\numberwithin{equation}{section}
\newtheorem{theorem}{Theorem}
\numberwithin{theorem}{section}
\newtheorem{lemma}[theorem]{Lemma}
\newtheorem{fact}[theorem]{Fact}
\theoremstyle{remark}
\newtheorem{AuxiliaryCl}{Claim}[theorem]
\theoremstyle{plain}
\newcounter{lemmafirststep}
\newcounter{lemmasecondstep}
\newcounter{lematko66}
\newcounter{lematko67}
\newcounter{lematko68}
\newcounter{lematko69}
\newcounter{lematko610}
\newcounter{lematko511}
\newtheorem{conjecture}[theorem]{Conjecture}
\newtheorem{definition}[theorem]{Definition}
\newtheorem{setting}[theorem]{Setting}
\newcounter{claim71counter}
\newtheorem{Claim71}[claim71counter]{Claim}
\def\wdeg{\mathrm{d\overline{eg}}}
\def\density{\mathrm{d}}
\def\neighbor{\mathrm{N}}
\def\gap{\mathrm{gap}}
\def\disc{\mathrm{disc}}
\def\ci{\mathrm{ci}}
\def\children{\mathrm{Ch}}
\def\parent{\mathrm{Par}}
\def\subset{\subseteq}
\def\dcup{\dot\cup} 
\def\XXX{\mathcal O}
\renewcommand{\leftrightarrow}{-} 
\renewcommand{\epsilon}{\varepsilon}
\newcommand{\By}[2]{\overset{\mbox{\tiny{#1}}}{#2}}
\newcommand{\geBy}[1]{    \By{#1}{\ge} }
\newcommand{\leBy}[1]{    \By{#1}{\le} }
\newcommand{\lBy}[1]{    \By{#1}{<} }
\newcommand{\Referee}[1]{}
\newcommand{\RefereeX}[2]{}
\def\alphaX{\zeta} 
\def\betaX{\alpha} 
\def\gammaX{\gamma} 
\def\epsilonX{\beta} 
\def\etaX{\vartheta} 
\def\omegaX{\kappa} 
\def\sigmaX{\lambda} 
\def\overV{V_*}
\def\overL{L_*}
\def\overN{N_*}
\def\overcalV{\mathcal{V}_*}
\begin{document}

\author{Jan Hladk\'y\thanks{Institute of Mathematics, Czech Academy of Science. \v Zitn\'a 25, 110 00, Praha, Czech Republic. The Institute of Mathematics of the Academy of Sciences of the Czech Republic is supported by RVO:67985840. E-mail: {\tt
honzahladky@gmail.com}.}
\and 
Diana Piguet\thanks{Institute of Computer Science, Czech Academy of Sciences, Pod Vod\'arenskou v\v e\v z\'i 2, 182~07 Prague, Czech Republic. With institutional support RVO:67985807.}}
\date{}
\title{Loebl--Koml\'os--S\'os Conjecture: dense case}
\maketitle
\begin{abstract}
We prove a version of the Loebl--Koml\'os--S\'os Conjecture for dense graphs. For each
$q>0$ there exists a number $n_0\in
\mathbb{N}$ such that for each $n>n_0$ and $k>qn$ the following
holds: if $G$ is a graph of order $n$ with at least $\tfrac n2$ vertices
of degree at least $k$, then each tree of order $k+1$ is a subgraph of~$G$.
\end{abstract}
\noindent\textbf{Keywords: } Loebl--Koml\'os--S\'os Conjecture, Ramsey number of trees.

\section{Introduction}
Embedding problems play a central role in Graph Theory. A variety
of graph embeddings (subgraphs, minors, subdivisions, immersions, etc) have been studied extensively. A graph (finite, undirected, loopless, simple; here as well as in
the rest of the paper) $H$ {\em embeds} in a graph $G$ if there exists
an injective mapping $\phi: V(H)\rightarrow V(G)$ which preserves
the edges of $H$, i.\,e.,
$\phi(x)\phi(y)\in E(G)$ for every edge $xy\in E(H)$. As a synonym we say that $G$ {\em
contains} $H$ ({\em as a subgraph}) and write $H\subset G$. Let
$\mathcal{H}$ be a family of graphs. The graph $G$ is {\em $\mathcal{H}$-universal} if it contains
 every graph from $\mathcal{H}$. This fact is denoted by $\mathcal{H}\subset G$.

In this paper we investigate embeddings of trees. This topic has received considerable attention during the last 40 years.
The class $\mathcal{T}_\ell$ consists of all trees of order $\ell$. One can ask which properties force a graph $G$ to be $\mathcal{T}_\ell$-universal. One sufficient condition for $\mathcal{T}_\ell$-universality can be given in terms of minimum degree. 
\begin{fact}\label{fact:greedyBUDA}
If a graph $G$ has the minimum degree $\delta(G)\ge k$ then $\mathcal{T}_{k+1}\subset G$.\Referee{(1)}
\end{fact}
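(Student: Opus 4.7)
The plan is to prove this by a classical vertex-by-vertex greedy embedding. First I would fix a ``backward leaf-stripping'' ordering $v_1, v_2, \ldots, v_{k+1}$ of the vertices of $T$ with the property that, for every $i\ge 2$, the vertex $v_i$ has exactly one neighbor $u_i$ in the set $\{v_1,\ldots,v_{i-1}\}$. Such an ordering is obtained by iteratively removing a leaf of $T$ (producing a sequence $v_{k+1},v_k,\ldots,v_1$) and reversing the order; it exists for every tree.

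Next I would construct an embedding $\phi\colon V(T)\to V(G)$ in this order. Let $\phi(v_1)$ be an arbitrary vertex of $G$. For $i=2,3,\ldots,k+1$, choose $\phi(v_i)$ to be any vertex in
\[
N_G\bigl(\phi(u_i)\bigr)\setminus\{\phi(v_1),\ldots,\phi(v_{i-1})\}.
\]
To see that such a vertex always exists, note that $\phi(u_i)$ itself lies in the forbidden set but not in its own neighborhood (as $G$ is loopless), so at most $i-2\le k-1$ of the previously used vertices can belong to $N_G(\phi(u_i))$. Since $\delta(G)\ge k$, the set $N_G(\phi(u_i))$ has size at least $k$, and hence contains an unused vertex. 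This shows the greedy step never fails.

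There is no real obstacle here; the proof is essentially bookkeeping. By construction $\phi$ is injective, and every edge of $T$ has the form $v_iu_i$ with $i\ge 2$, which is mapped to the edge $\phi(v_i)\phi(u_i)\in E(G)$ secured at step $i$. Hence $\phi$ is an embedding and $T\subset G$.
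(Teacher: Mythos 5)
Your proof is correct, and it is exactly the greedy embedding the paper alludes to (the paper only sketches the idea without spelling out the details). Your bookkeeping — noting that at step $i$ the vertex $\phi(u_i)$ belongs to the forbidden set but not to its own neighborhood, so at most $i-2\le k-1<k\le\deg(\phi(u_i))$ of its neighbors are occupied — is the right way to make the sketch rigorous.
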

To prove Fact~\ref{fact:greedyBUDA} it suffices to embed a given tree $T\in\mathcal{T}_{k+1}$ greedily in the host graph $G$. Loebl, Koml\'os and S\'os conjectured (see~\cite{EFLS95}) that the minimum degree condition can be relaxed to a median degree one.
\begin{conjecture}[LKS Conjecture]
Let $G$ be a graph of order $n$. If at least $\tfrac{n}2$ of the vertices
of~$G$ have degree at least $k$, then $\mathcal{T}_{k+1}\subset G$.
\end{conjecture}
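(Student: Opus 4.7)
The final statement is the full LKS conjecture, with no hypothesis relating $k$ to $n$, so my plan is to split the parameter regime into two cases joined at a threshold $k = qn$ for a suitable small constant $q > 0$, and tackle each case with different machinery.

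In the \emph{dense} regime $k \ge qn$, my plan is to apply Szemer\'edi's Regularity Lemma. This yields a regular partition $V_1 \cup \dots \cup V_t$ of $V(G)$ and a reduced graph $R$ on vertex set $[t]$ whose cluster-weights record the cluster sizes; the LKS degree hypothesis transfers (up to $o(k)$ errors) to a weighted analogue on $R$. In $R$ I would look for an ``LKS structure'' --- a matching $M$ between cluster-vertices such that the total weight reachable through $M$ from the high-weighted-degree part of $R$ is at least $k$, or a fractional relaxation thereof --- and then decompose the target tree $T \in \mathcal{T}_{k+1}$ into a bounded-height centroid skeleton plus many small shrubs. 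The skeleton is embedded cluster-by-cluster along $M$, and the shrubs are inserted greedily into the corresponding regular pairs, relying on the defect form of Hall's theorem to respect the cluster capacities.

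In the \emph{sparse} regime $k < qn$, the naive idea of iteratively stripping vertices of degree below $k$ until the minimum degree is $\ge k$ (whereupon Fact~\ref{fact:greedyBUDA} applies) fails by a margin: the hypothesis forces only $|E(G)| \ge nk/4$, while stripping could remove up to $nk$ edges. My plan is to refine this by partitioning $V(G)$ into the high-degree set $H = \{v : \deg_G(v) \ge k\}$ (of size $\ge n/2$) and its complement $L$. Since $|H| \gg k$, an averaging/probabilistic argument should locate a subgraph of $G[H]$ of minimum degree close to $k$, and any remaining deficit is supplied by routing selected leaves of $T$ to vertices of $L$ adjacent to the retained $H$-portion; the abundance of room ($|H| \gg k+1$) is the key resource that replaces the regularity machinery here.

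The main obstacle, expected in both regimes, is the \emph{extremal case}: graphs close to a disjoint union $K_k \cup K_{n-k}$ or similar clique-blow-up configurations that barely satisfy the LKS hypothesis. In such graphs the LKS structure in $R$ is minimal, matchings are essentially saturated, and one has no slack in the regularity argument; the cure is to embed $T$ directly into the large clique-like component via Fact~\ref{fact:greedyBUDA}, after a stability argument that forces $G$ to be truly close to this extremal shape. Calibrating the threshold $q$ so that the sparse and dense regimes cover every pair $(n,k)$ without a gap, and ensuring that the extremal analysis degrades uniformly as $q \to 0$, is a further technical hurdle I would have to address.
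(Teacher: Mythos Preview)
The statement you are attempting to prove is the full LKS Conjecture, which the paper does \emph{not} prove: it is stated as an open conjecture, and the paper establishes only the dense case $k>qn$ (Theorem~\ref{thm_main}). So there is no ``paper's own proof'' to compare against, and your proposal is necessarily attempting something beyond what the paper achieves.

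Your sparse-regime argument has a concrete gap. You write that since $|H|\gg k$, an averaging argument should locate a subgraph of $G[H]$ of minimum degree close to $k$. This is false: nothing in the LKS hypothesis forces any edges inside $H$. Take $G$ bipartite with colour classes $H$ and $L=V\setminus H$, each of size $n/2$, where every vertex of $H$ has degree exactly $k$ into $L$; then $G$ satisfies the LKS hypothesis but $G[H]$ is edgeless. Your fallback of ``routing selected leaves of $T$ to $L$'' then has to route \emph{all} of $T$, and the vertices of $L$ can have arbitrarily small degree, so there is no structure to exploit. The sparse case is genuinely hard: even the approximate version (Theorem~\ref{thm:LKSapproxsparse}) required the sparse decomposition technique of Ajtai--Koml\'os--Simonovits--Szemer\'edi, far beyond regularity, and the exact sparse case was open at the time of the paper.

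Your dense-regime sketch is in the right spirit (regularity, matching structure in the cluster graph, embed skeleton plus shrubs), but your treatment of the extremal case is also too optimistic. The extremal configuration is not close to a single $K_k$ but to a disjoint union of several copies of $K_k$ --- each one vertex too small to host a tree of order $k+1$ --- so ``embed $T$ directly via Fact~\ref{fact:greedyBUDA}'' does not work. The paper's Section~\ref{sec_Extremalcase} spends considerable effort finding bridges between clumps to place the one surplus vertex, with separate arguments depending on the number of leaves of $T$ and on fine structural properties of the partition.
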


The bound on $k$ of the minimal degree of large degree vertices cannot be
decreased. Indeed, if $G$ is a graph with maximum degree $k-1$, then it does not contain a star
$K_{1,k}$. The graph shown in Figure~\ref{fig:ExtremalGraph} shows that the requirement on the number of large degree vertices cannot be relaxed substantially below $\frac n2$. 
\begin{figure}[t]
\centering 
\includegraphics[scale=0.7]{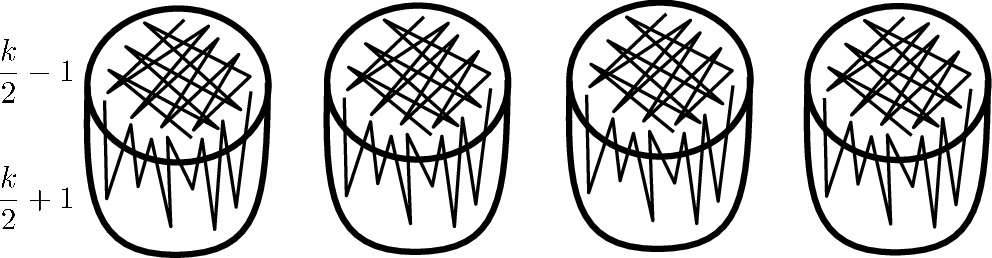}
\caption{A graph with almost half of its vertices of degree $k$ which does not contain a path of length~$k$.}
\label{fig:ExtremalGraph}
\end{figure}
See~\cite{Z07+} and~\cite{HladkyMSC} for further discussions.

There have been several partial results concerning the
LKS~Conjecture. In~\cite{BLW00}, Bazgan, Li and Wo{\'z}niak proved the
conjecture for paths. Piguet and Stein~\cite{PS2} proved that the
LKS~Conjecture is true when restricted to the class of trees of
diameter at most 5, improving upon results of Barr and Johansson~\cite{Barr} and of Sun~\cite{Sun07}.
There are several results proving the LKS~Conjecture under
additional assumptions on the host graph.
Soffer~\cite{Sof00} showed that the conjecture is true if the
host graph has girth at least 7. Dobson~\cite{Dob02} proved the
conjecture when the complement of the host graph does not contain a~$K_{2,3}$.

A special case of the LKS
Conjecture is when $k=\tfrac{n}2$.
This is often referred to as the
($\tfrac{n}2$--$\tfrac{n}2$--$\tfrac{n}2$)~Conjecture,
or Loebl's Conjecture.
 Zhao~\cite{Z07+} proved the conjecture
 for large
graphs.
\begin{theorem}\label{thm_Zhao}
There exists a number $n_0\in\mathbb N$ such that if a graph
$G$ of order $n>n_0$ has at least $\tfrac{n}2$ of the vertices of
degrees at least $\tfrac{n}{2}$, then $\mathcal{T}_{\lfloor \tfrac{n}2
\rfloor+1}\subset G$.
\end{theorem}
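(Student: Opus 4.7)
The plan is to combine Szemer\'edi's Regularity Lemma with the \emph{connected matching} method (pioneered by {\L}uczak for embedding long paths) to reduce the problem to a structural question about the reduced graph.

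First I would apply the Regularity Lemma to $G$ with a small parameter $\epsilon$, producing a partition $V_0,V_1,\ldots,V_M$ into clusters of equal size plus a tiny exceptional set. Form the reduced graph $R$ whose edges correspond to $\epsilon$-regular pairs of density at least some fixed $d>0$. The hypothesis that at least $n/2$ vertices of $G$ have degree at least $n/2$ transfers, up to an additive $O(\epsilon n)$ error, to the cluster graph: at least $(1/2-\epsilon)M$ clusters of $R$ have degree at least $(1/2-2\epsilon)M$ in $R$; call these the \emph{large clusters}.

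Next, I would look for a connected matching $\mathcal{M}$ in $R$ — a matching whose edges all lie in a single connected component — covering at least $(1/2+\gamma)M$ clusters, with one endpoint of each matching edge being a large cluster. Once such $\mathcal{M}$ is available, embedding any tree $T$ of order $k+1=\lfloor n/2\rfloor+1$ is routine: let $(A_1,B_1),\ldots,(A_t,B_t)$ be the matching edges, fix short paths in the component of $\mathcal{M}$ linking them, $2$-colour $V(T)$ by depth parity, and embed $T$ vertex by vertex in BFS order, placing odd-depth vertices into the $A_i$-sides and even-depth vertices into the $B_i$-sides, hopping to a neighbouring matching edge along the connecting paths whenever a vertex of $T$ is to be embedded on a different matching edge than its parent. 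Standard regularity bookkeeping ensures that no cluster becomes saturated during the process.

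The main obstacle is producing the connected matching, and here I would split into cases via a stability argument. In the \emph{non-extremal case}, when $R$ is far from the structure of Figure~\ref{fig:ExtremalGraph}, either $R$ is essentially connected or a large component contains most large clusters, and a Hall/K\"onig-type argument on the degree condition yields a connected matching of the required weight. In the \emph{extremal case}, when $R$ decomposes into (roughly) two dense halves sharing very little, the connected matching route may fall short of $k+1$; instead one embeds $T$ directly by exploiting the near-clique structure, invoking Fact~\ref{fact:greedyBUDA} inside the dense piece and distinguishing trees according to whether their bipartition classes are balanced and whether they have a vertex of large degree. The residual subtleties — turning "cluster weight at least $k+1$" into an actual embedding of $k+1$ vertices, absorbing $V_0$ and the leaves of $T$ without wasting capacity, and matching the stability cutoff with the regularity parameters at the boundary of the two cases — are technical but standard once the connected matching reduction is in place.
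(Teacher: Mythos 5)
Your proposal shares the broad shape of Zhao's proof (and of the present paper's proof of Theorem~\ref{thm_main}, which subsumes Theorem~\ref{thm_Zhao}): apply the Regularity Lemma, find a matching structure in the reduced graph, embed $T$ via that structure, and handle the extremal example of Figure~\ref{fig:ExtremalGraph} separately by ad-hoc arguments. However, the central combinatorial object in your plan --- a \emph{connected} matching in the Łuczak sense covering $(\tfrac12+\gamma)M$ clusters --- is not what this problem actually calls for, and this is a genuine gap rather than a stylistic choice.

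First, a connected matching of that weight is not guaranteed to exist under the LKS hypothesis: the degree condition is exactly at threshold, and a union of two near-cliques of order roughly $k$ has maximum connected matchings of weight only about $\tfrac{M}{2}$, with no margin $\gamma$. The paper's Lemma~\ref{prop_TutteType} works instead with the Gallai--Edmonds decomposition and outputs not a ``large connected matching'' but a matching $M$ together with a pair of adjacent \emph{hub} clusters $A,B$ (Case~I) or a set $\XXX$ with the property that almost all of its neighbourhood is covered by $M$ (Case~II). The hub is the crucial object because all the cut vertices of the tree's fine partition (the sets $W_A,W_B$ in Definition~\ref{def:ellfine}) are mapped into $A$ and $B$, and each shrub is then embedded into a single matching edge reachable from the hub in one step of $\mathbf H$. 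Second, your claim that the embedding is ``routine'' once the matching is in place is precisely the misstep that the Łuczak method invites: hopping along a connecting path in $R$ works for a path or a cycle (one embeds consecutive vertices of the path into consecutive clusters of the connecting path), but for a general tree a high-degree vertex may have to distribute its children among many matching edges, and there is no path in $T$ to lay down along the connecting path. The paper resolves this with the $\tau$-fine partition (Lemma~\ref{lem:cutfine}) and the rather delicate embedding Lemma~\ref{lem:Embedding-3}, which keeps every cut vertex in the hub and exercises careful balance control (condition~\ref{emb3-balanced} and the $c$-balanced hypothesis) so that no matching edge becomes lopsided; the unbalanced case requires the separate Lemma~\ref{prop:UnbalancedCase}. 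Third, the extremal case is not a single stability dichotomy: in the paper it is an \emph{iterated} peeling of nearly-isolated sets of size roughly~$k$, followed by a bridge-finding argument (Lemmas~\ref{lemma_StartDiana}, \ref{prop_deficientCones}) that reroutes parts of $T$ across clumps --- ``invoking Fact~\ref{fact:greedyBUDA} inside the dense piece'' is not enough, since each dense piece has only about $k$ vertices and cannot host $k+1$ of them on its own.
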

An approximate version of the LKS~Conjecture for dense graphs was proven by Piguet
and Stein~\cite{PS07+}.
\begin{theorem}\label{thm_PiguetStein}
For each $q,\epsilon>0$  there exists a number $n_0$ such that for each $n>n_0$ and $k>qn$ the following holds. If  $G$  is a graph of order $n$ with at
least $\tfrac n2$ vertices of degree at least $(1+\epsilon)k$, then
$\mathcal{T}_{k+1}\subset G$.
\end{theorem}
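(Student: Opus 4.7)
My plan is to prove the theorem by the Szemer\'edi Regularity Lemma, letting the $\epsilon$-slack in the degree hypothesis absorb every loss incurred by the regularity machinery. Apply the Regularity Lemma to $G$ with constants $\epsilon'\ll d\ll q,\epsilon$, obtaining an $\epsilon'$-regular equipartition of $V(G)$ into an exceptional set $V_0$ and clusters $V_1,\dots,V_m$ of common size $L$. Build the reduced graph $R$ on $[m]$ whose edges correspond to $\epsilon'$-regular pairs of density at least $d$. Clean $G$ to a subgraph $G'$ by discarding edges inside clusters, edges touching $V_0$, and edges in irregular or sparse pairs; at most $(\epsilon'+d+1/m)n^2$ edges disappear. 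A simple averaging argument shows that at least $(\tfrac12-\epsilon'')n$ vertices $v$ still satisfy $\deg_{G'}(v)\ge (1+\tfrac{\epsilon}{2})k$. Call a cluster $V_i$ \emph{heavy} if it contains such a vertex; then at least $(\tfrac12-2\epsilon'')m$ clusters are heavy, and for every heavy cluster one has the weighted bound $\sum_{j\in N_R(i)}|V_j|\ge (1+\tfrac{\epsilon}{3})k$.

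\textbf{Matching structure in $R$.} The key combinatorial step is to extract from $R$ a matching $M$, preferably a \emph{connected} matching in the sense of \L uczak, such that every heavy cluster is incident to $M$ and the clusters touched by $M$ jointly contain at least $(1+\tfrac{\epsilon}{4})k$ vertices of $G$. This would follow from a K\"onig-type argument: if no such $M$ existed, a maximum matching of $R$ would yield a vertex cover too small to absorb the weighted neighbourhoods of all heavy clusters, contradicting the lower bound above. Connectivity of $M$ is obtained by replacing $R$ by a large ``regularity'' component, which is possible since in a graph of density bounded away from $0$ most edges lie in one huge component.

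\textbf{Embedding via tree decomposition.} A standard fact is that every $T\in\mathcal{T}_{k+1}$ admits a ``spine + pendants'' decomposition: after removing a short centroidal spine one is left with subtrees each of order at most $\eta L$, for any prescribed $\eta\ll 1$. I would map the spine to a walk inside the connected matching of the previous step, and embed each pendant subtree greedily into the regular pair associated with the current edge of $M$, using the Koml\'os--S\'ark\"ozy--Szemer\'edi embedding lemma for bounded-degree trees into super-regular pairs. Whenever a cluster starts to fill up, the connectivity of $M$ is used to switch to the next matching edge without interrupting the embedding.

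\textbf{Main obstacle.} The delicate point is to guarantee that no cluster is ever overfilled during the embedding, even though the tree is completely arbitrary and may have very uneven branching. I would address this by pre-computing a \emph{fractional} distribution of the $k+1$ vertices of $T$ among the clusters touched by $M$, obtained as a feasible solution to a flow problem on $M$ with capacities $L$ (and demands coming from a BFS layering of $T$). The $(1+\epsilon)$ slack in the degree hypothesis is exactly what makes this flow feasible in spite of the $\epsilon'' n$ vertices and $(\epsilon'+d)n^2$ edges lost to cleaning; once the fractional distribution is in hand, it is rounded to an honest embedding by a Hall-type greedy argument exploiting super-regularity.
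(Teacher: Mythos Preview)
The paper does not prove Theorem~\ref{thm_PiguetStein} itself; it is quoted as a result of Piguet and Stein~\cite{PS07+}, and the paper then proves the stronger Theorem~\ref{thm_main} (no $\epsilon$-slack). That said, the paper's machinery for Theorem~\ref{thm_main}---and the original argument in~\cite{AKS95,PS07+}---follows the same high-level scheme you describe (Regularity Lemma, matching structure in the cluster graph, tree decomposition, embedding), so a comparison is still meaningful.

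Your regularity/cleaning step is fine. The substantive gap is in the second and third steps. A \emph{connected matching} in \L uczak's sense is tailored to embedding long paths or cycles: one walks along the matching, switching edges through the connecting vertices. For an arbitrary tree this does not work, because there need not be any ``spine'' to walk along---think of a star $K_{1,k}$, or a tree with $\Theta(k)$ leaves attached to a few high-degree vertices. The decomposition that is actually used (here Definition~\ref{def:ellfine} and Lemma~\ref{lem:cutfine}; in~\cite{PS07+} similarly) cuts $T$ at a small set $W_A\cup W_B$ of $O(k/\tau)$ vertices into \emph{shrubs} of order at most~$\tau$. The cut-vertices must all be embedded into a bounded number of fixed clusters from which one can reach many matching edges; this is why the matching lemma (Lemma~\ref{prop_TutteType} here, and its predecessor in~\cite{PS07+}) produces not a connected matching but two \emph{adjacent hub clusters} $A,B$ with $\wdeg(A,V(M))$ and $\wdeg(B,V(M)\cup\mathcal L^*)$ each at least roughly $k/2$. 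The cut-vertices of one parity go to $A$, those of the other parity to $B$, and each shrub is then placed in a matching edge reachable from its parent cut-vertex.

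Your ``flow'' idea for balancing the load across clusters is not how overfilling is controlled. What is used instead is a structural feature of the matching: for each $e\in M$ one has $|\neighbor(A)\cap e|\le 1$ (Case~I of Lemma~\ref{prop_TutteType}), so when a shrub is embedded into $e$, its two colour classes go to the two sides of $e$ and the pair fills in a balanced way. Combined with the $c$-balanced/unbalanced dichotomy for the family of shrubs (Lemma~\ref{prop:UnbalancedCase} and the notion on p.~\pageref{p.:def-unbalanced}), this keeps both sides of every matching edge from saturating before the total capacity $\approx k$ is exhausted. A fractional flow argument would have to reproduce exactly this, and it is not clear it does without the $A$--$B$ hub structure.
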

In this paper we strengthen Theorem~\ref{thm_PiguetStein} by
removing the $\epsilon$ term.
\begin{theorem}[Main Theorem]\label{thm_main}
For each $q>0$ there exists  a number $n_0=n_0(q)\in\mathbb N$ such that for each
$n>n_0$ and $k>qn$ the following holds. If $G$ is a graph of order $n$ with at least $\tfrac{n}2$ vertices of degree at least $k$, then $\mathcal{T}_{k+1}\subset G$.
\end{theorem}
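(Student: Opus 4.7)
The plan is to run a stability argument anchored on Theorem~\ref{thm_PiguetStein}. Fix a hierarchy of parameters $q \gg \epsilonX \gg 1/n_0 > 0$ and apply Szemer\'edi's regularity lemma to $G$ to obtain an $\epsilonX$-regular partition whose reduced graph $R$ inherits, in a suitable weighted form, the median-degree condition of $G$. Any embedding of a given $T \in \mathcal{T}_{k+1}$ will then proceed by first decomposing $T$ into a small set of high-degree \emph{hubs} together with a forest of pendant subtrees of bounded size (a $\tau$-fine decomposition in the spirit of~\cite{PS07+}), placing the hubs in a dense region of $G$, and using the regularity of $R$ to route the pendant forest into the remaining clusters.

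The argument splits into two cases. In the \emph{non-extremal case}, $G$ has at least $\tfrac{n}{2}$ vertices of degree at least $(1+\epsilonX)k$; then Theorem~\ref{thm_PiguetStein} applies verbatim and produces the desired embedding. In the \emph{extremal case}, fewer than $\tfrac{n}{2}$ vertices exceed this stronger threshold, so almost all of the large-degree vertices of $G$ have degree in the narrow window $[k,(1+\epsilonX)k]$. A standard degree-counting argument then forces $V(G)$ to split, up to an $O(\epsilonX n)$ error, into a dense block $A$ of order roughly $k+1$ (inducing a subgraph of near-complete density) together with an outer set whose large-degree vertices attach to $A$ through a narrow interface, mirroring the extremal example of Figure~\ref{fig:ExtremalGraph}.

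With this skeleton in hand, embedding $T$ into $G$ becomes a sharp extremal problem: the hubs of $T$ are greedily placed inside $A$ (invoking Fact~\ref{fact:greedyBUDA} on the induced subgraph of $A$, whose minimum internal degree is close to $|A|-1 \ge k$), while the pendant subtrees of $T$ are distributed between $A$ and the interface through a Hall-type matching in $R$. The principal obstacle is precisely this case: because $|V(T)| = k+1$ is only marginally smaller than $|A| \approx k+1$, there is essentially no slack in vertex count and, by hypothesis, no slack in degree either. A careful case split on the structure of $T$---the number of leaves, the diameter, and the coarse shape of the $\tau$-fine decomposition---is needed to guarantee that in every configuration either the interface is large enough to absorb the pendant subtrees, or $T$ has enough leaves to be hidden inside a perturbation of $A$. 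We expect this tight extremal bookkeeping, rather than the regularity setup, to be the technical core of the proof.
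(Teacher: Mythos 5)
Your plan identifies the right high-level shape of the problem --- a stability dichotomy, $\tau$-fine decompositions of $T$, a regularity-lemma part and an extremal part --- but there is a genuine gap at the step where you pass from ``fewer than $\tfrac{n}{2}$ vertices have degree at least $(1+\epsilonX)k$'' to the extremal structure. That inference is not available by ``standard degree-counting.'' Knowing that most large vertices have degree in the narrow window $[k,(1+\epsilonX)k)$ does \emph{not} force $V(G)$ to split into a dense block of order $\approx k+1$ plus a thin interface. A $k$-regular bipartite graph, for instance, has all its large vertices at the bottom of the window, yet has no dense block at all; the paper handles exactly this configuration by a separate special argument (Lemma~\ref{prop_SCHolds}). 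More importantly, even in the genuinely clique-like extremal case the structure is a partition $V=V_1\dcup\cdots\dcup V_\ell\dcup\tilde V$ into \emph{several} clumps of size $\approx k$ (there must be about $n/k$ of them), not a single block $A$. The embedding then has to route parts of $T$ across clump boundaries via carefully constructed ``bridges'' (Lemmas~\ref{lemma_StartDiana} and~\ref{prop_deficientCones}), which is where much of the paper's real difficulty lies; a single-block-plus-interface picture would skip this.

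How the paper actually obtains the extremal structure is also qualitatively different from a one-shot degree count, and this is the key idea your plan is missing. The extremal partition is built \emph{iteratively}: one applies the regularity lemma to a near-isolated set $\overV$, obtains a refined matching lemma (Lemma~\ref{prop_TutteType}, Cases I/II) in the cluster graph, and tries to embed $T$; if the embedding fails, the failure \emph{produces} a near-isolated set $V_i$ of size $\approx k$ with at least half its vertices large. Iterating this either embeds $T$ at some stage or exhausts $V(G)$ and yields the extremal partition. Your proposal also leans too hard on applying Theorem~\ref{thm_PiguetStein} ``verbatim'' in the non-extremal case: the median-degree assumption here is only $k$, not $(1+\epsilonX)k$, so even after regularity one has no $\epsilonX k$ of slack to absorb errors, and one still needs a strengthened matching lemma and a careful error analysis (this is exactly what Lemma~\ref{prop_TutteType} and the Case~I/II embedding analysis supply). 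So the plan as written both misstates the extremal structure and misses the mechanism by which that structure is identified.
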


\smallskip
We can see from our proof of Theorem~\ref{thm_main} that the requirement on
the number of vertices of large
degree can be relaxed in the case when $\tfrac{n}k$ is far from being an integer.
\begin{theorem}\label{thm_mainstronger}
For each $q_2>q_1>0$ such that the interval $[\tfrac{1}{q_2},\tfrac{1}{q_1}]$ does not
contain any integer, there exist $\varepsilon=\varepsilon(q_1,q_2)>0$ and $n_0$ such that for each $n>n_0$ and $k\in(q_1n,q_2n)$ the following
holds: if $G$ is a graph of order $n$ with at least $(\tfrac12-\varepsilon)n$
vertices
of degree at least $k$, then $\mathcal{T}_{k+1}\subset G$.
\end{theorem}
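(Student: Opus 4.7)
\smallskip
\noindent\textbf{Proof plan.} The plan is to revisit the proof of Theorem~\ref{thm_main} and locate the exact step at which the hypothesis of $\tfrac{n}{2}$ high-degree vertices is binding. That proof (which builds on the regularity-based approach of Piguet and Stein~\cite{PS07+} behind Theorem~\ref{thm_PiguetStein}) produces a regularity-type decomposition of $G$ and then splits into two cases: a \emph{non-extremal} case, in which an embedding of any tree $T\in\mathcal{T}_{k+1}$ is carried out with a definite amount of slack, and an \emph{extremal} case, in which $G$ is structurally close to the configuration depicted in Figure~\ref{fig:ExtremalGraph}.

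In the non-extremal case, tracking the slack through the embedding procedure should show that the conclusion still holds when the number of high-degree vertices drops from $\tfrac{n}{2}$ to $(\tfrac{1}{2}-\varepsilon)n$, provided $\varepsilon=\varepsilon(q_1,q_2)>0$ is small enough. The tolerable $\varepsilon$ in this regime is controlled by $q_1$, since a denser tree has relatively more room to be routed through the clusters of a regular partition.

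In the extremal case, the structure of $G$ forces (up to an exceptional set of size $o(n)$) a partition of $V(G)$ into ``clique-like'' blocks, each of order roughly $k$ or $k+1$, with a controlled set of low-degree leftover vertices. If there are $t$ such blocks, then $n = t(k + o(k))$, so $n/k$ lies within $o(1)$ of the integer $t$. The hypothesis that $[\tfrac{1}{q_2},\tfrac{1}{q_1}]$ contains no integer supplies a uniform positive lower bound $\delta=\delta(q_1,q_2)$ on $\min_{s\in\mathbb{Z}}|n/k-s|$; choosing $\varepsilon$ small enough in terms of $\delta$ then contradicts the extremal structure, so every $G$ satisfying the hypotheses falls into the non-extremal regime and the embedding goes through.

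The main obstacle is the stability step: one has to extract from the proof of Theorem~\ref{thm_main} a statement to the effect that any $G$ which fails the conclusion while satisfying the degree hypothesis with a deficit of $\varepsilon n$ high-degree vertices is necessarily close in edit distance to the extremal graph of Figure~\ref{fig:ExtremalGraph}. Once such a stability statement is in hand, the integrality gap provided by the assumption on $q_1$ and $q_2$ closes the argument by a short arithmetic calculation.
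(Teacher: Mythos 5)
Your proposal follows the same overall route as the paper's proof sketch (Section~\ref{sec_outline}): the only place the sharp bound $|L|\ge\tfrac n2$ is used is the terminal step of the extremal analysis, when the iteration has partitioned $V(G)$ into clumps $V_1,\dots,V_s$ each of size $\approx k$; there $n/k$ must sit within $o(1)$ of the integer $s$, which the hypothesis $[\tfrac1{q_2},\tfrac1{q_1}]\cap\mathbb{Z}=\emptyset$ rules out for $\varepsilon$ small enough. One small mismatch in framing: the paper's proof of Theorem~\ref{thm_main} is not a clean two-case ``non-extremal versus extremal'' dichotomy that can be packaged as a standalone stability theorem in edit distance; it is an \emph{iteration} that alternately applies the regularity lemma (Lemma~\ref{prop:iteration-Reg}) and the extremal lemma (Lemma~\ref{prop:EC-obecna}), peeling off one near-isolated block $V_i$ per round, and the sharp inequality $|L|\ge|S|$ enters only in the final round when the leftover $\tilde V$ becomes empty (this happens concretely inside Lemma~\ref{lem:def} and Lemma~\ref{lemma_StartDiana}). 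Each intermediate application already tolerates slack of order $\sigma k$ or $\rho k$, so there is no new ``stability statement'' to extract: one simply notes that the terminal deficient configuration forces $n\approx mk$ for an integer $m$, contradicting the integrality gap. With that reframing your plan is correct and coincides with the paper's.
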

In the paper, we explicitly prove only Theorem~\ref{thm_main}. In
Section~\ref{sec_outline} we sketch how the proof method can be revised to give
Theorem~\ref{thm_mainstronger}. However, determining the
optimal value of $\varepsilon(q_1,q_2)$ remains open.
Note also that Theorem~\ref{thm_main} has
slightly weaker assumptions
on $G$ than Theorem~\ref{thm_Zhao} when reduced to the case $k=\lfloor \tfrac{n}{2}\rfloor$ --- when $n$ is odd, the requirement on degrees of  large vertices in Theorem~\ref{thm_main} is smaller by one compared to Theorem~\ref{thm_Zhao}.
\bigskip

The property which is considered in the LKS conjecture is given in terms of the median degree. If
we consider the average degree instead we obtain a famous
conjecture of Erd\H os and S\'os which dates back to~1963. 
\begin{conjecture}[{ES Conjecture,~\cite[p.30]{Erdos:ExtremalProblems}}] Let $G$ be a graph of order $n$ with more than $\tfrac12(k-2)n$ edges. Then $\mathcal{T}_k \subset G$.\Referee{(2)}
\end{conjecture}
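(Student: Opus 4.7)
The plan is to reduce the Erd\H{o}s--S\'os edge-density hypothesis to the median-degree hypothesis of Theorem~\ref{thm_main}. First I would apply a cleaning step: iteratively delete any vertex of degree strictly less than $k/2$. Each such deletion removes fewer than $(k-2)/2$ edges while decreasing $n$ by one, so the inequality $e(G)>\tfrac12(k-2)n$ is preserved, and the process halts at a non-empty subgraph $G'$ with $\delta(G')\geq k/2$. If at any point $\delta\geq k-1$, Fact~\ref{fact:greedyBUDA} finishes the proof immediately, so we may assume $\tfrac{k}{2}\leq \delta(G')<k-1$ and that the average degree of $G'$ is just above $k-2$.

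Next I would dichotomise. Since $\sum_v\deg(v)>(k-2)n$, either at least half of the vertices of $G'$ have degree $\geq k-2$ (putting us within a constant of the LKS hypothesis and inviting a perturbation of the proof of Theorem~\ref{thm_main}), or the excess degree is concentrated on a small set of very-high-degree vertices. In the latter extremal case I would embed a tree $T\in\mathcal{T}_k$ by rooting it at a vertex $v^*$ of large degree, placing the neighbours of the root of $T$ inside $N_G(v^*)$, and then extending the remaining embedding greedily using the lower bound $\delta(G')\geq k/2$, together with a rearrangement to exploit the fact that the ``bulk'' vertices of $G'$ still have degree much larger than the depth of any branch that needs to be embedded beyond $v^*$. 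A separate stability analysis would have to treat the narrow band of configurations that slip between these two extremes.

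The main obstacle is that the edge bound $e(G)>\tfrac12(k-2)n$ is \emph{tight}: unlike Theorem~\ref{thm_PiguetStein} and Theorem~\ref{thm_main}, there is no $\varepsilon$-slack to absorb regularity losses, so Theorem~\ref{thm_main} cannot be used as a black box. The near-extremal examples for ES (two $(k-1)$-cliques sharing a small intersection, or blow-ups of a small graph) saturate the bound and have exactly the lopsided degree distribution that defeats a direct LKS-style reduction. Consequently an honest proof must combine a fine stability analysis of edge-extremal $\mathcal{T}_k$-free graphs with an exact (non-approximate) embedding strategy in each stable regime, in the spirit of the Ajtai--Koml\'os--Simonovits--Szemer\'edi programme. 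This is precisely why the conjecture remains open in general, and I would not expect a short proof building solely on Theorem~\ref{thm_main}.
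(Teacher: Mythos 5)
The statement you were asked to address is labelled as a \emph{conjecture} in the paper --- the Erd\H{o}s--S\'os Conjecture --- and the paper offers no proof of it. The authors cite it as motivation, note that Ajtai, Koml\'os, Simonovits and Szemer\'edi announced a proof for sufficiently large $k$ (the paper's Theorem~\ref{thm_ES}), and remark that a dense-regime counterpart could in principle be obtained via the Regularity Lemma, but they neither prove nor claim it. So there is no proof in the paper for yours to be compared against, and your closing assessment --- that the conjecture remains open and cannot be derived as a quick corollary of Theorem~\ref{thm_main} --- is the correct conclusion.

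Two small remarks on the sketch itself. First, the ``cleaning'' step's threshold should be $(k-2)/2$, not $k/2$: if you delete a vertex of degree $d$ the invariant $e>\tfrac12(k-2)n$ survives precisely when $d\le\tfrac12(k-2)$, and for odd $k$ a vertex of degree $\lceil k/2\rceil-1=(k-1)/2$ would slip through your stated threshold yet break the invariant. Second, and more importantly, the dichotomy you propose does not in fact reduce to the LKS hypothesis: Theorem~\ref{thm_main} needs at least $\tfrac n2$ vertices of degree at least $k$ (to embed a tree on $k+1$ vertices), whereas your first branch only gives $\tfrac n2$ vertices of degree at least $k-2$ for embedding a tree on $k$ vertices --- an off-by-two shift that, because the LKS bound is tight, cannot simply be absorbed. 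You correctly identify this tightness obstruction at the end, and it is indeed the essential reason neither ES nor LKS in full strength follows from the approximate dense results.

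In short: this item is an unproven conjecture quoted for context, and your honest acknowledgement of that fact is the right answer.
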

If true, the ES~Conjecture is sharp. After several partial results on
the problem, a breakthrough was achieved by Ajtai, Koml\'os,
Simonovits and Szemer\'edi, who announced a proof of the Erd\H os--S\'os Conjecture for large $k$.
\begin{theorem}\label{thm_ES}
There exists a number $k_0$ such that for each $k>k_0$ the following holds:
if a graph $G$ of order $n$ has more than $\tfrac12(k-2)n$ edges, then 
$\mathcal{T}_k \subset G$.
\end{theorem}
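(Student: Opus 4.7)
The plan is to apply Szemer\'edi's regularity lemma to $G$ and to reduce the problem to an embedding at the level of the reduced graph. Let $R$ be the reduced graph on $M$ clusters of common size $L\approx n/M$; the hypothesis $e(G)>\tfrac12(k-2)n$ transfers, up to negligible error, to a weighted edge bound on $R$, so that the total cluster-density of $R$ is at least $\tfrac12(k-2)M^2/n+o(M^2)$. The task then becomes to embed every $T\in\mathcal{T}_k$ using the dense regular pairs certified by $R$.

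First, I would try to reduce to the LKS framework of Theorem~\ref{thm_main}. Split $V(G)$ into a ``typical'' part $Y$ of vertices of degree close to $k$ and an ``absorber'' part $X$ of vertices of degree substantially above $k$. In the generic case $|Y|\ge n/2$, after trimming a bounded number of leaves of $T$ one can invoke Theorem~\ref{thm_main} on a suitably weighted subgraph (with parameter $k-O(1)$) to embed $T$ minus those leaves, and then attach the leaves greedily at a high-degree image vertex. This handles all graphs that are not close to the ES extremal configuration.

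Second, I would handle the extremal regime $|Y|<n/2$. Here the edge mass forces the at most $n/2$ vertices of $X$ to have average degree close to $n$, while $Y$ carries few edges, so $G$ is close to a disjoint union of a near-clique on $\sim k-1$ vertices and a sparse remainder---the sharpness example for the ES conjecture. A direct combinatorial embedding of $T$ into the near-clique, seeded at a centroid of $T$ and grown greedily using the large local minimum degree, finishes this case.

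The main obstacle will be the transition zone where $G$ is close to, but not exactly in, either regime: one must then quantify the slack left in the regularity partition and certify that the last few vertices of $T$ can be extended along regular pairs of borderline density. This is the classical stability analysis responsible for the length of the original argument of Ajtai, Koml\'os, Simonovits and Szemer\'edi, and it requires a case split on the shape of $T$ (long path-like, bushy with many leaves, or balanced), which is where the bulk of the combinatorial work lies.
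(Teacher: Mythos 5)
The paper does not prove Theorem~\ref{thm_ES}; it cites it as an announced result of Ajtai, Koml\'os, Simonovits and Szemer\'edi, explicitly noting that their proof uses ``a decomposition technique which substantially generalizes the Regularity Lemma, and which is applicable even to sparse graphs.'' So there is no in-paper proof to compare against, and what the paper does say about the method directly contradicts your proposed first step: applying the standard regularity lemma only makes sense when $k=\Theta(n)$, because otherwise the $\varepsilon n^2$ edges lost to irregular, low-density, and internal cluster pairs swamp the hypothesis $e(G)>\tfrac12(k-2)n$. Theorem~\ref{thm_ES} has no lower bound on $k$ in terms of $n$, and the sparse regime $k=o(n)$ is exactly where the real difficulty lies; your plan simply does not reach it.

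Even confining attention to $k=\Theta(n)$, the reduction to the LKS framework has a gap. The hypothesis of ES is an average-degree condition, $\sum_v\deg(v)>(k-2)n$; the hypothesis of Theorem~\ref{thm_main} is a median-degree condition, that at least $n/2$ vertices have degree $\ge k$. An average degree above $k-2$ does not give anywhere near $n/2$ vertices of degree close to $k$ (a few vertices of enormous degree can carry all the mass). Your two-case split on $|Y|$, and the inference in the second case that the vertices of $X$ must have degree close to $n$, do not follow from the stated hypothesis without an argument you have not supplied. In short: the route via Theorem~\ref{thm_main} covers at best the dense-$k$ regime and even there needs a non-trivial lemma converting average degree to median degree, while the sparse-$k$ regime requires machinery (the sparse decomposition of~\cite{LKSsparse1,LKSsparse2,LKSsparse3,LKSsparse4}) that you do not invoke.
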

A version of Theorem~\ref{thm_ES} for $k$ linear in $n$ could be obtained by
an application of the Regularity Lemma; such a theorem would be a counterpart to Theorem~\ref{thm_main}. The proof of Theorem~\ref{thm_ES} by Ajtai et al.\ uses a decomposition technique which substantially generalizes the Regularity Lemma, and which is applicable even
to sparse graphs. Hladk\'y, Koml\'os, Piguet, Simonovits, Stein, and Szemer\'edi~\cite{LKSsparse1,LKSsparse2,LKSsparse3,LKSsparse4} used this decomposition technique to prove an approximate version of the LKS~Conjecture (see also~\cite{LKSsparseOverview} for a high-level overview of the proof). 
\begin{theorem}\label{thm:LKSapproxsparse}
For each $\epsilon>0$ there exists a number $k_0$ such that for each $k>k_0$ the following holds. If  $G$ is a graph of order $n$ with at least $(\tfrac12+\epsilon)n$ vertices of degrees at least $(1+\epsilon)k$, then $\mathcal{T}_{k+1}\subset G$.
\end{theorem}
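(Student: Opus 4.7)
The plan is to extend the methodology of the approximate dense LKS theorem (Theorem~\ref{thm_PiguetStein}) to host graphs in which $k$ is allowed to be of arbitrary order relative to $n$. The key structural tool must be replaced: Szemer\'edi's Regularity Lemma gives nontrivial information only when the host graph is dense, so when $k=o(n)$ one needs a more general decomposition, analogous to the one developed by Ajtai, Koml\'os, Simonovits and Szemer\'edi for the Erd\H os--S\'os Conjecture (Theorem~\ref{thm_ES}). My first step would be to apply such a decomposition, partitioning $V(G)$ into a cluster graph of $\epsilon$-regular pairs of (possibly sparse) density, a collection of ``dense spots'' supported on a sublinear number of vertices, a set of expanding components, and a small exceptional set of vertices of extreme degree.

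The second step is to harvest the degree hypothesis. Vertices of degree $\ge (1+\epsilon)k$ comprise more than $(\tfrac12+\epsilon)n$ of $V(G)$, and after a cleaning procedure that discards low-degree vertices and low-density edges, all but a negligible fraction sit in clusters whose total weighted degree into the rest of the decomposition is at least $(1+\epsilon/2)k$. One then distinguishes two cases. In the non-extremal case there is a pair of adjacent structures (two clusters, a cluster and a dense spot, or two dense spots) whose vertex counts add up to at least $(1+\epsilon/4)k$; into such a pair a tree of order $k+1$ can be embedded by a Koml\'os--S\'ark\"ozy--Szemer\'edi-style argument. In the approximately extremal case, the graph must resemble the sharp example of Figure~\ref{fig:ExtremalGraph}, and an embedding tailored to that structure is used.

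The tree $T$ itself would be decomposed into a skeleton of ``seed'' vertices joined by subtrees (``knags'') of bounded size; each knag is embedded greedily into a regular pair, the seeds are routed through high-degree vertices, and the $\epsilon$-slack between $(1+\epsilon)k$ and $k$ is consumed to absorb the error terms arising from regularity. For trees with a bounded number of very high-degree vertices, the hubs have to be pre-embedded into a matching-like structure on the cluster graph before the knags are placed.

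The main obstacle, and the reason the full proof of this theorem occupies the four companion papers~\cite{LKSsparse1,LKSsparse2,LKSsparse3,LKSsparse4}, is the interaction between the sparse decomposition and the extremal analysis. When $k$ is very small compared to $n$, most of the host structure is invisible to regularity and one has to rely on the dense-spot and expander parts of the decomposition; designing a decomposition robust enough to support a ``common extension''-style classification of all approximately extremal configurations, and then matching each configuration to a tree-embedding strategy, is the technical heart of the argument.
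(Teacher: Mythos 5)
The paper you are reading does not actually prove Theorem~\ref{thm:LKSapproxsparse}: it is stated as a quoted result from the companion series~\cite{LKSsparse1,LKSsparse2,LKSsparse3,LKSsparse4} (with a high-level overview in~\cite{LKSsparseOverview}), and no proof of it appears in this paper. So there is no in-paper argument against which your proposal can be judged for fidelity; what you have written is, in effect, a pr\'ecis of those four companion papers rather than an alternative to anything here.

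As a pr\'ecis it is accurate in broad outline --- replace the Szemer\'edi Regularity Lemma by the sparse decomposition into dense regular pairs, dense spots and avoidable/expander-like parts, cut the tree into a seed skeleton plus small shrubs, prove a matching/structure dichotomy on the cluster-like object, and spend the $\epsilon$-slack in the degree hypothesis to absorb regularity losses. But it is not a proof: every step you name (the decomposition lemma itself, the cluster matching lemma on the richer decomposition, the case classification of the rough structure, the simultaneous embedding lemma across dense spots and regular pairs, and the treatment of high-degree hubs) is a multi-page theorem in its own right, and you neither state any of them precisely nor indicate why the error terms close. Concretely, the step ``into such a pair a tree of order $k+1$ can be embedded'' is exactly where all the work lives when $k=o(n)$, because the relevant pair may be a dense spot living on $o(n)$ vertices where the usual regularity bookkeeping breaks down; your proposal acknowledges the difficulty but does not address it. If the task was to supply a proof, this does not do so; if it was to describe the strategy of~\cite{LKSsparse1,LKSsparse2,LKSsparse3,LKSsparse4}, it is a fair summary but adds nothing beyond what~\cite{LKSsparseOverview} already states.
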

We believe that the techniques developed for Theorem~\ref{thm_main} and for Theorem~\ref{thm:LKSapproxsparse} can be utilized to proving the LKS Conjecture for $k$ sufficiently large.

\bigskip

The current work builds on techniques of Zhao~\cite{Z07+} and of
Piguet and Stein~\cite{PS07+}. We postpone a detailed discussion of
similarities between our approach and theirs and of our own contribution until Section~\ref{sec_outline}.
After the first version of this manuscript was posted on the arXiv, Oliver Cooley~\cite{Cooley} published an independent proof of Theorem~\ref{thm_main}.\Referee{(3)}

\subsection{Ramsey number of trees}
In this section we show the connection between the LKS Conjecture and the Ramsey number of trees. For two graphs~$F$ and~$H$ we write $R(F,H)$ for the {\em Ramsey
number} of the graphs~$F$ and~$H$. This is the smallest number $m$ such
that in each red/blue edge-coloring of $K_m$ there is a red copy
of $F$ or a blue copy of $H$. For two families of
graphs $\mathcal{F}$ and $\mathcal{H}$ the Ramsey number
$R(\mathcal{F},\mathcal{H})$ is the smallest number $m$ such that in
each red/blue edge-coloring of $K_m$ the graph induced by the red edges is $\mathcal{F}$-universal, or the graph induced by the blue edges is $\mathcal{H}$-universal. Theorem~\ref{thm_main} implies an almost tight upper bound (up to an additive error of one) on the Ramsey number of pairs of families of trees of similar orders. This partially answers a question of Erd\H os, F\"uredi, Loebl and S\'os~\cite{EFLS95}.
For a fixed real $p\in (0,\tfrac{1}{2})$ consider two natural numbers $\ell_1$
and $\ell_2$ such that \Referee{(4)}
\begin{equation}\label{Amalka}
n_0<\ell_1\le \ell_2<\tfrac{\ell_1}{p}\;,
\end{equation}
where $n_0=n_0(\tfrac{p}{2})$ comes from Theorem~\ref{thm_main}. Consider any red/blue edge-coloring of the
graph $K_{\ell_1+\ell_2}$. We color a vertex $v\in
V(K_{\ell_1+\ell_2})$ red if it incident with at least $\ell_1$ red
edges, and blue otherwise (in which case it is incident with at least $\ell_2$ blue edges).\RefereeX{(5)}{Slightly different wording} Thus at least
half of the vertices of $K_{\ell_1+\ell_2}$ have the same color. Applying Theorem~\ref{thm_main} to the graph whose edges are induced
by this color, we
conclude that $R(\mathcal{T}_{\ell_1+1},\mathcal{T}_{\ell_2+1})\le
\ell_1+\ell_2$.

For the lower bound, first consider the case when at least one of
$\ell_1$ and $\ell_2$ is odd.\RefereeX{(6)}{No, it is not needed to know which one. The fact that the red degree of vertices is $\ell_1-1$ is not connected to which of $\ell_1$ and $\ell_2$ is odd.} It is a well-known fact that there
exists a red/blue edge-coloring of $K_{\ell_1+\ell_2-1}$ such that
the red degree of every vertex is $\ell_1-1$. Neither a red copy of
$K_{1,\ell_1}$ nor a blue copy of $K_{1,\ell_2}$ is contained in
$K_{\ell_1+\ell_2-1}$ with this coloring. Thus
$R(\mathcal{T}_{\ell_1+1},\mathcal{T}_{\ell_2+1})> \ell_1+\ell_2-1$. A
construction in a similar spirit shows that
$R(\mathcal{T}_{\ell_1+1},\mathcal{T}_{\ell_2+1})> \ell_1+\ell_2-2$, if
both $\ell_1$ and $\ell_2$ are even.\Referee{(7)} Under the assumptions given by~\eqref{Amalka} we thus have
\begin{align}
R(\mathcal{T}_{\ell_1+1},\mathcal{T}_{\ell_2+1})&=\ell_1+\ell_2\; ,\quad\mbox{if
$\ell_1$ is odd or $\ell_2$ is odd, and}\\
\ell_1+\ell_2-1\le
R(\mathcal{T}_{\ell_1+1},\mathcal{T}_{\ell_2+1})&\le\ell_1+\ell_2\; ,\quad\mbox{
otherwise.} \label{eq_RamseyNotPrecise}
\end{align}
The ES~Conjecture, if true, shows \RefereeX{(8)}{Reformulated differently than suggested by the referee.} that the lower
bound in~\eqref{eq_RamseyNotPrecise} is attained.

Ramsey numbers of several other classes of trees have been investigated; the
reader is referred to a survey of Burr~\cite{Burr74} and to
newer results in~\cite{EFRS82,GHK79,HLT02}.

\section{Outline of the proof}\label{sec_outline}
We iterate the following procedure in steps $i=1,2,3,\ldots$. At the beginning of step $i$ we are given sets $V_1,\ldots, V_{i-1}$ that were obtained in previous steps. We then find a set $Q\subseteq V(G)\setminus \bigcup_{j< i}V_j$ such that at least about a half of the vertices in $Q$ are \emph{large} (i.\,e., of degree at least $k$). Furthermore, the set $Q$ is almost isolated from the rest of the graph. Using the Regularity Lemma, we try to embed $T$ in $Q$. If we do not succeed, then we can extract from $Q$ a subset $V_{i}\subseteq  Q$ of size approximately $k$ which is nearly isolated from the rest of the graph, and for which at least half of the vertices are large.
If we cannot embed $T$ in any of the iterating steps (i.\,e., $V(G)\setminus \bigcup_iV_i\cong\emptyset$), we obtain a particular configuration of the graph $G$, called the {\em Extremal Configuration}. The structure of $G$ is then very similar to that depicted in Figure~\ref{fig:ExtremalGraph}. In this case, we prove that $T\subseteq G$, without the use of the Regularity Lemma.

In the remainder of the overview, we explain in more detail the proof of the part using the Regularity Lemma, as well as the part when $G$ is in the Extremal configuration.

\paragraph{The Regularity Lemma Part.}
Before applying the Regularity Lemma,\Referee{(9)} we first resolve two simple cases. The first one is when $Q$ is close to a bipartite graph with one of its color classes being the large vertices (see Lemma~\ref{prop_SCHolds}). The second case (see Lemma~\ref{prop:UnbalancedCase}) is when the tree $T$ is locally unbalanced (see the definition on page~\pageref{p.:def-unbalanced}). In both cases easy arguments show that $T\subset G$.

In other cases we\Referee{(10)} use the Regularity Lemma on the graph $G$ and obtain a cluster graph $\mathbf{G}$.  We apply a matching lemma (Lemma~\ref{prop_TutteType}) to the subgraph induced by the clusters in $Q$. This lemma guarantees the existence of one of two certain matching structures in $\mathbf{G}$. Each of these structures exposes a matching $M$ in the cluster graph, and two clusters $A$ and $B$ that are adjacent in $\mathbf{G}$ and that have high average degree to the matching $M$. These structures are called Case~I and Case~II. The principle of the embedding is to use the edges of $M$ to embed parts of the tree~$T$ in them, and use the clusters $A$ and $B$ to connect these parts.

\paragraph{The Extremal Case Configuration.} In the Extremal case we are given disjoint sets $V_1,\ldots,V_i\subset V(G)$ such that each of them has size approximately $k$, contains at least nearly $\tfrac{k}{2}$ large vertices, and each set $V_j$ is almost isolated from the rest of the graph.

If the sets $V_1,\ldots,V_i$ exhaust the whole graph $G$, we are able to show
$T\subset G$ as follows. We find a set $V_{i_0}$ so that most of $T$ can be
embedded in $V_{i_0}$. We may need to use a\Referee{(11)} few edges that connect distinct sets $V_j$
and embed some part of $T$\Referee{(12)} outside $V_{i_0}$. The way of finding these
``bridges'' depends on the structure of the tree $T$.

If $V_1,\ldots, V_i$ do not exhaust $G$, the method remains the same. However, it has two possible outcomes. Either we show that $T\subset G$ or we are able to exhibit a set $Q\subseteq V\setminus \bigcup_{j< i}V_j$ with the properties as above allowing the next step of the iteration.

\paragraph{Strengthening of
Theorem~\ref{thm_main} --- Theorem~\ref{thm_mainstronger}.}  The only place where
we use the exact bound on the number of large vertices is the last step of the
Extremal case. That is, the whole vertex set $V(G)$ is decomposed into sets
$V_1,\ldots,V_s$, each of size approximately~$k$.\Referee{(13)} Assume now that $k\in(q_1n,q_2n)$. We have $n=|V_1|+|V_2|+\ldots+|V_s|\approx ks\in (q_1sn,q_2sn)$, yielding that the the interval $(q_1s,q_2 s)$ must contain~1 (or at least to be ``close to~1''). Thus the Extremal case cannot occur when $[\tfrac{1}{q_2},\tfrac{1}{q_1}]\cap
\mathbb{N}=\emptyset$. This suffices to prove Theorem~\ref{thm_mainstronger}.

\paragraph{Relation to previous work.}
The proof of Theorem~\ref{thm_main} is inspired by techniques used to
prove Theorem~\ref{thm_PiguetStein} (\cite{PS07+}) and Theorem~\ref{thm_Zhao}
(\cite{Z07+}). Both these papers build on a seminal paper of Ajtai, Koml\'os and
Szemer\'edi~\cite{AKS95} where an approximate version of the
$(\tfrac{n}{2}$--$\tfrac{n}{2}$--$\tfrac{n}{2})$~Conjecture was\Referee{(14)} proven. In~\cite{AKS95} the basic strategy is outlined. It is worth noting that even though~\cite{AKS95} addresses explicitly only the   $(\tfrac{n}{2}$--$\tfrac{n}{2}$--$\tfrac{n}{2})$~Conjecture the proof actually yields Theorem~\ref{thm_PiguetStein} in the regime $\tfrac{k}{n}\ge\tfrac{1}{2}$. As in the proof overview above, the key step is a certain matching lemma applied to the cluster graph of the host graph.

The key ingredient in~\cite{Z07+} was to identify --- using the approach of Ajtai, Koml\'os and Szemer\'edi combined with the Stability method of Simonovits~\cite{S68} --- one extremal case. This extremal case was analysed and resolved by ad-hoc methods.\Referee{(15)} 
The main contribution of~\cite{PS07+} is a more general
matching lemma, which is applicable even when $\tfrac{k}{n}<\tfrac{1}{2}$.
In this paper we further strengthen the matching lemma
from~\cite{PS07+}. The Extremal case is an extensive generalization of the
Extremal case from~\cite{Z07+}.

\paragraph{Algorithmic questions.}
Let us remark that our proof of Theorem~\ref{thm_main} yields a polynomial time algorithm for finding an embedding of each tree $T\in \mathcal{T}_{k+1}$ in $G$, given that $k$ and $G$ satisfy the conditions of Theorem~\ref{thm_main}. Indeed, all the existential results we use (Regularity Lemma, and various matching theorems) are known to have polynomial-time constructive algorithmic counterparts. We omit details. 
\section{Notation and preliminaries}\label{sec:preliminaries}
\renewcommand{\div}{\triangle}
For $n\in\mathbb{N}$ we write $[n]=\{1,2,\ldots,n\}$. 
The symbol
$\div$ means the symmetric difference of two sets.
The function $\ci: \mathbb{R}\rightarrow \mathbb{Z}$ is the {\em closest integer function} defined by $\ci(x)=\lfloor x\rfloor$ if $x-\lfloor x\rfloor<0.5$, and $\ci(x)=\lceil x\rceil$ otherwise.

We use standard graph theory terminology and notation, following Diestel's book~\cite{Die05}. We define here only symbols that are not used there. The order of a graph $H$ and the number of its edges are denoted by $v(H)$ and $e(H)$, respectively. For two vertex sets $X$ and $Y$ we write $E(X,Y)$ for the set of edges with one end-vertex in $X$ and the other in $Y$. We write $e(X,Y)=|E(X,Y)|$ (note that edges inside $X\cap Y$ get counted only once). When $X$ and $Y$ are disjoint, we write $H[X,Y]$ for the bipartite graph they induced. For a vertex $x$ and a vertex set $X$ we define $\deg(x,X)=\deg_X(x)=e(\{x\},X)$. For two sets $X,Y\subset V(H)$ we define the {\em average degree} from $X$ to $Y$ by $\wdeg_H(X,Y)=\tfrac{e(X,Y\setminus X)}{|X|}$. We write $\wdeg_H(X)$ as a short for $\wdeg_H(X,V(H))$. Let $X$ and~$Y$ are arbitrary (not necessarily disjoint vertex sets). We define two variants of the minimum degree: $\delta(X)=\min_{v\in X}\deg(v)$, and $\delta(X,Y)=\min_{v\in X}\deg(v,Y)$. In this case, we may write $H$ in the subscript (e.g.~$\delta_H(X)$)\Referee{(20)} to emphasize which graph we are dealing with. We denote by $\neighbor(x)$ the set of neighbors of the vertex $x$, by $\neighbor_X(x)$ the neighborhood of $x$ restricted to a set $X$, i.\,e., $\neighbor_X(x)=\neighbor(x)\cap X$, and by $\neighbor(X)$ the set of all vertices in $H$ which are adjacent to at least one vertex from $X$, i.\,e., $\neighbor(X)=\bigcup_{v\in X}\neighbor(v)$.

Let $P=v_1v_2\ldots v_\ell$ be a path. For arbitrary sets of
vertices $X_1, X_2, \ldots, X_\ell$ we say that $P$ is an {\em
$X_1\leftrightarrow X_2\leftrightarrow \ldots \leftrightarrow
X_\ell$-path} if $v_i\in X_i$ for every $i\in [\ell]$. An edge $xy$ is an $X\leftrightarrow Y$ edge if $x\in X$ and $y\in Y$ and a matching $M$ is an $X\leftrightarrow Y$ matching if its every edge is an $X\leftrightarrow Y$ edge.

A pair $(H,\omega)$ is a {\em weighted graph} if $H$ is a graph
and $\omega:E(H)\rightarrow (0,+\infty)$ is a weight function. For two sets $X,Y\subset V(H)$ the {\em weight of the edges crossing from $X$ to $Y$} is defined by $\omega(X,Y)=\sum_{xy\in E(X,Y)}\omega(xy)$.
Denote also by $\omega$ the weighted degree, $\omega(v)=\sum_{u\in V(H),
vu\in E(H)}\omega(vu)$. For a vertex $v$ and a vertex set $X$ we define
$\omega(v,X)$ analogously to $\deg(v,X)$.

We omit rounding symbols when this does not effect the correctness
of calculations.

\subsection{Trees} Let $T$ be a rooted tree with a root $r\in V(T)$. We define a partial
order
$\preceq$ on $V(T)$ by saying that $a\preceq b$ if and only if the
vertex $b$ lies on the (unique) path connecting $a$ with $r$.
If $a\preceq b$ and $a\neq b$ we say that~$a$ {\em is below} $b$. A vertex~$a$ is a
{\em child of} $b$ if $a\preceq b$ and $ab\in E(T)$. The vertex~$b$ is then the
{\em parent of} $a$. $\children(b)$ denotes the set of children of $b$. The parent of a vertex~$a$ is denoted $\parent(a)$ (note that $\parent(a)$ is undefined if
$a=r$). We extend the definitions of $\children(\cdot)$ and
$\parent(\cdot)$ to an arbitrary set $U\subset V(T)$ by
$\parent(U)=\bigcup_{u\in U}\parent(u)$ and
$\children(U)=\bigcup_{u\in U}\children(u)$.
We say that a tree
$T_1\subset T$ is {\em induced} by a vertex $x\in V(T)$ if
$V(T_1)=\{v\in V(T)\: :\: v \preceq x\}$ and we write
$T_1=T(r,\downarrow x)$, or if the root is obvious from the
context $T_1=T(\downarrow x)$.
Subtrees induced by a vertex are called {\em end subtrees}. Other subtrees are called {\em internal subtrees}.
A subtree~$T_0$ of $T$ is
a {\em full-subtree}, if there exists  a vertex $y\in V(T)$ and a set
$C\subset \children(y)$, $C\not= \emptyset$ such that
$T_0=T[\{y\}\cup\bigcup_{b\in C} \{v\: :\: v\preceq b\}]$. \emph{Internal} vertices are simply non-leaf vertices.

We will want to find a full-subtree in such a way that we have some control over its order or over its number of leaves. To this end we will use the following fact.
\begin{fact}[{\cite[Fact~7.9]{Z07+}}]\label{fact_fullsubtrees}
Let $(T,r)$ be a rooted tree of order $m$ with $\ell$ leaves.
\begin{enumerate}[label={$(\roman{*})$}]
\item\label{tesco1} For each integer $m_0$, $0< m_0\le m$, there exists a
full-subtree $T_0$ of $T$ of order $\tilde{m}\in[\tfrac{m_0}2,m_0]$.
\item\label{tesco2} For each integer $\ell_0$, $0< \ell_0\le \ell$, there exists a
full-subtree $T_0$ of $T$ with $\tilde{\ell}$ proper leaves (i.e.\ leaves of $T$), where
$\tilde{\ell}\in[\tfrac{\ell_0}2,\ell_0]$. 
\end{enumerate}
\end{fact}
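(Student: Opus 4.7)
The plan is to prove both parts in parallel by the same strategy: descend from the root $r$ to a carefully chosen vertex $y$, and then assemble $T_0$ from $y$ together with an appropriate subset $C$ of the end-subtrees rooted at its children.

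For part~\ref{tesco1}, set $w(v):=|V(T(\downarrow v))|$. Starting at $r$, where $w(r)=m\ge m_0$, I repeatedly move from the current vertex to any child $c$ with $w(c)\ge m_0$ and stop when no such child exists. This terminates at a vertex $y$ with $w(y)\ge m_0$ but $w(c)<m_0$ for every child $c$ of $y$ (in the only nontrivial regime $m_0\ge 2$, such a $y$ is automatically not a leaf and hence has at least one child). Listing the children of $y$ as $c_1,\ldots,c_d$ and writing $s_i:=w(c_i)$, I have $s_i<m_0$ for every $i$ while $1+\sum_i s_i=w(y)\ge m_0$.

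I then construct $C$ greedily. If some child satisfies $s_i\ge m_0/2-1$, I take $C=\{c_i\}$: since $s_i\le m_0-1$, the resulting full-subtree has order $1+s_i\in[m_0/2,m_0]$. Otherwise every $s_i<m_0/2-1$; I add the children to $C$ one at a time, tracking the partial sums $f_j:=1+s_1+\ldots+s_j$. These start strictly below $m_0/2$, end at $f_d\ge m_0$, and satisfy $f_j-f_{j-1}=s_j<m_0/2$. Therefore the smallest index $j$ with $f_j\ge m_0/2$ automatically satisfies $f_j<f_{j-1}+m_0/2<m_0$, placing $f_j\in[m_0/2,m_0]$ as required.

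Part~\ref{tesco2} follows by the same descent with $w(v)$ replaced by $\ell(v)$, the number of proper leaves of $T$ contained in $T(\downarrow v)$. Since $y$ has a child, it is not a proper leaf and contributes nothing to the leaf count, so the full-subtree on $C$ has exactly $\sum_{c_i\in C}\ell(c_i)$ proper leaves; the same case split (whether some $\ell(c_i)\ge\ell_0/2$ or all are smaller) together with the same greedy step-size estimate lands this total in $[\ell_0/2,\ell_0]$, with no ``$+1$'' to track. The only delicate point in the argument is the descent: without the property that every child of $y$ contributes strictly less than the target, a single end-subtree could leap over the interval $[m_0/2,m_0]$ (resp.\ $[\ell_0/2,\ell_0]$) in one step, breaking the greedy sum. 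Everything else is routine.
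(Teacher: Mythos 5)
The paper does not prove this fact; it is cited directly from Zhao~\cite{Z07+} as his Fact~7.9, so there is no in-paper argument to compare against. Your argument is correct and is surely the intended one: descend from the root to a vertex $y$ at which the target quantity still meets the threshold while every child's subtree falls strictly below it, then either take one heavy child or accumulate children greedily, using the fact that each increment is strictly less than half the target so the running sum cannot overshoot the interval. Your remark that part~\ref{tesco2} is cleaner because $y$ contributes nothing to the leaf count (so there is no ``$+1$'' in the initial value) is exactly the right observation. One boundary case is worth flagging: your assertion ``since $y$ has a child'' in part~\ref{tesco2} can fail when $\ell_0=1$, because the descent criterion $\ell(c)\ge 1$ is satisfied by every child and may therefore run all the way down to a proper leaf, leaving $C=\emptyset$; this is genuinely not covered by your argument (and the fact itself is degenerate there --- for $T$ a single edge no full-subtree has one proper leaf). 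However, $\ell(y)\ge\ell_0\ge 2$ forces $y$ to be a non-leaf with at least two children, so the argument is sound for all $\ell_0\ge 2$, which covers every use in the paper; the same observation disposes of $m_0=1$ in part~\ref{tesco1}, which is vacuous anyway since every full-subtree has at least two vertices, as you implicitly acknowledge.
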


For each tree $F$ we write $F_\oplus$ and $F_\ominus$ for the
vertices of its two color classes with $F_\oplus$ being the
larger one. We define the {\em gap} of the tree $F$ as
$\gap(F)=|F_\oplus|-|F_\ominus|$. For a tree $F$,
a partition of its vertices into sets $U_1$ and $U_2$ is called
{\em semi-independent}\Referee{(16)} if $|U_1|\le|U_2|$ and $U_2$ is an independent set.
Furthermore, the {\em discrepancy} of  $(U_1,U_2)$ is
$\disc(U_1,U_2)=|U_2|-|U_1|$ and the discrepancy of $F$ is defined as
$$\disc(F)=\max\{\disc(U_1,U_2)\: :\: \mbox{$(U_1,U_2)$ is semi-independent} \}
\;
\mbox{.}$$
Clearly, $\gap(F) \le \disc(F)$.

The next three facts relate discrepancy to other properties of trees.
\begin{fact}[{\cite[Fact~6.9]{Z07+}}]\label{fact_ManyLeavesInUnbalanced}
Let $(U_1,U_2)$ be a semi-independent partition of a tree $T$ of order\Referee{(17)}
$v(T)>1$. Then $U_2$ contains at least $|U_2|-|U_1|+1$ leaves.
\end{fact}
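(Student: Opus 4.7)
The plan is a short double-counting argument that uses only the independence of $U_2$ together with the fact that $T$ has $v(T)-1$ edges; the internal structure of $T[U_1]$ does not need to be exploited.

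Let $L$ denote the number of leaves of $T$ contained in $U_2$, and write $e_{12}$ for the number of edges of $T$ between $U_1$ and $U_2$. Because $U_2$ is independent, every edge of $T$ either lies inside $U_1$ or crosses between $U_1$ and $U_2$; in particular $e_{12} \le e(T) = |U_1|+|U_2|-1$. On the other hand, the independence of $U_2$ also ensures that each edge incident to a vertex of $U_2$ contributes exactly once to the degree sum $\sum_{v\in U_2}\deg_T(v)$, so this sum equals $e_{12}$.

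Since $v(T)>1$, every leaf of $T$ lying in $U_2$ has degree exactly $1$ and every non-leaf in $U_2$ has degree at least $2$; hence
\[
e_{12} \;=\; \sum_{v\in U_2}\deg_T(v) \;\ge\; L + 2(|U_2|-L) \;=\; 2|U_2|-L.
\]
Combining with the earlier bound gives $2|U_2|-L \le |U_1|+|U_2|-1$, which rearranges to $L \ge |U_2|-|U_1|+1$, as claimed. There is no real obstacle here: the whole argument is a single chain of inequalities, and the only minor point is confirming that non-leaves in $U_2$ really have degree at least $2$, which is immediate from $v(T)>1$.
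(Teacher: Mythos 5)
Your proof is correct. The paper does not supply its own argument for this fact --- it is quoted directly as Fact~6.9 from Zhao's paper \cite{Z07+} --- so there is nothing internal to compare against, but your double-counting via $\sum_{v\in U_2}\deg_T(v)=e_{12}\le e(T)=|U_1|+|U_2|-1$ together with the degree dichotomy (leaves in $U_2$ contribute $1$, non-leaves at least $2$, and $v(T)>1$ rules out isolated vertices) is exactly the standard, tight proof of this inequality, and every step is justified.
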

\begin{fact}\label{fact_cutnodiscrepancy}
Let $r$ be a vertex of a tree $F$, and let $(U_1,U_2)$ be any semi-independent partition of $F$. Let $\mathcal{K}$ be a subset of the components of the forest $F-\{r\}$ and let~$V(\mathcal{K})$ denote all the vertices contained in the components of~$\mathcal K$. Then
\begin{enumerate}[label={$(\roman{*})$}]
\item\label{sai1} $\left||V(\mathcal{K})\cap F_\oplus|-|V(\mathcal{K})\cap F_\ominus|\right|\le\disc(F)+1$, and
\item\label{sai2} $|V(\mathcal{K})\cap U_2|-|V(\mathcal{K})\cap U_1|\le\disc(F)+1$.
\end{enumerate}
\end{fact}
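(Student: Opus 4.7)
The plan is to reduce both parts to a single auxiliary construction. Let $\mathcal{T}$ denote the set of all components of $F-\{r\}$, so that $\mathcal{K}\subseteq\mathcal{T}$, and for each $T\in\mathcal{T}$ let $C_T\subseteq V(T)$ be one of the two color classes of the tree $T$ of maximum size. For any set $S\subseteq V(\mathcal{K})$ that is independent in $F$, I would introduce the partition
\[
\hat U_2 \;:=\; S \cup \bigcup_{T \in \mathcal{T}\setminus \mathcal{K}} C_T,\qquad \hat U_1 \;:=\; V(F)\setminus \hat U_2,
\]
and observe that $r\in\hat U_1$. The first step is to check that $\hat U_2$ is an independent set of $F$: inside each $T\in\mathcal{K}$ only $S\cap T$ lies in $\hat U_2$, and this is independent since $S$ is; inside each $T\in\mathcal{T}\setminus\mathcal{K}$ only the color class $C_T$ lies in $\hat U_2$; distinct components of $F-\{r\}$ share no edge of $F$; and the only $F$-edges incident to $r$ run between $r$ and its unique neighbor in each component of $\mathcal{T}$, which is harmless because $r\in\hat U_1$.

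The second step is a direct count:
\[
|\hat U_2|-|\hat U_1| \;=\; \bigl(|S|-|V(\mathcal{K})\setminus S|\bigr) \;+\; \sum_{T\in\mathcal{T}\setminus\mathcal{K}} \bigl\lvert|T\cap F_\oplus|-|T\cap F_\ominus|\bigr\rvert \;-\; 1.
\]
If this quantity is non-negative then $(\hat U_1,\hat U_2)$ is a semi-independent partition of $F$, so the left-hand side is at most $\disc(F)$; dropping the non-negative sum over $T$ yields the key inequality $|S|-|V(\mathcal{K})\setminus S|\le\disc(F)+1$. If instead $|\hat U_2|<|\hat U_1|$, then the integer $|S|-|V(\mathcal{K})\setminus S|$ is at most $0$, hence again at most $\disc(F)+1$.

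With this lemma in hand, part~\ref{sai2} follows by taking $S:=V(\mathcal{K})\cap U_2$, which is independent as a subset of the independent set $U_2$. For part~\ref{sai1} I would apply the same inequality twice, once with $S:=V(\mathcal{K})\cap F_\oplus$ and once with $S:=V(\mathcal{K})\cap F_\ominus$, and combine the two one-sided bounds into the required absolute-value bound. The main obstacle I anticipate is purely bookkeeping around the vertex $r$ in the independence check: placing $r$ in $\hat U_1$ is the trick that lets us ignore the only edges of $F$ that cross between $V(\mathcal{K})$ and its complement, and once this is arranged the remaining computation is routine.
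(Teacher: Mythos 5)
Your proof is correct and takes essentially the same route as the paper: the construction $\hat U_2=S\cup\bigcup_{T\notin\mathcal{K}}C_T$ with $r$ placed in $\hat U_1$ is exactly the partition $(W_1,W_2)$ the paper uses (for \ref{sai1} with $S=V(\mathcal{K})\cap F_a$, and for \ref{sai2} with $S=V(\mathcal{K})\cap U_2$). The only difference is presentational — you parametrize by an arbitrary independent $S\subseteq V(\mathcal{K})$ and so obtain both parts from one lemma, and you explicitly dispatch the case $|\hat U_2|<|\hat U_1|$, which the paper handles implicitly by assuming the relevant difference is positive.
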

\begin{proof} We focus first on~\ref{sai1}.
The statement is obvious when $|V(\mathcal{K})\cap F_\oplus|-|V(\mathcal{K})\cap F_\ominus|=0$. Suppose that $|V(\mathcal{K})\cap F_a|-|V(\mathcal{K})\cap F_b|=\ell> 0$, where $a,b\in\{\oplus,\ominus\}$, $a\not =b$ is a choice of the color classes.
It is enough to exhibit a semi-independent partition $(W_1,W_2)$ of the tree $F$ with $|W_2|-|W_1|\ge \ell-1$. Partition the components of the forest $F-\{r\}$ that are not included in $\mathcal{K}$ into two families $\mathcal{A}$ and $\mathcal{B}$ so that $\mathcal{A}$ contains those components $K\not \in\mathcal{K}$ for which $|V(K)\cap F_a|\ge|V(K)\cap F_b|$.\Referee{(18)} Then the partition below satisfies the requirements.
\begin{align*}
W_1&=\{r\}\cup (V(\mathcal{K})\cap F_b)\cup (V(\mathcal{A})\cap F_b)\cup(V(\mathcal{B})\cap F_a) \;\mbox{,}\\
W_2&=(V(\mathcal{K})\cap F_a)\cup (V(\mathcal{A})\cap F_a)\cup(V(\mathcal{B})\cap F_b) \;\mbox{.}
\end{align*}

The proof of~\ref{sai2} is similar, and we only sketch it. Again, we shall exhibit a
semi-independent partition~$(W_1,W_2)$ with $|W_2|-|W_1|\ge |V(\mathcal{K})\cap
U_2|-|V(\mathcal{K})\cap U_1|-1$. We put~$r$ into~$W_1$. On
the components of $\mathcal K$ the partition into $W_1$ and $W_2$ is inherited
from the partition $(U_1,U_2)$. Every component $K\not\in\mathcal K$ of $F-\{r\}$ is 
partitioned so that $W_2$ gets the majority color class of~$K$.
\end{proof}
\begin{fact}\label{fact:leavesB}
Suppose that $T$ is a tree with $\disc(T)\le \ell$. Let $V(T)=U_1\dcup U_2$ be a partition such that $U_2$ is independent. Then for the set $X$ of the leaves in $U_1$ that have another leaf-sibling in~$U_1$ we have $|X|\le \ell+|U_1|-|U_2|$.
\end{fact}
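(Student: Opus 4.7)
The plan is to exhibit a semi-independent partition $(W_1, W_2)$ of $T$ with $|W_2| - |W_1| \ge |X| + |U_2| - |U_1|$; the inequality $\disc(T) \le \ell$ then yields the claim. The intuition is that we want to push every leaf of $X$ into the independent side $W_2$ in order to widen the gap, but pushing a leaf into $W_2$ is legal only if its (unique) neighbor, i.e.\ its parent in $T$, is not simultaneously placed in $W_2$.

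Concretely, I would first identify the problematic parents, i.e.\ let $B=\{p \in U_2 : p \text{ has at least two leaf-children in } U_1\}$, and then perform the swap
\[
W_1 = (U_1 \setminus X) \cup B \;\mbox{,} \qquad W_2 = (U_2 \setminus B) \cup X \;\mbox{.}
\]
The first thing to verify is that $W_2$ is independent. This relies on the fact that every vertex in $X$ is a leaf whose unique neighbor (its parent) now lies in $W_1$, either because it was already in $U_1 \setminus X$ or because it was in $U_2$ and hence in $B$; combined with the observation that two leaves cannot be adjacent in $T$ unless $v(T)=2$, a case in which $X=\emptyset$ and the statement is trivial. A direct count then gives $|W_2|-|W_1| = (|U_2|-|U_1|) + 2(|X|-|B|)$.

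Since each $p \in B$ has, by the independence of $U_2$, all its children in $U_1$, and at least two of them are leaves by the very definition of $B$, we obtain $|X| \ge 2|B|$, and therefore $|W_2|-|W_1| \ge |X| + |U_2| - |U_1|$. If this quantity is positive then $(W_1, W_2)$ is a semi-independent partition witnessing $\disc(T) \ge |X|+|U_2|-|U_1|$, which finishes the proof; otherwise $|X| \le |U_1|-|U_2|$ directly, and one concludes using $\ell \ge \disc(T) \ge \gap(T) \ge 0$. The one step where I expect a small pitfall is the independence check for $W_2$, since one must carefully track how the parents of the leaves in $X$ are repositioned by the swap and confirm that no surviving edge of $T$ has both endpoints in $W_2$.
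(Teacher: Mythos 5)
Your proof is correct and follows the same route as the paper's. The paper moves $\parent(X)$ (which may intersect $U_1$) into the first class and $X$ into the second; since the vertices of $\parent(X)\cap U_1$ are already in $U_1\setminus X$, this produces exactly your partition, because $B=\parent(X)\cap U_2$. Your bound $|X|\ge 2|B|$ is the analogue of the paper's $|X|\ge 2|\parent(X)|$, and the discrepancy count is the same. The only real difference is expository: the paper argues by contradiction from $|X|>\ell+|U_1|-|U_2|$, which makes $|W_2|>|W_1|$ automatic and avoids the side case you handle separately; your direct argument with the degenerate case $|W_2|\le |W_1|$ is equally valid.
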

\begin{proof}
We have $|X|\ge 2|\parent(X)|$. Thus, if $|X|>\ell+|U_1|-|U_2|$, we consider the partition $$\big((U_1\setminus X)\cup \parent(X)  \;,\; (U_2\setminus \parent(X))\cup X \big)\;.$$
Even\Referee{(19)} though we do not necessarily have $\parent(X)\subset U_2$ this is semi-independent partition of discrepancy at least $|U_2|-|U_1|+2(|X|-|\parent(X)|)>\ell$, a contradiction.
\end{proof}

\subsection{Greedy embeddings} Given a tree $T$ and a graph $H$ there are
several situations when
one can embed $T$ in $H$ greedily. 
The simplest such setting is given in Fact~\ref{fact:greedyBUDA}.
An analogous procedure works if $H$ is bipartite, $H=(V_1,V_2;E)$, and
$ \delta(V_1,V_2)\ge |T_\oplus|, \delta(V_2,V_1)\ge
|T_\ominus|$. The facts stated below generalize the greedy procedure. 
\begin{fact}[{\cite[Fact~7.2(2)]{Z07+}}]\label{fact_embeddingsemiindependent}
Let $(U_1,U_2)$ be a semi-independent partition of a tree $T$. If
there are two disjoint sets of vertices $V_1$ and $V_2$ of a graph $H$
such that
$\min\{\delta(V_1,V_2),\delta(V_1,V_1),\delta(V_2,V_1)\}\ge |U_1|$
and $\delta(V_1)\ge v(T)-1$, then $T\subset H$.
\end{fact}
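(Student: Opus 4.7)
The plan is a greedy BFS embedding that routes $U_1$ into $V_1$ and $U_2$ preferentially into $V_2$. I root $T$ at an arbitrary vertex $r\in U_1$ (such a vertex exists because $U_2$ is independent, so every edge of $T$ has an endpoint in $U_1$; the case $v(T)=1$ is trivial). Order $V(T)$ in BFS as $v_1=r,v_2,\ldots,v_m$, so each $v_i$ with $i\ge 2$ has its parent $p=\parent(v_i)$ among the earlier vertices. Place $\phi(r)$ at any vertex of $V_1$, and extend $\phi$ inductively by choosing $\phi(v_i)$ among the unused neighbors of $\phi(p)$: if $v_i\in U_1$, pick $\phi(v_i)\in V_1$; if $v_i\in U_2$, first try $\phi(v_i)\in V_2$, and if no $V_2$-candidate remains, fall back to an arbitrary unused neighbor of $\phi(p)$ in $H$.

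The invariant $\phi(U_1)\subseteq V_1$ and $\phi(U_2)\cap V_1=\emptyset$ makes the $U_1$-step feasible: the parent image $\phi(p)$ lies in $V_1$ (if $p\in U_1$) or in $V_2$ (if $p\in U_2$), hence has $\ge|U_1|$ neighbors in $V_1$ via $\delta(V_1,V_1)\ge|U_1|$ or $\delta(V_2,V_1)\ge|U_1|$, while at most $|U_1|-1$ of them are occupied by earlier $U_1$-placements. For the $U_2$-step, where $\phi(p)\in V_1$, the bound $\delta(V_1,V_2)\ge|U_1|$ gives $|U_1|$ candidates in $V_2$; if any is unused we take it, otherwise the high-degree condition $\delta(V_1)\ge v(T)-1$ certifies at least one unused neighbor of $\phi(p)$ in $H$, since at most $i-2\le v(T)-2$ vertices other than $\phi(p)$ itself have been embedded so far.

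The main obstacle is preserving the invariant $\phi(U_2)\cap V_1=\emptyset$ throughout, because a $U_2$-vertex dropped into $V_1$ by the fallback would consume a reserved slot and might block a later $U_1$-placement. I plan to show by a careful count that the fallback can always select its unused neighbor in $V(H)\setminus V_1$: when all $|U_1|$ neighbors of $\phi(p)$ in $V_2$ are occupied, and writing $a$ and $b$ for the numbers of already-embedded $U_1$- and $U_2$-vertices, the previously-embedded $U_2$-vertices placed outside $V_1$ number at most $b$, while $\phi(p)$ has at least $|U_1|$ neighbors in $V_2$ plus any additional neighbors outside $V_1\cup V_2$ supplied by the gap between $\delta(V_1)\ge v(T)-1$ and $\deg(\phi(p),V_1\cup V_2)$; a case analysis then shows an unused vertex outside $V_1$ remains available. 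This accounting, in which all three degree hypotheses of the fact simultaneously come into play, is the only genuinely subtle step of the argument.
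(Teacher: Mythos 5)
Your argument has two linked gaps, both stemming from the fallback step for $U_2$-vertices. First, the invariant you maintain is $\phi(U_1)\subseteq V_1$ and $\phi(U_2)\cap V_1=\emptyset$, yet in justifying the $U_1$-step you assert that if $p\in U_2$ then $\phi(p)\in V_2$. That does not follow from the invariant: the fallback may have placed $\phi(p)$ outside $V_1\cup V_2$, and for such a vertex none of the hypotheses gives any lower bound on $\deg_H(\phi(p),V_1)$, so the supply of $V_1$-candidates for $v_i$ evaporates. Thus if the fallback ever routes a \emph{non-leaf} $U_2$-vertex outside $V_1\cup V_2$, the embedding of its $U_1$-children can fail outright. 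Second, the ``careful count'' you propose --- that the fallback can always land in $V(H)\setminus V_1$ --- is false. It is perfectly consistent with the four degree hypotheses that some $\phi(q)\in V_1$ has exactly $|U_1|$ neighbours in $V_2$ and no neighbours outside $V_1\cup V_2$ at all (for instance, take $V_1$ a clique of size $\ge v(T)$, $|V_2|=|U_1|$, all $V_1$--$V_2$ edges present, and no further vertices in $H$). Once those $|U_1|$ $V_2$-neighbours are occupied, every remaining unused neighbour of $\phi(q)$ lies in $V_1$, so the fallback is forced into $V_1$; this breaks the invariant and can in turn block a later non-leaf $U_1$-vertex whose parent's $V_1$-neighbourhood has size exactly $|U_1|$.

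The standard repair decouples the two degree hypotheses by postponing the $U_2$-leaves. Let $L$ be the set of leaves of $T$ in $U_2$; by Fact~\ref{fact_ManyLeavesInUnbalanced} we have $|U_2\setminus L|\le|U_1|-1$. In Stage~1 embed $T-L$ in any root-down order with $U_1\mapsto V_1$ and $U_2\setminus L\mapsto V_2$: a vertex destined for $V_1$ has its parent in $V_1$ or $V_2$, hence at least $|U_1|$ guaranteed $V_1$-neighbours, of which at most $|U_1|-1$ are occupied; a vertex destined for $V_2$ has its parent in $V_1$ (since $U_2$ is independent), hence at least $|U_1|$ guaranteed $V_2$-neighbours, of which at most $|U_2\setminus L|-1\le|U_1|-2$ are occupied. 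No fallback is ever needed. In Stage~2, attach the leaves of $L$ one at a time anywhere in $H$: each parent lies in $V_1$ and has $H$-degree at least $v(T)-1$, while at most $v(T)-2$ vertices other than the parent itself are already used. This isolates the role of $\delta(V_1)\ge v(T)-1$ to exactly the $U_2$-leaves, which is what makes postponing them safe.
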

\begin{fact}[{\cite[Fact~7.2(1)]{Z07+}}]\label{fact:easyEmbedding1}
Suppose that $H$ is a graph with a bipartite subgraph $K=(W_1,W_2;J)$. If $\delta(K)>\tfrac\ell2$ and $\delta_H(W_1)\ge \ell$ then $\mathcal{T}_{\ell+1}\subset H$.
\end{fact}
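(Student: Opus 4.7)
The plan is to decompose $T$ into its internal subtree $T^*=T[I]$ (where $I$ denotes the non-leaves) and its leaves $L$, and to embed these pieces in three phases. Writing $C_1,C_2$ for the color classes of $T$ with $|C_1|\le|C_2|$, let $A=I\cap C_1$, $B=I\cap C_2$, $L_1=L\cap C_1$, $L_2=L\cap C_2$. The target is an embedding that sends $A$ to $W_1$ and $B$ to $W_2$ so that $T^*$ is realised bipartitely inside $K$, then places the $L_1$-leaves into $W_1$ as further $K$-neighbors of their $B$-parents' images, and finally places the $L_2$-leaves as arbitrary $H$-neighbors of their $A$-parents' images in $W_1$.

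The crucial structural step is the bound $|A|,|B|\le\lfloor\ell/2\rfloor$. Since every vertex of $B$ is internal in $T$, its tree-degree is at least $2$, so the number of $T$-edges incident to $B$ is at least $2|B|$. These edges split into the $|A|+|B|-1$ edges of the tree $T^*$ (each with exactly one endpoint in $B$, as $T^*$ is bipartite with parts $A,B$) together with the $|L_1|$ edges from $B$ to its $L_1$-leaves; hence $|A|+|L_1|\ge|B|+1$. Combining this with the total count $|A|+|B|+|L_1|+|L_2|=\ell+1$ yields $2|B|\le\ell$, and a symmetric argument bounds $|A|$. Since $\delta(K)>\ell/2$ is the same as $\delta(K)\ge\lfloor\ell/2\rfloor+1$, I obtain $|A|,|B|<\delta(K)$ and $|C_1|=|A|+|L_1|\le\lfloor(\ell+1)/2\rfloor\le\delta(K)$.

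These inequalities drive the three greedy phases. In Phase~1 I embed $T^*$ in BFS order from a root in $A$, sending each new vertex to an unused $K$-neighbor of its parent's image; at all times the number of used $W_1$- (resp.\ $W_2$-) vertices is at most $|A|-1$ (resp.\ $|B|-1$), strictly less than $\delta(K)$, which guarantees a free $K$-neighbor. In Phase~2 I embed each $L_1$-leaf as an unused $K$-neighbor in $W_1$ of its $B$-parent's image; throughout this phase at most $|A|+|L_1|-1=|C_1|-1<\delta(K)$ vertices of $W_1$ have been used, so a free $K$-neighbor still remains. In Phase~3 I embed each $L_2$-leaf at an unused $H$-neighbor of its $A$-parent's image $w\in W_1$; since $\deg_H(w)\ge\ell$ exceeds the number of previously used vertices distinct from $w$, such a neighbor exists.

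The main obstacle is the structural inequality $|B|\le\lfloor\ell/2\rfloor$: without it, Phase~1 could not fit both sides of $T^*$ inside a single vertex's $K$-neighborhood, and the whole scheme collapses. The degenerate cases $|I|\le 1$ (when $T$ is $K_1$, $K_2$, or a star) force $B=\emptyset$, so Phases~1 and 2 either become trivial or vanish, and the embedding reduces to a direct use of $\delta_H(W_1)\ge\ell$ (together with $\delta(K)\ge 1$ to take care of $K_2$).
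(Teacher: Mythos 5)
The paper only cites this fact from Zhao~\cite{Z07+} and supplies no proof of its own, so there is nothing in this manuscript to compare against; your argument is correct and is the natural proof. The decomposition into the internal tree $T^*=T[I]$ and the leaves, the bipartite greedy embedding of $T^*$ with $A=I\cap C_1\to W_1$, $B=I\cap C_2\to W_2$, followed by attaching $L_1$-leaves via $K$-neighbours in $W_1$ and $L_2$-leaves via $H$-neighbours of $W_1$, is exactly the right scheme. The key degree count for $|A|,|B|\le\lfloor\ell/2\rfloor$ (using $\deg_T(v)\ge 2$ on $B$, that $T^*$ has $|A|+|B|-1$ edges each meeting $B$, and the $|L_1|$ pendant edges from $B$) is correct, the chain $|C_1|\le\lfloor(\ell+1)/2\rfloor\le\delta(K)$ together with $|B|<\delta(K)$ makes every greedy step in Phases~1 and 2 succeed, and the count of at most $\ell-1$ occupied vertices distinct from the parent in Phase~3 is also right. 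I also checked the degenerate cases $T\in\{K_1,K_2,K_{1,m}\}$, where $B=\emptyset$ so the root lies in $A$ and the argument collapses to the trivial greedy step; the proof is complete.
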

\begin{fact}\label{fact:easyemb2}
Suppose $H'\subset H$ are two graphs. If $\delta(H')\ge x$ and $\delta_H(V(H'))\ge\ell$, then $F\subset H$ for each tree $F\in\mathcal T_{\ell+1}$ with at least $\ell-x$ leaves.
\end{fact}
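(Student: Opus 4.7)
The plan is a two-phase greedy embedding of $F$ into $H$. Let $s$ denote the number of non-leaf vertices of $F$; since $v(F)=\ell+1$ and $F$ has at least $\ell-x$ leaves, we have $s\le x+1$. A quick observation I will use is that the subgraph $S$ of $F$ induced by the non-leaves is itself a tree: the unique path in $F$ between any two non-leaves only passes through vertices of degree $\ge 2$ in $F$, which are themselves non-leaves.

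In the first phase I embed $S$ into $H'$ greedily. I pick any vertex of $S$ as its root, map it to an arbitrary vertex of $H'$, and process the remaining vertices of $S$ top-down. When the $i$-th vertex is to be placed ($i\ge 2$), its already-embedded parent image has at least $\delta(H')\ge x$ neighbors in $H'$; of those at most $i-2\le s-2\le x-1$ can be blocked by previously used images, so at least one free neighbor remains. Thus $S$ is embedded with all images in $V(H')$.

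In the second phase I attach the leaves one by one. Each leaf $w$ of $F$ has a non-leaf parent whose image $u\in V(H')$ has $\deg_H(u)\ge\ell$ by hypothesis. When embedding the $i$-th leaf, at most $s+i-2$ other vertices are in use, so $u$ has at least $\ell-(s+i-2)$ free $H$-neighbors; since the total number of leaves is $(\ell+1)-s$, the worst case is $i=\ell-s+1$, which still leaves at least one free neighbor. The degenerate case of $F$ having no non-leaves occurs only when $F\cong K_2$, and is handled directly by $\delta_H(V(H'))\ge 1$. There is no genuine obstacle: the argument is careful bookkeeping of the greedy degree budget, and the two hypotheses $\delta(H')\ge x$ and $\delta_H(V(H'))\ge \ell$ are calibrated precisely to the two phases—the only point to watch is that the non-leaves of $F$ form a connected subtree, so that the first phase is a real tree embedding rather than a forest embedding.
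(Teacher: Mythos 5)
Your proof is correct and follows essentially the same two-phase strategy as the paper: first greedily embed the internal (non-leaf) subtree of $F$ into $H'$ using $\delta(H')\ge x$, then extend to the leaves using $\delta_H(V(H'))\ge\ell$. The paper gives only a two-line sketch citing Fact~\ref{fact:greedyBUDA}; your version spells out the degree budget and the (correct) observation that the non-leaves induce a connected subtree, which the paper leaves implicit.
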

\begin{proof}
We first embed the internal vertices of $F$ in $H'$ using the greedy procedure from Fact~\ref{fact:greedyBUDA}. We can then extend this embedding using the high degrees of $V(H')$. 
\end{proof}

The next lemma allows us to embed a tree $T$ into a graph containing a bipartite subgraph $H$ which can \emph{almost} accomodate~$T$. So, additional connecting structures $\mathcal M$, $\mathcal E$ that will allow to divert small parts of $T$ elsewhere are introduced. The main structures assumed in the lemma are shown in~Figure~\ref{fig_prop38}.\Referee{(21)}
\begin{figure}[ht]
  \centering
\includegraphics[scale=0.7]{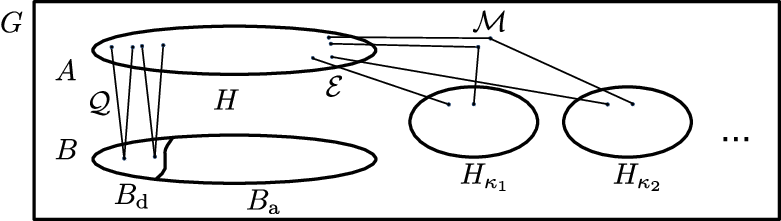}
  \caption{The situation in Lemma~\ref{prop_SE_embeddingwithfewleaves}. Most of the set $T_\ominus$ is embedded in $A$, most of the set $T_\oplus$ will be embedded in $B$. The connections $\mathcal E$ and $\mathcal M$ are used to divert parts of $T$ to the graphs $H_\kappa$.}
  \label{fig_prop38}
\end{figure}
\begin{lemma}\label{prop_SE_embeddingwithfewleaves}
Suppose that $\alpha\in(0,\frac1{10})$ is arbitrary. For each tree $T\in \mathcal{T}_{k+1}$ with less than $\alpha k$ leaves the following
holds. Suppose that a bipartite graph $H=(A,B;E)$ and graphs $\{H_\kappa\}_{\kappa\in I}$ (where $I$ is arbitrary) are pairwise vertex-disjoint subgraphs of a graph~$G$ on vertex set~$V$.\RefereeX{(22)}{No, the distinction between the graphs $H$ and $G$ was correct. The text was modified to make it clearer, and hopefully the new picture helps as well} Suppose that
the following properties are fulfilled.
\begin{enumerate}[label={$(\roman{*})$}]
\item $\delta(H_\kappa)>34\alpha k$ for each $\kappa\in I$.\RefereeX{!}{The original condition was $\delta(H_\kappa)>25\alpha k$ but when revising the proof we saw that this needs to be strengthened. Now changes in later applications of the lemma were required (as this condition is always satisfied with a lot of space).}
\item\label{it:XUN} $\delta_G(A)\ge k$.
\item There exists an $A\leftrightarrow (\bigcup_\kappa(V(H_\kappa)))$-matching
$\mathcal{E}$, and a family $\mathcal{M}$ of pairwise vertex-disjoint
$A\leftrightarrow(V\setminus V(H))\leftrightarrow(\bigcup_\kappa
V(H_\kappa))$ paths. Moreover, $V(\mathcal{E})\cap
V(\mathcal{M})=\emptyset$.
\item \label{it:sEM}$|\mathcal{E}|+|\mathcal{M}|<\alpha k$.
\item \label{it:sA}$|A|+|\mathcal{E}|\ge |T_\ominus|$.
\item \label{it:BEM}$|B|+|\mathcal{E}|+|\mathcal{M}|\ge |T_\oplus|-1$.
\item\label{it:delAB} $\delta(A,B)\ge |B|-\alpha k$.
\item The set $B$ has a decomposition $B=B_\mathrm{a}\dcup B_\mathrm{d}$,
$|B_\mathrm{d}|\le \alpha k$,
$\delta(B_\mathrm{a},A)\ge |A|-\alpha k$, and there exists a family
$\mathcal{Q}$ of $|B_\mathrm{d}|$ pairwise vertex-disjoint
$A\leftrightarrow B_\mathrm{d}\leftrightarrow A$ paths. Moreover,
$V(\mathcal{Q})\cap (V(\mathcal{E})\cup
V(\mathcal{M}))=\emptyset$.
\end{enumerate}
Then, $T\subset G$.
\end{lemma}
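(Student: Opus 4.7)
The plan is to embed $T$ essentially as a bipartite greedy embedding with $T_\ominus\to A$ and $T_\oplus\to B$, pushing a small overflow through the bridges $\mathcal E$ and $\mathcal M$ into the auxiliary graphs $H_\kappa$, and using the paths $\mathcal Q$ to bypass the defective vertices $B_\mathrm d$.

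First I fix a root of $T$ and compute the color-class deficit from $(v)$--$(vi)$: at most $|\mathcal E|$ vertices of $T_\ominus$ and at most $|\mathcal E|+|\mathcal M|+1$ vertices of $T_\oplus$ must leave $A\cup B$. Iterating Fact~\ref{fact_fullsubtrees} on subtrees at vertices of largest depth, I peel off end-subtrees $S_1,\dots,S_t$, each of size at most $34\alpha k$, whose collective color-class sums match these deficits with an additional slack of $\alpha k$ on each side (reserved for the ``endgame'' argument below). Each $S_j$ rooted in $T_\oplus$ is attached to an $\mathcal E$-bridge $a\leftrightarrow h$: the parent of the root of $S_j$ is placed at $a\in A$, the root at $h\in V(H_\kappa)$, and the rest of $S_j$ is embedded inside $H_\kappa$ greedily via Fact~\ref{fact:greedyBUDA}, which is valid because $\delta(H_\kappa)>34\alpha k\ge |S_j|$. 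Each $S_j$ rooted in $T_\ominus$ is attached analogously to an $\mathcal M$-path $a\leftrightarrow x\leftrightarrow h$, with the grandparent at $a\in A$, the parent at $x\in V\setminus V(H)$, and the root at $h\in V(H_\kappa)$.

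Next I reserve the $\mathcal Q$-paths. Since $T$ has fewer than $\alpha k$ leaves, the residual tree $T'=T-\bigcup_j V(S_j)$ still contains many internal ``carrier triples'' $u\leftrightarrow v\leftrightarrow w$ in which $v\in T_\oplus$ has $T'$-degree $2$ and $u,w\in T_\ominus$; I pick $|B_\mathrm d|$ such triples and earmark each for a distinct path $a_1 b a_2\in\mathcal Q$, committing $v\to b\in B_\mathrm d$, $u\to a_1$, $w\to a_2$. Then I embed the remainder of $T'$ into $A\cup B_\mathrm a$ by a standard bipartite greedy procedure, traversing $T'$ in a DFS order consistent with the already reserved images. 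At each step, the next tree vertex's parent-image is already placed, and by $(vii)$ together with $\delta(B_\mathrm a,A)\ge|A|-\alpha k$ the parent has at least $|B|-\alpha k$ (respectively $|A|-\alpha k$) neighbors on the target side; the $\alpha k$ slack built into the decomposition guarantees that one of them is still free, using $\alpha<\tfrac1{10}$.

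The main obstacle will be coordinating the decomposition so that all the simultaneous constraints are met: each $S_j$ must have the correct color of its root ($T_\oplus$ for $\mathcal E$-bridges, $T_\ominus$ for $\mathcal M$-paths), must fit inside an auxiliary graph, and collectively the $S_j$'s must absorb precisely the prescribed color-class overflow. Subtrees have roughly balanced color distributions, yet the two color-class deficits need not be in that ratio. I expect to handle this by combining both parts of Fact~\ref{fact_fullsubtrees}: part $(i)$ controls total size, while part $(ii)$ controls leaf counts and hence, via the few-leaves hypothesis on $T$, the color imbalance of the peeled subtrees. The ``$-1$'' slack in $(vi)$ supplies the extra degree of freedom needed to clear any parity obstructions.
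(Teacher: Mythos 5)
Your high-level plan---divert a small end-piece of $T$ over the bridges $\mathcal E$ and $\mathcal M$ into the graphs $H_\kappa$, and route a short detour through each $\mathcal Q$-path to cover $B_\mathrm d$---matches the intent of the paper's proof. But as written there is a genuine gap in the order of operations. You ``commit'' $u\to a_1$, $v\to b$, $w\to a_2$ and ``the parent of the root of $S_j$ to $a\in A$'' \emph{before} running the greedy embedding, and then run a ``DFS order consistent with the already reserved images.'' This is not something a greedy embedding can be assumed to honour: by the time the DFS reaches the vertex just above a reserved $u$ (or above the parent of the root of $S_j$), its image has already been chosen, and conditions~\ref{it:delAB} and $\delta(B_\mathrm a,A)\ge |A|-\alpha k$ only say that this image misses at most $\alpha k$ potential neighbours on the other side --- the one designated vertex $a_1$ (resp.\ $a$) may well be among the $\alpha k$ misses, and over $\Theta(\alpha k)$ reservations some will fail. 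Nothing in your argument arranges which bridge or $\mathcal Q$-path goes where once the greedy images are pinned down.

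This coordination is precisely what the paper buys with its two auxiliary bipartite graphs $K_1,K_2$ and Lemma~\ref{prop_Konig_v1} (a K\"onig/Hall argument). The paper first runs the greedy embedding with explicit \emph{skipping rules} on a large supply of degree-two ``flexible'' locations: a maximal family $\mathcal F$ of $7$-induced paths (for the $\mathcal E$/$\mathcal M$ hand-offs) and a family $\mathcal X$ of $5\alpha k$ $5$-induced, colour-alternating paths (for the $B_\mathrm d$ detours). Only \emph{after} the greedy has fixed the surrounding images does it decide, via a perfect/covering matching in $K_1$ and $K_2$, which reserved $\mathcal Q$-pair and which bridge endpoint each skipped vertex should land on; the near-completeness of $A$--$B_\mathrm a$ makes these auxiliary graphs dense enough for the matching to exist, and the deliberate oversupply ($5\alpha k$ candidate $\mathcal X$-paths for at most $\alpha k$ $\mathcal Q$-paths) provides the slack Hall's condition needs. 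Your carrier triples are a shorter analogue of the $\mathcal X$-paths, but you take exactly $|B_\mathrm d|$ of them with no surplus and no post-hoc matching, so the argument does not close. A secondary (patchable) issue is that Fact~\ref{fact_fullsubtrees} controls only the total order or leaf count of the peeled piece, up to a factor of $2$, while your plan requires the peeled subtrees' two colour-class totals to simultaneously match two prescribed deficits; you gesture at bounding the imbalance through the few-leaves hypothesis, which is the right idea (small $\gap$ via Fact~\ref{fact_ManyLeavesInUnbalanced}), but it is not spelled out and is in any case moot until the matching step is in place.
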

The proof is given in the Appendix.

\subsection{Specific notation}
A graph $H$ is said to have the {\em LKS-property} (with parameter $k$) if at least half
of its vertices have degree at least $k$, i.\,e., we have $|L|\ge
\tfrac{v(H)}2$, where $L=\{v\in V(H)\: :\: \deg_H(v)\ge k\}$.

When we refer to $q,n_0,n,k$ or $G$ in the rest of the paper, we
always refer to the objects from the statement of
Theorem~\ref{thm_main}. The vertex set of~$G$ is denoted by $V$. We partition $ V =L\dcup S$, where $L=\{v\in
 V \: :\: \deg(v)\ge k\}$ and $S=\{v\in  V \: :\: \deg(v)<k\}$. We call
the vertices from~$L$ {\em large} and the vertices from $S$ {\em small}. The hypothesis
of Theorem~\ref{thm_main} implies that $|L|\ge \tfrac{n}2$. Finally~$T$
denotes a tree of order $k+1$ that we want to embed in $G$.

We write $\alpha\ll \beta$ to express that $\alpha$ is sufficiently small
compared to $\beta$.

\section{Proof of the Main Theorem (Theorem~\ref{thm_main})}
The proof of Theorem~\ref{thm_main} is based on an iterated application
of Lemma~\ref{prop:EC-obecna} and~\ref{prop:iteration-Reg} below. To state Lemma~\ref{prop:EC-obecna} we need to introduce the notion of $(\beta,\sigma)$-extremality.\RefereeX{(23)}{Lower-cased this, and also ``deficient'' and ``abundant''} The $(\beta,\sigma)$-extremality says that a part of a graph resembles the extremal structure as in Figure~\ref{fig:ExtremalGraph}. For two reals $\beta,\sigma\in(0,1)$, a partition of the vertex set $ V
=V_1\dcup V_2\dcup\ldots \dcup V_\ell\dcup\tilde V$ is {\em $(\beta,\sigma)$-extremal} if  the following conditions are satisfied.
\begin{itemize}
\item $\ell\ge 1\;$.
\item $(1-\beta)k\le |V_i|\le(1+\beta)k$ for each $i\in[\ell]\;$.
\item  $\tilde{V}=\emptyset$ or $|\tilde{V}|>\sigma k\;$.
\item $e(V_i, V \setminus V_i)\le\beta k^2$ for each
$i\in[\ell]\;$, and
$e(\tilde{V},V\setminus \tilde{V})\le\beta k^2\;$.
\item $(\tfrac{1}2-\beta)k\le|V_i\cap L|$ for each $i\in[\ell]\;$.
\item $|\tilde{V}\cap L|\leq(\tfrac{1}2-\sigma)|\tilde{V}|\;$.
\end{itemize}

\medskip Lemma~\ref{prop:EC-obecna} below, which will be proved in Section~\ref{sec_Extremalcase}, deals with a graph that admits an extremal partition.\RefereeX{(24)}{all the propositions were turned into lemmas}
\begin{lemma}\label{prop:EC-obecna}
Given a number $q>0$, there exists a constant $c_{\mathbf E}>0$ such that the following holds.\
For each $\sigma\le c_{\mathbf E}$ there exists a number $\beta \in (0,\sigma)$ such
that if $G$ is a graph satisfying the LKS-property with $k\geq qn$\Referee{(25)} that
admits a $(\beta,\sigma)$-extremal partition $V=V_1\dcup \ldots \dcup V_\ell\dcup \tilde V$,
then $\mathcal T_{k+1}\subseteq G$, or there exists a set $Q\subset \tilde{V}$ such that
\begin{enumerate}[label={$(\roman{*})$}]
\item $|Q|>\tfrac{k}2\;$.\label{pr:velkyQ}
\item $|Q\cap L|>\tfrac{|Q|}2\;$.\label{pr:QL}
\item $e(Q, V \setminus Q)<\sigma k^2\;$.\label{pr:Qizo}
\end{enumerate}
\end{lemma}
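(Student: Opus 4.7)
The plan is to split on whether $|\tilde V|>\tfrac{k}{2}$ (the only regime where extracting $Q$ is even possible) and, in each branch, to exploit the fact that every piece $V_i$ closely resembles the extremal configuration of Figure~\ref{fig:ExtremalGraph}.

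\emph{Fine structure of each $V_i$.} First I would analyse the internal structure of every $V_i$. Since $e(V_i,V\setminus V_i)\le\beta k^2$ while $|V_i|\le(1+\beta)k$, a standard averaging argument shows that all but $O(\sqrt\beta)k$ vertices of $V_i$ send at most $O(\sqrt\beta)k$ edges outside $V_i$. For such a \emph{typical} vertex $v\in V_i\cap L$ this means $v$ is adjacent to all but $O(\sqrt\beta)k$ vertices of $V_i$, so the large vertices behave as a near-universal set in $V_i$. This puts us close to the situation in which the greedy embeddings of Fact~\ref{fact:greedyBUDA}, Fact~\ref{fact_embeddingsemiindependent}, and Fact~\ref{fact:easyEmbedding1} apply almost directly.

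\emph{Case A: $|\tilde V|\le\tfrac{k}{2}$.} Here no candidate $Q$ can satisfy property~\ref{pr:velkyQ}, so we must embed $T$. I would choose a piece $V_{i_0}$ that best accommodates the bipartition (or a semi-independent partition) of $T$, embed $T_\ominus$ into the typical large vertices of $V_{i_0}$ and $T_\oplus$ into the remaining vertices of $V_{i_0}$, all greedily. If $T$ is slightly too large or too unbalanced to fit in $V_{i_0}$ alone, I would divert a small fragment of $T$ to a neighbouring piece $V_j$ or into $\tilde V$ through the at most $\beta k^2$ inter-piece edges, invoking Lemma~\ref{prop_SE_embeddingwithfewleaves} with $\mathcal{E}$, $\mathcal{M}$ built from those bridges and the $H_\kappa$'s chosen among the other pieces. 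For $T$ with large discrepancy, Facts~\ref{fact_ManyLeavesInUnbalanced} and~\ref{fact:leavesB} supply many leaves, making Fact~\ref{fact:easyemb2} or the many-leaves hypothesis of Lemma~\ref{prop_SE_embeddingwithfewleaves} directly applicable.

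\emph{Case B: $|\tilde V|>\tfrac{k}{2}$ and the main obstacle.} I would try to produce $Q$ by iteratively removing from $\tilde V$ subsets that are badly small-vertex-dominated, until what remains is either majority large -- in which case, provided it is still of size $>\tfrac{k}{2}$ and has boundary $<\sigma k^2$ (which I would verify using the bound $e(\tilde V,V\setminus \tilde V)\le \beta k^2$ together with the edges cut during trimming), it serves as $Q$ -- or the process exhausts the large vertices of $\tilde V$. In the latter case $\tilde V$ is so small-vertex-dominated and so tightly interconnected with its own large vertices that, combined with a near-clique-like $V_i$, it supplies enough room to embed $T$ directly through Lemma~\ref{prop_SE_embeddingwithfewleaves}, with $\tilde V$ (or a suitable subset of it) playing the role of one of the $H_\kappa$'s. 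The main obstacle throughout is the intermediate regime where $V_{i_0}$ is just barely too small for a direct embedding and $T$ has moderate discrepancy: one must orchestrate the matchings $\mathcal{E}$, $\mathcal{M}$ and the absorber family $\mathcal{Q}$ required by Lemma~\ref{prop_SE_embeddingwithfewleaves} from the tight budget of inter-piece edges and the few available vertices of $\tilde V$, while keeping the embedding aligned with the chosen semi-independent partition of $T$.
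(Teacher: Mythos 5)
Your plan gets the geometry right (the near-clique structure of each $V_i$, greedy embedding into a distinguished piece, small bridges to other pieces, and trimming $\tilde V$ to extract $Q$), and you identify most of the relevant tools. However, two ideas that carry most of the weight in the paper's proof are missing, and without them the plan does not go through.

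First, you never make the \emph{abundant/deficient} distinction, which is the pivot of the whole argument: the paper asks whether some clump $V_i$ has $|L^i|\ge\tfrac{k+1}{2}$. If yes (abundant), a single piece can host $T_\ominus$ and only small corrections are needed. If no (deficient), \emph{every} piece is just barely too small, $\tilde V$ is forced to be empty (Lemma~\ref{lem:def}), and one cannot simply ``choose the piece $V_{i_0}$ that best accommodates the bipartition''; one has to manufacture a matching of ``bridge'' edges of size large enough so that $|L^{i_0}|+|\mathcal E^{i_0}|+|\mathcal J^{i_0}|\ge\tfrac{k+1}{2}$. That construction (Lemmas~\ref{lemma_StartDiana} and~\ref{prop_deficientCones}) is the hardest piece of combinatorics in the extremal case, using K\"onig's theorem, the inequality $m(k+1)>n$, and an averaging argument to find a suitable index; your plan waves at ``diverting a small fragment through the at most $\beta k^2$ inter-piece edges'' but gives no mechanism for why enough disjoint bridges exist emanating from a \emph{single} piece.

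Second, your mechanism for extracting $Q$ does not match the situation and I do not see how it would work: ``iteratively removing badly small-vertex-dominated subsets'' from $\tilde V$ is not what forces the existence of a good $Q$. In the paper, the failed embedding attempts expose a set $Y=\bigcup_{i\in\Lambda}\tilde R_i$ of \emph{outside neighbours} of the clump's large vertices, and the dichotomy in $(\clubsuit1)$--$(\clubsuit3)$ is whether $Y$ is light on large vertices and poorly connected to $\tilde V\setminus Y$ (in which case $Q=\tilde V\setminus Y$ works), or heavy on large vertices/well connected (in which case a high-minimum-degree subgraph inside $Y$ or between $Y$ and $\tilde V\setminus Y$ gives an $H_*$ to route part of $T$ into). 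Your trimming argument does not control $e(Q,V\setminus Q)$ or $|Q\cap L|$ and does not explain why the complement of the removed material is embeddable. Finally, you cannot fall back on Lemma~\ref{prop_SE_embeddingwithfewleaves} for trees with large discrepancy: that lemma explicitly requires $T$ to have \emph{few} leaves, whereas large discrepancy forces many leaves (Fact~\ref{fact_ManyLeavesInUnbalanced}); the many-leaf regime needs its own argument (Lemma~\ref{prop_ECManyLeaves}, via $p$-ideal semi-independent partitions and Lemma~\ref{lem:vnor-obecPartition}), which your plan does not address.
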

 The next statement, which will be proved in
Section~\ref{sec_TheProof}, entails the regularity part of the proof of Theorem~\ref{thm_main}.
\begin{lemma}\label{prop:iteration-Reg}
Given numbers $q, c, \rho>0$ there are numbers $\sigmaX\in (0, \rho)$ and 
$n_0=n_0(q,c,\rho)\in \mathbb N$ such that for each graph $G$ on $n\geq n_0$
vertices satisfying the LKS-property with $k\geq qn$ with a subset $\overV\subseteq V$\RefereeX{(34)}{Changed everywhere} having the following properties
\begin{enumerate}[label={$(\roman{*})$}]
\item $|\overV|> ck\;$,
\item\label{pr:Vrtacka} $e(\overV,V\setminus \overV)\le \sigmaX k^2\;$, and
\item $|L\cap \overV|\ge \tfrac{1}2(1-\sigmaX)|\overV|\;$,
\end{enumerate}
there exists a subset $V'\subseteq \overV$ such that
\begin{itemize}
\item [$\diamond$]$(1-\rho)k\le |V'|\le (1+\rho)k\;$,
\item [$\diamond$]$|V'\cap L|\geq \tfrac{1}2|V'|\;$, and
\item [$\diamond$]$e(V',V\setminus V')\leq \rho k^2\;$,
\end{itemize}
or $\mathcal{T}_{k+1}\subseteq G$.
\end{lemma}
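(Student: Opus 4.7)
The plan is to carry out the Regularity Lemma half of the iteration scheme sketched in Section~\ref{sec_outline}, treating $\overV$ as the set $Q$ produced at the current step. I first dispose of two easy configurations before regularizing. If $G[\overV]$ is close to a complete bipartite graph with the large vertices on one side, Lemma~\ref{prop_SCHolds} embeds $T$. If $T$ is locally unbalanced in the sense of page~\pageref{p.:def-unbalanced}, Lemma~\ref{prop:UnbalancedCase} embeds $T$. In either case we are done, so we may subsequently assume that $\overV$ does not have near-bipartite structure and that $T$ is locally balanced.

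Next, I apply the Szemer\'edi Regularity Lemma to $G$ with parameters chosen so that the regularity constant $\epsilon$, the density threshold $d$, and $1/M$ (where $M$ is the number of clusters) are all much smaller than $\sigmaX$, while $\sigmaX \ll \rho, c, q$. By the almost-isolation hypothesis $e(\overV,V\setminus \overV)\le \sigmaX k^2$, all but a $o(1)$ fraction of the clusters lie essentially inside $\overV$ or essentially outside $\overV$; ignoring boundary clusters costs only $O(\sigmaX)$ of the mass. Let $\mathbf Q$ be the part of the cluster graph spanned by clusters inside $\overV$. Using property $|L\cap \overV|\ge \tfrac12(1-\sigmaX)|\overV|$, the cluster graph $\mathbf Q$ inherits the LKS-like property: a $(\tfrac12-o(1))$-fraction of its clusters are large.

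I then apply the matching lemma (Lemma~\ref{prop_TutteType}) to $\mathbf Q$. The matching lemma either returns one of the matching configurations Case~I or Case~II together with distinguished clusters $A$ and $B$ of high average degree to a matching $M$, in which case Lemma~\ref{prop_SE_embeddingwithfewleaves} together with Fact~\ref{fact:easyemb2} (when $T$ has many leaves) or a direct matching-based embedding (when $T$ has few leaves and is locally balanced) produces $T\subseteq G$; or else the matching lemma exhibits a structured subset of $\mathbf Q$ whose shape mirrors the extremal configuration of Figure~\ref{fig:ExtremalGraph}. In the latter case I extract $V'$ by taking a suitable union of clusters of $\mathbf Q$ whose total vertex count falls in $[(1-\rho)k,(1+\rho)k]$; this is possible because each cluster has size $\ll \rho k$ and the total mass $|\overV|>ck$ of $\mathbf Q$ offers $\Theta(k)$ flexibility. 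The extremal structure forces that this union contains at least half large vertices and is bounded from the rest by $\rho k^2$ edges, the latter combining the $\sigmaX k^2$ boundary bound on $\overV$ with the intrinsic isolation coming from the failure of the matching lemma.

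The main obstacle is the final extraction step: once both Cases~I and~II of the matching lemma are ruled out, I must show that the structural output of Lemma~\ref{prop_TutteType} always admits a sub-union of clusters of nearly integral cluster-weight $k$ that is simultaneously of the correct total size (up to $\pm\rho k$), dominantly large, and almost edge-disconnected from its complement. Balancing these three constraints is the crux of the argument; the slack between $\sigmaX$ and $\rho$ (and the small cluster size granted by the Regularity Lemma) must be spent exactly here, and verifying that the structural conclusion of the matching lemma is strong enough to guarantee such a $V'$ is what I expect to require the most care.
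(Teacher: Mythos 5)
Your proposal correctly identifies the two preliminary reductions (Lemma~\ref{prop_SCHolds} for the near-bipartite case and Lemma~\ref{prop:UnbalancedCase} for locally unbalanced trees), the regularization of $G$, and the fact that Lemma~\ref{prop_TutteType} is applied to the cluster graph on $\overV$. However, there is a fundamental structural misunderstanding that breaks the rest of the argument.

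You write that the matching lemma either produces Case~I or Case~II, each yielding $T\subseteq G$, \emph{or else} it ``exhibits a structured subset of $\mathbf Q$ whose shape mirrors the extremal configuration'' from which $V'$ is extracted. This third branch does not exist. Lemma~\ref{prop_TutteType} is a dichotomy: one of Case~I and Case~II \emph{always} holds. There is no failure mode of the matching lemma that reveals extremal structure. Consequently ``once both Cases~I and~II of the matching lemma are ruled out'' is never reached, and your extraction step has no trigger. The actual source of $V'$ in the paper is entirely different: Case~I always gives $T\subseteq G$ (Lemma~\ref{prop-caseI}), but Case~II is where $V'$ is born. Within Case~II, Lemma~\ref{prop-CaseIIpart1} constructs a matching $M_A$ whose vertex union $V_A=\bigcup V(M_A)$ already satisfies $(1-8\etaX)k\le|V_A|\le k$ and $|V_A\cap L|\ge\tfrac12|V_A|$, and Lemma~\ref{prop-CaseIIpart2} shows that if $V_A$ emits many edges to $V\setminus V_A$ then $T\subseteq G$. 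When it does not, $V':=V_A$ fulfills the three conclusions of the statement. The near-isolation of $V'$ is thus established by contradiction inside the Case~II embedding analysis, not by a combinatorial sub-union argument on clusters. Your ``crux'' paragraph about balancing size, largeness and edge-boundary by spending the $\sigmaX$–$\rho$ slack on a cluster union is not what is required, and it is unclear how one would carry it out without a structural statement that the matching lemma does not provide.

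A secondary point: the embedding work in the regularity part is done with Lemma~\ref{lem:Embedding-3} (used in the proofs of Lemmas~\ref{prop-caseI} and~\ref{lem:cross-edges-1}--\ref{lem:fewL-LinM*(A)}), not with Lemma~\ref{prop_SE_embeddingwithfewleaves}, which the paper reserves for the Extremal Case in Section~\ref{sec_Extremalcase}. Citing Fact~\ref{fact:easyemb2} here is also not how the embedding is carried out.
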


\bigskip

\begin{proof}[Proof of Theorem~\ref{thm_main}] 
Given $q>0$ let $c_\mathbf{E}$ be given by
Lemma~\ref{prop:EC-obecna}. Further let $\beta$ be given by
Lemma~\ref{prop:EC-obecna} with input parameters $q,c_\mathbf{E}$ and
$\sigma= c_\mathbf{E}$.\RefereeX{(26)}{Done. However, we left these subscript references in Section~\ref{ssec_Struct-Graph} and proofs of Lemma~\ref{lem:T^3} and Lemma~\ref{lem:cross-edges-1}, where we feel that they really help the reader.} Set $c= \frac {q\beta}{2}$
and $C=\lceil\frac 1q\rceil$.
We find
a sequence of parameters
\begin{equation}\label{eq:parameters-Main}
0<\sigma_1\ll\rho_1\ll\sigma_2\ll\rho_2\ll\dots\ll\rho_{C-1}
\ll\sigma_C\ll\rho_C\;,
\end{equation}
constructed as follows. Set $\rho_C= c$. Inductively for each $i=C,\dots
,1$ let $\sigma_i=\lambda(q,c,\rho_i)$\Referee{(27)} be given by Lemma~\ref{prop:iteration-Reg} for input
parameters $q,c$ and $\rho_i$.  Further let $\beta_i$ be given by
Lemma~\ref{prop:EC-obecna} with input parameters $q,c_\mathbf{E}$ and
$\frac{\sigma_i}2$. Finally for $i>1$ set $\rho_{i-1}= \frac {\beta_i}C$.
Set $n_0=\underset{i=1,\dots,C}{\max}\{n_0(q,c,\rho_i)\}$, where
the numbers $n_0(q,c,\rho_i)$ are from Lemma~\ref{prop:iteration-Reg}.

Let $G$ be a graph satisfying the conditions of Theorem~\ref{thm_main} (i.e., $q$
is fixed, $n\ge n_0$, and $k>qn$).

Recall that $\ci(x)$ denotes the closest integer to~$x$.\Referee{(28)} Let $\vartheta=\ci(\tfrac{n}k)$. We iterate the following process for at
most $\vartheta$ steps. In step $i,\; i\leq \vartheta$, we prove that $\mathcal T_{k+1}\subseteq G$ or we define a set $V_i\subseteq V\setminus \bigcup_{j<i}V_j$ such that the following conditions are fulfilled for each $j\in [i]$.
\begin{itemize}
\item [(P1)$_i$]$(1-\rho_i)k\leq |V_j|\leq (1+\rho_i)k$,
\item [(P2)$_i$]$|L\cap V_j|\geq (\tfrac{1}2-\rho_i)k$, and
\item [(P3)$_i$]$e(V_j,V\setminus V_j)\leq \rho_i k^2$.
\end{itemize}

In step $i=1$, we apply Lemma~\ref{prop:iteration-Reg} with
parameters $q,c,\rho_1$ and input set $\overV=V$. We obtain that
$\mathcal T_{k+1}\subset G$, or there exists a set $V_1$ satisfying (P1)$_{1}$,
(P2)$_{1}$, and (P3)$_{1}$. In step $i>1$, suppose that we have sets
$V_1,\dots,V_{i-1}$ satisfying (P1)$_{i-1}$, (P2)$_{i-1}$, and
(P3)$_{i-1}$.  Set $V^*=V\setminus \bigcup_{j<i}V_j$.

First assume that $|V^*|>ck$.
If $|L\cap V^*|\geq \tfrac{1}2(1-\sigma_{i})|V^*|$, the graph $G$
satisfies the conditions of Lemma~\ref{prop:iteration-Reg}
(with input parameters $q,c,\rho_i$ and input set $\overV=V^*$). Indeed,
$|V^*|>ck$ by assumption, $e(V^*,V\setminus V^*)\le (i-1)\rho_{i-1}k^2\le
\beta_ik^2<\sigma_ik^2$ because $V_1,\ldots,V_{i-1}$ satisfy (P3)$_{i-1}$, and $|L\cap V^*|\ge
\tfrac{1}2(1-\sigma_{i})|V^*|$ by assumption.\Referee{(29)}

If $|L\cap V^*|< \tfrac12(1-\sigma_{i})|V^*|$, then the partition
$V=V_1\dcup\dots\dcup V_{i-1}\dcup V^*$ is $(C\rho_{i-1},\tfrac{\sigma_{i}}2)$-extremal. 
Indeed, 
\begin{itemize}
  \item $i>1$;
  \item $(1-C \rho_{i-1})k\le (1-\rho_{i-1})k\le |V_j|\le
(1+\rho_{i-1})k\le (1+C\rho_{i-1})k$ for each $j\le i-1$ by (P1)$_{i-1}$;
\item $|V^*|>ck\ge \tfrac{\sigma_ik}2$ by assumption;
\item  $e(V_j, V\setminus V_j)\le
\rho_{i-1}k^2\le C\rho_{i-1}k^2$ for each $j\le i-1$ by (P3)$_{i-1}$ and
\\ $e(V^*,V\setminus V^*)\le (i-1)\rho_{i-1}k^2< C\rho_{i-1}k^2$;
\item $|V_j\cap L|\ge (\tfrac 12-\rho_{i-1})k\ge (\tfrac 12-C\rho_{i-1})k$
for each $j\le i-1$ by (P2)$_{i-1}$;
\item $|V^*\cap L|< \tfrac12(1-\sigma_i)|V^*|=(\tfrac
12-\tfrac{\sigma_i}2)|V^*|$.
\end{itemize}

Therefore Lemma~\ref{prop:EC-obecna} with parameters $q,c_{\mathbf{E}},
\tfrac{\sigma_{i}}2$ applies. Thus
$\mathcal
T_{k+1}\subseteq G$, or there exists a set $Q\subseteq V^*$ satisfying Lemma~\ref{prop:EC-obecna}~\ref{pr:velkyQ}--\ref{pr:Qizo}.\Referee{(30)}
It is enough to assume the latter case. Here again, the graph $G$ satisfies the
conditions of Lemma~\ref{prop:iteration-Reg} (with input parameters
$q,c,\rho_i$ and input set $\overV=Q$). Indeed, $|Q|>\tfrac k2\ge
\tfrac {q\beta}2k=ck$, $e(Q,V\setminus Q)<\tfrac {\sigma_i}2k^2<\sigma_ik^2$
and $|Q\cap L|>\tfrac{|Q|}2>\tfrac 12(1-\sigma_i)|Q|$.
Thus Lemma~\ref{prop:iteration-Reg} yields that
$\mathcal{T}_{k+1}\subseteq G$, or that there exists a set $V_i\subseteq Q$ satisfying Properties (P1)$_i$--(P3)$_i$.

It remains to deal with the case $|V^*|\le ck$. The set $V$ is decomposed into sets $V_1,\ldots,V_{i-1}$, each of which is of size approximately~$k$, and a little set~$V^*$. Thus, $i-1=\theta$.\Referee{(31)}
Having
found sets $V_1,\dots, V_\vartheta$ satisfying
(P1)$_\vartheta$--(P3)$_\vartheta$, we set $V_1'=V_1\cup V^*$ and $V_j'=V_j$ for $j\ge 2$. The thus defined partition $V=V'_1\dcup\dots\dcup V'_\vartheta\dcup\emptyset$ is
$(\beta,c_\mathbf{E})$-extremal.
Indeed, by (P1)$_\vartheta$--(P3)$_\vartheta$, we have 
\begin{itemize}
  \item $\vartheta\ge 1$;
  \item $(1-\beta)k\le (1-\rho_{\vartheta})k\le |V_j|\le |V'_j|\le
  |V_j|+|V^*|\le (1+\rho_{\vartheta}+c)k\le (1+\beta)k$ for each $j\le \vartheta$;
\item  $e(V'_j, V\setminus V'_j)\le e(V_j,V\setminus V_j)+e(V^*, V\setminus
V^*)\le  \rho_\vartheta k^2+(\vartheta-1)\rho_\vartheta k^2\le\beta k^2$ for each
$j\le \vartheta$ (the summand $e(V^*, V\setminus
V^*)$ is necessary only when $j=1$);
\item $|V'_j\cap L|\ge |V_j\cap L|\ge (\tfrac 12-\rho_\vartheta)k\ge
(\tfrac 12-\beta)k $ for each $j\le \vartheta$.
\end{itemize}

Lemma~\ref{prop:EC-obecna} with parameters $q, c_\mathbf{E}$ and
$\sigma= c_{\mathbf{E}}$ yields that $\mathcal{T}_{k+1}\subseteq G$ (as no new
set $Q$ can be found).
\end{proof} 
\section{Tools for the proof of Lemma~\ref{prop:iteration-Reg}}\label{sec_RLproof}
\subsection{Sparsity in the set of large vertices}\label{sec_Specialcase}
Suppose that $G$ is a graph with the LKS-property with parameter $k$ such that its set $L$ of
large vertices is almost independent. In this section we provide an ad-hoc
argument showing that in (a situation a bit more general than) the setting
above, we have $\mathcal T_{k+1}\subset G$. Indeed, in this case~$G$ is close to a $k$-regular bipartite graph with color classes $L$ and $S$, and thus we are roughly in the setting of Fact~\ref{fact:easyEmbedding1}.
\begin{lemma}\label{prop_SCHolds}
For every $q>0$ there exists a real $c_\mathbf{S}>0$ such that for  each
$c\in (0,c_\mathbf{S}]$ and each $n$-vertex graph $G=(V,E)$ with the
LKS-property with parameter $k>qn$, and with a set $\overV\subset V$ satisfying
\begin{enumerate}[label={$(\roman{*})$}]
\item\label{IT1} $|\overV |>\sqrt[4]{c}k\;$,
\item\label{IT2} $e(\overV ,V\setminus \overV )<c k^2\;$,
\item\label{IT3} $(\tfrac{1}2-c)|\overV |<|\overV \cap
L|\;$, and
\item\label{IT4} $e(G[\overV \cap L])<c n^2\;$,
\end{enumerate}
we have $\mathcal{T}_{k+1}\subset G$.
\end{lemma}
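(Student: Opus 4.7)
The plan is to verify that $G[\overV]$ is close to a near-complete bipartite graph with color classes $L':=\overV\cap L$ and $S':=\overV\setminus L'$, and then invoke Fact~\ref{fact:easyEmbedding1}. This matches the intuition stated right before the lemma: because the large vertices in $\overV$ span few edges (by~\ref{IT4}), most of the degree of each $v\in L'$ is absorbed by $S'$.

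First I would clean up the large side. Let $L_1\subseteq L'$ consist of those $v$ with $\deg(v,V\setminus\overV)\le\sqrt{c}\,k$ and $\deg_{L'}(v)\le\sqrt{c}\,n$; Markov applied to~\ref{IT2} and~\ref{IT4} gives $|L'\setminus L_1|\le\sqrt{c}\,k+2\sqrt{c}\,n$, which is $O(\sqrt{c}\,k/q)$ using $n<k/q$. Every $v\in L_1$ then satisfies $\deg_{S'}(v)\ge k-\sqrt{c}\,k-\sqrt{c}\,n\ge k(1-\eta)$ with $\eta:=\sqrt{c}(1+1/q)$. Hypotheses~\ref{IT1} and~\ref{IT3} give $|L'|>(\tfrac12-c)c^{1/4}k$, so $L_1\neq\emptyset$ for small~$c$; hence $|S'|\ge k(1-\eta)$. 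Plugging this into $|L'|\ge\tfrac{1/2-c}{1/2+c}|S'|\ge(1-4c)|S'|$ (a rearrangement of~\ref{IT3} using $|\overV|=|L'|+|S'|$) yields $|L_1|\ge k(1-O(\sqrt{c}/q))$.

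Next, I would trim $S'$. Let $B:=\{u\in S':\deg_{L_1}(u)\le k/2\}$. The crucial observation is that $S'\subseteq S$, so $\deg_G(u)<k$, hence $\deg_{L_1}(u)<k$ for every $u\in S'$. Double counting $e(L_1,S')$ gives
\[
|L_1|\,k(1-\eta)\;\le\;e(L_1,S')\;\le\;k(|S'|-|B|)+\tfrac{k}{2}|B|,
\]
so $|B|\le 2(|S'|-|L_1|)+2|L_1|\eta=O(\sqrt{c}\,k/q^2)$, which is strictly less than $k(\tfrac12-\eta)$ provided $c\le c_\mathbf{S}$ for a threshold $c_\mathbf{S}>0$ chosen small in terms of~$q$.

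Finally, put $W_1:=L_1$, $W_2:=S'\setminus B$, and $K:=G[W_1,W_2]$. For $v\in W_1$ we have $\deg_{W_2}(v)\ge\deg_{S'}(v)-|B|\ge k(1-\eta)-|B|>k/2$; for $u\in W_2$ we have $\deg_{W_1}(u)>k/2$ by construction; and $\delta_G(W_1)\ge k$ because $W_1\subseteq L$. Fact~\ref{fact:easyEmbedding1} with $\ell=k$ therefore yields $\mathcal T_{k+1}\subseteq G$. The main obstacle is controlling $|B|$: the bound $\deg_{L_1}(u)<k$ for $u\in S'$ is what makes the double-counting argument close; without it a single small vertex could absorb too many edges of $L_1$ and the bipartite structure could be far more lopsided than~$K$.
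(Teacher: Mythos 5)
Your proof is correct and follows the same strategy as the paper: trim $\overV$ down to a near-regular bipartite subgraph spanned by large and small vertices, verify $\delta(K)>k/2$ and $\delta_G(W_1)\ge k$, and invoke Fact~\ref{fact:easyEmbedding1}. The only meaningful variation is in the bookkeeping: you split $\overV$ as $L'\dcup S'$ with $S'\subseteq S$, so the trivial bound $\deg_G(u)<k$ for $u\in S'$ closes your double-count and~\ref{IT4} is used only once, whereas the paper fixes a set $L_1\subseteq L\cap\overV$ of size exactly $(\tfrac12-c)|\overV|$ and must then handle the large vertices remaining in $\overV\setminus L_1$ (the set $\overL$) by a second appeal to~\ref{IT4}. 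Both routes are sound; yours is marginally more streamlined.
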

\begin{proof}Set $c_\mathbf{S}=q^9 10^{-8}$. Let $c\in (0,c_\mathbf{S}]$ be
arbitrary. Let~$G$ be any graph satisfying the assumptions of the lemma.
First observe that
\begin{equation}\label{eq:barVbig}
|\overV|\ge \tfrac{3k}4 \;.
\end{equation}
Indeed, suppose the  contrary. Assumptions~\ref{IT1} and~\ref{IT3} imply that $|\overV\cap L|\ge
(\tfrac{1}2-c)\sqrt[4]{c}k>\tfrac{1}4\sqrt[4]{c}k$.
By the negation\Referee{(35)} of~\eqref{eq:barVbig}, each vertex in $\overV\cap L$
emanates at least $\tfrac{k}4$ edges into $V\setminus \overV$. Therefore $e(\overV\cap L,V \setminus \overV)>\tfrac{1}{16}\sqrt[4]{c}k^2$, a contradiction
to~\ref{IT2}.

Fix a set $L_1\subset L\cap \overV$\Referee{(36)} of size
$|L_1|=(\tfrac{1}2-c)|\overV|$. Define $L_2=\{u\in L_1\:
:\: \deg(u,\overV\setminus L_1)\ge (1-2\sqrt{c})k\}$. For
each vertex $x\in L_1\setminus L_2$ we have that $\deg(x,L_1)+\deg(x,V\setminus \overV)>2\sqrt{c}k$, otherwise~$x$ would have been included in $L_2$.
Summing up~\ref{IT2} and~\ref{IT4}, we have $e(G[L_1])+e(L_1\setminus
L_2,V\setminus \overV)<2 cn^2$. Theorefore, we have that $$|L_1\setminus
L_2|\le\frac{4cn^2}{2\sqrt{c}k}\lBy{\eqref{eq:barVbig}}3\sqrt{c}q^{-2}|\overV|\le
\frac12\sqrt[4]{c} |\overV|\;.$$  Consequently,
\begin{equation}\label{eqLtilBig}
|L_2|>(\tfrac{1}2-\sqrt[4]{c})|\overV|\;.
\end{equation} We verify that the set $\tilde{S}=\{u\in
\overV \setminus L_1\: :\: \deg(u,L_2)\ge (1-\sqrt[8]{c})k\}$ covers
almost the whole set $\overV \setminus L_1$. Define $\overL=\{y\in
\overV \setminus L_1\::\:\deg(y,L_2)\ge k\}$. Observe that
$\overL\subset L\cap \tilde{S}$. By~\ref{IT4}, less than
$cn^2$ edges of $E[L_2,\overV \setminus L_1]$ are
incident with a vertex from $\overL$. Hence the number of edges in the bipartite graph $B=G[L_2,\overV \setminus (L_1\cup \overL)]$ is at least
\begin{equation}\label{eq:edgesBmany} 
e(B)\ge|L_2|(1-2\sqrt{c})k-cn^2\geBy{\eqref{eqLtilBig},\eqref{eq:barVbig}}(\tfrac12
-2\sqrt[4]{c})|\overV|k\;\mbox{.}
\end{equation}
On the other hand, we upper-bound the number of edges in the graph $B$ using the
fact that for each $x\in \tilde{S}\setminus\overL$ and for each $y\in \overV\setminus(L_1\cup\tilde S)$ we have that $\deg_B(x)< k$ and
$\deg_B(y)\le (1-\sqrt[8]{c})k$, respectively.\Referee{(37)}
\begin{align}\nonumber
e(B)&\le |\tilde{S}\setminus\overL|k+|\overV\setminus(L_1\cup\tilde
S)|(1-\sqrt[8]{c})k \qquad\big[\mbox{as $\tilde{S}\cup (\overV\setminus(L_1\cup\tilde
S))= \overV\setminus L_1$}\big]\\
\qquad&=
|\overV\setminus L_1|k-\sqrt[8]{c}|\overV\setminus(L_1\cup \tilde S)|k
\nonumber\\ 
\label{eq:edgesBfew} &=
(\tfrac{1}2+c)|\overV|k-\sqrt[8]{c}|\overV\setminus(L_1\cup \tilde S)|k \mbox{.}
\end{align}
Combining~\eqref{eq:edgesBmany} with~\eqref{eq:edgesBfew} we obtain
\begin{equation}\label{eq:koukej}
|\overV\setminus(L_1 \cup\tilde S)|\le 3\sqrt[8]{c}|\overV|\le \frac{3\sqrt[8]{c}k}q\;.
\end{equation}

By the choice of $L_2$ and $\tilde{S}$, the minimum degree of the vertices in
$L_2$ in the bipartite graph
$G_1=G[L_2,\tilde{S}]$ is at least $(1-2\sqrt{c})k-|\overV\setminus(L_1 \cup\tilde S)|$,
and of those in $\tilde{S}$ at least $(1-\sqrt[8]{c})k$. By~\eqref{eq:koukej} and
the choice of $c_\mathbf{S}$ we have that
$\delta(G_1)>\tfrac{k}2$.

Fact~\ref{fact:easyEmbedding1} applied on the graphs $B$ and $G$ yields that $\mathcal T_{k+1}\subset G$.
\end{proof}
\subsection{Cutting trees, and (un)balanced trees}\label{ssec_cut}
\RefereeX{(38)}{The notion of seeds is not needed anymore}
\Referee{(39)}

{
\renewcommand{\labelenumi}{(\roman{enumi}) }
\begin{definition}\label{def:ellfine}
An {\em $\ell$-fine partition} of a tree $T\in \mathcal{T}_{k+1}$ rooted at a vertex $R\in V(T)$ is a
quaternary $\mathcal{D}=(W_A,W_B, \mathcal{D}_A, \mathcal{D}_B)$ with the following properties.
\begin{enumerate}[label={$(\roman{*})$}]
\item $W_A$ and $W_B$ are sets of vertices in $V(T)$.
$\mathcal{D}_A$ and $\mathcal{D}_B$ are sets of subtrees in $T$. Further, $V(T)$ is a disjoint union of $W_A$, $W_B$, and the sets $V(t)$, $t\in \mathcal{D}_A\dcup \mathcal{D}_B$.\Referee{(40)}
\item The distance from each vertex in $W_A$ to each vertex in $W_B$ is
odd. The distance between each pair of vertices in $W_A$ or
between each pair of vertices in $W_B$ is even.
\item No tree from $\mathcal{D}_A$ is adjacent\footnote{a subtree $t$ is \emph{adjacent} to a vertex $v$ if there is at least one edge from $v$ to $V(t)$}\Referee{(41)} to any vertex in
$W_B$. No tree from $\mathcal{D}_B$ is adjacent to any vertex in
$W_A$.
\item $v(t)\leq \ell$ for each tree $t\in \mathcal{D}_A\cup
\mathcal{D}_B$.
\item $R\in W_A\cup W_B$.
\item\label{ellfine:max} $\max\{|W_A|,|W_B|\}\leq \frac{12k}{\ell}$.
\item $\mathcal{D}_B$ contains no internal tree.
\item We have $$\sum_{\substack{t\in \mathcal{D}_A\\t\textrm{ end-tree}}}v(t)\ge\sum_{t\in \mathcal{D}_B}v(t)\;\mbox{.}$$
\item Each internal tree from $\mathcal{D}_A$ is adjacent to two vertices of $W_A$.\RefereeX{(!)}{The last property was not present in the previous version of the manuscript. The reason we added it there was that it substantially simplifies the proof of Lemma~\ref{lem:Embedding-3}.
That means that the existence of a fine partition no longer directly follows from the work of Piguet and Stein and we therefore included a proof here. Even if the existence of a fine partition follows from~\cite{LKSsparse4}, the latter is a more recent paper and still in a preprint version}
\end{enumerate}
\end{definition}

For an $\ell$-fine partition
$\mathcal{D}=(W_A,W_B,\mathcal{D}_A,\mathcal{D}_B)$ the trees from $\mathcal{D}_A\cup\mathcal{D}_B$ are called {\em shrubs}.
For a subset $\mathcal F\subseteq \mathcal D_A\cup \mathcal D_B$, we denote the vertices contained in~$\mathcal F$ by $V(\mathcal F)$ and we write $v(\mathcal F)=|V(\mathcal F)|$.

It is proven in~\cite{PS07+} that for every $\ell$ each tree can be cut up in a way which results in a partition that satisfies (i)--(viii) of Definition~\ref{def:ellfine}. Here we extend this result by the additional requirement of (ix) from Definition~\ref{def:ellfine}. 
\begin{lemma}\label{lem:cutfine}
Let $T\in \mathcal{T}_{k+1}$ be a tree rooted at a vertex $R$ and let $\ell\in \mathbb{N}, \ell<k$. Then the rooted tree $(T,R)$ has an $\ell$-fine partition.
\end{lemma}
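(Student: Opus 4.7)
The plan is to obtain the partition in two stages. First, I would invoke the recursive cutting construction of Piguet and Stein~\cite{PS07+}, which produces a partition $(W_A,W_B,\mathcal{D}_A,\mathcal{D}_B)$ satisfying properties (i)–(viii) of Definition~\ref{def:ellfine}: one walks from the root $R$ downward, slicing off full-subtrees of sizes in $[\ell/2,\ell]$ via Fact~\ref{fact_fullsubtrees}, declaring each slicing vertex a member of $W_A$ or $W_B$ according to the parity of its depth, and finally swapping any problematic shrubs between $\mathcal{D}_A$ and $\mathcal{D}_B$ (or re-labelling an adjacent cut vertex) to enforce that $\mathcal{D}_B$ contains no internal tree and that the sum inequality holds. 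The size bound (vi) follows because each cut removes $\Omega(\ell)$ vertices from a tree of $k+1$ vertices.

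Second, I claim that once (i)–(viii) are in hand, property (ix) is automatic. Let $t\in\mathcal{D}_A$ be any internal shrub. By (v), the root $R$ lies in $W_A\cup W_B$, hence $R\notin V(t)$, so the root $r_t$ of $t$ has a parent $p\in V(T)\setminus V(t)$. Since $V(T)$ is partitioned by (i) into $W_A$, $W_B$ and the vertex sets of shrubs, we have $p\in W_A\cup W_B$; and by (iii), $t\in\mathcal{D}_A$ cannot be adjacent to $W_B$, so $p\in W_A$. On the other hand, $t$ being internal means that $V(t)$ is not of the form $\{v:v\preceq x\}$, and therefore some vertex $u\in V(t)$ has a child $c\in V(T)\setminus V(t)$. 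By the same reasoning, $c\in W_A$. Finally $p\neq c$, because $p$ is an ancestor of every vertex of $t$ whereas $c$ is a descendant of $u\in V(t)$. Thus $t$ has edges to at least two distinct vertices of $W_A$, which is exactly (ix).

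The main obstacle I anticipate is not in deducing (ix) but in verifying that the output of~\cite{PS07+} literally fits Definition~\ref{def:ellfine}, since here the separation property (iii), the internal/end distinction in (vii), and the size bound in (vi) have to hold simultaneously. If the statement in~\cite{PS07+} is phrased more loosely, I would redo the construction by hand: repeatedly peel off a shrub of size between $\ell/2$ and $\ell$ using Fact~\ref{fact_fullsubtrees}\ref{tesco1}, add its attachment vertex to $W_A$ or $W_B$ according to its depth parity, and keep track of the two running sums in (viii) so that a simple greedy choice of which full-subtree to peel at each step keeps the $\mathcal{D}_A$-side ahead; the resulting partition has $O(k/\ell)$ cut vertices and satisfies (i)–(viii), and then the short argument above gives (ix).
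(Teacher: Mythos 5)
Your reduction of (ix) to (i)--(viii) has a gap at the step ``we have $p\in W_A\cup W_B$''. Property (i) only says that $V(T)$ is a disjoint union of $W_A$, $W_B$ and the shrub vertex sets; it does \emph{not} say that the shrubs are the connected components of $T-(W_A\cup W_B)$, so nothing in (i)--(viii) prevents $p$ (or your vertex $c$) from lying in a neighbouring shrub. A counterexample: take the path on vertices $a,b,c,d$ in this order, rooted at $a$, with $\ell=2$, $W_A=\{a\}$, $W_B=\emptyset$, $\mathcal{D}_A=\{\{b\},\{c,d\}\}$, $\mathcal{D}_B=\emptyset$. All of (i)--(viii) hold ($\{c,d\}=T(\downarrow c)$ is an end-shrub so (viii) reads $2\ge 0$, each shrub has order at most $\ell$, etc.), yet $\{b\}$ is internal---it equals $T(\downarrow x)$ for no $x$---and meets only the single $W_A$-vertex $a$, its other neighbour $c$ lying inside the shrub $\{c,d\}$. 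So (ix) is not a formal consequence of (i)--(viii).

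Your argument does become correct once you add the missing hypothesis that the shrubs are precisely the components of $T-(W_A\cup W_B)$: then any vertex adjacent to a shrub and outside it lies in $W_A\cup W_B$, and your two witnesses (the parent of the $\preceq$-maximal vertex of the internal shrub, and a child of some shrub vertex lying outside the shrub, both pushed into $W_A$ by (iii) and distinct because one lies strictly above and the other strictly below the shrub) give (ix). Stated with that hypothesis, this is a genuine shortcut relative to the paper, which rebuilds the partition from scratch: a raw cut set $W_1$; then the branch vertices of components meeting at least three cut vertices (this is what keeps the number of extra cut vertices, hence (vi), under control); a parity choice favouring the side with more end-shrubs; and further cut vertices to enforce (iii) and (vii), after which all nine conditions are verified on the components of $T-W_3$. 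Your by-hand fallback is also thin exactly where real work is needed: swapping shrubs or relabelling cut vertices cannot repair a shrub adjacent to both $W_A$- and $W_B$-vertices; one must \emph{add} cut vertices to enforce (iii), and keeping (vi) under control while doing so is precisely the role of the paper's branch-vertex step.
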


For the proof, we shall need the following easy claim.
\begin{fact}[{\cite[Proposition~7.11]{Z07+}}]\label{fact_FewLeavesManyDeg3}
	Let $T$ be a tree with $\ell$ leaves. Then $T$ has at most $\ell-2$
	vertices of degree at least three.
\end{fact}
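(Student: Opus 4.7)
My plan is to prove the bound by a straightforward application of the handshake identity for trees, exploiting the fact that every tree on $m$ vertices has exactly $m-1$ edges and hence total degree $2(m-1)$.

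First I would partition the vertex set of $T$ according to degree. Let $\ell$ denote the number of leaves (vertices of degree $1$), let $d_2$ denote the number of vertices of degree $2$, and let $d_{\ge 3}$ denote the number of vertices of degree at least $3$. Since $T$ is connected with at least one edge (the statement is only meaningful when $\ell\ge 2$, in which case $v(T)\ge 2$), every vertex falls into exactly one of these three classes, so $v(T)=\ell+d_2+d_{\ge 3}$.

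Next I would write down the handshake identity $\sum_{v\in V(T)}\deg(v)=2(v(T)-1)$ and lower bound the left-hand side by the contribution of each class: the $\ell$ leaves contribute exactly $\ell$, the degree-$2$ vertices contribute exactly $2d_2$, and each vertex counted by $d_{\ge 3}$ contributes at least~$3$. This gives
\[
\ell+2d_2+3d_{\ge 3}\le 2(v(T)-1)=2\ell+2d_2+2d_{\ge 3}-2,
\]
which simplifies to $d_{\ge 3}\le \ell-2$, exactly the desired bound. I do not anticipate any real obstacle here; the only mild point is to note that the statement is vacuously correct (or trivially correct with the convention $d_{\ge 3}=0$) for the degenerate one-vertex or two-vertex trees, so that we may freely assume $v(T)\ge 2$ in the computation above.
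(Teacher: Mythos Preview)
Your proof is correct. The paper does not actually prove this fact; it merely cites it from~\cite{Z07+} without giving an argument, so there is no proof in the paper to compare against. Your handshake-lemma computation is the standard and expected way to establish this bound.
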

\begin{proof}[Proof of Lemma~\ref{lem:cutfine}]
We first cut up the tree $T$ into components of order at most $\ell$. To this end we start with an empty set $W_1$ and place a token $v$ on the root $R$. At each step we check whether all the components of $T-v$ possibly except the one containing $R$ are of individual orders at most $\ell$. If that is the case then we insert $v$ into $W_1$, and we delete $v$ as well as all the said components from $T$. We restart with the token $v$ again on $R$. Otherwise, we move $v$ one vertex down to any component of order more than $\ell$. Obviously, at the stage when the process terminates, we have $|W_1|\le \frac{k+1}{\ell+1}$. Last, we add $R$ to $W_1$. Then $|W_1|\le \frac{k+1}{\ell+1}+1$.

Next, we want to refine the set of cut vertices $W_1$ in order to satisfy~(ix) of Definition~\ref{def:ellfine}.
To this end, consider the components $\mathcal D_{\ge 3}$ of $T-W_1$ that neighbour at least 3 vertices of~$W_1$.
Fix an arbitrary tree $t\in \mathcal D_{\ge 3}$. Let $X(t)\subset V(t)$ be the neighbors of $W_1$. Let $X'(t)$ be all the vertices of $X(t)$ with the $\preceq$-maximal element removed. 
We have $|X'(t)|=|X(t)|-1$. Consider the tree $\text{branch}(t)\subset t$ induced by the paths in $t$ connecting all the pairs of vertices of $X(t)$. Let $Y(t)$ be the vertices of degree at least 3 in $\text{branch}(t)$. By Fact~\ref{fact_FewLeavesManyDeg3}, we have $|Y(t)|\le |X(t)|-2< |X'(t)|$. Observe that a map assigning to each vertex of $\bigcup_{t\in\mathcal D_{\ge 3}} X'(t)$ any of its $\preceq$-minimal neighbors in $W_1$ is injective. Set $W_2=W_1\cup\bigcup_{t\in\mathcal D_{\ge 3}} Y(t)$. By the above, $|W_2|\le |W_1|+\sum_{t\in\mathcal D_{\ge 3}} |X'(t)|\le 2|W_1|\le \frac{2(k+1)}{\ell+1}+2$. Let $\mathcal S_A$ and $\mathcal S_B$ be a partition of all the components of $T-W_2$ where the respective membership of a component to $\mathcal S_A$ or to $\mathcal S_B$ is given by the parity of the distance of that component to $R$, and further such that 
$$\sum_{\substack{t\in \mathcal{S}_A\\t\textrm{ end-tree}}}v(t)\ge
\sum_{\substack{t\in \mathcal{S}_B\\t\textrm{ end-tree}}}
v(t)\;\mbox{.}$$
In particular, we can write $W_2=W_{2A}\dcup W_{2B}$ where $W_{2A}$ are the parents of all the components of $\mathcal S_A$ and $W_{2B}$ are the parents of all the components of $\mathcal S_B$. 

It remains to add further cut vertices in order to satisfy~(iii) and~(vii) of Definition~\ref{def:ellfine}. Initially, set $W_3=W_2$. For each internal tree $t_B\in\mathcal S_B$ we take its unique $\preceq$-maximal vertex and add it to the set $W_3$. Further, we add $\parent(W_{2B})\cap V(t_B)$ to $W_3$. For each internal tree $t_A\in\mathcal S_A$ we add $\parent(W_{2B})\cap V(t_A)$ to $W_3$. See Figure~\ref{fig:finepartition}.
\begin{figure}[t]
\centering\includegraphics[scale=0.8]{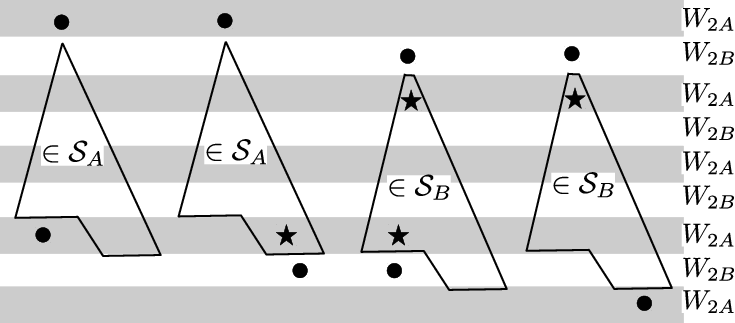}
	\caption{Obtaining the set $W_3$ from the set $W_2$ on examples of four internal trees depending on the parity of the neighbouring vertices of $W_2$ (which are denoted by dots). The newly added vertices are marked by stars.}
\label{fig:finepartition}
\end{figure}
As each vertex of $W_{2B}$ has at most one parent lying in some internal tree from $S_A\cup S_B$, we have
$$ 
|W_3|\le|W_2|+|\{\text{internal trees in $\mathcal S_B$}\}|+|W_{2B}|
\;.
$$
As each internal tree can be associated with a unique vertex of $W_2$ lying directly below it we get $|W_3|\le 3|W_2|\le \frac{6(k+1)}{\ell+1}+6\le \frac{12k}{\ell}$. It is straightforward to check that the set $W_3$ partitioned according to the bipartite colouring $W_3=W_A\dcup W_B$ with the correspondingly partitioned components $\mathcal D_A\dcup\mathcal D_B$ of $T-W_3$ satisfies all requirements of Definition~\ref{def:ellfine}.
\end{proof}

\bigskip


The next lemma will allow us to remove\Referee{(43)} trees which are locally unbalanced
from further considerations in our proof of Theorem~\ref{thm_main}. Let us
introduce the notion of (un)balanced forest now. For a real number $c\in
(0,\tfrac{1}2)$ we say that a family $\mathcal{C}$ of trees of total order at most $k+1$ is {\em
$c$-balanced} \label{p.:def-unbalanced} if the forest formed by the trees $t\in \mathcal{C}$ with
$|t_\ominus|> c\cdot v(t)$ is of order at least $c k$, i.\,e.,
$$\sum_{\substack{t\in \mathcal{C}\\|t_\ominus|> cv(t)}}v(t)\ge
ck\;\mbox{.}$$
Otherwise, we say that $\mathcal{C}$ is {\em $c$-unbalanced}.\RefereeX{(44)}{Somewhat differently incorporated}

Note that when $\mathcal{C}$ is $c$-balanced, then 
\begin{equation}\label{eq:ladvi}
\sum_{t\in \mathcal{C}}|t_\ominus|\ge
c^2k\;.
\end{equation}\RefereeX{(45)}{Somewhat differently incorporated}
\begin{lemma}\label{prop:UnbalancedCase}
For each number $q>0$ there exists a constant $c_\mathbf{U}>0$ such that the
following holds for each $n$-vertex graph $G$ with the LKS-property with
parameter $k>qn$. Suppose that $T\in \mathcal{T}_{k+1}$ is given. If there
exists a set $W\subset V(T)$, $|W|<c_\mathbf{U}k$ such that the family $\mathcal{C}$ of all components of the forest $T-W$ is
$c_\mathbf{U}$-unbalanced, then $T\subset G$.
\end{lemma}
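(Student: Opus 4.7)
The plan is to show that $T$ has at least $(1-Cc_\mathbf{U})k$ leaves for some absolute constant $C$, and then to finish by Fact~\ref{fact:easyemb2} combined with Lemma~\ref{prop_SCHolds} as a fallback when $G[L]$ is sparse.

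First I would establish the leaf count. Partition $\mathcal{C}$ into singleton components $\mathcal{C}_1$, non-singleton unbalanced components $\mathcal{C}_{\ge 2}^u$, and non-singleton balanced components $\mathcal{C}_{\ge 2}^b$. Every non-singleton $t$ carries an edge, so $|t_\ominus|\ge 1$, and for $t\in\mathcal{C}_{\ge 2}^u$ the condition $|t_\ominus|\le c_\mathbf{U} v(t)$ forces $v(t)\ge 1/c_\mathbf{U}$, yielding $|\mathcal{C}_{\ge 2}^u|\le c_\mathbf{U}(k+1)$; the $c_\mathbf{U}$-unbalanced hypothesis paired with $v(t)\ge 2$ gives $|\mathcal{C}_{\ge 2}^b|\le c_\mathbf{U} k/2$. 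For each $t\in\mathcal{C}_{\ge 2}^u$, Fact~\ref{fact_ManyLeavesInUnbalanced} delivers at least $(1-2c_\mathbf{U})v(t)$ leaves of $t$ inside $t_\oplus$, while for singleton components the identity $\deg_T(v)=|\neighbor_W(v)|$ gives, by a direct degree-sum inequality, at least $2|\mathcal{C}_1|-\beta_1$ leaves of $T$ within $\mathcal{C}_1$, where $\beta_1:=e(W,V(\mathcal{C}_1))$. A leaf of a component becomes a non-leaf of $T$ only through a $W$-neighbor, so the loss across $\mathcal{C}_{\ge 2}$ is at most $\beta-\beta_1$; and the tree-edge identity produces $\beta:=e(W,V(T)\setminus W)\le|W|+|\mathcal{C}|-1$. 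Plugging the estimates on $|\mathcal{C}|$ and $|W|\le c_\mathbf{U} k$ into the sum of the two contributions yields at least $(1-Cc_\mathbf{U})k+1$ leaves of $T$ after routine arithmetic.

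With the leaf count in hand, let $c_\mathbf{S}=c_\mathbf{S}(q)$ be the constant from Lemma~\ref{prop_SCHolds} and set $c_\mathbf{U}=c_\mathbf{S}/(2C)$. If $e(G[L])<c_\mathbf{S}n^2$, then Lemma~\ref{prop_SCHolds} applied with $\overV=V$ immediately gives $T\subseteq G$: hypothesis~(i) holds since $n>k$, (ii) is vacuous, (iii) follows from $|L|\ge n/2$, and (iv) is the case assumption. Otherwise iteratively prune $G[L]$ by deleting any vertex whose current degree is less than $Cc_\mathbf{U} k$; this removes fewer than $Cc_\mathbf{U} k\cdot|L|\le Cc_\mathbf{U} n^2<c_\mathbf{S}n^2$ edges, so a nonempty subgraph $H'\subseteq G[L]$ with $\delta(H')\ge Cc_\mathbf{U} k$ survives. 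Since $V(H')\subseteq L$ yields $\delta_G(V(H'))\ge k$, and $T$ has at least $k-Cc_\mathbf{U} k$ leaves, Fact~\ref{fact:easyemb2} with $\ell=k$ and $x=Cc_\mathbf{U} k$ embeds $T$ into $G$.

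The main obstacle is the leaf count itself: a naive estimate $\sum_{t\text{ unbal}}(1-2c_\mathbf{U})v(t)-\beta$ collapses when $\beta$ is close to $k$, but a large $\beta$ forces a large $|\mathcal{C}|$ (through $\beta\le|W|+|\mathcal{C}|-1$), large $|\mathcal{C}|$ means many singletons (since non-singleton components are scarce by the size bounds), and singletons in turn recoup the missing leaves via the degree-sum inequality $\ell_1\ge 2|\mathcal{C}_1|-\beta_1$. Once this compensation is isolated, the rest is bookkeeping.
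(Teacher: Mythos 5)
Your proof is correct, and it follows the same high-level strategy as the paper: establish that $T$ has at least $(1-O(c_\mathbf{U}))k$ leaves, then split on whether $e(G[L])$ is large (Fact~\ref{fact:easyemb2} after extracting a high-minimum-degree subgraph $H'\subset G[L]$) or small (Lemma~\ref{prop_SCHolds} with $\overV=V$). The second half — the dichotomy on $e(G[L])$, the iterative pruning to get $H'$, and the application of Fact~\ref{fact:easyemb2} — is essentially verbatim the paper's argument.

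Where you diverge is in the bookkeeping for the leaf count. The paper roots $T$, observes that each component-leaf that fails to be a $T$-leaf is either the $\preceq$-minimum or the $\preceq$-maximum of its component, and then bounds $|X_{\min}|\le|W|$ (via parents in $W$) and $|X_{\max}|\le h\le c_\mathbf{U}(k+1)$ (via the scarcity of large unbalanced components). You avoid rooting and instead count edges: the tree identity $\beta=e(W,V(T)\setminus W)\le|W|+|\mathcal{C}|-1$ bounds the global loss, the degree-sum inequality $\ell_1\ge 2|\mathcal{C}_1|-\beta_1$ recoups leaves from singletons, and the compensation mechanism (large $\beta$ forces large $|\mathcal{C}_1|$) closes the circle. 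I checked the arithmetic and your approach does give a bound of the form $(1-Cc_\mathbf{U})k$ with $C$ around $7$ (the paper gets $C=6$). Both methods are elementary and of comparable length; the paper's is slightly more geometric (it sees the lost leaves as endpoints of the components under a fixed root), while yours is more purely combinatorial (edge counting). Neither is clearly preferable, but they are genuinely different ways of packaging the same underlying observation that $|W|$ and $|\mathcal{C}|$ are both small.
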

\begin{proof}
Set $c_\mathbf{U}=\tfrac{c_\mathbf{S}}{6}$, where $c_\mathbf{S}$ is given by
Lemma~\ref{prop_SCHolds}.

If the set $L$ induces less then $c_\mathbf{S}n^2$ edges then
we have $T\subset G$ by Lemma~\ref{prop_SCHolds} with $\overV=V$. In the rest we
assume that $G[L]$ contains at least $c_\mathbf{S}n^2$ edges. A
well-known fact asserts that there exists a graph $G'\subset G[L]$
with minimum degree at least half of the average degree of $G[L]$,
i.\,e., $\delta(G')\ge c_\mathbf{S}n\ge
6 c_\mathbf{U}(k+1)$.

Let $\mathcal{C}'\subset \mathcal{C}$ be those trees $t\in
\mathcal{C}$ for which $|t_\ominus|\le  c_\mathbf{U} v(t)$. Since $\mathcal{C}$ is $c_\mathbf{U}$-unbalanced we have $\sum_{t\in \mathcal C\setminus \mathcal C'}v(t)<c_\mathbf{U} k$.\Referee{(46)} Consequently,
\begin{equation}\label{eq:Cprimbig}
\sum_{t\in \mathcal{C}'}v(t)=v(T)-|W|-\sum_{t\in \mathcal C\setminus \mathcal C'}v(t)>k+1-c_\mathbf{U} k-c_\mathbf{U} k>(1-2 c_\mathbf{U})(k+1)\;.
\end{equation}
Fact~\ref{fact_ManyLeavesInUnbalanced} gives that each tree
$t\in\mathcal{C}'$, $v(t)>1$ contains more than $(1-2 c_\mathbf{U})v(t)$ leaves. The same property holds trivially for each tree $t\in\mathcal{C}'$, $v(t)=1$. Employing~\eqref{eq:Cprimbig},\Referee{(47)} we get that there are at least
$(1-2 c_\mathbf{U})\sum_{t\in \mathcal{C}'}v(t)\ge(1-4 c_\mathbf{U})(k+1)$ leaves in the trees of $\mathcal{C}'$. A leaf of a
tree $t\in \mathcal{C}'$ is either a leaf
of~$T$ or it is adjacent to a vertex in $W$. We root $T$ at an
arbitrary vertex $r$, thus obtaining a partial order
$\preceq$. Let $X$ be the set of
vertices that are leaves of some tree $t\in \mathcal{C}'$
but not leaves of~$T$.\Referee{(48)} Each vertex in~$X$ is either a
$\preceq$-minimal or a $\preceq$-maximal vertex of some tree $t\in
\mathcal{C}$. Let $X_\mathrm{min}\subset X$ be the $\preceq$-minimal
vertices and $X_\mathrm{max}=X\setminus X_\mathrm{min}$. 
(Note that the vertices which come out from $1$-vertex trees of $\mathcal{C}'$
are included only in $X_\mathrm{min}$.)
As each
tree in $\mathcal{C}'$ has a unique $\preceq$-maximal vertex we get
$|X_\mathrm{max}|\le h$, where $h$ is the number of trees in
$\mathcal{C}'$ which have order more than 1. Observe\Referee{(49)} that each such
tree has at least $\tfrac1{c_{\mathbf{U}}}$ vertices and thus
$h\le  c_\mathbf{U}(k+1)$. For each $v\in X_\mathrm{min}$ we
have $|\children(v)\cap W|\ge 1$. Since for each $u\in W$ it holds
$|\parent(u)\cap X_\mathrm{min}|\le 1$, we have $|X_\mathrm{min}|\le
|W|< c_\mathbf{U}k$. Summing the bounds we get
$|X|<2 c_\mathbf{U}(k+1)$. Thus $T$ has at least
$(1-6 c_\mathbf{U})(k+1)$ leaves. Therefore, we can apply Fact~\ref{fact:easyemb2} on $G'\subset G$ and conclude that $T\subset G$.\Referee{(50)}
\end{proof}
 
\subsection{A matching structure}
A graph $H$ is said to be {\em
factor critical} if for each its vertex $v$ the graph $H-v$ has a
perfect matching.
The following statement is a fundamental result in Matching
theory. See~\cite[Theorem~2.2.3]{Die05}, for example.
\begin{theorem}[Gallai--Edmonds Matching Theorem]
\label{thm_GallaiEdmonds} Suppose that $H$ is a graph. Then there exist a set
$Q\subset V(H)$ and a matching $M$ of size $|Q|$ in $H$ such that
every component of $H-Q$ is factor critical and the matching $M$
matches every vertex in $Q$ to a different component of $H-Q$.
\end{theorem}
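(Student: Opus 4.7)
The statement is the Gallai--Edmonds matching theorem, and I would follow the classical proof via the Gallai--Edmonds decomposition, as in~\cite[\S2.2]{Die05}. The plan is to define $D = D(H) \subseteq V(H)$ as the set of vertices missed by at least one maximum matching of $H$, and then set $A := N_H(D) \setminus D$ and $C := V(H) \setminus (A \cup D)$. The theorem's pair $(Q,M)$ will be extracted from this decomposition, after some post-processing to handle the set $C$.

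The main technical task is to establish three structural properties: (a) every component of $H[D]$ is factor-critical; (b) $H[C]$ admits a perfect matching; and (c) every maximum matching of $H$ restricts to a perfect matching of $H[C]$, a near-perfect matching of each component of $H[D]$, and pairs every vertex of $A$ with a vertex from a distinct component of $H[D]$. The crux of the argument is Gallai's lemma: if a connected graph $J$ satisfies $D(J) = V(J)$, then $J$ is factor-critical. I would prove Gallai's lemma by induction on $v(J)$: in a minimal counterexample, one picks two vertices $u,v$ at minimum distance with $v$ missed by some maximum matching of $J - u$, and uses Berge's augmenting-path characterization to derive a contradiction. Property (a) then follows by applying Gallai's lemma to each component of $H[D]$ (one first checks that $D$ of such a component, considered as its own graph, equals the whole vertex set); (b) and (c) follow by analyzing $M$-alternating walks started at unsaturated vertices and observing that such walks cannot exit $D \cup A$.

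Given (a)--(c), I would set $Q_0 := A$ and let $M_0$ be the set of $A$-$D$ edges in a fixed maximum matching of $H$. Then $|M_0| = |A| = |Q_0|$, and $M_0$ maps distinct vertices of $Q_0$ to distinct factor-critical components of $H[D]$; since there are no $C$-$D$ edges (by the very definition of $A$), these are also components of $H - Q_0$. The remaining components of $H - Q_0$ lie inside $C$ and need not be factor-critical, so I would promote them by recursing on $v(H)$: apply the theorem inductively to each component $K$ of $H[C]$ to obtain $(Q_K, M_K)$, and set $Q := A \cup \bigcup_K Q_K$ and $M := M_0 \cup \bigcup_K M_K$. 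The recursion strictly decreases $v(H)$ whenever $A \cup D \neq \emptyset$. The base case $D(H) = \emptyset$ (so $A = \emptyset$ and $H$ itself has a perfect matching) I would handle separately: pick any edge $uv$ of a perfect matching of $H$, place $u$ into $Q$ and the edge $uv$ into $M$, and recurse on $H - u$, where now $v \in D(H-u)$ (the perfect matching of $H$ minus $uv$ witnesses this), so $v \notin Q'$ in the recursive output, and the argument closes. The hardest part will be proving Gallai's lemma and deriving (a)--(c) from it; once those are in place, the construction of $(Q,M)$ by the recursion above is routine bookkeeping.
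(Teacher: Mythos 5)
The paper does not prove this statement; it is quoted as a known result with a pointer to Diestel's book (Theorem~2.2.3 there), so there is no in-paper proof to compare against. Your route --- the classical $D$/$A$/$C$ Gallai--Edmonds decomposition built on Gallai's lemma, followed by a recursion into $H[C]$ --- is a standard and viable alternative to Diestel's own argument (which instead picks a set $S$ maximizing the Tutte deficiency $q(G-S)-|S|$ and argues factor-criticality of the components of $G-S$ directly, with matchability of $S$ coming from a Hall/K\"onig step).

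There is, however, a genuine gap in your base case. You recurse on $H-u$ to obtain $(Q',M')$, note that $v\in D(H-u)$ hence $v\notin Q'$, and then declare $Q:=Q'\cup\{u\}$, $M:=M'\cup\{uv\}$. But $v\notin Q'$ does not prevent $v$, or some other vertex of the component $K_v$ of $(H-u)-Q'$ containing $v$, from being an endpoint of $M'$. If a vertex of $Q'$ is matched by $M'$ into $K_v$, then two vertices of $Q$ hit the same component, violating the conclusion; and if $v$ itself is $M'$-covered, then $M'\cup\{uv\}$ is not even a matching. Concretely, $H-u$ has deficiency one, so $(H-u)[D(H-u)]$ has $|A(H-u)|+1$ components and your chosen $A$-$D$ edges miss exactly one of them --- but nothing in the construction forces the missed one to be $K_v$ (a small example: $D(H-u)=\{v,w\}$ with $A(H-u)=\{a\}$ adjacent to both; the recursion may match $a$ to $v$). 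The fix is easy but must be said: when processing $H-u$, take the ``fixed maximum matching'' that defines the $A$-$D$ edges to be the perfect matching of $H$ with the edge $uv$ deleted; since this matching misses $v$, its $A$-$D$ edges avoid $K_v$ and the argument then closes. Equivalently, strengthen the inductive claim to: for any prescribed $v\in D(G)$, one may choose $(Q,M)$ so that the component of $G-Q$ containing $v$ is disjoint from $V(M)$. The rest of the sketch (Gallai's lemma by minimal counterexample via Berge, properties (a)--(c), the disjointness of the $D$-components from $C$, and the termination of the outer recursion) is sound.
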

The set $Q$ in Theorem~\ref{thm_GallaiEdmonds} is called a {\em
separator}.
In order\Referee{(51)} to introduce the main result of this section, Lemma~\ref{prop_TutteType}, we
need the following setting.
\begin{setting}\label{set:TutteType}
Let $s>0$ and let $(H,\omega)$ be a weighted graph of order $N$, with
$\omega: E(H)\rightarrow (0,s]$. Let $\sigma, K$ be two positive reals with
$\tfrac1{2N}<\sigma<\min\{\tfrac{K}{32Ns},\tfrac{1}{30}\}$. Let $\mathcal{L}$ be
a\Referee{(52)} set of vertices such that
\begin{enumerate}[label={$(\roman{*})$}]
\item\label{ca:2} $V(H)\setminus \mathcal{L}$ is an independent set,
\item\label{ca:3} $|\mathcal{L}|>\tfrac{N}2-\sigma N$,
\item\label{ca:4} $\omega(u)\ge K$ for every $u\in \mathcal{L}$,
\item\label{ca:5} the set $\mathcal{L}$ induces at least one edge in $H$,
\item\label{ca:6} $\omega(u)<(1+\sigma)K$ for every $u\in V(H)\setminus
\mathcal{L}$.\RefereeX{(53)}{No, we need to work in the current mode. Our formulation is more general, and is necessary so that it fits the definition of the set $\mathcal L$ as given later, denoted by (53)* in the margin}
\end{enumerate}
\end{setting}

\begin{lemma}
\label{prop_TutteType} Let $s, N, \sigma, K, \mathcal L$, and a graph $(H,\omega)$
be as in Setting~\ref{set:TutteType}. Set $\mathcal{L}^*=\{u\in V(H)\: :\:
\omega(u)\ge\tfrac12(1+\sigma)K\}$.
Then there exist a matching $M$ such
that at least one of the following holds.
\begin{itemize}
\item[Case I]  There are two adjacent vertices $A,B\in V(H)\setminus V(M)$ with $A\in\mathcal L$, $\omega(A,V(M))\geq
K-s$, and $\omega(B,V(M)\cup \mathcal{L}^*)\ge
\tfrac12(1+\sigma)K$. For each edge $e\in M$ we have $|\neighbor(A)\cap e|\le 1$. 
\item[Case II]\RefereeX{!}{Case II and its proof were changed. Newly, there are no vertices $A,B$. This relates to comment (76).} There exists a set $\XXX\subset V(H)$ such that for each $x\in \XXX$ all but at most $2\sigma N$ neighbours of~$x$ are covered by~$M$. Furthermore, the set $\XXX\cap \mathcal{L}$ induces at least one edge, and
$|V(M')\setminus \XXX|\leq 1$, where $M'=\{xy\in M\::\: x,y\in \neighbor(\XXX)\}$.
\end{itemize}
Moreover, observe that each edge $e\in M$ intersects the set $\mathcal L$.
\end{lemma}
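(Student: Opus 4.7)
\medskip

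\noindent\textbf{Proof plan.}
My plan is to start from a near\-maximum\-weight matching $M_0$ in $H$ and to use a bounded sequence of local alternating\-path exchanges to either extract a witness pair $(A,B)$ for Case~I or to expose the set $\mathcal{O}$ of Case~II. Since $V(H)\setminus\mathcal{L}$ is independent by~\ref{ca:2}, every edge of $H$ meets~$\mathcal{L}$, so every edge of any matching in $H$ automatically intersects~$\mathcal{L}$; this immediately yields the ``moreover'' clause. A double count using~\ref{ca:3},~\ref{ca:4} and~\ref{ca:6}, combined with the slack provided by $\sigma<\min\{K/(32Ns),1/30\}$, shows that almost every vertex of~$\mathcal{L}$ is covered by $M_0$, so the unmatched set $U:=V(H)\setminus V(M_0)$ has size of order $\sigma N$.

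Next I try for Case~I. Condition~\ref{ca:5} gives an edge $e_0\subseteq\mathcal{L}$, and by Berge\-type exchanges I may rearrange $M_0$ (without decreasing total weight) so that both endpoints of~$e_0$ become unmatched; call them $A$ and~$B$, with $A\in\mathcal{L}$. Since $\omega(A)\ge K$ and the weight\-maximality of~$M_0$ forces $A$ to have at most one neighbour inside $U$, one gets $\omega(A,V(M_0))\ge \omega(A)-s\ge K-s$. The companion bound $\omega(B,V(M_0)\cup\mathcal{L}^*)\ge\tfrac12(1+\sigma)K$ comes from the fact that every neighbour of~$B$ lying outside $V(M_0)\cup\mathcal{L}^*$ carries weight strictly below $\tfrac12(1+\sigma)K$, combined with the independence of $V(H)\setminus\mathcal{L}$ which keeps low\-weight vertices well matched. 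The property ``$|N(A)\cap e|\le 1$ for each $e\in M_0$'' is enforced by a second swap loop: whenever $e=xy\in M_0$ has both endpoints in $N(A)$, replace $e$ by $Ax$, pick a new unmatched candidate $A'\in U\cap\mathcal{L}$, and repeat; the number of such bad edges is a potential that strictly decreases at each step, so the process terminates.

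If at termination no valid pair $(A,B)$ has been produced, I declare $\mathcal{O}$ to be the set of all candidate vertices examined during the exchange phase whose unmatched neighbourhood in the current matching had size at most $2\sigma N$. Condition~\ref{ca:5} guarantees that $\mathcal{O}\cap\mathcal{L}$ still spans at least one edge, and the possible exceptional vertex in $|V(M')\setminus\mathcal{O}|\le 1$ corresponds to one endpoint of the last exchange that was initiated but stopped. The main obstacle is the quantitative bookkeeping through the exchange phase: each swap may alter the total matching weight by up to $s$ and can shuffle which vertices are candidates for $\mathcal{O}$, so one must verify that the slack encoded in $\sigma<K/(32Ns)$ survives all iterations and that the termination regime really delivers the three claimed properties of~$\mathcal{O}$; this is where I expect the technical heart of the argument to lie.
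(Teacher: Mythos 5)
Your proposal has a fundamentally different skeleton from the paper's proof, and it contains several gaps that would make it unviable as written.

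The paper's proof is anchored on the Gallai--Edmonds structure theorem: it fixes a Gallai--Edmonds matching $M_0$ with separator $Q$, chosen to cover the maximum number of vertices outside $\mathcal{L}^*$, and then argues by cases according to whether $\mathcal{L}_0 := \mathcal{L}\setminus Q$ induces an edge, is empty, or is a nonempty independent set. In the third case, factor-criticality of the components of $H-Q$ combined with the independence of $\mathcal{L}_0$ and of $V(H)\setminus\mathcal{L}$ forces all components to be singletons, which in turn makes the separator $Q$ the structural hub of a delicate weighted double-count; the final contradiction relies precisely on the bound $\sigma < K/(32Ns)$. Your approach abandons this structural theorem entirely in favour of local alternating-path exchanges. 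That is a legitimate thing to try, but here it does not deliver, for the following concrete reasons.

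First, your claim that Berge-type exchanges can turn both endpoints of the edge $e_0\subseteq\mathcal{L}$ into unmatched vertices \emph{without decreasing weight} is unsubstantiated. If $M_0$ is maximum (in size or weight), freeing two matched vertices typically \emph{does} decrease the matching; when it does not, it requires a specific alternating structure that has to be produced, not assumed. The paper avoids this entirely: in the Gallai--Edmonds structure, either the factor-critical component containing $e_0$ already supplies a perfect matching of $C-z$ (Case~II), or $\mathcal{L}_0$ being independent forces a contradiction, but at no point are two adjacent matched vertices ``freed.''

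Second, the termination argument for enforcing ``$|N(A)\cap e|\le 1$ for all $e\in M$'' is not sound. You replace a bad edge $xy$ by $Ax$ and pick a fresh candidate $A'$, then claim the number of bad edges strictly decreases. But the bad-edge count is relative to the current candidate $A$ (it counts edges of $M$ both of whose ends are neighbours of $A$); once you switch to $A'$, this count is computed against a different vertex and a different matching, and there is no monotone potential. The paper's fix is entirely different: the property ``$|N(A)\cap e|\le 1$'' is not obtained by exchanges at all, but falls out for free because in the independent-$\mathcal{L}_0$ case, $A$ is chosen in $\mathcal{L}_0$, all components of $H-Q$ are singletons, and $M_0$ matches $Q$ to distinct singletons, so two matched neighbours of $A$ cannot both lie in an edge of $M$.

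Third, your Case~II is not really a construction. ``Declare $\mathcal{O}$ to be the set of all candidate vertices examined during the exchange phase whose unmatched neighbourhood had size at most $2\sigma N$'' does not define a set from which the required properties (that $\mathcal{O}\cap\mathcal{L}$ induces an edge, and that $|V(M')\setminus\mathcal{O}|\le 1$) could be verified. In the paper, $\mathcal{O}$ is either the vertex set of a concrete factor-critical component of $H-Q$, or all of $V(H)$ when $\mathcal{L}_0=\emptyset$; in both situations the three required properties follow directly from the Gallai--Edmonds decomposition. In your framework, the critical inequality $|V(M')\setminus\mathcal{O}|\le 1$ would need a separate argument, and none is given.

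Finally, the weighted-degree bound $\omega(B,V(M)\cup\mathcal{L}^*)\ge\tfrac12(1+\sigma)K$ is waved away in your proposal; it is actually the hardest part. In the paper it is established indirectly: one assumes the negation for every candidate $B\in\tilde{\mathcal{L}}$, derives inequalities \eqref{eq_XandL(tilde)}, \eqref{eq_L0minusM} and \eqref{eq_L'-V(M)andX}, and then uses $\sigma<\min\{K/(32Ns),1/30\}$ to reach a contradiction on the size of the residual set $X$. This chain of inequalities depends on the precise Gallai--Edmonds structure (that $Q\setminus\mathcal{L}$ is matched to $\mathcal{L}_0$, Claim~\ref{AC:match}, etc.) and would not survive the ad hoc swapping in your scheme. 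You correctly identify the bookkeeping of the slack $\sigma<K/(32Ns)$ as ``the technical heart,'' but that is exactly the part your outline does not supply, and the intermediate structure you rely on (a swap process with a decreasing potential) does not exist.

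In summary: the ``moreover'' part is correctly and cheaply obtained from condition~\ref{ca:2}, but everything else in your outline is either unproved or relies on a false termination argument. The Gallai--Edmonds decomposition (or something equivalent) seems to be genuinely needed here, and a purely local exchange argument would have to be rebuilt from scratch with a correct potential and a precise definition of $\mathcal{O}$.
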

\begin{figure}[ht]
  \centering
\includegraphics{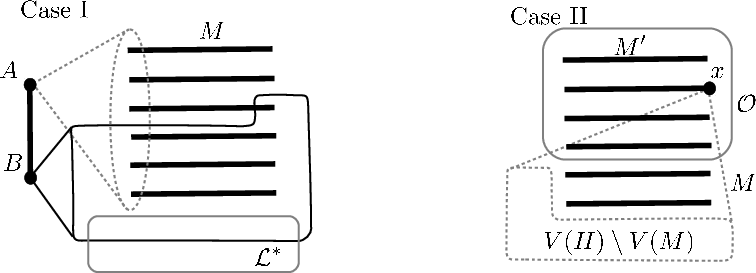}
  \caption{Two resulting matching structures from Lemma~\ref{prop_TutteType}. Dashed lines represent no connections (in Case~I), or sparse connections (in Case~II).}
  \label{fig_Tutte}
\end{figure}
\begin{proof}
Among all the matchings satisfying the conclusion of
Theorem~\ref{thm_GallaiEdmonds}, choose a matching~$M_0$ that covers the maximum number of vertices from $V(H)\setminus \mathcal{L}^*$. Let $Q$ be the corresponding separator. By definition, $M_0$ is a $Q\leftrightarrow
(V(H)\setminus Q)$-matching. Set $\mathcal L_0=\mathcal{L}\setminus Q$ and
$\mathcal{S}=V(H)\setminus \mathcal{L}$.
We distinguish three cases.

\noindent$\bullet$~~\emph{There exists an $\mathcal L_0\leftrightarrow \mathcal L_0$ edge.}\Referee{(54)}
Let $C$ be a component of $H-Q$ containing an $\mathcal L_0\leftrightarrow \mathcal L_0$ edge. 
If $V(M_0)\cap V(C)\not=\emptyset$, then we take $\{z\}=V(M_0)\cap V(C)$. Otherwise, we choose $z$ arbitrarily in $C$.\Referee{(56)}
Since $C$ is factor critical, there exists a perfect matching $M_1$ in $C-z$.
We claim that the conditions of Case~II are satisfied for $M=M_0 \cup M_1$, and
$\XXX=V(C)$. Thus, $\XXX\cap \mathcal{L}$ induces an edge. Next, let $x\in\XXX$.\Referee{(57)} We have
$\neighbor(x)\setminus\{z\}\subset V(M)$. Therefore, $\omega(x,V(M))\geq \omega(x)- s\ge \omega(x)-2\sigma Ns$. Consequently, all but at most $2\sigma N$ neighbours of~$x$ are covered by~$M$.\Referee{(58)} To check that $|V(M')\setminus \XXX|\leq 1$, it is enough to observe that each edge of $M'$ except at most one is contained entirely in $C$.\\

\noindent$\bullet$~~\emph{We have $\mathcal L_0=\emptyset$.}
Set $\XXX=V(H)$ and $M=M_0$. \RefereeX{(55)}{Not relevant, $A,B$ do not appear in the proof anymore.} Setting~\ref{set:TutteType}~\ref{ca:5} implies that there is an edge in $\XXX\cap \mathcal{L}$.
It is clear that $ V(M')\setminus\XXX=\emptyset$. Since $Q\supseteq \mathcal{L}$, $|\mathcal{L}|\ge
\tfrac{N}{2}-\sigma N$, and $|V(M)|=2|Q|$ it holds that all but at most $2\sigma
N$ vertices of $H$ are covered by $M$.\RefereeX{(58)}{Here, indeed, the bound was not needed.} The conditions of
Case~II are met.\\

\noindent$\bullet$~~\emph{$\mathcal L_0$ is an independent set and $\mathcal
L_0\not=\emptyset$.} We first derive some auxiliary properties of the
graph~$H$.
\begin{AuxiliaryCl}\label{AC:eachCompoSingle}
Each component $C$ of $H-Q$ is a singleton.
\end{AuxiliaryCl}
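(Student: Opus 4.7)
\medskip

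\textbf{Plan for proving Claim~\ref{AC:eachCompoSingle}.} The plan is to argue by contradiction using only the structural information already available at this point of the proof: the separator $Q$ coming from the Gallai--Edmonds decomposition, the fact that each component of $H-Q$ is factor critical, the independence of $V(H)\setminus\mathcal L$ (Setting~\ref{set:TutteType}~\ref{ca:2}), and the current case assumption that $\mathcal L_0 = \mathcal L\setminus Q$ is independent.

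First I would suppose, for contradiction, that some component $C$ of $H-Q$ satisfies $|V(C)|\ge 2$. Since $C$ is factor critical, $|V(C)|-1$ must be even (it is the number of vertices covered by a perfect matching of $C-v$ for any $v\in V(C)$), so $|V(C)|$ is odd and in particular $|V(C)|\ge 3$. Next I would observe that $V(C)\subseteq V(H)\setminus Q = \mathcal L_0 \cup \mathcal S$, where $\mathcal S = V(H)\setminus \mathcal L$. By Setting~\ref{set:TutteType}~\ref{ca:2} the set $\mathcal S$ is independent, and by the current case hypothesis $\mathcal L_0$ is independent as well. Consequently every edge of $C$ must go between $\mathcal L_0\cap V(C)$ and $\mathcal S\cap V(C)$, so $C$ is bipartite.

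The last step is to derive a contradiction from the fact that $C$ is a factor-critical bipartite graph with $|V(C)|\ge 2$. This is a standard observation: if $C$ were bipartite with classes $X$ and $Y$ satisfying $|X|\le|Y|$, then deleting any vertex of $Y$ would leave a bipartite graph with class sizes $|X|$ and $|Y|-1$, which can carry a perfect matching only if $|X|=|Y|-1$; but then deleting any vertex of $X$ yields class sizes $|X|-1$ and $|Y|=|X|+1$, which differ by two and hence admit no perfect matching, contradicting factor criticality. This rules out $|V(C)|\ge 2$ and proves the claim.

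The main obstacle is essentially non-existent: the only subtlety is to notice that the two independence assumptions (the one from Setting~\ref{set:TutteType}~\ref{ca:2} and the case hypothesis that $\mathcal L_0$ is independent) together force $C$ to be bipartite, at which point factor-criticality provides the immediate contradiction. No use of the weight function $\omega$, of the matching $M_0$, or of the maximality choice of $M_0$ is needed at this stage.
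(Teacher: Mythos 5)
Your proof is correct and follows essentially the same approach as the paper: both arguments observe that the independence of $\mathcal S$ and of $\mathcal L_0$ forces every component $C$ of $H-Q$ to be bipartite, and then use factor-criticality to conclude $v(C)=1$. You simply spell out the final step (a nontrivial factor-critical graph cannot be bipartite) in more detail than the paper, which states it as the one-line parity identity $|V(C-u)\cap\mathcal L_0|=|V(C-u)\cap\mathcal S|$ for all $u$.
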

\begin{proof}\Referee{(59)}
Indeed, since $\mathcal{S}$ and $\mathcal L_0$ are independent, all the edges in each matching in $C$ are in the form
$\mathcal{S}\leftrightarrow \mathcal L_0$. Since $C$ is factor critical,
we have $|V(C-u)\cap \mathcal L_0|=|V(C-u)\cap \mathcal{S}|$ for each
vertex $u\in V(C)$. This is possible only when $v(C)=1$.\Referee{(60)}
\end{proof} 
Claim~\ref{AC:eachCompoSingle} implies that $M_0$ is a
maximum matching in~$H$.\RefereeX{(61)}{Similar references were added to Claims~\ref{cl:manyleaves},
\ref{cl-pdfd},
\ref{AC:assExists},
\ref{club3}} 
Define $\mathcal{\tilde{L}}=\{u\in \neighbor(\mathcal L_0)\:
:\:\omega(u)\ge K\}$. Observe that $\mathcal{\tilde{L}}\subset Q$.
 By Setting~\ref{set:TutteType}~\ref{ca:4}, we also
have
\begin{equation}\label{eq:L0ALT}
\neighbor(\mathcal L_0)\setminus \tilde{\mathcal L}\subset Q\setminus \mathcal L
\;.
\end{equation}
\begin{AuxiliaryCl}\label{AC:L(tilde)notempty}
We have $\mathcal{\tilde{L}}\not=\emptyset$.
\end{AuxiliaryCl}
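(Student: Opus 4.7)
My plan is a proof by contradiction. Assume $\tilde{\mathcal L}=\emptyset$. Then every $u\in N(\mathcal L_0)$ satisfies $\omega(u)<K$, and in particular $u\notin\mathcal L$; combined with Claim~\ref{AC:eachCompoSingle} (whose singleton conclusion forces $N(\mathcal L_0)\subseteq Q$) this gives $N(\mathcal L_0)\subseteq Q\cap\mathcal S$.

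I would next use Setting~\ref{set:TutteType}~\ref{ca:5} to produce an edge $xy$ with both endpoints in $\mathcal L$. The independence of $\mathcal L_0$ rules out $x,y\in\mathcal L_0$, and the inclusion $N(\mathcal L_0)\cap\mathcal L=\emptyset$ rules out the mixed case, so $x,y\in\mathcal L\cap Q$. Writing $x'$ and $y'$ for the $M_0$-partners of $x$ and $y$, a short check shows $x',y'\in\mathcal S$: otherwise (say $x'\in\mathcal L$) the singleton component $\{x'\}$ would lie in $\mathcal L_0$, forcing $x\in N(\mathcal L_0)\cap\mathcal L$, contrary to the standing assumption.

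For the final contradiction I would modify $M_0$ into another matching that still satisfies the Gallai--Edmonds conclusion with the same separator $Q$, but is strictly better in one of the two selection criteria used for $M_0$: either it contains strictly more edges (contradicting the maximality of $M_0$) or it covers strictly more vertices of $V(H)\setminus\mathcal{L}^*$ (contradicting the tie-break rule). The candidate is obtained by deleting $xx'$ and $yy'$, inserting the within-$Q$ edge $xy$, and repairing the size-$1$ deficit through an alternating swap anchored at some $v\in\mathcal L_0$ together with its $M_0$-neighbour $u\in Q\cap\mathcal S$; here the singleton structure of $H-Q$ from Claim~\ref{AC:eachCompoSingle} ensures that the resulting edge set is again a $Q\leftrightarrow(V(H)\setminus Q)$-matching. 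Since $v\in\mathcal{L}^*$ whereas the newly uncovered vertices $x'$ and $y'$ can be arranged to lie in $V(H)\setminus\mathcal{L}^*$, the net effect is strictly higher coverage of low-weight vertices.

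The principal obstacle is precisely this last surgical step: one has to carry out the alternating-path rearrangement so that the modified matching genuinely witnesses the Gallai--Edmonds conclusion with the same separator $Q$, and one needs the density bound $|\mathcal L|>\tfrac{N}{2}-\sigma N$ from Setting~\ref{set:TutteType}~\ref{ca:3} to guarantee enough room in $Q\cap\mathcal S$ for the required swap.
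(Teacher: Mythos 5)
The paper's argument is a short double-counting: assuming $\tilde{\mathcal L}=\emptyset$ forces $\omega(u)<K$ for all $u\in N(\mathcal L_0)$, and since each $v\in\mathcal L_0$ has $\omega(v)\ge K$ and $\mathcal L_0$ is independent, summing $\omega(\mathcal L_0,N(\mathcal L_0))$ two ways gives $|\mathcal L_0|<|N(\mathcal L_0)|$. On the other hand, by Claim~\ref{AC:eachCompoSingle} and the independence of $\mathcal S$, each $u\in N(\mathcal L_0)\subseteq Q\setminus\mathcal L$ is $M_0$-matched to a distinct vertex of $\mathcal L_0$, giving the opposite inequality. You never use Setting~\ref{set:TutteType}~\ref{ca:5} or any matching surgery.

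Your first two paragraphs are fine (and your derivation of an edge $xy$ with $x,y\in\mathcal L\cap Q$ and $M_0$-partners in $\mathcal S$ is a correct observation), but the proposed surgical step has a fatal structural flaw that cannot be repaired in the form you describe. A matching witnessing Theorem~\ref{thm_GallaiEdmonds} with separator $Q$ is, by definition, a $Q\leftrightarrow(V(H)\setminus Q)$-matching of size exactly $|Q|$; it cannot contain the within-$Q$ edge $xy$ you propose to insert. So the ``candidate'' you construct is not in the pool of matchings that $M_0$ was chosen from, and neither of your two extremality criteria applies: every eligible matching has exactly $|Q|$ edges (so one cannot ``beat $M_0$ in size''), and the tie-break on coverage of $V(H)\setminus\mathcal L^*$ only compares matchings satisfying the Gallai--Edmonds conclusion. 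The ``alternating swap anchored at $v\in\mathcal L_0$ and its $M_0$-neighbour $u$'' does not resolve this: $v$ and $u$ are already matched to each other, so touching that edge doesn't repair the deficit, and no amount of alternation can make a matching containing the $Q$-internal edge $xy$ into one of the required form. The density bound from Setting~\ref{set:TutteType}~\ref{ca:3} is not used in the paper's proof and does not supply the missing ingredient here. In short: the approach needs to be abandoned, not just tightened. Replace it with the weight-counting argument sketched above, which uses only Claim~\ref{AC:eachCompoSingle}, the independence of $\mathcal L_0$ and of $\mathcal S$, and the definitions of $\mathcal L$ and $\tilde{\mathcal L}$.
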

\begin{proof}
Assume for contradiction that $\mathcal{\tilde{L}}=\emptyset$. Then for
every vertex $u\in\neighbor(\mathcal L_0)$ we have $\omega(u)<K$. We get $|\mathcal L_0|K\le
\omega(\mathcal L_0,\neighbor(\mathcal L_0))< K|\neighbor(\mathcal L_0)|$
(the second inequality is indeed strict because $\neighbor(\mathcal L_0)\neq \emptyset$) implying 
\begin{equation}\label{eq:L0NL0a}
|\mathcal L_0|<|\neighbor(\mathcal L_0)|\;.
\end{equation} On the other hand, from
$\mathcal{\tilde{L}}=\emptyset$ it follows that $\neighbor(\mathcal L_0)\cap \mathcal{L}=\emptyset$. Thus every vertex in $\neighbor(\mathcal L_0)$ is matched by $M_0$ to a distinct vertex
in $\mathcal L_0$, a contradiction to~\eqref{eq:L0NL0a}. 
\end{proof}

We show that the graph $V(H)$ fulfills the conditions of Case~I. 
Suppose first that  $B\in \neighbor(\mathcal L_0)$ is such that
$\omega(B,V(M_0)\cup \mathcal{L}^*)\ge \tfrac12(1+2\sigma)K$ and let $A\in \neighbor(B)\cap \mathcal L_0$ be arbitrary. Set $M=M_0\setminus\{A,B\}$. It can then be easily shown that that pair $(A,B)$ satisfies the conditions of Case~I.

So assume
that for every $B\in\mathcal{\tilde{L}}\subset \neighbor(\mathcal L_0)$ we have
\begin{align}
\label{eq_BinL*small} \omega(B,V(M_0)\cup
\mathcal{L}^*)&<\tfrac12(1+2\sigma)K \;\mbox{,}
\end{align}
which yields
\begin{align}
\label{eq_BinL*big} \omega(B,X)&>\tfrac12(1-2\sigma)K \;\mbox{,}
\end{align}
where $X=V(H)\setminus(V(M_0)\cup \mathcal{L}^*)$. 
\begin{AuxiliaryCl}\label{AC:match}
$M_0$ does not contain any edge with both end-vertices in
$\mathcal{L}$.
\end{AuxiliaryCl}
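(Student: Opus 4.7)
My plan is to prove Claim~\ref{AC:match} by contradiction, exploiting the maximality property built into the choice of $M_0$. Suppose some edge $xy\in M_0$ has both endpoints in~$\mathcal L$. Because $M_0$ is a $Q\leftrightarrow(V(H)\setminus Q)$-matching, exactly one of $x,y$ lies in~$Q$; say $x\in Q$ and $y\in V(H)\setminus Q$. Then $y\in \mathcal L\setminus Q=\mathcal L_0$, and since $x$ is a neighbour of $y$ with $\omega(x)\ge K$, we have $x\in\tilde{\mathcal L}$.

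Apply inequality~\eqref{eq_BinL*big} with $B=x$. This gives $\omega(x,X)>\tfrac12(1-2\sigma)K>0$, so $x$ has at least one neighbour $z\in X:=V(H)\setminus(V(M_0)\cup\mathcal L^*)$. By the definition of $\mathcal L^*$ we have $\omega(z)<\tfrac12(1+\sigma)K<K$, hence $z\in\mathcal S$. In particular $z$ is unmatched by $M_0$, and being a singleton component of $H-Q$ (by Claim~\ref{AC:eachCompoSingle}) it is distinct from the singleton component $\{y\}$. Next, perform the swap
\[
M_0':=(M_0\setminus\{xy\})\cup\{xz\}.
\]
Then $|M_0'|=|M_0|$, so $M_0'$ is again a maximum matching; and since it differs from $M_0$ only in replacing $x$'s partner $y$ (a singleton component) by $z$ (another singleton component), $M_0'$ still matches every vertex of $Q$ to a distinct component of $H-Q$. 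Thus $M_0'$ satisfies the conclusion of Theorem~\ref{thm_GallaiEdmonds} with the same separator $Q$.

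Finally, compare the coverage of $V(H)\setminus\mathcal L^*$. Since $\mathcal L\subseteq\mathcal L^*$ (because $\omega(u)\ge K\ge\tfrac12(1+\sigma)K$ for $u\in\mathcal L$), the vertex $y$ which $M_0'$ abandons does not belong to $V(H)\setminus\mathcal L^*$ and so is irrelevant to the count, whereas $z\in V(H)\setminus\mathcal L^*$ is newly covered. Hence $M_0'$ covers strictly more vertices of $V(H)\setminus\mathcal L^*$ than $M_0$, contradicting the choice of~$M_0$. The only delicate point in the argument is verifying that the swap preserves the Gallai--Edmonds structure, which is exactly where Claim~\ref{AC:eachCompoSingle} is indispensable: because every component of $H-Q$ is a singleton, any free vertex $z\in V(H)\setminus Q$ can be paired with a $Q$-neighbour without spoiling the distinct-component matching.
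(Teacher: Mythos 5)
Your proof is correct and follows essentially the same approach as the paper: take the endpoint of $xy$ lying in $Q$ (which is in $\tilde{\mathcal L}$), apply inequality~\eqref{eq_BinL*big} to find a neighbour in $X$, and swap the matching edge to contradict the extremal choice of $M_0$. You simply fill in more of the details (verifying the singleton-component structure via Claim~\ref{AC:eachCompoSingle} and the coverage count using $\mathcal L\subseteq\mathcal L^*$) that the paper leaves implicit.
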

\begin{proof}
Indeed, suppose that such an edge $xy\in M_0$ exists. Then $x\in
\mathcal L_0$ and $y\in \mathcal{\tilde{L}}$. By~\eqref{eq_BinL*big},
$\omega(y,X)>\tfrac12(1-2\sigma)K$. In particular, there exists a vertex $p\in \neighbor_X(y)$. The matching $\{yp\}\cup M_0\setminus \{xy\}$ is a matching as in Theorem~\ref{thm_GallaiEdmonds} (with separator $Q$) which covers more vertices of $V(H)\setminus \mathcal L^*$ than $M_0$. This contradicts the choice of $M_0$.
\end{proof}
Observe that for each vertex $u\in X$, we have
$\omega(u,V(M))=\omega(u)<\tfrac12(1+\sigma)K$. As $\mathcal{\tilde{L}}\subset
V(M_0)$, we have $\omega(u,\mathcal{\tilde{L}})<\tfrac12(1+\sigma)K$. We bound
$\omega(\mathcal{\tilde{L}},X)$ from both sides.
$$(1-2\sigma)|\mathcal{\tilde{L}}|\frac{K}{2}\overset{\eqref{eq_BinL*big}}{\le}
\omega(\mathcal{\tilde{L}},X)\le (1+\sigma)|X|\frac{K}{2} \;\mbox{,}$$ which
yields
\begin{equation}\label{eq_XandL(tilde)}
|\mathcal{\tilde{L}}|\le \frac{1+\sigma}{1-2\sigma}|X| \;\mbox{.}
\end{equation}
We use~\eqref{eq_BinL*small} and $\mathcal L_0\subset \mathcal L^*$ to get
$\omega(\tilde{\mathcal{L}},\mathcal L_0)\le
|\tilde{\mathcal{L}}|(1+2\sigma)K/2$. Also, by the definition of $\tilde{\mathcal{L}}$, we have $\omega(\neighbor(\mathcal{L}_0)\setminus
\tilde{\mathcal{L}},\mathcal L_0)\le K|\neighbor(\mathcal L_0)\setminus
\tilde{\mathcal{L}}|$. Therefore,
\begin{align*}
|\mathcal L_0|K \le \omega(Q,\mathcal
L_0)&\le\omega(\mathcal{\tilde{L}}, \mathcal L_0)+
\omega(\neighbor(\mathcal{L}_0)\setminus
\tilde{\mathcal{L}},\mathcal L_0)
\\ 
&\le
(1+2\sigma)\frac{K}{2}|\tilde{\mathcal{L}}|+K|\neighbor(\mathcal L_0)\setminus
\tilde{\mathcal{L}}| \\ &\overset{\eqref{eq:L0ALT}}\le (1+2\sigma)\frac{K}{2}
|\tilde{\mathcal{L}}|+ K |Q\setminus \mathcal{L}|\;
\mbox{,}
\end{align*}\RefereeX{(62)}{No, actually the current computation cannot be shortened. Indeed, it is true that $|\mathcal L_0|K \le \omega(Q,\mathcal
	L_0)\le\omega(\mathcal{\tilde{L}}, \mathcal L_0)+
	\omega(Q\setminus \tilde{\mathcal{L}},\mathcal L_0)
	$. However, we do not have a way of bounding the degree of vertices in $Q\setminus \tilde{\mathcal{L}}$ by $K$. However, we added an explanation of getting the second line.}
which gives
\begin{equation}\label{eq_L0minusM}
2|\mathcal L_0|\le (1+2\sigma)|\mathcal{\tilde{L}}|+2|Q\setminus
\mathcal{L}|\;\mbox{.}
\end{equation}
Every vertex in $Q\setminus \mathcal{L}$ is matched with a vertex
in $\mathcal L_0$. The converse is true due to Claim~\ref{AC:match}: if a vertex
in $\mathcal L_0$ is matched then it is matched with a vertex in $Q\setminus\mathcal L$. Therefore,
$|Q\setminus \mathcal L|=|\mathcal L_0\cap V(M_0)|$. Combined
with~\eqref{eq_L0minusM} we get that $2|\mathcal L_0\setminus V(M_0)|\le
(1+2\sigma)|\mathcal{\tilde{L}}|$. Plugging in~\eqref{eq_XandL(tilde)} we obtain
\begin{equation}\label{eq_L'-V(M)andX}
2|\mathcal L_0\setminus V(M_0)|\le \frac{(1+2\sigma)^2}{1-2\sigma}|X| \;\mbox{.}
\end{equation}
By Setting~\ref{set:TutteType}~\ref{ca:3}, we have
$|\mathcal{L}|>|V(H)\setminus \mathcal{L}|-2\sigma N$. By Claim~\ref{AC:match},
we get $|\mathcal L_0\setminus V(M)|\ge |X|-2\sigma N$. Combined with~\eqref{eq_L'-V(M)andX} we obtain
\begin{equation*}
2|X|-4\sigma N\le \frac{(1+2\sigma)^2}{1-2\sigma}|X| \; \mbox{.}
\end{equation*}
We use the bounds $\sigma\le\min\{\tfrac{K}{32Ns},\tfrac{1}{30}\}$ to get\Referee{(63)}
\begin{equation}\label{eq:Acont}
|X|\le \frac{4\sigma N}{1-14\sigma}\le 8\sigma N\le \frac{8K}{32s}  \;
\mbox{.}
\end{equation}
On the other hand,
using~\eqref{eq_BinL*big} and Claim~\ref{AC:L(tilde)notempty}, we get $\omega(\mathcal{\tilde{L}},X)>\frac12(1-2\sigma)K|\mathcal{\tilde{L}}|$. As
$\omega(e)\le s$ for each $e\in E(H)$ we get $\omega(\mathcal{\tilde{L}},X)\le s|\mathcal{\tilde{L}}||X|$.\Referee{(64)} Combining these two bounds we arrive at
$$|X|>\frac{(1-2\sigma)K}{2s}>\frac{K}{4s}\;,$$
a contradiction to~\eqref{eq:Acont}.
\end{proof}

\subsection{Regularity Lemma}\label{ssec_RegularityLemma}\Referee{(32)}
In this section we recall briefly the Regularity
Lemma~\cite{Sze78} and establish related notation. The reader may
find more on the Regularity Method in~\cite{KS96,KSSS00,KuhnOsthusSurv}.

Let $H=(V(H);E(H))$ be a graph. For two nonempty disjoint sets $X,Y\subset V(H)$ we denote the {\em density} of the pair $(A,B)$ by $\density(A,B)=\tfrac{e(A,B)}{|A||B|}.$ The pair $(A,B)$ is \emph{$\varepsilon$-regular}, if for any subsets $X\subseteq A$, $Y\subseteq B$ with $|X|>\varepsilon |A|$ and $|Y|>\varepsilon |B|$, we have $|d(X,Y)- d(A,B)|<\varepsilon$. Such sets~$X$ and~$Y$ are called {\em significant}. We say that a vertex $v\in A$ is {\em typical} with respect to (``w.~r.~t.'') a significant set $Y\subset B$, if $\deg(v,Y)\ge (\density(A,B)-\varepsilon)|Y|$. Analogously, if $\{(A,B_i)\}_{i=1}^\ell$ are $\varepsilon$-regular pairs, and $Y_i\subseteq B_i$ are significant, a vertex $v\in A$ is \emph{typical} w.~r.~t.~$\bigcup_{i-1}^\ell Y_i$, if $\deg(v,\bigcup_{i=1}^\ell Y_i)\ge \sum_{i=1}^\ell (d(A,B_i)-\varepsilon)|Y_i|$. Note that our definitions of typicality is only one-sided; this turns out to be sufficient for our proof.
\begin{fact}\label{fact:zap}
Let $X,Y_1, Y_2,\ldots, Y_\ell$ be disjoint sets of
vertices, such that $(X,Y_1), (X,Y_2), \ldots, (X,Y_\ell)$ are $\varepsilon$-regular pairs. Suppose that sets
$W_i\subset Y_i$ are significant.
\begin{enumerate}[label={$(\roman{*})$}]
\item\label{it:za1} All but at most $\varepsilon |X|$
vertices of $X$ are typical w.~r.~t.\ $\bigcup_{i=1}^{\ell}W_i$. 
\item\label{it:za2} 
All but at most $\sqrt{\varepsilon}|X|$ vertices of $X$
are typical w.~r.~t.\ at least
$\sqrt{\varepsilon}\ell$ sets $W_i$.
\end{enumerate}
\end{fact}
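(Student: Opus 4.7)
For part~\ref{it:za1}, the natural plan is to argue by contradiction using $\varepsilon$-regularity on each pair $(X,Y_i)$ simultaneously. Let $B\subset X$ be the set of non-typical vertices, i.e.\ those $v\in X$ with $\deg(v,\bigcup_{i=1}^\ell W_i)<\sum_{i=1}^\ell(\density(X,Y_i)-\varepsilon)|W_i|$. Assume for contradiction that $|B|>\varepsilon|X|$. Then $B$ is significant in every pair $(X,Y_i)$, and the sets $W_i$ are significant by assumption, so by $\varepsilon$-regularity
\[
e(B,W_i)\;>\;(\density(X,Y_i)-\varepsilon)|B||W_i|\qquad\text{for every }i\in[\ell].
\]
Since the sets $Y_i$ (and hence the $W_i$) are pairwise disjoint, summing over $i$ yields $e(B,\bigcup_iW_i)>|B|\sum_i(\density(X,Y_i)-\varepsilon)|W_i|$. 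On the other hand, by the definition of $B$,
\[
e\!\left(B,\bigcup_iW_i\right)=\sum_{v\in B}\deg\!\left(v,\bigcup_iW_i\right)<|B|\sum_i(\density(X,Y_i)-\varepsilon)|W_i|,
\]
a contradiction. Hence $|B|\le\varepsilon|X|$.

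For part~\ref{it:za2}, the plan is a straightforward double-counting argument building on part~\ref{it:za1} applied one pair at a time. For each $i\in[\ell]$ let $B_i=\{v\in X\::\:\deg(v,W_i)<(\density(X,Y_i)-\varepsilon)|W_i|\}$ be the set of vertices non-typical with respect to $W_i$; the single-pair version of the argument above (equivalently, part~\ref{it:za1} applied with the one pair $(X,Y_i)$ and the one set $W_i$) gives $|B_i|\le\varepsilon|X|$. Define $T(v)=|\{i\::\:v\in B_i\}|$, so that $\sum_{v\in X}T(v)=\sum_{i=1}^\ell|B_i|\le\varepsilon\ell|X|$. By Markov's inequality the number of vertices $v\in X$ with $T(v)\ge\sqrt{\varepsilon}\ell$ is at most $\varepsilon\ell|X|/(\sqrt{\varepsilon}\ell)=\sqrt{\varepsilon}|X|$. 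Every remaining vertex is non-typical with respect to fewer than $\sqrt{\varepsilon}\ell$ sets $W_i$, hence typical with respect to at least $(1-\sqrt{\varepsilon})\ell\ge\sqrt{\varepsilon}\ell$ of them (for $\varepsilon\le 1/4$), as required.

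Neither step looks like it contains a real obstacle: (i) is the usual "regularity transfers from significant subsets" trick extended additively over disjoint partners, and (ii) is simply an averaging argument on top of the single-pair bound. The only thing to be careful about is the use of disjointness of the $Y_i$'s in (i) to rewrite $\sum_ie(B,W_i)$ as $e(B,\bigcup_iW_i)$, and the mild assumption on $\varepsilon$ at the end of (ii) to convert the strong conclusion ($(1-\sqrt{\varepsilon})\ell$ typical sets) into the slightly weaker one stated in the fact.
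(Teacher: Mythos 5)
Both parts of your proposal are correct. For part~\ref{it:za1}, your argument is essentially the same as the paper's appendix proof: the paper defines the set $\tilde X$ of non-typical vertices, sums the degree inequalities defining non-typicality to bound $e(\tilde X,\bigcup_i W_i)$, finds by averaging an index $i$ with $\density(\tilde X,W_i)<\density(X,Y_i)-\varepsilon$, and concludes from $\varepsilon$-regularity and the significance of $W_i$ that $\tilde X$ cannot itself be significant. You run the identical chain of estimates but phrase it as a proof by contradiction; the difference is purely presentational.

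For part~\ref{it:za2} the paper does not give a proof at all but refers to \cite[Proposition~4.5]{Z07+}, so your self-contained double-counting argument is a welcome addition. Your argument --- apply the single-pair case of~\ref{it:za1} to each $(X,Y_i)$, sum the sizes of the exceptional sets, and apply Markov's inequality to the counting function $T(v)$ --- is the natural one and correct. One small point worth making explicit: at the very end you need $1-\sqrt{\varepsilon}\ge\sqrt{\varepsilon}$, i.e.\ $\varepsilon\le\tfrac14$, to convert the stronger conclusion (typical w.r.t.\ at least $(1-\sqrt{\varepsilon})\ell$ of the $W_i$) into the weaker one stated in the fact. You flag this, which is right; it is an implicit but entirely standard smallness assumption on the regularity parameter. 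In fact your argument shows the Fact could be stated with the stronger threshold $(1-\sqrt{\varepsilon})\ell$ at no extra cost.
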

The proof of~\ref{it:za2} can be found in~\cite[Proposition~4.5]{Z07+}. We prove~\ref{it:za1} in the Appendix. 
The next fact is the well-known ``slicing property'' of regular pairs.
\begin{fact}[{\cite[Fact~1.5]{KS96}}]\label{fact:slicing}
Suppose that $(X,Y)$ is an $\epsilon$-regular pair of density $d$. Let $A\subset
X$ and $B\subset Y$ be such that $|A|>\alpha |X|$, and $|B|>\alpha |Y|$ for
$\alpha>2\epsilon$. Then the pair $(A,B)$ is
$\max\left\{\frac{\epsilon}{\alpha},2\epsilon\right\}$-regular of density at
least $d-\epsilon$.
\end{fact}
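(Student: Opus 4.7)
The plan is a short direct argument using the definition of $\varepsilon$-regularity applied twice. Set $\varepsilon'=\max\{\varepsilon/\alpha,2\varepsilon\}$. I first check that the density bound is inherited, and then verify the regularity condition for arbitrary ``large enough'' subsets of $A$ and $B$ by showing that these subsets are significant inside the original pair $(X,Y)$.

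For the density bound: since $\alpha>2\varepsilon$, we have $|A|>\alpha|X|>\varepsilon|X|$ and $|B|>\alpha|Y|>\varepsilon|Y|$, so $A$ and $B$ are themselves significant subsets of $X$ and $Y$ in the $\varepsilon$-regular pair $(X,Y)$. The regularity of $(X,Y)$ gives $|\density(A,B)-\density(X,Y)|<\varepsilon$, and in particular $\density(A,B)\ge d-\varepsilon$.

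For the regularity condition: consider any $X'\subseteq A$ and $Y'\subseteq B$ with $|X'|>\varepsilon'|A|$ and $|Y'|>\varepsilon'|B|$. Using $\varepsilon'\ge \varepsilon/\alpha$ together with $|A|>\alpha|X|$, I estimate
\[
|X'|>\varepsilon'|A|\ge \frac{\varepsilon}{\alpha}\cdot \alpha|X|=\varepsilon|X|,
\]
and analogously $|Y'|>\varepsilon|Y|$. Hence $X'$ and $Y'$ are significant in $(X,Y)$, so $|\density(X',Y')-\density(X,Y)|<\varepsilon$. Combining with the density bound from the previous paragraph via the triangle inequality yields
\[
|\density(X',Y')-\density(A,B)|\le |\density(X',Y')-\density(X,Y)|+|\density(X,Y)-\density(A,B)|<2\varepsilon\le \varepsilon'.
\]
Since $X'$ and $Y'$ were arbitrary subsets meeting the $\varepsilon'$-size threshold, the pair $(A,B)$ is $\varepsilon'$-regular, finishing the proof.

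There is no real obstacle here beyond bookkeeping: the only thing one has to be careful about is that the threshold $\varepsilon'=\max\{\varepsilon/\alpha,2\varepsilon\}$ is chosen exactly so that (a) subsets meeting the $\varepsilon'$-threshold inside $A$ and $B$ automatically meet the $\varepsilon$-threshold inside $X$ and $Y$ (this uses the $\varepsilon/\alpha$ summand), and (b) the final $2\varepsilon$ coming from applying regularity twice does not exceed $\varepsilon'$ (this uses the $2\varepsilon$ summand). The assumption $\alpha>2\varepsilon$ is what ensures that $A$ and $B$ themselves satisfy the $\varepsilon$-significance condition so that one can even speak of $\density(A,B)$ being close to $\density(X,Y)$.
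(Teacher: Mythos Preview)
Your proof is correct and is the standard argument for the slicing lemma. The paper does not actually prove this fact; it is only stated with a citation to \cite{KS96}, so there is no paper proof to compare against.
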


A partition $V(H)=V_0\dcup V_1\dcup \ldots\dcup V_N$ of the vertex set a graph
$H$ is called {\em $(\varepsilon,N)$-regular} if
$|V_0|<\varepsilon v(H)$,
$|V_i|=|V_j|$ for every $i,j\in [N]$, and
for each $i\in [N]$ at most $\varepsilon N$ pairs $(V_i,V_j)$ (where $j\in [N]$) are not $\varepsilon$-regular.
The sets $V_1,\ldots, V_N$ are called {\em clusters}.

We are now ready to state a standard version Szemer\'edi's original result~\cite{Sze78}.\RefereeX{(33)}{We do not think it is ``multipartite''. So, as a compromise, we suggest to refer to it as a ``standard version''.}
\begin{theorem}[{\cite{Sze78}}]\label{thm_regularity}
For every $\varepsilon>0$ and every $m_0,r\in\mathbb{N}$, there
exist numbers $M_0, N_0\in\mathbb{N}$ such that every graph $H$ of
order $m\geq N_0$ whose vertex sets is partitioned into $r$ sets
$V(H)=O_1\dcup O_2\dcup\ldots\dcup O_r$ admits an
$(\varepsilon, N)$-regular partition $V(H)=V_0\dcup V_1\dcup \ldots\dcup V_N$ for some
$m_0\leq N\leq M_0$ such that for every $i\in [N]$ we have
$V_i\subset O_j$ for some $j\in [r]$.
\end{theorem}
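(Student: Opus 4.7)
The plan is to follow Szemer\'edi's original index--increment argument, but started from the prepartition $\{O_1,\dots,O_r\}$ instead of the trivial one. Define, for a partition $\mathcal{P}=\{V_1,\dots,V_N\}$ of $V(H)$, the index
\[
\mathrm{ind}(\mathcal{P})=\sum_{1\le i<j\le N}\frac{|V_i||V_j|}{m^2}\,\density(V_i,V_j)^2\in[0,1].
\]
A standard Cauchy--Schwarz calculation shows that refining a partition can never decrease the index, and that if a partition $\mathcal{P}$ has more than $\varepsilon N^2$ pairs that are not $\varepsilon$-regular, then the witnesses of irregularity (subsets $X_{ij}\subseteq V_i$, $Y_{ij}\subseteq V_j$) give rise to a refinement $\mathcal{P}'$ with $\mathrm{ind}(\mathcal{P}')\ge\mathrm{ind}(\mathcal{P})+\varepsilon^5$. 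Since the index is bounded by $1$, the iteration terminates after at most $\varepsilon^{-5}$ steps, producing an absolute upper bound $M_0=M_0(\varepsilon,m_0,r)$ on the number of parts.

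Starting the iteration from $\mathcal{P}_0=\{O_1,\dots,O_r\}$ ensures that the condition ``every cluster is contained in some $O_j$'' is an invariant: every refinement step only subdivides existing pieces, so membership in a single $O_j$ is preserved. To satisfy the lower bound $N\ge m_0$, if the iteration terminates with fewer than $m_0$ parts one performs a dummy refinement (e.g.\ splitting each cluster into $\lceil m_0/r\rceil$ arbitrary equal pieces), which can only improve regularity and retains the prepartition property.

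Finally, one has to pass from a (possibly unequal-sized) regular partition to an equitable one with a small exceptional set $V_0$. Choose a common target size $c=\lfloor m/N'\rfloor$ for a suitable $N'$ with $m_0\le N'\le M_0$, slice each cluster into pieces of size exactly $c$, and dump the residues (at most $c-1$ vertices per cluster, plus any old exceptional set) into $V_0$. The slicing property (Fact~\ref{fact:slicing}) guarantees that the sub-pairs inherit $\varepsilon$-regularity up to a harmless constant; by taking $N_0$ large enough that $m\ge N_0\gg M_0/\varepsilon$, the total size of $V_0$ stays below $\varepsilon m$. The only delicate step is the index--increment inequality itself, but this is a classical computation carried out in Szemer\'edi's original paper and no new idea is required in the prepartition setting.
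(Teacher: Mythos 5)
The paper does not prove this theorem; it simply cites Szemer\'edi's original paper~\cite{Sze78}, so there is no in-paper proof to compare your sketch against. Your outline captures the right high-level ideas: the index (mean-square density) as a potential function bounded by~$1$, the Cauchy--Schwarz defect inequality, the refinement by witnesses of irregularity, and the observation that the prepartition property is preserved because refinement never merges pieces.

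However, the way you handle equitability is wrong and would not go through as written. You first run the index-increment iteration without regard to cluster sizes, and only at the end try to slice the (possibly wildly unequal) clusters into pieces of a common size~$c$, dumping residues into~$V_0$. Two things break. First, the increment lemma ``$>\varepsilon N^2$ irregular pairs $\Rightarrow$ $\mathrm{ind}$ increases by $\varepsilon^5$'' is a statement about \emph{mass}, not pair count; for a non-equitable partition, a large count of irregular pairs can carry negligible mass and give a much smaller index gain, so the $\varepsilon^{-5}$ bound on the number of iterations is not justified. Second, the post-hoc slicing is dangerous: if some cluster $V_i$ has size much larger than~$c$, a slice of size $c$ is a $c/|V_i|$-fraction of $V_i$, which may be far below $2\varepsilon$, and Fact~\ref{fact:slicing} then gives no usable regularity for the resulting sub-pairs; conversely, controlling the residue $\sum_i(|V_i|\bmod c)$ requires the number of old clusters to be small compared to $\varepsilon N'$, which again needs the intermediate partitions to have been kept small and balanced. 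In Szemer\'edi's actual argument, equitability is maintained \emph{at every step}: after refining by the irregularity witnesses, one immediately re-chops each piece into blocks of a common size depending on the current number of parts and throws the small residue into $V_0$, and the index-increment computation is done for this equitable refinement. The prepartition version only adds the harmless initial step of taking the common refinement of the starting equitable partition with $\{O_1,\dots,O_r\}$; everything else is untouched. Your sketch needs to be rewritten with equitability carried as an invariant rather than patched on at the end.
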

In the above setting, let $H_d$ denote the graph obtained from $H$ by deleting the
edges incident to $V_0$, contained in some $V_i$, or in pairs of clusters that are irregular or of
density smaller than some fixed constant $d$. Let $\mathbf{H}$ denote the \emph{cluster graph} induced by
$H_d$.
That is, $\mathbf{H}$ has order $N$, its vertices are
$V(\mathbf{H})=\{V_1,\dots,V_N\}$ and edges are $$E(\mathbf{H})=\{V_iV_j \: :\: (V_i,V_j)
\mbox{ is a $\varepsilon$-regular pair with density at least $d$}\}\;
\mbox{.}$$ 
Set $\wdeg_{\mathbf{H}}(C,D):=\wdeg_{H_d}(C,D)$, for any disjoint sets $C,D\subseteq V(H)$. The function $\wdeg_\mathbf{H}$ induces a weight function on $\mathbf{H}$.

\subsection{Embedding lemmas}\label{ssec_embed}
In this section, we introduce tools for embedding trees into regular pairs. Similar results are folklore. Here we give statements tailored to our needs; their proofs are included in the Appendix. The first lemma deals with embedding a tree into one regular pair.
\begin{lemma}\label{lemma_Embedding1}
Let $(t,r)$ be a rooted tree, and $d>2\varepsilon>0$. Let $(X,Y)$ be an
$\varepsilon$-regular pair with $|X|=|Y|=s$ and density $\density(X,Y)\ge
d$. Let $P'\subset P\subset X$ and $Q'\subset Q\subset Y$ be such
that $\min\{|P|,|Q|\}\ge \Delta$ and $\max\{|P'|,|Q'|\}\ge \Delta$,
where $\Delta\ge\frac{\varepsilon s+ v(t)}{d-2\varepsilon}$. Then
there exists an embedding~$\phi$ of $t$ to $P\cup Q$ such that the
root $r$ is mapped to $P'\cup Q'$. Moreover, 
if $|P'|\ge \Delta$,  the vertex~$r$ can be mapped to $P'$, and if $|Q'|\ge \Delta$,  the vertex
$r$ can be mapped to $Q'$.
\end{lemma}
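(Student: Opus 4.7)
My approach is a standard greedy BFS embedding that tracks a single typicality condition. I describe the case $|P'|\ge\Delta$ (mapping $r$ into $P'$); the case $|Q'|\ge\Delta$ is symmetric by swapping the roles of $P,Q$ (and hence of $X,Y$). Root $t$ at $r$ and colour each vertex of $t$ by the parity of its distance from $r$; I embed the root-colour class into $P\subseteq X$ and the opposite class into $Q\subseteq Y$. Throughout, I maintain the invariant that every already-placed vertex $\phi(u)$ on the $P$-side (resp.\ $Q$-side) is \emph{typical with respect to the entire set} $Q$ (resp.\ $P$). The key advantage of this invariant is that typicality with respect to a fixed set is unaffected as the embedding progresses and vertices get used up.

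To start, note that $|P'|,|Q|\ge \Delta>\varepsilon s$: indeed $\Delta\ge \tfrac{\varepsilon s + v(t)}{d-2\varepsilon}>\varepsilon s$ since densities are bounded by $1$ and so $d-2\varepsilon<1$. Thus $P'\subseteq X$ and $Q\subseteq Y$ are significant, and by $\varepsilon$-regularity at most $\varepsilon s$ vertices of $X$ fail to be typical with respect to $Q$; in particular $P'$ contains a typical vertex, which I pick as $\phi(r)$. I then process the remaining vertices of $t$ in BFS order. When embedding a vertex $v$ whose parent $u$ has already been placed with $\phi(u)\in P$, I choose $\phi(v)$ to be any vertex of $N(\phi(u))\cap Q$ that is not yet used and is typical with respect to $P$; the symmetric rule is applied if $\phi(u)\in Q$. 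This maintains the invariant for $\phi(v)$.

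The only non-trivial point is to verify that such a $\phi(v)$ always exists. By the invariant, $\phi(u)$ has at least $(d-\varepsilon)|Q|\ge (d-\varepsilon)\Delta$ neighbours in $Q$. From these we discard at most $v(t)-1$ already-used vertices, and at most $\varepsilon s$ vertices of $Q$ that are not typical with respect to $P$ (which is significant in $X$, since $|P|\ge\Delta>\varepsilon s$). The number of valid candidates for $\phi(v)$ is therefore at least
\begin{equation*}
(d-\varepsilon)\Delta-(v(t)-1)-\varepsilon s \;\ge\; \tfrac{d-\varepsilon}{d-2\varepsilon}(\varepsilon s+v(t))-\varepsilon s-v(t)+1 \;=\; \tfrac{\varepsilon(\varepsilon s+v(t))}{d-2\varepsilon}+1 \;>\;0,
\end{equation*}
which is precisely how the bound on $\Delta$ is calibrated. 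Hence the greedy procedure never gets stuck and produces a valid embedding of $t$ into $P\cup Q$ with $\phi(r)\in P'$. The main point requiring care is ensuring that this last arithmetic check accounts simultaneously for the loss from used vertices and the loss from non-typical vertices; everything else is routine bookkeeping of the BFS order and of which side each tree-vertex is mapped to.
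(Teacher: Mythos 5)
Your proof is correct, and it follows essentially the same greedy BFS strategy as the paper: embed one colour class of $t$ into (a subset of) $X$ and the other into $Y$, maintaining a typicality invariant so that each placed vertex has enough unused typical neighbours on the other side. The only cosmetic difference is that the paper first fixes subsets $S_P\subset P$, $S_Q\subset Q$ of size exactly $\Delta$ and tracks typicality with respect to those fixed sets, while you track typicality with respect to all of $P$ and $Q$; since both are significant and fixed throughout, this changes nothing substantive, and your final arithmetic check is calibrated in the same way as the paper's bound $(d-2\varepsilon)\Delta\ge\varepsilon s+v(t)$.
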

The next lemma deals with embedding a tree using a matching structure in the underlying cluster graph. A simplified picture of the situation is given in Figure~\ref{fig:L512}. \begin{figure}[ht]
	\centering
	\includegraphics{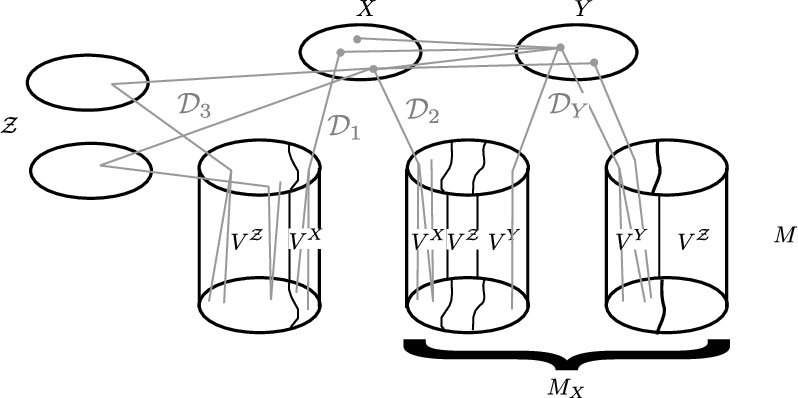}
	\caption{A simplified picture of an embedding provided by Lemma~\ref{lem:Embedding-3}. The lemma provides with an embedding of a tree with a given fine partition $(W_X,W_Y,\mathcal D_X,\mathcal D_Y)$. The cut-vertices $W_X$ and $W_Y$ are mapped to $X$ and $Y$, respectively. The shrubs $\mathcal D_Y$ are mapped to the part $V^Y$ of the regular matching $M$. The shrubs $\mathcal D_X$ are embedded using one of three different ways which is indicated by the partition $\mathcal D_X=\mathcal D_1\dcup \mathcal D_2\dcup \mathcal D_3$. The shrubs of $\mathcal D_1$ are mapped to $V^X\setminus \bigcup V(M_X)$. The shrubs of $\mathcal D_2$ which are required to be balanced are mapped to $V^X\cap \bigcup V(M_X)$. Finally, the shrubs of $\mathcal D_3$ are accommodated to a set $V^{\mathcal Z}$ with their roots placed to an additional set of clusters $\mathcal Z$.}
	\label{fig:L512}
\end{figure}
\RefereeX{(65)}{A simplified illustration of the situation was added. The sets $\mathcal D_1,\ldots,\mathcal D_3$ form indeed an arbitrary partition of $\mathcal D_X$ as is written in the text. Remark about rewording Line 6 incorporated.}
\begin{lemma}\label{lem:Embedding-3}
\setcounter{lematko511}{\value{theorem}}
Let $0<\varepsilon, \xi, d\le 1$
and $\tau, s$ be such that $\tau/s\le \varepsilon\le \xi^2 d/400$.
\RefereeX{!}{The condition $\tau/s\le \varepsilon\le \xi^2 d/400$ was changed due to changes in the proof of the lemma}
Let
$F$ be a tree of order at most $k+1$ with a $\tau$-fine partition $(W_X,W_Y,\mathcal D_X,\mathcal D_Y)$ and let $\mathcal D_1\dcup \mathcal
D_2\dcup \mathcal D_3$ be an arbitrary partition of $\mathcal D_X$. Let $\mathbf{H}$ 
be a cluster graph corresponding to an $\epsilon$-regular partition of an
$n$-vertex graph $H$, whose edges have density at
least~$d$ and clusters have size~$s$. 
Let $XY\in E(\mathbf{H})$, and 
$X'\subseteq X$, $Y'\subseteq Y$ such that $|X'|,|Y'|\ge (1-d/2) s$. Let
$\mathcal Z\subseteq V(\mathbf{H})\setminus \{X,Y\}$. Further let  $M_X\subseteq M$ be matchings in $\mathbf {H}$ disjoint from $\mathcal Z\cup \{X,Y\}$ such that for each edge of $M_X$ contains at most one vertex of $\neighbor_{\mathbf{H}}(X)$. Let  $V^X,V^Y, V^{\mathcal Z}\subseteq
\bigcup V(M)$ be pairwise disjoint sets. Suppose that
\begin{enumerate}[label={$(\roman{*})$}]
\item\label{emb3-balanced} For all $CD\in M$,  $|C\cap V^X|=|D\cap V^X|$, $|C\cap V^Y|=|D\cap V^Y|$, and  $|C\cap V^{\mathcal Z}|=|D\cap V^{\mathcal Z}|$.
\item \label{emb3-sign}For all $C\in V(M)$,  $|C\cap V^X|, |C\cap V^Y|, |C\cap V^{\mathcal Z}|\in \{0\}\cup (\varepsilon s, s]$.
\item \label{emb3-V^Y} If $\mathcal D_Y\neq \emptyset$, then $\wdeg_{\mathbf{H}}(Y,V^Y)\ge
v(\mathcal D_Y)+\xi n$\;.
\item \label{emb3-V^X} If $\mathcal D_1\neq \emptyset$, then $\wdeg_{\mathbf{H}}(X,V^X\setminus \bigcup V(M_X))\geq v(\mathcal D_1)+\xi n$\;.
\item \label{emb3-MX} If $\mathcal D_2\neq \emptyset$, then $\mathcal D_2$ is
$c$-balanced and $\wdeg_{\mathbf{H}}(X,V^X\cap \bigcup V(M_X))\geq v(\mathcal D_2)-c^2k+\xi n$\;.
\item \label{emb3-AZ} If $\mathcal D_3\neq \emptyset$, then $\wdeg_{\mathbf{H}}(X,\bigcup \mathcal
Z)\geq |V(\mathcal D_3)\cap \neighbor_F(W_X)|+\xi n$\;.
\item \label{emb3-Z} If $\mathcal D_3\neq \emptyset$, then for each $Z\in \mathcal Z$,
$\wdeg_{\mathbf{H}}(Z,V^{\mathcal Z})\geq v(\mathcal D_3)+\xi n$\;.
\end{enumerate}
 Then there is an embedding $\varphi$ of $F$ in $H$ such that $\varphi(W_X)\subseteq X'$, $\varphi(W_Y)\subseteq Y'$, $\varphi(V(\mathcal D_Y))\subseteq V^Y$, and  $\varphi(V(\mathcal D_X))\subseteq (V^X\cup V^{\mathcal Z}\cup \bigcup \mathcal Z)$.
\end{lemma}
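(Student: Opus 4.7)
\smallskip
\noindent\textbf{Proof proposal.} The plan is to construct $\varphi$ in three phases. First, in an \emph{allocation} phase we reserve, for every shrub $t\in \mathcal D_X\cup\mathcal D_Y$, a matching edge $e_t\in M$ together with a split of $V(t)$ between the two clusters of $e_t$; in the \emph{cut-vertex} phase we embed the small set $W_X\cup W_Y$ into $X'\cup Y'$; in the \emph{shrub} phase each shrub is placed into its reserved slot using Lemma~\ref{lemma_Embedding1}. Throughout, we exploit that every shrub is tiny, $v(t)\le \tau\le \varepsilon s$, while clusters have size $s$.

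\smallskip
\noindent\emph{Allocation.} We process shrubs greedily. For each $t\in\mathcal D_Y$ we pick an edge $e_t=C_tD_t\in M$ with $|C_t\cap V^Y|=|D_t\cap V^Y|>\varepsilon s$ and reserve disjoint subsets of $C_t\cap V^Y$ and $D_t\cap V^Y$ of sizes $|t_\oplus|$ and $|t_\ominus|$ for the two colour classes; the capacity bound~\ref{emb3-V^Y} together with~\ref{emb3-balanced}--\ref{emb3-sign} guarantees that the total load never exceeds what is available. The same scheme handles $\mathcal D_1$ using~\ref{emb3-V^X}, and $\mathcal D_3$ using~\ref{emb3-AZ} to choose a root cluster $Z_t\in\mathcal Z$ combined with~\ref{emb3-Z} to choose an edge for the rest of $t$ inside $V^{\mathcal Z}$. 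The delicate case is $\mathcal D_2$: for $t\in\mathcal D_2$ we pick $e_t=C_tD_t\in M_X$ with, say, $C_t\in\neighbor_{\mathbf H}(X)$ and $D_t\notin\neighbor_{\mathbf H}(X)$, and we must send the colour class of $t$ containing all \emph{attachment vertices} (those $v\in V(t)$ adjacent in $F$ to some vertex of $W_X$) into $C_t$, while the other colour class goes into $D_t$. This is consistent because the two attachment vertices of any internal $\mathcal D_2$-shrub lie in the same colour class of $t$, since $W_X$-vertices are pairwise at even distance in $F$. The tolerance $c^2k$ in~\ref{emb3-MX} absorbs the total order of the non-$c$-balanced shrubs in $\mathcal D_2$ (which is at most $ck$ by the $c$-balanced hypothesis, yielding the bound~\eqref{eq:ladvi}), while for balanced shrubs the smaller colour class genuinely fills capacity on the $D_t$ side.

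\smallskip
\noindent\emph{Cut vertices.} We embed $W_X\cup W_Y$ greedily in BFS order from the root. Because $|W_X|+|W_Y|\le 24k/\tau\ll s$, at each step we need only find a vertex of $X'$ (or $Y'$) that is adjacent in $H$ to the already embedded parent and typical with respect to: (a) the parts of $Y'$ (resp.\ $X'$) earmarked for upcoming cut-vertex neighbours; (b) the reserved $V^X$-subsets of clusters allocated to shrubs attached to the current cut vertex; and (c) the clusters $Z_t$ for each attached $\mathcal D_3$-shrub. By Fact~\ref{fact:zap}~\ref{it:za1}, at most $\varepsilon|X|$ vertices of $X$ fail to be typical w.\,r.\,t.\ the union of the (boundedly many) significant sets in question, which leaves far more valid choices than the total number of cut vertices to be embedded. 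Adjacency inside the pair $(X,Y)$ is handled by the same typicality argument against the already placed $W_Y$- or $W_X$-vertex.

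\smallskip
\noindent\emph{Shrubs, and the main obstacle.} With cut vertices fixed, each shrub is embedded individually via Lemma~\ref{lemma_Embedding1} on the (still regular, by Fact~\ref{fact:slicing}) pair formed by its reserved subclusters. For $t\in\mathcal D_Y$ attached to $w\in W_Y$, typicality of $\varphi(w)$ ensures that $\neighbor(\varphi(w))\cap (C_t\cap V^Y)$ is significant; feeding it as the set $P'$ in Lemma~\ref{lemma_Embedding1} puts the root of $t$ adjacent to $\varphi(w)$. The cases $\mathcal D_1$, $\mathcal D_3$ and end-shrubs in $\mathcal D_2$ are analogous, while internal shrubs in $\mathcal D_X$ require two applications of Lemma~\ref{lemma_Embedding1} with prescribed roots, one per attachment vertex. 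The hardest step is the allocation for $\mathcal D_2$: since each edge of $M_X$ has only one endpoint in $\neighbor_{\mathbf H}(X)$, all attachment vertices of internal $\mathcal D_2$-shrubs must be crammed into that single endpoint, and the $c$-balanced assumption is exactly what makes this feasible while the deficit $c^2k$ in~\ref{emb3-MX} still suffices to cover the unbalanced shrubs. Phase~2 is conceptually simpler, its only requirement being that the cluster size $s$ dominates $|W_X|+|W_Y|$, which is built into the hypothesis $\tau/s\le \varepsilon$.
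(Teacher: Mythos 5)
Your three-phase strategy (pre-allocate slots, then place cut vertices, then embed shrubs) is a genuinely different architecture from the paper's, which interleaves the placement of each cut vertex $x_i$ with the embedding of the shrubs hanging off it, maintaining an explicit ``packedness'' invariant {\bf (I1)} on how each matching edge is filled at every step. Unfortunately, your shrub phase does not go through as written, for a concrete quantitative reason.

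The slots you reserve in the allocation phase are too small to embed into. For each shrub $t$ you reserve disjoint subsets of $C_t$ and $D_t$ of sizes $|t_\oplus|$ and $|t_\ominus|$, each at most $v(t)\le \tau \le \varepsilon s$. These sets are below the significance threshold, so neither Fact~\ref{fact:slicing} (which needs relative size $>2\varepsilon$) nor Lemma~\ref{lemma_Embedding1} (which needs $\min\{|P|,|Q|\}\ge \Delta \ge \frac{\varepsilon s + v(t)}{d-2\varepsilon} > \varepsilon s$) applies to them, and typicality of $\varphi(w)$ gives you nothing about a set of size $\tau$. The paper avoids this by letting each shrub occupy whatever free space remains in the \emph{entire} pair $(C,D)$ at the moment it is embedded: that space stays significant provided each matching-edge pair is filled in a balanced way, and guaranteeing that balance is exactly the role of {\bf (I1)} together with the ``moreover'' clause of Lemma~\ref{lemma_Embedding1} (which lets the algorithm choose the side receiving the root of the next shrub so as to keep $|C\cap \varphi|$ and $|D\cap\varphi|$ within $\tau$). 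Your proposal has no mechanism enforcing this balance — the ``total load never exceeds capacity'' claim treats the average-degree hypotheses~\ref{emb3-V^Y}--\ref{emb3-Z} as per-cluster capacity bounds, which they are not. Finally there is a chicken-and-egg problem with phase~1: you commit each shrub $t$ attached to $w$ to an edge $e_t$ before $\varphi(w)$ exists, yet you need $\varphi(w)$ to have many neighbours in one cluster of $e_t$; the paper instead chooses the edge only after $\varphi(x_i)$ is placed, from the set $M_{\varphi(x_i)}$ of edges to which $\varphi(x_i)$ is typical on both halves, which Fact~\ref{fact:zap} shows is almost all of $M$.
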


\section{Proof of Lemma~\ref{prop:iteration-Reg}}\label{sec_TheProof}
Suppose that $q,c$, and $\rho$ are given. Let  $c_{\mathbf S}$ be given by
Lemma~\ref{prop_SCHolds} for input parameter $q$. Further, let
$c_\mathbf{U}$ be given by Lemma~\ref{prop:UnbalancedCase} for input
parameter $q$.
Set reals $\alphaX, \betaX,\gammaX,\epsilonX,\etaX,\sigmaX,\omegaX$ so that
\[
0<\betaX\ll\epsilonX\ll\gammaX\ll \sigmaX\ll\alphaX\ll\etaX\ll\omegaX\ll
\min\{\rho,c,c_\textbf{S},c_\mathbf{U}, q\}.
\]

Let $n_0$ (the minimal order of the graph) and $\Pi_1$ (the upper bound for the number of clusters) be the
numbers given by the Regularity Lemma~\ref{thm_regularity} for input
parameters $\betaX$ (for precision), $\Pi_0=\tfrac2\betaX$ (for the\Referee{(66)} minimum number of
clusters) and $4$ (for the number of pre-partition classes).

Let $G$ be a graph of order $n\geq n_0$ that has the LKS-property. 
We can assume that $G$ is
LKS-minimal, that is, there is no proper spanning subgraph $G'\subset G$ with
the LKS-property. Then clearly,
\begin{align}
\label{ass_Sindep}
\mbox{the set $S$ is independent}\;.
\end{align}
Let $\overV\subseteq V$ satisfy the assumptions of Lemma~\ref{prop:iteration-Reg} and
let $T\in \mathcal{T}_{k+1}$ be arbitrary. Our goal is to show that $T\subset G$. Root $T$ at an arbitrary vertex~$R$, and consider any $\tau$-fine partition $(W_A,W_B,\mathcal{D}_A,\mathcal{D}_B)$ of $(T,R)$, with $\tau=\tfrac{\betaX k}{\Pi_1}$. The existence of such a partition follows from Lemma~\ref{lem:cutfine}.

Prepartition the vertex set $V$ into
$\overV\cap L,\overV\cap S, L\setminus \overV$, and $S\setminus \overV$. By the Regularity Lemma~\ref{thm_regularity}, there
exists a partition $ V =C_0\dcup C_1\dcup\dots\dcup C_N$ satisfying
the following.
\begin{enumerate}[label=\textbf{(R\arabic{*})}]
\item \label{RL:bound}$\Pi_0\leq N\leq \Pi_1$,
\item \label{RL:clustersize}$|C_i|=s$ for each $i\in [N]$,\Referee{(67)}
\item \label{RL:garbageset}$|C_0|\leq \betaX n$,
\item\label{RL:mindegree} for each $i\in [N]$, all but at most $\betaX N$ pairs
$(C_i,C_j)$ (where $j\in [N]$) are $\betaX$-regular,
\item \label{RL:prepartition}for each $i\in[N]$, if $C_i\cap L\neq \emptyset$ then $C_i\subseteq
L$, and if $C_i\cap \overV\neq \emptyset$ then
$C_i\subseteq \overV$.
\end{enumerate}

Let $G_{\gammaX}$ denote the graph obtained from $G$ by deleting the
edges incident to $C_0$, contained in some $C_i$, or in pairs of clusters that are irregular or of
density smaller than $\gammaX$ and let $\mathbf{G}$ be the corresponding cluster graph with weight function $\wdeg_\mathbf{G}$. \RefereeX{(68)}{I do not see what would these be used for. Certainly not for deriving~\eqref{eq:edgesGvsGg}} 
Observe that by~\ref{RL:bound}--\ref{RL:mindegree} we have\begin{equation}\label{eq:edgesGvsGg}
e(G_{\gammaX})\ge e(G)-2\betaX n^2-\gammaX n^2\ge e(G)-\sigmaX
k^2\;.
\end{equation}

Denote by $\mathcal{L}$ the set
of clusters contained in $L\cap \overV$ which have large average degree in
$\overV$: \Referee{(53)*}
$$\mathcal{L}=\{C\in V(\mathbf{G})\: :\: C\subseteq L\cap
\overV,\ \wdeg_{\mathbf{G}}(C,\overV)\ge k-\sqrt{\sigmaX} n\}\;
\mbox{.}$$ 
Note that~\ref{RL:prepartition} supports\Referee{(69)} the definition and observation below. Let
$\overcalV$ be the set of clusters contained in $\overV$; we
write $\overN=|\overcalV|$. Observe that each cluster inside
$L\cap \overV$ is in $\mathcal L$, unless it sends many edges to $V\setminus \overV$. 
To estimate the size of $\mathcal L$,\RefereeX{(70)}{different formulation than suggested used} we set $\mathcal B=\{C \in V(\mathbf{G}):
C\subseteq \overV, \wdeg_\mathbf{G}(C,V\setminus \overV)\ge
\frac{\sqrt{\sigmaX} n}2\}$. It follows from the assumptions of
Lemma~\ref{prop:iteration-Reg} that 
\begin{equation}\label{eq:boB}
|\mathcal B|\le 3\sqrt{\sigmaX} N\;.
\end{equation}
Further,
observe that we have 
\begin{equation}\label{eq:Lsup}
\mathcal L\supset \{C\in V(\mathbf{G}): C\subset
\overV \cap L\}\setminus \mathcal B\;.
\end{equation}
The ratio $|L\cap \overV|:|\overV|$ approximately corresponds to $|\mathcal{L}|:|\overcalV|$. More precisely, we use later the following lower-bound on $|\mathcal{L}|$,\RefereeX{(71)}{The precise argument is exactly the calculation below, the previous sentence was just to emphasize our starting point.}
\begin{equation}\label{eq:boundScriptL}
|\mathcal{L}|\geq
\tfrac12(1-2\sigmaX)\overN-|\mathcal B|\geq \tfrac{\overN}2-4\sqrt{\sigmaX}N\ge \tfrac{\overN}2-\alphaX\overN\;,
\end{equation}
where we use $\sigmaX\ll \alphaX\ll c,q$.



Let $\mathbf{H}$ be the subgraph of $\mathbf{G}$ induced by $\overcalV$ such that all the edges induced by the set $\overcalV\setminus\mathcal L$ are removed. The cluster graph $\mathbf{H}$ naturally inherits  the function $\wdeg_{\mathbf{G}}$ of $\mathbf{G}$ (which is denoted by~$\wdeg_{\mathbf{H}}$). The next lemma gives some simple properties of $\mathbf{H}$.\Referee{(72)}
\begin{lemma}\label{lem:por}$~$
\begin{enumerate}[label=(\roman*)]
\item\label{por1} For each $C\in \mathcal L$, we have
$
\sum_{D\in \neighbor_{\mathbf H}(C)}\wdeg_\mathbf{H}(C,D)=
\sum_{D\in \neighbor_{\mathbf G}(C, \overV)}\wdeg_\mathbf{G}(C,D)$.
\item\label{por2}
All but at most $3\sqrt{\sigmaX}N$
clusters $C\in \overcalV\setminus \mathcal L$ satisfy
$$\sum_{D\in \neighbor_{\mathbf H}(C)}\wdeg_\mathbf{H}(C,D)\ge
\Big(\sum_{{D\in \neighbor_{\mathbf
G}(C,\overcalV)}}\wdeg_\mathbf{G}(C,D)\Big)-3\sqrt{\sigmaX}n\;.$$
\end{enumerate}
\end{lemma}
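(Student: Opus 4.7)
The plan is to deduce both parts directly from the construction of $\mathbf{H}$, combined with the inclusion~\eqref{eq:Lsup} and the independence of $S$ recorded in~\eqref{ass_Sindep}. For part~\ref{por1}, the argument should be essentially definitional: $\mathbf{H}$ is obtained from $\mathbf{G}[\overcalV]$ by deleting only those edges with both endpoints in $\overcalV\setminus\mathcal L$, and edge weights are inherited from $\wdeg_\mathbf{G}$. Thus for any $C\in\mathcal L$, every $\mathbf{G}$-edge from $C$ into $\overcalV$ survives in $\mathbf{H}$ with unchanged weight, giving $\neighbor_\mathbf{H}(C)=\neighbor_\mathbf{G}(C)\cap\overcalV$; the two sums then coincide term by term.

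For part~\ref{por2}, I would designate the exceptional clusters to be those in $\mathcal B$, which by~\eqref{eq:boB} number at most $3\sqrt{\sigmaX}N$. Take any $C\in(\overcalV\setminus\mathcal L)\setminus\mathcal B$. By the prepartition property~\ref{RL:prepartition} the cluster $C$ is monochromatic with respect to $L,S$; the case $C\subseteq L$ would force $C\in\mathcal L$ via~\eqref{eq:Lsup}, a contradiction, so $C\subseteq S$. Then~\eqref{ass_Sindep} ensures that $C$ has no $G$-neighbor in any other cluster contained in $S$, whence $\neighbor_\mathbf{G}(C)\cap\overcalV\subseteq\{D\in\overcalV:D\subseteq L\}$; applying~\eqref{eq:Lsup} a second time places this last set inside $\mathcal L\cup\mathcal B$.

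The inequality of~\ref{por2} then follows by splitting the right-hand sum along this partition. The $\mathcal L$-part equals $\sum_{D\in\neighbor_\mathbf{H}(C)}\wdeg_\mathbf{H}(C,D)$ by construction, while the $\mathcal B$-part is bounded crudely by $|\mathcal B|\cdot s\le 3\sqrt{\sigmaX}\,Ns\le 3\sqrt{\sigmaX}\,n$, using only the trivial $\wdeg_\mathbf{G}(\cdot,\cdot)\le s$ together with $Ns\le n$. The main piece of care — and the only real, mild obstacle — is to keep straight the three types of $\overcalV$-clusters (namely $\mathcal L$, the clusters in $\overcalV\cap L\cap\mathcal B$, and the clusters with $C\subseteq S$ and $C\notin\mathcal B$), and that bookkeeping is precisely what the two invocations of~\eqref{eq:Lsup} handle cleanly.
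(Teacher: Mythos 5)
Your proof is correct and takes essentially the same route as the paper: both parts of the paper's (very terse) argument designate $\mathcal B$ as the exceptional set, and for $C\in\overcalV\setminus(\mathcal L\cup\mathcal B)$ observe that any lost neighbour must lie in $\mathcal B$, giving the $|\mathcal B|\,s\le 3\sqrt{\sigmaX}\,n$ bound. Your write-up merely makes explicit the intermediate steps the paper leaves implicit (that such $C$ is an $S$-cluster via~\eqref{eq:Lsup} and~\ref{RL:prepartition}, and that $S$-independence~\eqref{ass_Sindep} then forces all $\mathbf{G}$-neighbours in $\overcalV$ into $L$-clusters, hence into $\mathcal L\cup\mathcal B$).
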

\begin{proof}
Part~\ref{por1} follows directly.\RefereeX{!}{The proof was simplified, constants improved}
 Let us now deal with part~\ref{por2}. By~\eqref{eq:boB}, for any cluster~$C\in \overcalV\setminus (\mathcal L\cup \mathcal B)$, we have $\wdeg_{\mathbf{H}}(C)\ge \wdeg_\mathbf{G}(C,\overV)-|\mathcal B|s\ge \wdeg_\mathbf{G}(C,\overV)-3\sqrt{\lambda}n$, as edges of $\mathbf G$ sent by~$C$ go either to $\mathcal B$ or are kept in $\mathbf H$. At most $3\sqrt{\lambda}N$ clusters in $\overcalV\setminus \mathcal L$ may be contained in $\mathcal B$.
\end{proof}

\subsection{Matching structure in the cluster graph}
\label{ssec_Struct-Graph}
Set $c'=\min\{c_{\mathbf{S}},c^4\}$. If $e(G[\overV\cap L])<
c'n^2$, then the conditions of Lemma~\ref{prop_SCHolds} are satisfied for
the set $\overV$ and parameter $c_{\mathrm L\ref{prop_SCHolds}}=c'$. Indeed,
the assumptions~\ref{IT1}--\ref{IT3} of Lemma~\ref{prop_SCHolds} follow from the assumptions of
Lemma~\ref{prop:iteration-Reg}, and the fact that $\sigmaX\ll c'$. Then, by
Lemma~\ref{prop_SCHolds} we get $\mathcal{T}_{k+1}\subseteq G$. Therefore,
 we  assume in the rest of the proof that $e(G[\overV\cap L])\geq c'n^2$.
By~\eqref{eq:edgesGvsGg} as $\sigmaX\ll c'$, we get
$e(G_{\gammaX}[\overV\cap L])\geq \frac {c'}2n^2$. 
\begin{lemma}\label{lem:Ledge}
The set $\mathcal{L}$ induces at least one edge in $\mathbf{H}$.
\end{lemma}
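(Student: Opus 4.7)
The plan is to prove the lemma by contradiction. Suppose that $\mathcal{L}$ induces no edge in $\mathbf{H}$. The goal is to derive a contradiction with the standing assumption $e(G_{\gammaX}[\overV\cap L])\ge \tfrac{c'}{2}n^2$ established just before the lemma.

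First, I would classify the edges of $G_{\gammaX}[\overV\cap L]$. By construction of $G_{\gammaX}$, every such edge lies between two distinct clusters $C_i, C_j$ that form a $\betaX$-regular pair of density at least $\gammaX$, so $C_iC_j\in E(\mathbf{G})$. Since both endpoints of the edge lie in $\overV\cap L$, property \ref{RL:prepartition} of the regular partition forces $C_i, C_j\subseteq \overV\cap L$. By the inclusion \eqref{eq:Lsup}, this means $C_i, C_j\in \mathcal{L}\cup\mathcal{B}$. If both $C_i$ and $C_j$ were in $\mathcal{L}$, then the edge $C_iC_j$ would remain in $\mathbf{H}$ and would be induced by $\mathcal{L}$, contradicting our assumption. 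Hence at least one of $C_i, C_j$ must lie in $\mathcal{B}$.

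Second, I would bound the number of edges of $G_{\gammaX}[\overV\cap L]$ that can have such an endpoint. Since each cluster has size $s$ and $|\mathcal{B}|\le 3\sqrt{\sigmaX}N$ by \eqref{eq:boB}, the total number of vertices contained in clusters of $\mathcal{B}$ is at most $3\sqrt{\sigmaX}N\cdot s\le 3\sqrt{\sigmaX}n$. Each such vertex is incident with at most $n$ edges of $G$, and therefore at most $n$ edges of $G_{\gammaX}$. Consequently,
\[
e(G_{\gammaX}[\overV\cap L])\le 3\sqrt{\sigmaX}\,n\cdot n = 3\sqrt{\sigmaX}\,n^2.
\]

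Finally, since the parameter hierarchy was chosen so that $\sigmaX\ll c'$, in particular $3\sqrt{\sigmaX}<\tfrac{c'}{2}$, and therefore $3\sqrt{\sigmaX}\,n^2 < \tfrac{c'}{2}n^2 \le e(G_{\gammaX}[\overV\cap L])$, a contradiction. I do not expect any real obstacle here; the only care needed is the bookkeeping that ensures every inter-cluster edge counted on the left-hand side indeed corresponds to an edge of $\mathbf{H}$ (which is exactly where \ref{RL:prepartition} and \eqref{eq:Lsup} are used).
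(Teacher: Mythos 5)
Your proof is correct and follows essentially the same argument as the paper: count the edges of $G_{\gammaX}[\overV\cap L]$ that fail to connect two clusters of $\mathcal{L}$ (the paper shows these number at most $4\sqrt{\sigmaX}n^2$ using \eqref{eq:Lsup} and \eqref{eq:boB}, while you get $3\sqrt{\sigmaX}n^2$; the difference is immaterial) and compare against $e(G_{\gammaX}[\overV\cap L])\ge \tfrac{c'}{2}n^2$. You phrase it as a contradiction and spell out the role of \ref{RL:prepartition} more explicitly, but the substance is the same.
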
  
\begin{proof}
By~\eqref{eq:Lsup} and~\eqref{eq:boB} at most $4\sqrt{\sigmaX}n^2$ of the edges
of $E(G_{\gammaX}[\overV\cap L])$ are not induced by the vertices of $\bigcup
\mathcal L$. As $e(G_{\gammaX}[\overV\cap L])\geq\frac {c'}2n^2\gg 4\sqrt{\sigmaX}n^2$, $\mathcal{L}$ induces at least one
edge in $\mathbf{G}$. This edge is also an edge in $\mathbf{H}$.
\end{proof}

The weighted graph $(\mathbf{H},\wdeg_\mathbf{H})$ satisfies the conditions
of Lemma~\ref{prop_TutteType}  with parameters $s$, $N=\overN$, $\sigma=
\alphaX$, and $K=k-\sqrt{\sigmaX} n$.\Referee{(73)} Let us verify\RefereeX{(74)}{``Let us verify'' instead of the suggested ``In fact''}  Conditions~\ref{ca:2}--\ref{ca:6} of Setting~\ref{set:TutteType}.
Condition~\ref{ca:2} is satisfied by the way $\mathbf{H}$ was derived from $\mathbf{G}$. Condition~\ref{ca:3} follows from~\eqref{eq:boundScriptL}. Condition~\ref{ca:4} is given by the definition of~$\mathcal L$. Condition~\ref{ca:5} was derived in Lemma~\ref{lem:Ledge}. Finally, Condition~\ref{ca:6} follows from the definitions of~$L$ and~$\mathcal L$. Lemma~\ref{prop_TutteType} ensures that one of the two specific matching structures in $\mathbf{H}$ exists. 

\bigskip
\noindent {\em Case I: }There are two adjacent clusters  $A,B$ and a matching
$M$ in $\mathbf{H}-\{A,B\}$  such that:
\begin{enumerate}[label={$(\alph{*})$}]
\item\label{hippiA} We have $\wdeg_\mathbf{H}(A,V(M))\geq
k-2\sqrt{\sigmaX} n$.\Referee{(75)}
\item\label{hippiB}  For each edge $e\in M$ we have
$|\neighbor_{\mathbf{H}}(A)\cap e|\le 1$.
\item\label{hippiC} There is a set $\mathcal{L}^*\in V(\mathbf H)$ such that for all $C\in \mathcal{L}^*$ we have $
\wdeg_\mathbf{H}(C)\geq (1+\tfrac\alphaX2)\frac k2$ and 
\begin{equation}\label{eq:WF}
\wdeg_\mathbf{H}(B,V(M)\cup\mathcal{L}^\ast)\geq (1+\tfrac \alphaX 2)\frac
k2\;.
\end{equation}
\end{enumerate}

\noindent {\em Case II: }There exist a set of clusters $\XXX\subseteq V(\mathbf H)$ and a matching $M$ in $\mathbf{H}$ such that:\RefereeX{(76)}{No, actually the clusters $A$ and $B$ cannot be defined here yet. The wording of Lemma~\ref{prop-CaseIIpart1} hints why.}
\begin{enumerate}[label={$(\alph{*})$}]
\item\label{hypnousA} $ \XXX\cap \mathcal{L}$ induces at least one edge in $\mathbf{H}$.
\item\label{hypnousB} $|V(M_\XXX)\setminus \XXX|\leq 1$, where $M_\XXX=\{CD\in M\: :\: C,D\in
\neighbor_{\mathbf{H}}(\XXX)\}$.
\item\label{hypnousC} All clusters of $\XXX\cap\mathcal L$ and all but at most
$3\sqrt\sigmaX N$ clusters $C\in \XXX\setminus \mathcal L$ satisfy
\begin{equation*}
\wdeg_{\mathbf H}(C,V(M))\geq \wdeg_{\mathbf{G}}(C, \overV)-3\alphaX n\;.
\end{equation*}
To see this, recall that by the assertion of Lemma~\ref{prop_TutteType} we have
that\Referee{(77)}
$$\sum_{D\in\neighbor_\mathbf{H}(C)\cap V(M)}\wdeg_\mathbf{H}(C,D)\ge
\sum_{D\in\neighbor_\mathbf{H}(C)}\wdeg_\mathbf{H}(C,D)-2\alphaX n$$for each
$C\in \XXX$. Thus the assertion follows from Lemma~\ref{lem:por}.
\item\label{hypnousD} Each edge of $M$ intersects $\mathcal L$.
\end{enumerate}

We partition $\mathcal D_A=\mathcal{T}_F\dcup \mathcal{T}_A$, where $\mathcal{T}_F$ are the internal shrubs and by~$\mathcal{T}_A$ are the end-shrubs of $\mathcal D_A$. Recall that~$\mathcal{D}_B$ contains only end-shrubs and  that $v(\mathcal D_B)\leq v(\mathcal T_A)$. 
We shall assume that $\mathcal{T}_F\cup \mathcal{T}_A\cup \mathcal{D}_B$ is $c_\mathbf{U}$-balanced, otherwise~$T\subseteq G$ by Lemma~\ref{prop:UnbalancedCase}.\Referee{(78)}
 
As we shall show shortly, the proof of Lemma~\ref{prop:iteration-Reg} follows from the following three statements, proofs of which are postponed to subsequent sections.
\begin{lemma}\label{prop-caseI}
If we have Case~I, then~$T\subseteq G$.
\end{lemma}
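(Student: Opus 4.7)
The plan is to invoke Lemma~\ref{lem:Embedding-3} with $X=A$, $Y=B$, $W_X=W_A$, $W_Y=W_B$, $\mathcal{D}_Y=\mathcal{D}_B$, and the partition $\mathcal{D}_A=\mathcal{D}_1\dcup\mathcal{D}_2\dcup\mathcal{D}_3$ given by $\mathcal{D}_1=\mathcal{T}_A$, $\mathcal{D}_2=\emptyset$, $\mathcal{D}_3=\mathcal{T}_F$, together with $M_X=\emptyset$. With these choices, condition~(v) of Lemma~\ref{lem:Embedding-3} becomes vacuous, so the $c_{\mathbf U}$-balancedness hypothesis on $\mathcal T_F\cup\mathcal T_A\cup\mathcal D_B$ is not used for Case~I. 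Upfront I observe that Definition~\ref{def:ellfine}~(viii) gives $v(\mathcal{D}_B)\le v(\mathcal{T}_A)\le v(\mathcal{D}_A)$, whence $v(\mathcal{D}_B)\le (k+1)/2$ using $v(\mathcal{D}_A)+v(\mathcal{D}_B)+|W_A|+|W_B|=k+1$; this is precisely the quantity the $B$-side budget $(1+\alphaX/2)\tfrac{k}{2}$ from Case~I~(c) must accommodate.

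First I would enlarge~$M$ to a matching $M'\supseteq M$ in $\mathbf{H}-\{A,B\}$ with $V(M')\supseteq V(M)\cup(\mathcal{L}^*\setminus(V(M)\cup\{A,B\}))$ by a greedy procedure: every $\mathcal{L}^*$-cluster has $\wdeg_{\mathbf H}$ at least $(1+\alphaX/2)k/2$ and hence at least $(1+\alphaX/2)k/(2s)$ neighbours in~$\mathbf{H}$, so unmatched $\mathcal{L}^*$-clusters can iteratively be paired to fresh partners outside~$\{A,B\}$. After the augmentation, $\wdeg_{\mathbf H}(B,V(M'))\ge(1+\alphaX/4)\tfrac{k}{2}$ by Case~I~(c), $\wdeg_{\mathbf H}(A,V(M'))\ge k-3\sqrt{\sigmaX}n$ by Case~I~(a) (since $A$'s weight already lies almost entirely on $V(M)\subseteq V(M')$), and property~(b) is preserved on~$M'$ after discarding the at-most-$|\mathcal{L}^*|$ new edges whose partners accidentally landed in $\neighbor_{\mathbf H}(A)$, a negligible loss.

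Next I would partition $\bigcup V(M')$ into pairwise disjoint $V^X$, $V^Y$, $V^{\mathcal Z}$, respecting the balanced-within-edge constraints~(i),(ii) of Lemma~\ref{lem:Embedding-3}. For each edge $CD\in M'$, property~(b) of Case~I designates at most one endpoint, call it~$C_e$, as the ``$A$-side''; I split $(C_e,D_e)$ into equal-sized blocks for $V^X,V^Y,V^{\mathcal Z}$ with aggregates satisfying $\wdeg_{\mathbf H}(A,V^X)\ge v(\mathcal{T}_A)+\xi n$, $\wdeg_{\mathbf H}(B,V^Y)\ge v(\mathcal{D}_B)+\xi n$, and $V^{\mathcal Z}$ large enough to hold~$\mathcal{T}_F$. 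Then I pick a small $\mathcal{Z}\subseteq\neighbor_{\mathbf H}(A)\setminus(V(M')\cup\{A,B\})$ with $\wdeg_{\mathbf H}(A,\bigcup\mathcal{Z})\ge|V(\mathcal{T}_F)\cap\neighbor_F(W_A)|+\xi n$ and $\wdeg_{\mathbf H}(Z,V^{\mathcal Z})\ge v(\mathcal{T}_F)+\xi n$ for every $Z\in\mathcal{Z}$. This is feasible because property~(ix) of the fine partition gives $|V(\mathcal{T}_F)\cap\neighbor_F(W_A)|\le 2|\mathcal{T}_F|$, and $|\mathcal{T}_F|\le|W_A|-1$ by a forest argument on~$W_A$ (the auxiliary graph on $W_A$ with an edge per internal shrub is a forest, since a cycle there would trace to a cycle in~$T$); both quantities are constants in~$k$ and far smaller than~$\xi n$. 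Taking $X'\subseteq A$ and $Y'\subseteq B$ to be the vertices typical toward $V^X$, $V^Y$, $V^{\mathcal Z}$, and $\bigcup\mathcal{Z}$ (Fact~\ref{fact:zap}) leaves $|X'|,|Y'|\ge(1-d/2)s$, and Lemma~\ref{lem:Embedding-3} then delivers $T\subseteq H\subseteq G$.

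The main obstacle will be the simultaneous balancing of $V^X,V^Y,V^{\mathcal Z}$ within each matching edge while meeting the four weighted-degree budgets~(iii), (iv), (vi), (vii) of Lemma~\ref{lem:Embedding-3}. Property~(b) of Case~I effectively restricts $V^X$ to the $A$-sides $C_e$, so the balance constraint $|C\cap V^X|=|D\cap V^X|$ consumes $D$-side mass for $V^X$ (and symmetrically $C$-side mass for $V^Y$); careful bookkeeping is needed to apportion the capacity $|V(M')|\cdot s$ proportionally to $v(\mathcal{T}_A)$, $v(\mathcal{D}_B)$, and $v(\mathcal{T}_F)$ without overshooting the weighted-degree bounds from Case~I~(a),(c). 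A secondary subtlety is verifying that the augmentation from~$M$ to~$M'$ preserves properties~(a) and~(b) only up to the lower-order losses required so that the ``$A$-side capacity'' estimate still applies.
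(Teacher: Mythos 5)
Your plan takes a fundamentally different route from the paper — a single invocation of Lemma~\ref{lem:Embedding-3} to embed all of~$T$ at once — and it has a genuine gap that I do not see how to repair.

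The difficulty is the choice $\mathcal{D}_3=\mathcal{T}_F$ with $\mathcal{Z}\subseteq\neighbor_{\mathbf H}(A)\setminus(V(M')\cup\{A,B\})$. Condition~(vii) of Lemma~\ref{lem:Embedding-3} then demands $\wdeg_{\mathbf H}(Z,V^{\mathcal Z})\ge v(\mathcal{T}_F)+\xi n$ for every $Z\in\mathcal{Z}$. But $v(\mathcal{T}_F)$ is not bounded away from $k$: nothing in the fine partition prevents almost the whole of $T$ from sitting in internal shrubs (take $\mathcal{T}_A$ and $\mathcal{D}_B$ both negligible). Then you need clusters $Z$ whose weighted degree into a prescribed subset $V^{\mathcal Z}\subseteq\bigcup V(M')$ exceeds $k$. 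Case~I only gives you degree control for $A$ and for $B$ (toward $V(M)\cup\mathcal{L}^*$), and none at all for arbitrary clusters in $A$'s neighbourhood outside the matching. You address condition~(vi) (the bound on $\wdeg_{\mathbf H}(A,\bigcup\mathcal{Z})$, which is indeed cheap by the forest argument and property~(ix)) but never give a reason why~(vii) should be satisfiable; and in fact~(vi) itself can already fail, since Case~I~(a) allows $A$'s weight to lie entirely on $V(M)$, so $\neighbor_{\mathbf H}(A)\setminus(V(M')\cup\{A,B\})$ can be empty while $\mathcal{T}_F\neq\emptyset$.

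This is exactly the obstacle the paper's proof routes around. There, Lemma~\ref{lem:Embedding-3} is applied to a proper subtree only: $\mathcal{T}_F$ goes in as $\mathcal{D}_1$ or $\mathcal{D}_2$ (landing in $V^X=M^F$, a $(1-y)$-slice of every matching cluster, where $A$'s degree budget is directly available), a truncated family $\mathcal{T}_B^M\subseteq\mathcal{D}_B$ goes in as $\mathcal{D}_Y$ (landing in $V^Y=M^B$), and $\mathcal{Z}=\mathcal{D}_3=\emptyset$. The remaining end-shrubs $\mathcal{T}_B^L$ and all of $\mathcal{T}_A$ are attached afterwards, one shrub at a time, via short ad-hoc arguments (Phases~2 and~3): end-shrubs are anchored at a single vertex of $W_A$ or $W_B$, so they can be dropped greedily into any matching edge where the image of their parent has many free neighbours — no condition of type~(vii) is needed for them. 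The $c_{\mathbf U}$-balancedness of $\mathcal{T}_F\cup\mathcal{T}_A\cup\mathcal{D}_B$ is then used precisely to produce the slack that makes the budget $(1-y)\wdeg_{\mathbf H}(A,V(M))\ge v(\mathcal{T}_F)+\sigmaX n$ close; the extra $\alphaX/2$ in Case~I~(c) only covers the $B$-side. Your observation that balancedness is ``not used'' is a symptom of the budget not actually closing: there is no free slack in Case~I to fund a condition like~(vii) for a set of the size of $\mathcal{T}_F$.

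So the two secondary subtleties you flag (the $M\to M'$ augmentation and the per-edge apportionment under the balance constraint~(i)) are genuine but plausibly fixable; the choice $\mathcal{D}_3=\mathcal{T}_F$ is not. To make progress you should reassign $\mathcal{T}_F$ to $\mathcal{D}_1$ (or $\mathcal{D}_2$ with $M_X=M$ if it is balanced), keep $\mathcal{Z}=\mathcal{D}_3=\emptyset$, and plan to place $\mathcal{T}_A$ and the overflow of $\mathcal{D}_B$ by separate greedy arguments after the lemma, using Case~I~(a) to find fresh space in the matching edges and Case~I~(c) (via $\mathcal{L}^*$) to divert the excess of $\mathcal{D}_B$ — which is essentially the three-phase structure of the paper's proof.
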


\begin{lemma}\label{prop-CaseIIpart1}
If we have Case~II, then~$T\subseteq G$, or for any two clusters $A,B\in \XXX\cap \mathcal L$ that are adjacent in $\mathbf{H}$, there exists a matching $M_A\subseteq M-\{A,B\}$  such that $M_A$ and $V_A=\bigcup V(M_A)$ satisfy the following properties.
\begin{enumerate}[label={$(\roman{*})$}]
\item\label{aSS:1} $\wdeg_{\mathbf H}(A,C), \wdeg_{\mathbf H}(A,D)>(1-2\etaX)s$ and $\wdeg_{\mathbf H}(A,CD)>(2-3\etaX)s$, for all $CD\in M_A$.
\item\label{aSS:2} $\wdeg_{\mathbf H}(A,V(M_A))\ge  (1-8\etaX) k$.
\item\label{aSS:3} $(1-8\etaX)k\le |V_A|\le k$.
\item\label{aSS:4} $V(M_A)\subseteq \XXX$.
\item\label{aSS:5} If $v(\mathcal D_B)\ge \sqrt[4]{\zeta}k$, then $\wdeg_{\mathbf H}(B,V(M_A))\ge (1-9\etaX)k$. 
\item\label{aSS:6} If $v(\mathcal D_B)< \sqrt[4]{\zeta}k$, then there exists a matching $M_B\subseteq M- (V(M_A)\cup\{ A, B\})$ such that $|M_B|\le  \sqrt[4]{\zeta}N$ and $v(\mathcal D_B)+\sigmaX k\le \wdeg_{\mathbf H}(B,V(M_B))\le v(\mathcal D_B)+\sigmaX k +2s$.
\item\label{aSS:7} $|V_A\cap L|\ge \frac 12 |V_A|$.
\end{enumerate}
\end{lemma}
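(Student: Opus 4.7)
The plan is to build a candidate matching $M_A\subseteq M-\{A,B\}$ by selecting the edges of $M$ on which $A$ has almost maximal weighted degree, and then to show that either this matching satisfies all seven properties~\ref{aSS:1}--\ref{aSS:7}, or any failure exhibits enough extra structure to feed into Lemma~\ref{lem:Embedding-3} and embed $T$ directly.

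\emph{Construction of $M_A$.} Since $A,B\in \XXX\cap \mathcal L$, combining Case~II~(c) with the definition of $\mathcal L$ yields $\wdeg_\mathbf{H}(A,V(M)),\wdeg_\mathbf{H}(B,V(M))\ge k-O(\alphaX)n$; property~(b) of Case~II discards at most one edge of $M_\XXX$ lying outside $\XXX$; and property~(d) ensures every edge of $M$ meets $\mathcal L$. Let $\mathcal G$ be the set of $CD\in M-\{A,B\}$ with $C,D\in \XXX$ and $\wdeg_\mathbf{H}(A,C),\wdeg_\mathbf{H}(A,D)>(1-2\etaX)s$ and $\wdeg_\mathbf{H}(A,CD)>(2-3\etaX)s$. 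A deficit-counting argument, comparing $\wdeg_\mathbf{H}(A,V(M))$ with the per-edge maximum $2s$, gives $\wdeg_\mathbf{H}(A,V(\mathcal G))\ge (1-7\etaX)k$. Trimming edges from $\mathcal G$ one at a time if $2s|\mathcal G|>k$ yields the desired $M_A$ of total cluster size in $[(1-8\etaX)k,k]$. Properties~\ref{aSS:1}--\ref{aSS:4} hold by construction, and~\ref{aSS:7} holds because every edge of $M_A$ meets $\mathcal L\subseteq L$, contributing at least $s$ of its $2s$ vertices to $V_A\cap L$.

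\emph{Handling $B$ when $v(\mathcal D_B)\ge \sqrt[4]{\alphaX}k$.} To obtain~\ref{aSS:5}, I intersect $\mathcal G$ with the analogous set $\mathcal G_B$ of edges on which $B$ has nearly full weighted degree. An inclusion--exclusion on the per-edge weighted degrees of $A$ and $B$, both summing to nearly $k$ over $V(M)$, shows $|\mathcal G\cap \mathcal G_B|$ remains large enough to supply the required $A$-weight, so the final $M_A\subseteq \mathcal G\cap \mathcal G_B$ gives both~\ref{aSS:2} and~\ref{aSS:5}. If the intersection is too small --- which forces $A$ and $B$ to spend their $V(M)$-weight on nearly disjoint parts --- then I instead invoke Lemma~\ref{lem:Embedding-3} with $X=A$, $Y=B$, the original matching $M$, and $V^X, V^Y, V^{\mathcal Z}$ chosen as unions of entire matched pairs of $M$ (so that the balance conditions~\ref{emb3-balanced} and~\ref{emb3-sign} hold automatically); $V^Y$ lies in the region heavy for $B$, $V^X$ in the region heavy for $A$, and the internal shrubs $\mathcal T_F$ go through $V^{\mathcal Z}$ and a suitable $\mathcal Z$, so that~\ref{emb3-V^Y}--\ref{emb3-Z} reduce to the already-established weighted degree bounds.

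\emph{Handling $B$ when $v(\mathcal D_B)< \sqrt[4]{\alphaX}k$.} To obtain~\ref{aSS:6}: if $\wdeg_\mathbf{H}(B,V(M)\setminus(V(M_A)\cup\{A,B\}))\ge v(\mathcal D_B)+\sigmaX k+2s$, I build $M_B$ greedily by adding edges in decreasing order of $B$-weighted degree until $\wdeg_\mathbf{H}(B,V(M_B))$ first exceeds $v(\mathcal D_B)+\sigmaX k$; the final edge overshoots by at most $2s$, and the size bound $|M_B|\le \sqrt[4]{\alphaX}N$ follows from $s\ge n/\Pi_1$ and $\sigmaX\ll \sqrt[4]{\alphaX}\ll q$. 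Otherwise $B$ spends essentially all of its $V(M)$-weight inside $V(M_A)\cup\{A,B\}$, so both $A$ and $B$ see $V(M_A)$ with weighted degree near $k$; I invoke Lemma~\ref{lem:Embedding-3} with $X=A$, $Y=B$, matching $M_A$, partition $\mathcal D_A=\mathcal D_2\dcup \mathcal D_3$ taking $\mathcal D_2=\mathcal T_A$ (which is balanced by the $c_\mathbf{U}$-balance assumption standing from Section~\ref{ssec_Struct-Graph}) and $\mathcal D_3=\mathcal T_F$, and $V^Y$ a small union of matched pairs of $M_A$ chosen so that $\wdeg_\mathbf{H}(B,V^Y)$ lies in the correct range for $\mathcal D_B$. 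The main obstacle will be the verification of Lemma~\ref{lem:Embedding-3}'s hypotheses in the two direct-embedding fallbacks: in particular, routing the internal shrubs $\mathcal T_F$ through $\mathcal Z$ forces a simultaneous choice of $\mathcal Z$ and $V^{\mathcal Z}$ satisfying both the matching-balance conditions~\ref{emb3-balanced}, \ref{emb3-sign} and the joint-weighted-degree conditions~\ref{emb3-AZ}, \ref{emb3-Z}, which will require identifying clusters with high joint weighted degree from both $A$ and the chosen members of $\mathcal Z$.
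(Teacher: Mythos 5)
Your construction of $M_A$ has a genuine gap. You define $\mathcal G$ to be the edges $CD\in M-\{A,B\}$ with $C,D\in\XXX$ and $\wdeg_\mathbf{H}(A,C),\wdeg_\mathbf{H}(A,D)>(1-2\etaX)s$, and claim that ``a deficit-counting argument, comparing $\wdeg_\mathbf{H}(A,V(M))$ with the per-edge maximum $2s$, gives $\wdeg_\mathbf{H}(A,V(\mathcal G))\ge(1-7\etaX)k$.'' This does not follow. An edge $CD\notin\mathcal G$ can still contribute up to $(2-2\etaX)s$ to $\wdeg_\mathbf{H}(A,V(M))$, and $M$ can have on the order of $N/2\approx n/(2s)$ such edges; since $k$ is only $\approx qn$ with $q$ possibly small, the combined weight carried by edges outside $\mathcal G$ can easily exceed $k$. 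Concretely, nothing forbids $A$'s entire weight $\approx k$ from being spread over $\approx qN$ edges $CD$ with $\wdeg_\mathbf{H}(A,C)\approx s$ but $\wdeg_\mathbf{H}(A,D)\approx 0$: then $\mathcal G=\emptyset$ and no trimming can produce an $M_A$ of total cluster size $\ge(1-8\etaX)k$. The lemma's statement is a dichotomy (``$T\subseteq G$, or $M_A$ exists'') precisely because in such spread-out configurations the matching with property~\emph{(i)} may fail to exist and one must instead exhibit an embedding of $T$. Your fallbacks to Lemma~\ref{lem:Embedding-3} are only invoked for the $B$-side properties~\emph{(v)} and~\emph{(vi)}, not for the existence of $M_A$ itself, so they cannot rescue the argument.

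For comparison, the paper's proof does not try to build $M_A$ from scratch. It first establishes, from Case~II~\emph{(c)} and $A,B\in\XXX\cap\mathcal L$, that $\wdeg_{\mathbf H}(A,V(M\setminus\tilde M)),\wdeg_{\mathbf H}(B,V(M\setminus\tilde M))\ge k-4\alphaX n$ where $\tilde M$ is the minimum submatching covering $A,B$; it then observes that these bounds, together with Case~II~\emph{(b)}, match the hypotheses of Zhao's Lemma~6.11, whose conclusion is exactly the dichotomy needed for~\emph{(i)}--\emph{(vi)}. Property~\emph{(vii)} is derived separately from Case~II~\emph{(d)} and $\bigcup\mathcal L\subseteq L$, which is what you do as well. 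So the degree bounds you start from are correct and are the same as the paper's, but the key step --- proving that either $T$ embeds or a near-``full'' matching $M_A$ exists --- requires a structural argument of the type Zhao supplies, not a per-edge deficit count.
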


\begin{lemma}\label{prop-CaseIIpart2}
Suppose we have Case~II and let $A,B\in \XXX\cap \mathcal L$, $AB\in E(\mathbf{H})$. Suppose that $M_A$, $M_B$ and $V_A$ satisfy~\ref{aSS:1}--\ref{aSS:7} from Lemma~\ref{prop-CaseIIpart1}. (For convenience, we take $M_B=\emptyset$ if the assumption of Lemma~\ref{prop-CaseIIpart1}~\ref{aSS:6} is not satisfied.)\Referee{(79)} If $|e_{G_\gamma}(V_A,V\setminus V_A)|\ge \frac{\kappa n^2}{2}$, then~$T\subseteq G$. 
\end{lemma}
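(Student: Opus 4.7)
The plan is to apply Lemma~\ref{lem:Embedding-3} with $X = A$ and $Y = B$. Most of $T$ will be embedded inside $V_A\cup\{A,B\}$ using the matching $M_A$, with the cut vertices $W_A$, $W_B$ mapped to $A$, $B$ and the shrubs of $\mathcal{D}_A$, $\mathcal{D}_B$ to $V_A$ (or to $V(M_B)$ in the small case). A small family $\mathcal{D}_3 \subseteq \mathcal{D}_A$ whose total order matches the overflow $v(T)-|V_A|$ beyond what $V_A$ can absorb will be diverted to an auxiliary set $\mathcal{Z}$ of clusters outside $V_A$; locating a useful $\mathcal{Z}$ is the whole purpose of the hypothesis on the outgoing edges from~$V_A$.

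My first step would be to locate these outside resources. The bound $|e_{G_\gamma}(V_A, V \setminus V_A)| \ge \tfrac{\omegaX n^2}{2}$, together with Case~II property~\ref{hypnousC} saying that almost all weight of each $\XXX$-cluster lies inside $V(M)$, implies that the cluster-graph weight $\wdeg_{\mathbf{H}}(V_A, V(M) \setminus V_A)$ is of order $\omegaX n$. Since $A \in \mathcal{L}$ has total weight $\wdeg_{\mathbf{H}}(A) \ge k - \sqrt{\sigmaX}\, n$, while property~\ref{aSS:2} forces $\wdeg_{\mathbf{H}}(A, V_A) \ge (1-8\etaX)k$, the residual weight that $A$ carries outside $V_A$ is comparatively small, of order $\etaX k$. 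To make this residual directly usable, I would trim $M_A$ by discarding a small number of its matching edges $CD$ for which $\wdeg_{\mathbf{H}}(A, CD)$ is smallest, and in exchange select a set $\mathcal{Z}$ of clusters in $V(M)\setminus V_A$ to which $A$ is adjacent. A careful averaging over the discarded mass enforces both $\wdeg_{\mathbf{H}}(A, \bigcup \mathcal{Z}) \ge 2\xi n$ and $\wdeg_{\mathbf{H}}(A, V_A') \ge (1-9\etaX)k$ for the trimmed $V_A' \subseteq V_A$; by restricting the matching $M$ to those edges incident to $\mathcal{Z}$, one obtains a matching $M_{\mathcal{Z}}$ such that each $Z \in \mathcal{Z}$ has weight $\ge 2\xi n$ into its $M$-partner, yielding the set $V^{\mathcal{Z}}$.

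My second step is to fix the tree decomposition. I would partition $\mathcal{D}_A = \mathcal{D}_1 \dcup \mathcal{D}_2 \dcup \mathcal{D}_3$ with $\mathcal{D}_3$ a small family of end-shrubs absorbing precisely the overflow $v(T) - |V_A'|$, $\mathcal{D}_2$ a $c_\mathbf{U}$-balanced subfamily (which exists because $\mathcal{T}_F \cup \mathcal{T}_A \cup \mathcal{D}_B$ is $c_\mathbf{U}$-balanced, by the running assumption from Lemma~\ref{prop:UnbalancedCase}), and $\mathcal{D}_1$ the remainder. The sub-matching $M_X \subseteq M_A$ is chosen to cover the portion of $V_A'$ where $\mathcal{D}_2$ is to be embedded. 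The set $V^Y$ is taken as a balanced-across-$M_A$ subset of $V_A'$ of size $v(\mathcal{D}_B)$ in the large case $v(\mathcal{D}_B) \ge \sqrt[4]{\zeta}k$ (using~\ref{aSS:5}), or as the whole $V(M_B)$ in the small case (using~\ref{aSS:6}).

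Finally, it remains to verify the hypotheses of Lemma~\ref{lem:Embedding-3}. Condition~\ref{emb3-balanced} follows by making the vertex assignments $V^X$, $V^Y$, $V^{\mathcal{Z}}$ symmetric across each matching pair; condition~\ref{emb3-sign} by requiring all chosen portions to be either empty or of size $\ge \varepsilon s$; and conditions~\ref{emb3-V^Y}--\ref{emb3-Z} as direct consequences of~\ref{aSS:1}--\ref{aSS:7} together with the construction of $\mathcal{Z}$. Lemma~\ref{lem:Embedding-3} then yields the embedding $T \subseteq G$. The main obstacle of the proof will be the first step---the amplification of $A$'s residual weight---for the trimming of $M_A$ must be done in such a way that $A$ retains enough weight inside $V_A'$ to cover $v(\mathcal{D}_1) + v(\mathcal{D}_2) + \xi n$ for conditions~\ref{emb3-V^X} and~\ref{emb3-MX}, while simultaneously concentrating new weight on a set $\mathcal{Z}$ large enough to satisfy~\ref{emb3-AZ}.
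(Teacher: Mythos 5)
Your plan correctly identifies the role of Lemma~\ref{lem:Embedding-3} and the need to divert a family $\mathcal D_3$ of shrubs outside~$V_A$, but the mechanism you propose for locating the set $\mathcal Z$ fails. You claim that since $A\in\mathcal L$ has weight $\wdeg_{\mathbf H}(A)\ge k-\sqrt{\sigmaX}n$ while~\ref{aSS:2} gives $\wdeg_{\mathbf H}(A,V_A)\ge(1-8\etaX)k$, the residual weight of $A$ outside $V_A$ is of order $\etaX k$. This does not follow: both estimates are lower bounds, and $\wdeg_{\mathbf H}(A,V_A)$ can be as large as $|V_A|\le k$, so the weight that $A$ sends into $V(M)\setminus V_A$ may well be zero. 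Membership in $\mathcal L$ imposes no upper bound on $\wdeg_{\mathbf H}(A)$. Consequently you cannot place $\mathcal Z$ in $V(M)\setminus V_A$ and hope to satisfy condition~\ref{emb3-AZ}, which requires $\wdeg_{\mathbf H}(A,\bigcup\mathcal Z)$ to be bounded below. Trimming $M_A$ to ``concentrate'' weight does not help, as it only removes weight of $A$ inside $V_A$, and cannot create weight from $A$ to the outside. Furthermore the hypothesis $e_{G_\gamma}(V_A,V\setminus V_A)\ge\tfrac{\omegaX n^2}{2}$ is about $V_A$ as a whole and says nothing about the single cluster~$A$.

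The paper's resolution is to place $\mathcal Z$ \emph{inside} $V(M_A)$. Lemma~\ref{lem:forref} converts the cross-edge hypothesis into the existence of a sizable subfamily $\mathcal C\subseteq V(M_A)$ for which $\wdeg_{\mathbf H}(C,V(M\setminus(M_A\cup M_B)))$ is large; the clusters $\mathcal C$ play the role of $\mathcal Z$, with $V^{\mathcal Z}\subseteq\bigcup V(M\setminus(M_A\cup M_B))$ as the outside destination. Condition~\ref{emb3-AZ} is then satisfied because~\ref{aSS:1} already guarantees that $A$ has near-full weight to each edge of $M_A$, hence to $\mathcal Z$. You should also note that a single application of Lemma~\ref{lem:Embedding-3} does not suffice: the paper proceeds by a case analysis on whether the outgoing edges emanate mostly from small clusters ($\tilde S$) or whether there are many fully-large matching edges ($M_L$), and within each branch it must treat separately the shrubs of size~$1$, size~$2$, and size~$\ge3$ (Lemma~\ref{lem:T^3}), with Fact~\ref{f_numbertheoretic} used to split $M_A$ into $M_a$ and $M_b$ accommodating $\mathcal D_A$ and $\mathcal D_B$ respectively. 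Your sketch compresses all of this into one pass and omits the structural dichotomies that make the argument go through.
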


Given Lemmas~\ref{prop-caseI}--\ref{prop-CaseIIpart2}, Lemma~\ref{prop:iteration-Reg} follows. 
Indeed, we get that $\mathcal T_{k+1}\subset G$, or $e_{G_\gamma}(V_A,V\setminus V_A)< \kappa^2n^2/2$, with $V_A$ from Lemma~\ref{prop-CaseIIpart1}. 
In the latter case, the assertions of Lemma~\ref{prop:iteration-Reg} are fullfilled with the set $V':=V_A$.  
Indeed, by Lemma~\ref{prop:iteration-Reg}~\ref{pr:Vrtacka} and by~\eqref{eq:edgesGvsGg}, we have $e_G(V_A,V\setminus V_A)\le e_G(\overV, V\setminus \overV)+e_{G_\gamma}(V_A,V\setminus V_A)+e(G)-e(G_\gamma)\le 2\lambda k^2+\kappa n^2/2\ll \rho k^2$.

\subsection{Proof of Lemma~\ref{prop-caseI}}
\label{ssec_CaseI}
We shall partition each cluster $C\in V(\mathbf H)$ so that the
partition defines two disjoint sets $V^F,V^B\subset V(G)$.
The embedding $\varphi: V(T)\rightarrow  V(G) $ of~$T$ will be defined in three phases. In the first phase, we shall embed the
subtree $T'=T[W_A\cup W_B\cup V(\mathcal T_F\cup \mathcal T^M_B)]$, where~$\mathcal T_B^M\subseteq \mathcal D_B$ will be defined later. 
 The trees~$\mathcal T_F$ will be embedded in~$V^F$ and the trees~$\mathcal T^M_B$ in~$V^B$.
In the second phase, we shall embed $\mathcal T_B^L=\mathcal D_B\setminus \mathcal T_B^M$ in~$V^B$. In the last phase, we shall embed~$\mathcal T_A$
 in~$V(G)$. From now on, we write~$\varphi$ for the partial embedding (at the current stage) of~$T$.
   
The difference between the present proof of Theorem~\ref{thm_main} and its approximate version Theorem~\ref{thm_PiguetStein} is that in the proof of Theorem~\ref{thm_main} we have to fight to gain back small loses caused by the use of the Regularity Lemma. However, this is not necessary when we have the matching structure of Case~I. Indeed, we can reduce this situation to the ``approximate version'', i.e., to a setting of similar nature as in Theorem~\ref{thm_PiguetStein}.

\paragraph{Preparation.} We partition each cluster $C\in V(\mathbf H)$ into sets~$C^F$ and~$C^B$ in an arbitrary way so that $|C^F|=(1-y)|C|$ and $|C^B|=y|C|$, where
\begin{equation}\label{eq:def_y}
y=\frac
{v(\mathcal T_A\cup \mathcal D_B)}{k}\cdot\frac 1{1+\tfrac{\alphaX}4}+\sigmaX\geq \frac
{2v(\mathcal D_B)}{k}\cdot\frac 1{1+\tfrac{\alphaX}4}+\sigmaX.
\end{equation}
Note that\Referee{(83)}
\begin{align}\label{eq:1-ystronger}
1-y&\ge \frac{v(\mathcal T_F)}k+\frac{\alphaX}{8}\cdot\frac{v(\mathcal T_A\cup \mathcal D_B)}k-\lambda\\
\label{eq:1-y}&\ge
\frac{v(\mathcal T_F)}k-\lambda
\;.
\end{align}
Set\RefereeX{(80)}{In Lemma~\ref{prop_TutteType}CaseI, newly $M$ is made disjoint from $A$ and $B$. Consequently, the matching $\tilde{M}$ is indeed not needed. However, such a change cannot be done in CaseII (which is a subject of Lemma~\ref{prop-CaseIIpart1} and Lemma~\ref{prop-CaseIIpart2})}
\begin{align*}
&V^B=\bigcup_{C\in V(\mathbf{H})}C^B\;,\quad  
V^F=\bigcup_{C\in V(\mathbf{H}}C^F \;, \\
 &M^B=V^B\cap \bigcup V(M)\; ,\quad   M^F=V^F\cap \bigcup V(M)\;\mbox{, and }
 \quad L^B=V^B\cap \bigcup ( \mathcal L^*\setminus \{A,B\})\;\mbox{.}
\end{align*}
Observe that~\eqref{eq:def_y} gives $y\in (\sigmaX,1-\sigmaX)$. Indeed, the
lower bound is trivial and the upper bound follows from $\frac{1}{1+\frac\alphaX4}< 1-2\sigmaX$. 

Let $\mathcal{T}^M_B\subseteq \mathcal{D}_B$ be a maximal subject to
\begin{equation}\label{eq:defTBM}
\sum_{t\in \mathcal{T}^M_B}v(t)\leq \wdeg_{\mathbf H}(B,M^B)-{\sigmaX n}.
\end{equation}
\Referee{(87)} Let $\mathcal{T}_B^L=\mathcal{D}_B\setminus \mathcal{T}_B^M$. 
From the maximality, we have
\begin{equation}\label{eq:defTBMcomp}
\sum_{t\in \mathcal{T}^M_B}v(t)\geq \wdeg_{\mathbf H}(B,M^B)-{\sigmaX n}-\tau k\quad \mbox{or} \quad \mathcal{T}_B^L=\emptyset\;.
\end{equation}

\smallskip
We now proceed with the three-phase embedding outlined above.

\paragraph{Phase 1 of the embedding.} 
Let~$A'\subseteq A$ be the set of typical vertices w.~r.~t.\ all but at most~$\beta N$ sets $C\in V(M)$ and
let ~$B'\subseteq B$ be the set of typical vertices  w.~r.~t.~$L^B$. From Fact~\ref{fact:zap},\Referee{(81)}
\begin{equation}\label{eq:A'B'}
\min\{|A'|, |B'|\}\ge
(1-\sqrt{\alpha})s\;.
\end{equation}

We use Lemma~\ref{lem:Embedding-3} to embed the tree $T'=T[W_A\cup W_B\cup
V(\mathcal T_F\cup \mathcal T^M_B)]$ with the following setting. 
The cluster graph is $\mathbf
H$, with $AB\in E(\mathbf{H})$ and $A'\subseteq A$,  $B'\subseteq B$. 
 The set~$\mathcal Z$ is empty.  The tree $T'$ has a
$\tau$-fine partition $(W_A, W_B, \mathcal T_F, \mathcal T^M_B)$. 
We have disjoint sets
$M^F\dcup M^B\dcup \emptyset\subseteq \bigcup V(M)$. The sets $M^F$, $M^B$ and $\emptyset$ play the roles of $V^X$, $V^Y$, and $V^\mathcal{Z}$ from Lemma~\ref{lem:Embedding-3}.\Referee{(82)}
If $\mathcal T_F$ is
$c_{\mathbf{U}}/2$-balanced, we set $\mathcal
D_2=\mathcal T_F$, $\mathcal D_1=\mathcal D_3=\emptyset$ and $M_X=M$. If $\mathcal T_F$ is not
$c_{\mathbf{U}}/2$-balanced, we set $\mathcal
D_1=\mathcal T_F$, $\mathcal D_2=\mathcal D_3=\emptyset$, and
$M_X=\emptyset$. \RefereeX{(89)}{preparation. The main change is one page below.} In particular, note that
\begin{equation}\label{eq:vyhled}
\varphi(V(T')\setminus V(\mathcal T_B^M))\cap M^B=\emptyset\;.
\end{equation}

\def\LemmaEmbedT{{\mathrm{L\ref{lem:Embedding-3}}}}
We now verify the assumptions of Lemma~\ref{lem:Embedding-3}, where we use $d_\LemmaEmbedT=\gamma, \xi_\LemmaEmbedT=\sigmaX,\epsilon_\LemmaEmbedT=\alpha$. The parameters $0<\betaX\ll \sigmaX\ll \gamma<1$ and $\tau,s$ satisfy
$\tau/s<\betaX<\sigmaX^2 \gamma/400$. The bound~\eqref{eq:A'B'} guarantees that $A'$ and $B'$ have sizes as required by the lemma.
Condition~\ref{emb3-balanced} follows from the way $V^F$
and $V^B$ were defined and Condition~\ref{emb3-sign} holds as $y\in
(\sigmaX,1-\sigmaX)$. Conditions~\ref{emb3-AZ} and~\ref{emb3-Z} hold
trivially. Condition~\ref{emb3-V^Y} follows from~\eqref{eq:defTBM}. If $\mathcal T_F$ is $c_{\mathbf{U}}/2$-balanced, Condition~\ref{emb3-V^X} holds
trivially and for Condition~\ref{emb3-MX} observe that\Referee{(83)}
\begin{align*}
\wdeg_{\mathbf H}(A,M^F)&\ge (1-y)(\wdeg_{\mathbf H}(A, V(M))-\betaX n \;\qquad\text{[by~\eqref{eq:1-y} and Case~I\ref{hippiA}]}\\
&\ge v(\mathcal T_F)-3\sqrt{\sigmaX }n\ge v(\mathcal T_F)-\tfrac {c_{\mathbf{U}}^2}{4}k+\sigmaX n\;.
\end{align*}

As $\mathcal T_A\cup\mathcal T_F\cup \mathcal D_B$ is\Referee{(84)} $c_{\mathbf{U}}$-balanced we get that if $\mathcal T_F$ is not $c_{\mathbf{U}}/2$-balanced then $\mathcal T_A\cup \mathcal D_B$ is
$c_{\mathbf{U}}/2$-balanced. Condition~\ref{emb3-MX} holds trivially and for
Condition~\ref{emb3-V^X} observe that\Referee{(85)}
\begin{align*}
\wdeg_{\mathbf H}(A,M^F)&\ge (1-y)(\wdeg_{\mathbf H}(A, V(M))-\betaX n\;\qquad\text{[by~\eqref{eq:1-ystronger} and Case~I\ref{hippiA}]}\\
&\ge v(\mathcal T_F)+v(\mathcal T_A\cup \mathcal D_B)\frac
{\zeta}{8}-3\sqrt{\sigmaX} n\ge v(\mathcal T_F)+\sigmaX n\;.
\end{align*}

\paragraph{Phase 2 of the embedding.} Phase~2 is skipped when $\mathcal{T}_B^L=\emptyset$. We label the shrubs of~$\mathcal{T}_B^L$ as $t_1,\ldots, t_{|\mathcal{T}_B^L|}$. In step $i\geq 1$,
we define the embedding for the shrub~$t_i$ in a suitable edge $CD\in
E(\mathbf{G})$. Set $U_i=\varphi(V(\mathcal T_F\cup \mathcal T_B^M)\cup \bigcup_{j<i}V(t_j)).$ Let
$x_i\in W_B$ be the parent of the root of the shrub~$t_i$. The
vertex~$\varphi(x_i)$ is typical w.~r.~t.\ $L^B$ and hence
by~\eqref{eq:WF},~\eqref{eq:def_y} and~\eqref{eq:defTBMcomp},\Referee{(87)} we have\RefereeX{(86)}{Indeed, $2\alpha$ changed to $\alpha$.}
\begin{align*}
\deg(\varphi(x_i),L^B)&\geq \wdeg_{\mathbf H}(B,L^B)-\betaX n\ge \wdeg_{\mathbf H}(B,M^B\cup L^B)-\wdeg_{\mathbf H}(B,M^B)-\betaX n\\
&\geq v(\mathcal D_B)+\tfrac{\zeta k}4-v(\mathcal T_B^M)-\sigmaX n-\tau k-2\betaX n\geq
v(\mathcal T_B^L)+\sigmaX n\; .
\end{align*}
Thus there is a cluster $C\in \mathcal{L}^*$ with $$|\neighbor(\varphi(x_i))\cap
C\setminus U_i|\geq \frac {\sigmaX n}N\geq \frac {\betaX
s+\tau}{\gammaX-2\betaX}\; .$$ 
From the definition of $\mathcal{L}^*$,~\eqref{eq:def_y}, and~\eqref{eq:vyhled}\Referee{(88)} we obtain\Referee{(89)}
$$\wdeg_{\mathbf H}(C,V^B\setminus U_i)\geq \wdeg_{\mathbf H}(C,V^B) -|\varphi(V(\mathcal D_B))\cap U_i|\geq  \tfrac{\sigmaX k}4\; .$$ Therefore there
is a cluster $D\in\neighbor_{\mathbf H}(C)$ with $|D\setminus U_i|\geq \frac {\betaX
s+\tau}{\gammaX-2\betaX}$. We use Lemma~\ref{lemma_Embedding1} to embed
$t_i$ in $(C\cup D)\setminus U_i$ so that the root of the shrub~$t_i$ is mapped to $\neighbor(\varphi(x_i))\cap C\setminus U_i$.

\paragraph{Phase 3 of the embedding.} 
We label $\mathcal T_A$ as $t_1, \dots, t_{|\mathcal T_A|}$. In
step $i= 1, \dots, |\mathcal T_A|$, we define the embedding for the shrub~$t_i$. Let
$x_i\in W_A$ be the parent of the root~$r_i$ of~$t_i$. Set $U_i=\varphi(V(\mathcal T_F\cup
\mathcal D_B)\cup \bigcup_{j<i}V(t_j))$. For an edge $CD\in M$ with $C\in \neighbor_{\mathbf H}(A)$ we define
\[
\Upsilon^i_{CD}=\min\{|\neighbor(\varphi(x_i))\cap C\setminus U_i|,|D\setminus
U_i|\}\;.\]
By Lemma~\ref{lemma_Embedding1}, the shrub~$t_i$ can be embedded in unused
vertices of an edge $CD\in M$ so that~$r_i$ is mapped to a neighbor of $\varphi(x_i)$, whenever $CD$ satisfies
$\Upsilon^i_{CD}\ge \sigmaX s$. If $\mathcal{T}_F\cup \mathcal{D}_B$ is
$\tfrac{c_{\mathbf{U}}}2$-balanced then  by~\eqref{eq:ladvi} we have\Referee{(90)}
$$ \sum_{\substack{CD\in  M\\C\in\neighbor_{\mathbf H}(A)}}\max\{|C\cap U_i|,|D\cap U_i|\}\le v(\mathcal T_A)+v(\mathcal{T}_F\cup \mathcal{D}_B)-\sum_{t\in\mathcal{T}_F\cup \mathcal{D}_B}|t_\ominus|\le v(\mathcal T_A)+v(\mathcal{T}_F\cup \mathcal{D}_B)-\tfrac{c_\mathbf{U}^2k}4\;.$$
By Fact~\ref{fact:zap} we have
\begin{align*}
\sum_{\substack{CD\in M\\C\in\neighbor_{\mathbf H}(A)}} \Upsilon^i_{CD}
\geq &
\sum_{\substack{CD\in  M\\C\in\neighbor_{\mathbf H}(A)}}\left(|\neighbor(\varphi(x_i))\cap
C|-\max\{|C\cap U_i|,|D\cap U_i|\}\right)\\
\ge & \wdeg_{\mathbf H}(A,V(M))-2\sqrt{\betaX}n-(v(\mathcal T_F\cup
\mathcal D_B)-\tfrac{c_\mathbf{U}^2k}4)-v(\mathcal T_A) \geq  \sigmaX n\;.
\end{align*}
If $\mathcal{T}_F\cup \mathcal{D}_B$ is $\tfrac{c_{\mathbf{U}}}2$-unbalanced,
then $\mathcal{T}_A$ is $\tfrac{c_{\mathbf{U}}}2$-balanced. Then by~\eqref{eq:ladvi}, \Referee{(91)}
$\max\{|V(\mathcal T_A)\cap T_\oplus|,|V(\mathcal T_A)\cap T_\ominus|\}\leq
v(\mathcal T_A)-(\tfrac{c_{\mathbf{U}}}2)^2 k$. We get
\begin{align*}
\sum_{\substack{CD\in M\\C\in\neighbor_{\mathbf H}(A)}} \Upsilon^i_{CD}
\geq &
\sum_{\substack{CD\in  M\\C\in\neighbor_{\mathbf H}(A)}}\left(|\neighbor(\varphi(x_i))\cap
C|-\max\{|C\cap U_i|,|D\cap U_i|\}\right)\\
\ge & \wdeg_{\mathbf H}(A,V(M))-2\sqrt{\betaX}n-v(\mathcal T_F\cup
\mathcal D_B)-(v(\mathcal T_A)-\tfrac{c_\mathbf{U}^2k}4) \geq \sigmaX n\;.
\end{align*}
In both cases, there is an edge $CD\in M$ with $\Upsilon^i_{CD}\ge \sigmaX s$.

\def\LPDV{{\mathrm{L\ref{lem:Embedding-3}}}}
\subsection{Proof of Lemma~\ref{prop-CaseIIpart1}}\label{ssec_CaseIIp1}
Let~$\tilde M\subset M$ be the minimum matching covering clusters~$A$ and~$B$.
 We claim that
\begin{equation}\label{eq:deg(A,tildeM)}
\min\{\wdeg_{\mathbf H}(A,V(M\setminus\tilde M)),\wdeg_{\mathbf H}(B,V(M\setminus\tilde M))\}\geq k-4\alphaX n\;.
\end{equation}
As $A,B\in \XXX\cap \mathcal L$, $\min\{\wdeg_{\mathbf G}(A, \overV),\wdeg_{\mathbf G}(B, \overV)\}\ge k-\sqrt{\sigmaX} n$. From Case~II~\ref{hypnousC} and the fact that $|V(\tilde M)|\le 4$,~\eqref{eq:deg(A,tildeM)} follows. \RefereeX{!}{additional correction here}
 
The proof of \emph{(i)}--\emph{(vi)} corresponds to Lemma~6.11 from~\cite{Z07+}. The hypotheses of \cite[Lemma~6.11]{Z07+} and the present Lemma~\ref{prop-CaseIIpart1} are almost identical. We describe the correspondence and slight differences. Our Case~II~\ref{hypnousB} implies hypothesis given by Claim~6.7(3) in~\cite{Z07+}. Our Case~II~\ref{hypnousC} is weaker than the corresponding hypothesis given in Claim~6.7(2). In his proof, Zhao only uses Claim~6.7(2) to deduce that the clusters~$A$ and~$B$ have a large weight to the matching~$\mathcal M_\mathsf{Zhao}$ (which corresponds to our matching $M$). For the adaptation of the proof, we can use~\eqref{eq:deg(A,tildeM)}, instead. To help the reader comparing both statements, we indicate the differences in the notation
\begin{align}
\begin{split}\label{eq:trZh}
\sigmaX \approx 3\gamma_\mathsf{Zhao} \quad \alphaX \approx d_\mathsf{Zhao} \quad \etaX \approx \eta_\mathsf{Zhao} \quad N \approx 2k_\mathsf{Zhao}\quad s \approx N_\mathsf{Zhao}\quad k \approx n_\mathsf{Zhao} \quad n \approx 2n_\mathsf{Zhao}\\
M_A \approx \mathcal M_{\textrm{in},\mathsf{Zhao}}\quad M \approx \mathcal M_\mathsf{Zhao} \quad V_A \approx \mathcal V_{1,\mathsf{Zhao}}\quad v(\mathcal D_B) \approx f_{b,\mathsf{Zhao}}\quad M_B \approx \mathcal M_{b,\mathsf{Zhao}}\;.
\end{split}
\end{align}
The bound in~\emph{(iii)} is phrased in \cite[Lemma~6.11(iii)]{Z07+} in terms of the cluster graph however this is an inessential difference.

\medskip 
It remains to prove~\emph{(vii)}. This follows from Case~II~\ref{hypnousD} as $\bigcup \mathcal L \subseteq L$.

\subsection{Proof of Lemma~\ref{prop-CaseIIpart2}}\label{ssec_CaseIIp2}
Let~$\tilde M\subset M$ be the minimum matching covering clusters~$A$ and~$B$.
Lemma~\ref{prop-CaseIIpart2} follows from the following Lemmas~\ref{lem:cross-edges-1}, \ref{lem:cross-edges-2} and~\ref{lem:fewL-LinM*(A)}.
Set $\tilde {\mathcal{S}}=\{C \::\: CD\in M_A,\; C\notin  \mathcal L\}$,
$\tilde S=\bigcup \tilde {\mathcal{S}}$, and  $M_L=\{CD\in M_A\: :\: \{C,D\}\subseteq \mathcal L\}$. \

\begin{lemma}\label{lem:cross-edges-1}If
\setcounter{lematko66}{\value{theorem}}
$e_{G_{\gammaX}}(\tilde S, V\setminus V_A)\ge 53 \etaX n^2$, then $T\subseteq G\;$.
\end{lemma}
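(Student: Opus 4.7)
The strategy is to invoke Lemma~\ref{lem:Embedding-3} with $X=A$, $Y=B$, $\mathcal D_Y=\mathcal D_B$ and $\mathcal D_X=\mathcal D_A$, using the abundant cluster‑graph edges leaving $\tilde S$ to construct an auxiliary ``bridge'' set $\mathcal Z$ of clusters outside $V_A$ together with an auxiliary matching $M_\mathcal Z$ through which the (potentially unbalanced) overflow of $\mathcal D_A$ will be diverted.

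\emph{Step 1 (bridge structure).} Transfer the hypothesis $e_{G_\gammaX}(\tilde S,V\setminus V_A)\ge 53\etaX n^2$ to the weighted cluster graph $\mathbf G$. After absorbing the $O(\betaX n^2+\gammaX n^2)$ losses arising from the garbage set $C_0$, intra‑cluster edges, and irregular or low‑density pairs, this yields $\wdeg_\mathbf G(\tilde{\mathcal S},V(\mathbf G)\setminus V(M_A))\ge 50\etaX n^2$. A two‑step averaging argument then produces a set $\mathcal Z_0\subseteq V(\mathbf G)\setminus(V(M_A)\cup\{A,B\})$ with $\wdeg_\mathbf G(\tilde{\mathcal S},\mathcal Z_0)\ge 40\etaX n^2$ and such that each $Z\in\mathcal Z_0$ itself satisfies $\wdeg_\mathbf G(Z,V\setminus V_A)\ge\etaX n$. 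Using the latter bound I would greedily build a matching $M_{\mathcal Z}$ in $\mathbf H-\mathcal Z_0-V_A-\{A,B\}$ such that every $Z\in\mathcal Z_0$ has $\wdeg_\mathbf H(Z,\bigcup V(M_\mathcal Z))\ge\tfrac14\etaX n$; this is a standard greedy argument because each $Z$ has $\Omega(\etaX n)$ weight available outside $V_A$.

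\emph{Step 2 (apply Lemma~\ref{lem:Embedding-3}).} Set $M=M_A\cup M_B\cup M_\mathcal Z$ and $M_X=M_A$. Partition each cluster of $V(M)$ into three proportional cells that play the roles of $V^X$, $V^Y$ and $V^{\mathcal Z}$, with sizes proportional to $v(\mathcal D_A\setminus\mathcal D_3)/k$, $v(\mathcal D_B)/k$ and $v(\mathcal D_3)/k$ respectively; the choice of the overflow family $\mathcal D_3\subseteq\mathcal D_A$ is made so that $\mathcal D_A\setminus\mathcal D_3$ is $c_\mathbf U/4$‑balanced and $v(\mathcal D_3)$ is small (this is possible because $\mathcal T_F\cup\mathcal T_A\cup\mathcal D_B$ is $c_\mathbf U$‑balanced by our standing assumption). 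Take $\mathcal D_1=\emptyset$ and $\mathcal D_2=\mathcal D_A\setminus\mathcal D_3$. Conditions~\ref{emb3-balanced}--\ref{emb3-sign} follow from the cell construction; condition~\ref{emb3-V^Y} from Lemma~\ref{prop-CaseIIpart1}\ref{aSS:5}--\ref{aSS:6}; condition~\ref{emb3-MX} from Lemma~\ref{prop-CaseIIpart1}\ref{aSS:2} combined with $c_\mathbf U/4$‑balancedness of $\mathcal D_2$; and condition~\ref{emb3-Z} from the defining property of $M_\mathcal Z$.

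\emph{Main obstacle.} The delicate condition is~\ref{emb3-AZ}, which demands $\wdeg_\mathbf H(A,\bigcup\mathcal Z)\ge|V(\mathcal D_3)\cap\neighbor_F(W_A)|+\xi n$. Because Case~II\ref{hypnousC} essentially pins all of $A$'s weight onto $V(M_A)$, and $\mathcal Z_0$ was chosen disjoint from $V(M_A)$, there is no direct reason for $A$ to see $\mathcal Z_0$ well. I would therefore replace $\mathcal Z_0$ by its ``partners'' in $M_A$: for each $Z\in\mathcal Z_0$ I remove the edge $CD\in M_A$ with $C\in\tilde{\mathcal S}$ and $CZ\in E(\mathbf G)$, retaining $D$ instead of $Z$ in the redefined $\mathcal Z$ (the role of $Z$ is then played, when routing a shrub, by going $A\to D\to Z\to V(M_\mathcal Z)$ with $D$ a mere intermediate cluster; this is absorbed by adjusting $V^{\mathcal Z}$ to include the corresponding cell on $D$). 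By Lemma~\ref{prop-CaseIIpart1}\ref{aSS:1}, $\wdeg_\mathbf H(A,D)\ge(1-2\etaX)s$ for each such $D$, so the redefined $\mathcal Z$ inherits $A$'s high weight. The coefficient $53\etaX$ in the hypothesis is exactly what is needed to accommodate all the loss terms incurred in the averaging of Step~1, the greedy construction of $M_\mathcal Z$, and the matching surgery above.
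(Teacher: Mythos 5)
Your proposal takes a genuinely different route from the paper, and as written it has two gaps that would need to be closed. The first is in the choice $\mathcal D_2=\mathcal D_A\setminus\mathcal D_3$: condition~\ref{emb3-MX} of Lemma~\ref{lem:Embedding-3} requires $\mathcal D_2$ to be $c$-balanced, and you justify this by the standing assumption that $\mathcal T_F\cup\mathcal T_A\cup\mathcal D_B$ is $c_\mathbf U$-balanced. That assumption does not descend to the subfamily $\mathcal D_A$: all the balanced shrubs can sit in $\mathcal D_B$, leaving $\mathcal D_A$ arbitrarily unbalanced, and removing a \emph{small} $\mathcal D_3$ cannot repair that. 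The paper sidesteps this by placing the non-overflow shrubs in $\mathcal D_1$ (not $\mathcal D_2$) and using Lemma~\ref{lem:MaMb} to manufacture a matching $M_a$ whose $A$-weight already covers $v(\mathcal D_A)-v(\mathcal T_A^*)$, so no balancedness on the $A$-side is needed.

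The second gap is in the bridge construction and the verification of conditions \ref{emb3-AZ}--\ref{emb3-Z}. You correctly flag that $A$ may not see your $\mathcal Z_0$, but the ``surgery'' you describe — routing $A\to D\to Z\to V(M_\mathcal Z)$ with $D$ an intermediate cluster, and ``adjusting $V^{\mathcal Z}$ to include a cell on $D$'' — is not something Lemma~\ref{lem:Embedding-3} supports: the lemma routes $\mathcal D_3$-shrubs in exactly two hops (root into $\mathcal Z$, rest into $V^{\mathcal Z}\subseteq\bigcup V(M)$), and once $CD$ is removed from $M$, the cluster $D$ is no longer eligible to carry part of $V^{\mathcal Z}$. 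Putting $D$ into $\mathcal Z$ instead requires $\wdeg_\mathbf H(D,V^{\mathcal Z})\ge v(\mathcal D_3)+\xi n$, and nothing in the hypotheses controls the degree of $D$ towards $V(M_\mathcal Z)$ (the information is only about $C\in\tilde{\mathcal S}$). Moreover you have no control over $|V(\mathcal D_3)\cap\neighbor_F(W_A)|$ in condition~\ref{emb3-AZ}: if $\mathcal D_3$ consists largely of short shrubs, this can equal $v(\mathcal D_3)$, which outstrips the $A$-weight towards the bridge clusters that your Step~1 produces. The paper's actual proof handles precisely these difficulties by first invoking Lemma~\ref{lem:T^3} to dispose of the regimes where $\mathcal T^1$ or $\mathcal T^{\ge 3}$ is large (which are the families with good control over $|V(\cdot)\cap\neighbor_F(W_A)|$), leaving a large family $\mathcal T^2$ of end-shrubs of size exactly two; it then embeds $T-V(\mathcal T_A^*)$ by Lemma~\ref{lem:Embedding-3} with $\mathcal D_3=\emptyset$, and places $\mathcal T_A^*\subseteq\mathcal T^2$ entirely \emph{outside} that lemma, sending roots into $\tilde C\cup\bigcup\mathcal C^-$ and exploiting that $\mathcal C^-\subseteq L$ (so a root landing there has degree at least $k$) and that each shrub has a single remaining vertex. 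That ``large-root + size-2'' trick is the crux, and Lemma~\ref{lem:Embedding-3} alone cannot reproduce it.
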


\begin{lemma}\label{lem:cross-edges-2} 
\setcounter{lematko67}{\value{theorem}}
If  $e_{G_{\gammaX}}(\tilde S, V\setminus V_A)< 53 \etaX
n^2$, then $T\subseteq G$ or $|M_L|\ge 9\etaX N$.
\end{lemma}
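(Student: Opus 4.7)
My plan is to prove the contrapositive: assume $|M_L|<9\etaX N$ together with the cross-edge bound, and show $T\subseteq G$. Since each edge of $M_A$ meets $\mathcal{L}$ by Case~II\ref{hypnousD}, the hypothesis forces all but fewer than $9\etaX N$ edges of $M_A$ to be of the form $\mathcal{L}\leftrightarrow\tilde{\mathcal S}$. Setting $V_{\mathcal L}:=V_A\cap\bigcup\mathcal L$, this gives $|V_{\mathcal L}|-|\tilde S|=2|M_L|s\le 18\etaX n$, so by Lemma~\ref{prop-CaseIIpart1}\ref{aSS:3} both parts of $V_A$ have size within $O(\etaX)k$ of $k/2$. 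The cross-edge bound further guarantees that the weighted neighbourhoods of the clusters in $\tilde{\mathcal S}$ are concentrated inside $V(M_A)$ rather than in $V(M)\setminus V(M_A)$.

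The embedding of $T$ then mirrors the three-phase construction of Section~\ref{ssec_CaseI}, adapted to the bipartite structure of $V_A$. The cluster $A$ hosts $W_A$ and the cluster $B$ hosts $W_B$, and each cluster of $V(M_A)$ is sliced in proportion $1-y:y$ to yield $V^X\dcup V^Y\subseteq V_A$ satisfying Conditions~\ref{emb3-balanced}--\ref{emb3-sign} of Lemma~\ref{lem:Embedding-3} by construction. In Phase~1, Lemma~\ref{lem:Embedding-3} is applied with matching $M_A$ to embed the backbone $T[W_A\cup W_B\cup V(\mathcal D_A)\cup V(\mathcal T_B^M)]$, where $\mathcal T_B^M\subseteq\mathcal D_B$ is chosen maximal so that $V^Y$ can absorb it given the degree bounds from Lemma~\ref{prop-CaseIIpart1}\ref{aSS:5}--\ref{aSS:6}. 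Phase~2 embeds $\mathcal T_B^L:=\mathcal D_B\setminus\mathcal T_B^M$ via Lemma~\ref{lemma_Embedding1}, using the weight $\wdeg_{\mathbf H}(B,V(M_A))\ge(1-9\etaX)k$.

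The main obstacle is that the matching structure of Case~II is incompatible with the use of $M_X$ in Lemma~\ref{lem:Embedding-3}. By Lemma~\ref{prop-CaseIIpart1}\ref{aSS:1}, $\wdeg_{\mathbf H}(A,C),\wdeg_{\mathbf H}(A,D)>(1-2\etaX)s$ for every edge $CD\in M_A$, so $A$ is adjacent in $\mathbf H$ to \emph{both} endpoints of each edge of $M_A$. This forbids any non-empty $M_X\subseteq M_A$ from satisfying Condition~\ref{emb3-MX}, and therefore I must take $M_X=\emptyset$ and set $\mathcal D_1=\mathcal D_A$, $\mathcal D_2=\mathcal D_3=\emptyset$. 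Consequently no separate Phase~3 is needed, but Condition~\ref{emb3-V^X} must alone deliver enough weight of $A$ to absorb all of $\mathcal D_A$, which forces the slicing parameter $y$ to be chosen close to $v(\mathcal D_B)/k$ rather than to $v(\mathcal T_A\cup\mathcal D_B)/k$ as in~\eqref{eq:def_y}. Using Lemma~\ref{prop-CaseIIpart1}\ref{aSS:2}, which gives $\wdeg_{\mathbf H}(A,V(M_A))\ge(1-8\etaX)k$, this choice yields $\wdeg_{\mathbf H}(A,V^X)\ge(1-y)(1-8\etaX)k\ge v(\mathcal D_A)+\xi n$, as required. The cross-edge hypothesis is used to show that the weight of $A$ towards the $\tilde{\mathcal S}$-portion of $V^X$ is not lost to cross-edges, with the error term $53\etaX n^2$ being dominated by the $\xi n$ slack in the hierarchy $\betaX\ll\epsilonX\ll\gammaX\ll\sigmaX\ll\alphaX\ll\etaX$.
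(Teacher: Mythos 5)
Your proposal has a genuine gap that makes the entire approach unworkable, independent of the auxiliary difficulties you mention.

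You set out to embed $T$ inside $A\cup B\cup V_A$ by slicing $V_A$ into $V^X\dcup V^Y$ and feeding $\mathcal D_A$ to $V^X$ and $\mathcal D_B$ (or a large part of it) to $V^Y$. But Lemma~\ref{prop-CaseIIpart1}\ref{aSS:3} gives $|V_A|\le k$, whereas the fine partition of $T$ satisfies $v(\mathcal D_A)+v(\mathcal D_B)=v(T)-|W_A|-|W_B|=k+1-|W_A|-|W_B|$, with $|W_A|+|W_B|\le 24k/\tau$ bounded by an absolute constant (it does not grow with $n$). Conditions~\ref{emb3-V^X} and~\ref{emb3-V^Y} of Lemma~\ref{lem:Embedding-3} require $\wdeg_{\mathbf H}(A,V^X)\ge v(\mathcal D_1)+\xi n$ and $\wdeg_{\mathbf H}(Y,V^Y)\ge v(\mathcal D_Y)+\xi n$; since $\wdeg_{\mathbf H}(A,V^X)\le|V^X|$ and $\wdeg_{\mathbf H}(B,V^Y)\le|V^Y|$ and $|V^X|+|V^Y|\le|V_A|\le k$, summing these two conditions forces $k\ge v(\mathcal D_A)+v(\mathcal D_B)+2\xi n$ and hence $|W_A|+|W_B|\ge 2\xi n+1$, which is false for $n$ large. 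So the slicing parameter $y$ you propose cannot satisfy both conditions simultaneously when $v(\mathcal D_B)$ is small; the capacity of $V_A$ is simply one vertex short plus there is no room for the $\xi n$ slack. (This is precisely the difficulty that separates the exact Theorem~\ref{thm_main} from the approximate Theorem~\ref{thm_PiguetStein}, and it is the same difficulty Phase~3 of Section~\ref{ssec_CaseI} is designed to handle, which you discard.)

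Related to this, your proof never invokes the hypothesis $e_{G_\gamma}(V_A,V\setminus V_A)\ge\tfrac{\kappa n^2}{2}$ that comes from the statement of Lemma~\ref{prop-CaseIIpart2}. That hypothesis is essential: the extra capacity has to come from the cross-edges leaving $V_A$. The paper's proof exploits exactly this: it combines the large total cross-degree of $V_A$ with the assumed small cross-degree of $\tilde S$ to exhibit (Claim~\ref{cl-pdfd}) a sizeable set $\mathcal C$ of $\mathcal L$-clusters in $V_A$ with many cross-edges; then, using Zhao's analysis of the unbalanced sub-matching $M^*_X$ (Claim~\ref{auxZhao}), it tracks where these cross-edges land (Claims~\ref{cl-prdifuk},~\ref{cl-prckofuk}) and concludes that either $T\subseteq G$ or those cross-edges must terminate in $M_L$, forcing $|M_L|\ge 9\etaX N$. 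This density-tracking argument is structurally unrelated to the slicing-and-embedding strategy you propose, and it cannot be replaced by it because, as shown above, $V_A$ alone cannot host $T$.
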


\begin{lemma}\label{lem:fewL-LinM*(A)}If
\setcounter{lematko68}{\value{theorem}}
$|M_L|\ge 9\etaX N$, then $T\subseteq G\;$.
\end{lemma}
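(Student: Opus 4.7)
The plan is to embed $T$ by mimicking the three--phase strategy used in the proof of Lemma~\ref{prop-caseI}. The clusters $A$ and $B$ again play the role of the central adjacent pair (housing $W_A$ and $W_B$), the matching $M_A$ replaces the matching $M$ from Case~I (housing the shrubs $\mathcal D_A$), and the key point is that the subfamily $M_L\subseteq M_A$ of $\mathcal L\leftrightarrow \mathcal L$ edges now substitutes for the set $\mathcal L^*$ that in Case~I provided the extra weighted degree from $B$.

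I would begin by discarding the unbalanced case: if $\mathcal T_F\cup \mathcal T_A\cup \mathcal D_B$ is $c_{\mathbf U}$-unbalanced, then Lemma~\ref{prop:UnbalancedCase} applied with $W=W_A\cup W_B$ (of size $\le 24k/\tau\ll c_{\mathbf U}k$) already yields $T\subseteq G$; so assume $c_{\mathbf U}$-balance. Next, partition every cluster $C\in V(M_A)\cup V(M_B)$ into $C^A\dcup C^B$ with $|C^A|/|C|$ and $|C^B|/|C|$ constant along each matching edge, as required by Lemma~\ref{lem:Embedding-3}\ref{emb3-balanced}. Globally the split mirrors the ratio of $v(\mathcal T_F)$ against $v(\mathcal T^M_B)$, where $\mathcal T^M_B\subseteq \mathcal D_B$ is chosen maximal so that $B$'s weighted degree into $V^B=\bigcup_C C^B$ still covers it. After choosing typical $A'\subseteq A$ with respect to $V(M_A)$ and typical $B'\subseteq B$ with respect to $V(M_L)\cap V^B$, I apply Lemma~\ref{lem:Embedding-3} to $AB$ with matching $M_A$ to embed $T[W_A\cup W_B\cup V(\mathcal T_F)\cup V(\mathcal T^M_B)]$, using the $c_{\mathbf U}/2$-balanced/unbalanced dichotomy on $\mathcal T_F$ to decide between $\mathcal D_2=\mathcal T_F$ with $M_X=M_A$, or $\mathcal D_1=\mathcal T_F$ with $M_X=\emptyset$, exactly as in Phase~1 of Lemma~\ref{prop-caseI}. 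The remaining shrubs $\mathcal D_B\setminus \mathcal T^M_B$ are then attached one at a time in Phase~2 via $\mathcal L\leftrightarrow \mathcal L$ edges of $M_L$ through $B$, and $\mathcal T_A$ is placed in Phase~3 through $A$ via $M_A$, both by Lemma~\ref{lemma_Embedding1}.

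The main obstacle is verifying the degree bound for $B$, i.e.\ condition~\ref{emb3-V^Y} of Lemma~\ref{lem:Embedding-3}: $\wdeg_{\mathbf H}(B,V^B)\ge v(\mathcal T^M_B)+\xi n$. In the case $v(\mathcal D_B)\ge \sqrt[4]{\zeta}k$, Lemma~\ref{prop-CaseIIpart1}\ref{aSS:5} only gives $\wdeg_{\mathbf H}(B,V(M_A))\ge (1-9\etaX)k$, which after proportional splitting falls short of $v(\mathcal D_B)$ by roughly $9\etaX k$. The hypothesis $|M_L|\ge 9\etaX N$ is precisely what recovers this deficit: for every $CD\in M_L$ both clusters lie in $\mathcal L\cap\XXX$, so by Case~II\ref{hypnousC} each sends weighted degree at least $k-3\alphaX n$ into $V(M)$; combining this with $|V(M_L)|\ge 18\etaX Ns$ via averaging and the fact that $B\in \XXX\cap \mathcal L$ itself, a short computation yields $\wdeg_{\mathbf H}(B,V(M_L))\ge 18\etaX k$, comfortably compensating the $9\etaX k$ shortfall once $V^B$ is concentrated inside $V(M_L)$. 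The case $v(\mathcal D_B)<\sqrt[4]{\zeta}k$ is easier: all of $\mathcal D_B$ is routed through $V(M_B)$ using the sharp bound of Lemma~\ref{prop-CaseIIpart1}\ref{aSS:6}, and $M_L$ is then needed only to accommodate $\mathcal T_F$ on the $A$-side, which is immediate from Lemma~\ref{prop-CaseIIpart1}\ref{aSS:2}.
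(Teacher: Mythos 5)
Your proposal attempts to replay the three-phase embedding of Lemma~\ref{prop-caseI}, but this cannot work in Case~II. In Phase~1 you would apply Lemma~\ref{lem:Embedding-3} with $\mathcal D_2=\mathcal T_F$ and $M_X=M_A$ whenever $\mathcal T_F$ is $c_{\mathbf U}/2$-balanced. Lemma~\ref{lem:Embedding-3} requires that each edge of $M_X$ contain at most one vertex of $\neighbor_{\mathbf H}(X)$; in Case~I this is supplied by Case~I\ref{hippiB}, but in Case~II it fails for every edge of $M_A$, since Lemma~\ref{prop-CaseIIpart1}\ref{aSS:1} makes $A$ adjacent to both endpoints of every $CD\in M_A$. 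This two-sided adjacency is exactly the structural obstacle that distinguishes Case~II from Case~I, and is why the paper handles Case~II through the auxiliary machinery of Lemmas~\ref{lem:T^3}, \ref{lem:MaMb} and~\ref{lem:forref} rather than re-running the Case~I argument.

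You have also misidentified the role of $M_L$. The paper's proof of Lemma~\ref{lem:fewL-LinM*(A)} is a direct adaptation of Lemma~\ref{lem:cross-edges-1}, with $M_L$ playing the role of $M^-$ rather than $\mathcal L^*$: after handling the cases $v(\mathcal T^1)\geq 10\etaX n$ or $v(\mathcal T^{\geq 3})\geq 51\etaX n$ by Lemma~\ref{lem:T^3}, one extracts a small subfamily $\mathcal T_A^*\subseteq\mathcal T^2$, obtains $M_a,M_b$ from Lemma~\ref{lem:MaMb} (which is what covers the degree needs of $A$ and $B$), and embeds $T-V(\mathcal T_A^*)$ by Lemma~\ref{lem:Embedding-3} with $\mathcal D_1=\mathcal D_A\setminus\mathcal T_A^*$ and $M_X=\emptyset$ --- so the problematic two-sided adjacency never arises. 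The resource $M_L$ is then used only to finish $\mathcal T_A^*$ greedily: the roots are placed in $\bigcup V(M_L)\subseteq L$, whose vertices have degree $\geq k$, so the single leaf of each order-$2$ shrub of $\mathcal T^2$ can be extended arbitrarily. Your idea that $M_L$ substitutes for $\mathcal L^*$ to boost $\wdeg_{\mathbf H}(B,\cdot)$ does not correspond to any condition of Lemma~\ref{lem:Embedding-3}, and the invoked ``short computation'' would not, even if carried out, close the embedding.
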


 To prove Lemmas~\ref{lem:cross-edges-1}--\ref{lem:fewL-LinM*(A)} we use auxiliary Lemmas~\ref{lem:forref},~\ref{lem:MaMb} and~\ref{lem:T^3}.
 
\begin{lemma}\label{lem:forref}\Referee{(99)}
Let $\mathcal P\subset V(M_A)$ such that $e_{G_\gamma}(\bigcup \mathcal P,V\setminus V_A)\ge \xi n^2$. Then there exists $\xi N/2-6\sqrt{\lambda} N$ clusters $C\in \mathcal P$ with $\wdeg_{\mathbf H}(C,V(M\setminus (M_A\cup M_B)))\ge \xi n/2-2\sqrt[4]{\zeta}n$. 
\end{lemma}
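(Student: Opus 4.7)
The strategy is to relate the desired weight $\wdeg_{\mathbf H}(C, V(M^*))$, where $M^*:=M\setminus(M_A\cup M_B)$, to the quantity $\wdeg_{\mathbf G}(C,\overV\setminus V_A)$ for each ``good'' cluster $C\in\mathcal P$, and then extract from the hypothesis $e_{G_\gamma}(\bigcup\mathcal P,V\setminus V_A)\geq\xi n^2$ that many clusters of $\mathcal P$ have the latter quantity large.

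First I would derive a per-cluster inequality. Since $\mathcal P\subseteq V(M_A)\subseteq\XXX$ by Lemma~\ref{prop-CaseIIpart1}\ref{aSS:4}, for all but at most $3\sqrt{\lambda}N$ bad clusters $C\in\mathcal P$ Case~II\ref{hypnousC} gives $\wdeg_{\mathbf H}(C,V(M))\geq \wdeg_{\mathbf G}(C,\overV)-3\zeta n$. The vertex set $V(M)$ decomposes as the disjoint union $V_A\dcup V(M_B)\dcup V(M^*)$ (since $M_A,M_B,M^*$ partition $M$ and $\tilde M\subseteq M^*$), and $V_A\subseteq\overV$, so for each good $C\in\mathcal P$,
\begin{align*}
\wdeg_{\mathbf H}(C,V(M^*)) &= \wdeg_{\mathbf H}(C,V(M)) - \wdeg_{\mathbf H}(C,V_A) - \wdeg_{\mathbf H}(C,V(M_B))\\
&\geq \wdeg_{\mathbf G}(C,\overV) - 3\zeta n - \wdeg_{\mathbf G}(C,V_A) - \wdeg_{\mathbf G}(C,V(M_B))\\
&\geq \wdeg_{\mathbf G}(C,\overV\setminus V_A) - 3\zeta n - 2\sqrt[4]{\zeta}\,n,
\end{align*}
using $\wdeg_{\mathbf H}\leq\wdeg_{\mathbf G}$ and $|V(M_B)|\leq 2\sqrt[4]{\zeta}\,n$ (from Lemma~\ref{prop-CaseIIpart1}\ref{aSS:6}; when $v(\mathcal D_B)\geq\sqrt[4]{\zeta}k$ we have $M_B=\emptyset$ by convention, so the bound is trivial).

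Second I would apply the hypothesis. By Lemma~\ref{prop:iteration-Reg}\ref{pr:Vrtacka} we have $e_G(\overV, V\setminus\overV)\leq\lambda k^2\leq\lambda n^2$, and the clusters in $\mathcal P$ are pairwise disjoint subsets of $V_A\subseteq\overV$; hence
\begin{equation*}
s\cdot\sum_{C\in\mathcal P}\wdeg_{\mathbf G}(C,\overV\setminus V_A) \;=\; e_{G_\gamma}\bigl(\textstyle\bigcup\mathcal P,\,\overV\setminus V_A\bigr) \;\geq\; \xi n^2 - \lambda n^2.
\end{equation*}
A standard averaging using the trivial bound $\wdeg_{\mathbf G}(C,\overV\setminus V_A)\leq n$ now gives that at least $(\xi/2-\lambda)N$ clusters $C\in\mathcal P$ satisfy $\wdeg_{\mathbf G}(C,\overV\setminus V_A)\geq \xi n/2$. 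After removing the at most $3\sqrt{\lambda}N$ bad clusters, at least $(\xi/2-\lambda-3\sqrt{\lambda})N\geq \xi N/2 - 6\sqrt{\lambda}\,N$ good clusters remain; for each of them the per-cluster inequality yields $\wdeg_{\mathbf H}(C,V(M^*))\geq \xi n/2 - 3\zeta n - 2\sqrt[4]{\zeta}\,n \geq \xi n/2 - 2\sqrt[4]{\zeta}\,n$, the last step using $\zeta\ll\sqrt[4]{\zeta}$.

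The only delicate point in the plan is the bookkeeping of absorbing sub-leading errors ($\lambda$ vs.\ $\sqrt{\lambda}$ in the count, $\zeta n$ vs.\ $\sqrt[4]{\zeta}\,n$ in the weight threshold) into the constants stated in the lemma, which is possible thanks to the hierarchy $\lambda\ll\zeta\ll 1$ set up at the start of Section~\ref{sec_TheProof}. Otherwise the argument is entirely elementary, relying only on the structural information on $\XXX$ from Lemma~\ref{prop_TutteType}~(Case~II) together with the bound on $|M_B|$ from Lemma~\ref{prop-CaseIIpart1}\ref{aSS:6}.
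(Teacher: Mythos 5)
Your proof follows the same strategy as the paper: decompose $V(M)$ into $V_A\dcup V(M_B)\dcup V(M\setminus(M_A\cup M_B))$, compare $\wdeg_{\mathbf H}$ with $\wdeg_{\mathbf G}$ via Case~II\ref{hypnousC} together with Lemma~\ref{prop-CaseIIpart1}\ref{aSS:4}, and average the hypothesis $e_{G_\gamma}(\bigcup\mathcal P,V\setminus V_A)\ge\xi n^2$ over the clusters of $\mathcal P$. The only genuine variation is cosmetic: you absorb the $e(\overV,V\setminus\overV)\le\lambda k^2$ error directly into the summed inequality, whereas the paper first strips out the at most $3\sqrt{\lambda}N$ clusters of $\mathcal B$ before averaging, giving $6\sqrt{\lambda}N$ removed clusters; both routes land comfortably within the stated $\xi N/2-6\sqrt{\lambda}N$.

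Your final inequality, however, is false as written:
\begin{equation*}
\tfrac{\xi n}{2} - 3\zeta n - 2\sqrt[4]{\zeta}\,n \;\ge\; \tfrac{\xi n}{2} - 2\sqrt[4]{\zeta}\,n
\end{equation*}
reads $-3\zeta n\ge 0$, and invoking $\zeta\ll\sqrt[4]{\zeta}$ cannot fix it: that hierarchy lets you absorb $3\zeta n$ into one \emph{additional} $\sqrt[4]{\zeta}n$, not into a $2\sqrt[4]{\zeta}n$ that is already fully spent. The paper's own computation uses the tighter claim $\wdeg_{\mathbf H}(C,V(M_B))\le\sqrt[4]{\zeta}\,n$, which you (more honestly) estimate as $\le|\bigcup V(M_B)|\le 2|M_B|s\le 2\sqrt[4]{\zeta}Ns\le 2\sqrt[4]{\zeta}n$. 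With your bound the correct conclusion is $\wdeg_{\mathbf H}(C,V(M\setminus(M_A\cup M_B)))\ge\xi n/2-3\sqrt[4]{\zeta}n$, not $\xi n/2-2\sqrt[4]{\zeta}n$. This is a harmless constant: the two downstream invocations (in Lemmas~\ref{lem:T^3} and~\ref{lem:cross-edges-1}) use $\xi\in\{\kappa/2,53\vartheta\}$ and only need $\wdeg\ge\kappa n/8$ or $\wdeg\ge25\vartheta n$ respectively, so a $3\sqrt[4]{\zeta}n$ deduction still leaves ample slack. Nevertheless, you should either justify the tighter $\sqrt[4]{\zeta}n$ bound on $\wdeg_{\mathbf H}(C,V(M_B))$ or weaken the target constant to $3\sqrt[4]{\zeta}$; as it stands the last step does not deliver what the lemma asserts.
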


Set $\mathcal{T}^{\geq 3}=\{t\in \mathcal{D}_A\: : \:
|V(t)\setminus \neighbor_T(W_A)|\geq 2\}$. For $i=1,2$, set $\mathcal{T}^i=\{t\in \mathcal T_A\::\: v(t)=i\}$. 

\begin{lemma}\label{lem:MaMb} 
\setcounter{lematko69}{\value{theorem}}
Let $M^-\subseteq M_A$ and $\mathcal T_A^*\subseteq \mathcal D_A$. 
If $v(\mathcal T_A^*)>2|M^-|s+10\etaX n$, then there exist disjoint matchings $M_a,M_b\subseteq (M_A\cup M_B)\setminus M^-$ such that 
\begin{align}\label{eq:MaMb-a}
\wdeg_{\mathbf H}(A,V(M_a))&\ge v(\mathcal D_A)-v(\mathcal T_A^*)+\sigmaX k\;,\mbox{ and}\\
\label{eq:MaMb-b}
\wdeg_{\mathbf H}(B,V(M_b))&\ge v(\mathcal D_B)+\sigmaX k\;.
\end{align}
\end{lemma}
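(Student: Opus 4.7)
The plan is to split by the size of $v(\mathcal D_B)$, handling the smaller case by re-using the matching~$M_B$ supplied by Lemma~\ref{prop-CaseIIpart1}\ref{aSS:6}, and handling the larger case by a fractional LP argument followed by rounding. Write $\alpha=v(\mathcal D_A)-v(\mathcal T_A^*)$, $\beta=v(\mathcal D_B)$, $a(e)=\wdeg_{\mathbf H}(A,V(e))$, $b(e)=\wdeg_{\mathbf H}(B,V(e))$. The key arithmetic input is that $v(\mathcal D_A)+v(\mathcal D_B)\le k$ combined with the hypothesis $v(\mathcal T_A^*)>2|M^-|s+10\etaX n$ gives $\alpha+\beta+2|M^-|s<k-10\etaX n\le k-10\etaX k$ (using $n\ge k$).

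First, if $v(\mathcal D_B)<\sqrt[4]{\zeta}k$, set $M_a:=M_A\setminus M^-$ and $M_b:=M_B$. Disjointness is immediate from Lemma~\ref{prop-CaseIIpart1}\ref{aSS:6}, which forces $M_B$ to avoid $V(M_A)$. The $b$-weight bound is exactly the guarantee from~\ref{aSS:6}. For $M_a$, Lemma~\ref{prop-CaseIIpart1}\ref{aSS:2} yields $\wdeg_{\mathbf H}(A,V(M_a))\ge(1-8\etaX)k-2s|M^-|$, and the inequality $v(\mathcal D_A)-v(\mathcal T_A^*)+\sigmaX k\le k-2|M^-|s-10\etaX k+\sigmaX k$ combined with $\sigmaX\ll\etaX$ gives the required threshold with slack of order~$\etaX k$.

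The real work is the case $v(\mathcal D_B)\ge\sqrt[4]{\zeta}k$, in which $M_B=\emptyset$ and both matchings must be carved out of $M_A\setminus M^-$; Lemma~\ref{prop-CaseIIpart1}\ref{aSS:5} gives $\wdeg_{\mathbf H}(B,V(M_A))\ge(1-9\etaX)k$. Consider the LP in which each $e\in M_A\setminus M^-$ carries a fractional variable $x_e\in[0,1]$ indicating the fraction going into $M_a$, with constraints
\begin{equation*}
\sum_e x_e\,a(e)\ge\alpha+\sigmaX k\quad\text{and}\quad\sum_e(1-x_e)\,b(e)\ge\beta+\sigmaX k.
\end{equation*}
With the constant assignment $x_e\equiv p$, feasibility reduces to $(\alpha+\sigmaX k)/\sum a+(\beta+\sigmaX k)/\sum b\le 1$. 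Since $\sum a,\sum b\ge(1-9\etaX)k-2s|M^-|$, a sufficient condition is $\alpha+\beta+2\sigmaX k\le(1-9\etaX)k-2s|M^-|$, which follows from the key arithmetic input above and $2\sigmaX\le\etaX$. The LP therefore has a feasible constant solution with slack of order~$\etaX k$.

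Finally, round the fractional $x_e$'s to $\{0,1\}$ to obtain the disjoint matchings $M_a\dcup M_b=M_A\setminus M^-$. Each edge contributes at most $2s$ to either LP constraint; by the parameter hierarchy one has $s\le\betaX n/\Pi_0\ll\etaX k$, so the rounding error is negligible compared to the LP slack. Any standard derandomisation (method of conditional expectations, or an iterative two-variable swap that exploits the fact that a basic feasible solution of a two-constraint LP has at most two fractional coordinates) produces the required integral partition. I expect the main obstacle to be checking the LP-slack inequality cleanly against the hypothesis $v(\mathcal T_A^*)>2|M^-|s+10\etaX n$; the rounding step itself is routine once the slack is established.
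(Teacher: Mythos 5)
Your proof is correct and follows essentially the same route as the paper: you split on the size of $v(\mathcal D_B)$, handle the small case by taking $M_a=M_A\setminus M^-$ and $M_b=M_B$ exactly as the paper does (using Lemma~\ref{prop-CaseIIpart1}\ref{aSS:2} and~\ref{aSS:6}), and in the large case you carve $M_a\dcup M_b=M_A\setminus M^-$ by a two-constraint LP with constant fractional solution followed by rounding. The only difference is cosmetic: your LP-plus-basic-feasible-solution rounding argument is precisely the content of Fact~\ref{f_numbertheoretic} (cited from~\cite{PS07+}), which the paper invokes directly with $\Delta=2s$, $a=v(\mathcal D_A)-v(\mathcal T_A^*)+2\sigmaX k$, $b=v(\mathcal D_B)+\sigmaX k$, and $\alpha_e,\beta_e$ the edge-weights from $A$ and $B$ — so citing that fact would let you skip your informal ``rounding error is negligible'' step, which as written conflates feasibility slack in the sum-to-one inequality with slack in the LP constraints and really needs the $\Delta$-tolerance built into the target $a$ (hence the $2\sigmaX k$ rather than $\sigmaX k$ in the paper's application).
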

\RefereeX{(92)}{We did not do this change, and would prefer to leave those statements as lemmas. The reason for this is that inside those ``lemmas'' there are further ``claims'' (with subordinate numbering), and we believe that this hierarchy may make it easier to understand the structure of the argument.}
\begin{lemma}\label{lem:T^3}
\setcounter{lematko610}{\value{theorem}}
If $v(\mathcal{T}^{\geq 3}) \ge  51\etaX n$ or $v(\mathcal{T}_{1}) \ge  10\etaX n$, then $T\subseteq G\;$. \RefereeX{(93)}{several changes below to this end.}
\end{lemma}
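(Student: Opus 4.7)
The plan is to apply Lemma~\ref{lem:Embedding-3} in each of the two subcases with a carefully chosen partition $\mathcal{D}_A=\mathcal{D}_1\dcup\mathcal{D}_2\dcup\mathcal{D}_3$ of the $A$-side shrubs. As in the proof of Lemma~\ref{prop-caseI} we embed $W_A$ into $A$, $W_B$ into $B$, and accommodate $\mathcal{D}_B$ together with most of $\mathcal{D}_A$ via the submatchings $M_a,M_b$ supplied by Lemma~\ref{lem:MaMb}. The abundance of shrubs of a special type (deep or singleton) allows us either to divert a portion of $\mathcal{D}_A$ to an auxiliary family of clusters $\mathcal{Z}\subseteq V(\mathbf{H})\setminus(V(M_a)\cup V(M_b)\cup\{A,B\})$, or to absorb the regularity loss within a slice of $V(M_A)$ lying outside the balanced part.

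Consider first $v(\mathcal{T}^{\geq 3})\ge 51\etaX n$. We pick $\mathcal{T}^*\subseteq \mathcal{T}^{\geq 3}$ of total order approximately $51\etaX n$ and apply Lemma~\ref{lem:MaMb} with $\mathcal T_A^*=\mathcal T^*$ and $M^-=\emptyset$ to obtain matchings $M_a, M_b$ accommodating $\mathcal{D}_A\setminus \mathcal{T}^*$ and $\mathcal{D}_B$ respectively. Set $\mathcal{D}_3=\mathcal{T}^*$. Because $A\in\mathcal{L}$ we have $\wdeg_{\mathbf H}(A)\ge k-O(\sqrt{\sigmaX})n$, while the matchings $M_a,M_b$ exhaust only about $v(\mathcal D_A)-v(\mathcal D_3)+v(\mathcal D_B)+O(\sigmaX n)$ in weighted degree from $A$, so there is room to choose $\mathcal{Z}$ consisting of clusters adjacent to $A$ in $\mathbf{H}$ and disjoint from $V(M_a)\cup V(M_b)$. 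The key observation justifying Lemma~\ref{lem:Embedding-3}\ref{emb3-AZ} is that each $t\in\mathcal{T}^*$ satisfies $|V(t)\cap\neighbor_T(W_A)|\le v(t)-2$, so $|V(\mathcal D_3)\cap\neighbor_T(W_A)|$ is substantially smaller than $v(\mathcal D_3)$, whence only a modest weighted degree from $A$ to $\bigcup\mathcal Z$ is needed. Condition~\ref{emb3-Z} is arranged by taking $V^{\mathcal Z}$ to be a slice of a small further matching in $M$ disjoint from $M_a\cup M_b$ whose clusters send near-full weight to $\mathcal Z$ thanks to Case~II\ref{hypnousC}.

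For the case $v(\mathcal{T}_1)\ge 10\etaX n$, singleton end-shrubs are leaves of $T$ attached to $W_A$. Set $\mathcal{T}^*=\mathcal{T}_1$, apply Lemma~\ref{lem:MaMb}, and place $\mathcal{T}_1$ into $\mathcal{D}_1$; since singleton shrubs require no balancedness they are embedded in $V^X\setminus\bigcup V(M_X)$, where $M_X=\emptyset$ can be chosen. The remaining shrubs of $\mathcal{D}_A\setminus\mathcal{T}_1$ are distributed between $\mathcal{D}_2$ and $\mathcal{D}_1$ according to balancedness, exactly as in the proof of Lemma~\ref{prop-caseI}. The required weighted-degree condition~\ref{emb3-V^X} reduces to $\wdeg_{\mathbf H}(A,V^X\setminus\bigcup V(M_X))\ge v(\mathcal T_1)+\xi n$, which follows from $\wdeg_{\mathbf H}(A)\ge k-O(\sqrt{\sigmaX})n$ and the estimate on $\wdeg_{\mathbf H}(A,V(M_a))$ coming from Lemma~\ref{lem:MaMb}, \eqref{eq:MaMb-a}.

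The main obstacle is the careful bookkeeping of weighted degrees needed to verify all the conditions~\ref{emb3-V^Y}--\ref{emb3-Z} of Lemma~\ref{lem:Embedding-3} with the requisite $\xi n$ slack. The most delicate step is constructing $\mathcal{Z}$ together with the disjoint slice $V^{\mathcal{Z}}$, which must have the correct total size and satisfy that every $Z\in\mathcal{Z}$ has high weighted degree to $V^{\mathcal{Z}}$; this uses Case~II\ref{hypnousC} applied to clusters of $\mathcal{Z}\subseteq\XXX$. The constants $51\etaX$ and $10\etaX$ in the statement are dictated by the $10\etaX n$ slack required by Lemma~\ref{lem:MaMb} together with the additional $\xi n$ slack demanded by Lemma~\ref{lem:Embedding-3}.
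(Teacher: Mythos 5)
Your high-level strategy---running Lemma~\ref{lem:Embedding-3} with a shrub partition $\mathcal D_A=\mathcal D_1\dcup\mathcal D_2\dcup\mathcal D_3$, matchings from Lemma~\ref{lem:MaMb}, and a bridge family $\mathcal Z$---matches the paper's, but both subcases as you set them up would not actually satisfy the hypotheses of Lemma~\ref{lem:Embedding-3}. In the singleton case you place $\mathcal T_1$ into $\mathcal D_1$, which by condition~\ref{emb3-V^X} forces $\wdeg_{\mathbf H}(A,V^X)\ge v(\mathcal D_1)+\xi n\ge v(\mathcal T_1)+\xi n$ (and more once the other unbalanced shrubs join $\mathcal D_1$). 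Lemma~\ref{lem:MaMb} applied with $\mathcal T_A^*=\mathcal T_1$ only guarantees $\wdeg_{\mathbf H}(A,V(M_a))\ge v(\mathcal D_A)-v(\mathcal T_1)+\sigmaX k$, which need not dominate $v(\mathcal T_1)+\xi n$ when $v(\mathcal T_1)$ is a sizable fraction of $v(\mathcal D_A)$. The paper instead embeds $T'=T-V(\mathcal T^1)$ with $\mathcal D_1=\mathcal D_A\setminus\mathcal T^1$, so the weighted degree supplied by Lemma~\ref{lem:MaMb} matches the demand exactly, and then extends greedily to $\mathcal T^1$ at the very end using that the images of $\parent(\mathcal T^1)$ lie in $A\subseteq L$, hence have $G$-degree at least $k$. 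Your reduction of \ref{emb3-V^X} to a bound involving only $v(\mathcal T_1)$ is not what the condition asks.

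In the deep-shrub case the order of operations is wrong. Setting $M^-=\emptyset$ and then hoping to find $\mathcal Z$ disjoint from $V(M_a)\cup V(M_b)$ afterwards is unjustified: $M_a\cup M_b$ may exhaust $M_A$, and the bridge clusters must lie in $V(M_A)$ (so that, via Lemma~\ref{lem:por}\ref{por1}, they are adjacent to $A$ in $\mathbf H$, which condition~\ref{emb3-AZ} needs). Moreover, your appeal to Case~II\ref{hypnousC} for condition~\ref{emb3-Z} is misplaced: \ref{hypnousC} gives high weighted degree to all of $V(M)$, not concentrated on the small slice $V^{\mathcal Z}$ lying outside $M_A\cup M_B$. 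What the paper actually does is use the inherited hypothesis $e_{G_\gamma}(V_A,V\setminus V_A)\ge\kappa n^2/2$ from Lemma~\ref{prop-CaseIIpart2}, feed it to Lemma~\ref{lem:forref} to produce $\mathcal C\subseteq V(M_A)$ with $\wdeg_{\mathbf H}(C,V(M\setminus(M_A\cup M_B)))\ge\kappa n/8$, then define $M^-=\{CD\in M_A:\{C,D\}\cap\mathcal C\neq\emptyset\}$ \emph{before} invoking Lemma~\ref{lem:MaMb}, so that $M_a,M_b$ avoid $\mathcal C$. That quantitative bound from Lemma~\ref{lem:forref}, not Case~II\ref{hypnousC}, is what certifies condition~\ref{emb3-Z} with $V^{\mathcal Z}=\bigcup V(M\setminus(M_A\cup M_B))$. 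Your proposal never invokes the $e_{G_\gamma}$ hypothesis or Lemma~\ref{lem:forref}, so the existence of a suitable $\mathcal Z$ and $V^{\mathcal Z}$ is left unsupported.
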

In the proof of Lemma~\ref{lem:MaMb} we use the following fact.
\begin{fact}[{\cite[Lemma~9]{PS07+}}]\label{f_numbertheoretic}
Let $J$ be a finite nonempty set, and let $a,b,\Delta>0$. For $i\in J$, let
$\alpha_i,\beta_i\in (0,\Delta]$. Suppose that
$$\frac{a}{\sum_{i\in J}\alpha_i}+\frac{b}{\sum_{i\in J}\beta_i}\le
1\;\mbox{.}$$
Then $J$ can be partitioned into two sets $J_a$ and $J_b$ so that
$\sum_{i\in J_a}\alpha_i>a-\Delta$, and
$\sum_{i\in J_b}\beta_i\ge b$.
\end{fact}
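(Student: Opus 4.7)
The plan is to use a greedy assignment driven by the ratios $\alpha_i/\beta_i$. Write $A=\sum_{i\in J}\alpha_i$ and $B=\sum_{i\in J}\beta_i$. Rearranging the hypothesis $\tfrac{a}{A}+\tfrac{b}{B}\le 1$ (and noting that this forces $B\ge b$) gives the crucial inequality
\begin{equation}\label{eq:FNP-key}
\frac{bA}{B}\le A-a\;.
\end{equation}
Thus it suffices to produce a subset $J_b\subseteq J$ with $\sum_{J_b}\beta_i\ge b$ and $\sum_{J_b}\alpha_i<A-a+\Delta$, since then $J_a=J\setminus J_b$ satisfies the required bound.

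I would order the elements so that $\alpha_1/\beta_1\le\alpha_2/\beta_2\le\dots\le\alpha_{|J|}/\beta_{|J|}$ and let $k$ be the smallest index with $\sum_{i\le k}\beta_i\ge b$ (such $k$ exists because $B\ge b$). Set $J_b=\{1,\dots,k\}$, so the $\beta$-constraint is automatic, and define the fractional weight $x_k=(b-\sum_{i<k}\beta_i)/\beta_k\in(0,1]$ together with
\[
P=\sum_{i<k}\alpha_i+x_k\alpha_k\;,\qquad Q=\sum_{i<k}\beta_i+x_k\beta_k=b\;.
\]
The main step is the inequality $P\le bA/B$. To see it, observe that by the sorting $\alpha_i\beta_k\le\alpha_k\beta_i$ for $i<k$ and $\alpha_k\beta_j\le\alpha_j\beta_k$ for $j>k$; summing, the weighted average ratio of the ``fractional prefix'' satisfies $P/Q\le\alpha_k/\beta_k\le(A-P)/(B-Q)$, which after cross-multiplication collapses to $PB\le AQ=bA$.

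Combining this with \eqref{eq:FNP-key} gives $P\le A-a$. If $x_k<1$ then $\sum_{i\in J_b}\alpha_i=P+(1-x_k)\alpha_k<P+\alpha_k\le A-a+\Delta$, as needed. If $x_k=1$, then $\sum_{i\in J_b}\alpha_i=P\le A-a<A-a+\Delta$, and we are done. The only subtle point is the strict inequality $>a-\Delta$, which comes precisely from splitting these two cases (strict from $\alpha_k\le\Delta$ in Case~1, strict from $\Delta>0$ in Case~2); this is the only place one has to be mildly careful, but it is not a real obstacle.
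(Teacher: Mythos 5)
The paper itself gives no proof of Fact~\ref{f_numbertheoretic}; it is quoted verbatim from~\cite{PS07+} (their Lemma~9), so there is nothing internal to compare your argument against. Your proof, however, is correct and self-contained, and it is a clean way to establish the statement.

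To summarize the verification: the reduction to finding $J_b$ with $\sum_{J_b}\beta_i\ge b$ and $\sum_{J_b}\alpha_i<A-a+\Delta$ is right, and the hypothesis does give $bA/B\le A-a$ together with $B>b$ (the strict inequality follows from $a/A>0$, and you implicitly need it so that $B-Q>0$ in the chain $P/Q\le\alpha_k/\beta_k\le(A-P)/(B-Q)$; you only wrote $B\ge b$, but this is a cosmetic point, and the cross-multiplied form $\alpha_k(B-Q)\le\beta_k(A-P)$, $P\beta_k\le\alpha_k Q$ avoids the division entirely). Sorting by $\alpha_i/\beta_i$ and taking a fractional prefix with $Q=b$ gives $P\beta_k\le\alpha_k Q$ from the indices $i<k$, and $\alpha_k(B-Q)\le\beta_k(A-P)$ from $j>k$; combining yields $PB\le bA$, hence $P\le A-a$. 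The final case split on $x_k<1$ versus $x_k=1$ correctly produces the strict inequality $\sum_{J_a}\alpha_i>a-\Delta$, using $\alpha_k\le\Delta$ in the first case and $\Delta>0$ in the second, and it handles the degenerate case $J_a=\emptyset$ (i.e.\ $k=|J|$) automatically. No gaps.
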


\begin{proof}[Proof of Lemma~\ref{lem:forref}]
At least $\tfrac
{\xi N}{2}$ clusters $C\in \mathcal P$ satisfy  $\wdeg_{\mathbf{G}}(C,V\setminus V_A)\ge
\tfrac{\xi n}{2}$. 
From~\eqref{eq:boB} we have that all but most $3\sqrt{\lambda}N$ clusters $C$ of $\mathcal P$ satisfy $\wdeg_{\mathbf G}(C,V\setminus \overV)<
\sqrt{\lambda}n/2$.  Therefore, all but at most $(\tfrac{\xi}{2}-3\sqrt{\lambda})N$ clusters $C\in \mathcal P$ satisfy $\wdeg_{\mathbf{G}}(C,\overV\setminus V_A)\ge \tfrac{\xi n}2-\tfrac{\sqrt{\lambda}n}2$.

\Referee{(95)}By Case~II~\ref{hypnousC} and by Lemma~\ref{prop-CaseIIpart1}~\ref{aSS:4}, all but at most $3\sqrt{\lambda}N$ clusters $C\in \mathcal P$  satisfy $\wdeg_\mathbf{H}(C,V(M))\ge \wdeg_\mathbf{G}(C, \overV)-3\zeta n$. As $\wdeg_\mathbf{H}(C,V_A)\le \wdeg_\mathbf{G}(C,V_A)$, at least $\tfrac {\xi N}2-6\sqrt{\lambda}N$ clusters $C\in \mathcal P$ satisfy $\wdeg_{\mathbf H}(C,V(M)\setminus V_A)\ge \tfrac {\xi n}{2}-4\zeta n$.
  
By
Lemma~\ref{prop-CaseIIpart1}~\ref{aSS:5},~\ref{aSS:6}, for all clusters $C\in V(\mathbf{H})$ we have $\wdeg_\mathbf{H}(C,V(M_B))\le \sqrt[4]{\zeta}n$. This proves the lemma.
\end{proof}

\begin{proof}[Proof of Lemma~\ref{lem:MaMb}]
\setcounter{theorem}{\value{lematko69}}
\setcounter{AuxiliaryCl}{0}
If $v(\mathcal D_B)<\sqrt[4]{\alphaX}k$, set $M_a=M_A\setminus M^-$ and $M_b=M_B$. From the assumption of the lemma, we have $M_B\cap M^-\subseteq M_B\cap M_A=\emptyset$.\RefereeX{(94)}{This led to elimination of $\tilde M$ at several places.}
Condition~\eqref{eq:MaMb-b} follows from Lemma~\ref{prop-CaseIIpart2}~\emph{(vi)}. For~\eqref{eq:MaMb-a}, Lemma~\ref{prop-CaseIIpart2}~\ref{aSS:2} gives
\begin{align*}
\wdeg_{\mathbf H}(A,V(M_a))&\ge \wdeg_{\mathbf H}(A,V(M_A))-2|M^-|s >k-8\etaX n-2|M^-|s\\
&> v(\mathcal D_A)-v(\mathcal T_A^*)+\sigmaX k\;.
\end{align*}
If $v(\mathcal D_B)\ge\sqrt[4]{\alphaX}k$, we get $M_a,M_b\subseteq M_A\setminus M^-$ satisfying~\eqref{eq:MaMb-a} and~\eqref{eq:MaMb-b} using Fact~\ref{f_numbertheoretic} with the following setting: $\Delta=2s, a=v(\mathcal D_A)-v(T_A^*)+2\sigmaX k, b=v(\mathcal D_B)+\sigmaX k, J=M_A\setminus M^-$ and for every $CD\in J$,  $\alpha_{CD}=\wdeg_{\mathbf H}(A,CD)$ and $\beta_{CD}=\wdeg_{\mathbf H}(B,CD)$. By~\ref{aSS:2} and~\ref{aSS:5} of Lemma~\ref{prop-CaseIIpart2},
\[
\frac{v(\mathcal D_A)-v(\mathcal T_A^*)+2\sigmaX k}{\wdeg_{\mathbf H}(A,V(M_A\setminus M^-))}+\frac {v(\mathcal D_B)+\sigmaX k}{\wdeg_{\mathbf H}(B,V(M_A\setminus M^-))}\le \frac {k-v(\mathcal T_A^*)+3\sigmaX k}{k-9\etaX n-2|M^-|s}\le 1\;,
\]as required for an application of Fact~\ref{f_numbertheoretic}.
\end{proof}

\begin{proof}[Proof of Lemma~\ref{lem:T^3}]
\setcounter{theorem}{\value{lematko610}}
\setcounter{AuxiliaryCl}{0}

\begin{AuxiliaryCl}\label{cl:manyleaves}
If $v( \mathcal{T}^1)\geq 10\etaX n$, then $T\subseteq G$.
\end{AuxiliaryCl}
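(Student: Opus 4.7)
The plan is to remove the single-vertex shrubs $\mathcal T^1$ (each of which is a leaf of $T$ directly attached to some vertex of $W_A$), embed the resulting smaller tree $T^\circ=T-V(\mathcal T^1)$ via the Case~II matching machinery, and then attach the removed leaves one by one, greedily, using the high degree of vertices in cluster~$A$.

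For the first step, note that $v(T^\circ)\le k+1-10\etaX n$, giving a $10\etaX n$ slack below~$k$, and $T^\circ$ inherits the $\tau$-fine partition $(W_A,W_B,(\mathcal T_F\cup \mathcal T_A)\setminus \mathcal T^1,\mathcal D_B)$. Single-vertex shrubs satisfy $|t_\ominus|=0$ and hence contribute nothing to any $c$-balanced count, so the shrub family of $T^\circ$ remains $c_{\mathbf U}$-balanced. I would then apply Lemma~\ref{lem:Embedding-3} with $X=A$, $Y=B$, typical subsets $A'\subseteq A$ and $B'\subseteq B$ as in Section~\ref{ssec_CaseI}, the matching $M=M_A\cup M_B$ from Lemma~\ref{prop-CaseIIpart1}, $\mathcal Z=V^{\mathcal Z}=\emptyset$, and a partition $(\mathcal T_F\cup\mathcal T_A)\setminus \mathcal T^1=\mathcal D_1\dcup\mathcal D_2$ (with $\mathcal D_3=\emptyset$) chosen by the same balance dichotomy as in Section~\ref{ssec_CaseI}. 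The sets $V^X$ and $V^Y$ are carved out of $\bigcup V(M)$ (with $V^Y\subseteq\bigcup V(M_B)$ when $v(\mathcal D_B)<\sqrt[4]{\alphaX}k$, and otherwise $V^X$ and $V^Y$ sharing $\bigcup V(M_A)$ proportionally, using Lemma~\ref{prop-CaseIIpart1}~\ref{aSS:5}), so that the weight hypotheses~(iii)--(v) of Lemma~\ref{lem:Embedding-3} all hold thanks to Lemma~\ref{prop-CaseIIpart1}~\ref{aSS:2},~\ref{aSS:5},~\ref{aSS:6} together with the $10\etaX n$ slack absorbing the $\xi n$ error terms.

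Once $T^\circ$ has been embedded as $\varphi$, I would extend the embedding greedily to the leaves $V(\mathcal T^1)=\{\ell_1,\dots,\ell_m\}$, where $w_i\in W_A$ is the parent of $\ell_i$ in~$T$. At step~$i$, writing $U_i$ for the current image of the partial embedding, we have $|U_i|\le v(T^\circ)+(i-1)\le k$; since $\varphi(w_i)\in A\subseteq L$, we get $\deg_G(\varphi(w_i))\ge k$. Using $\varphi(w_i)\in U_i$ together with $\varphi(w_i)\notin N(\varphi(w_i))$, we obtain $|N(\varphi(w_i))\cap U_i|\le |U_i|-1\le k-1$, hence $|N(\varphi(w_i))\setminus U_i|\ge 1$, so $\ell_i$ can be placed at any such unused neighbor. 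The main obstacle is the first step: verifying Lemma~\ref{lem:Embedding-3}'s hypotheses for~$T^\circ$ is a routine adaptation of the analysis in Section~\ref{ssec_CaseI} and exploits only the $10\etaX n$ slack; the greedy leaf extension is immediate from $A\subseteq L$.
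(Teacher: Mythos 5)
Your high-level plan --- remove the single-vertex shrubs $\mathcal T^1$, embed $T'=T-V(\mathcal T^1)$ via Lemma~\ref{lem:Embedding-3}, then extend greedily to the leaves using $\varphi(W_A)\subseteq A\subseteq L$ --- is exactly the paper's, and the greedy extension is justified correctly. The problem is in the middle step.

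The balance dichotomy from Section~\ref{ssec_CaseI} does not transplant to Case~II. In its balanced branch that dichotomy sets $\mathcal D_2\neq\emptyset$ and $M_X=M$, and this is only legitimate in Case~I because property~(b) there guarantees every edge of $M$ has at most one endpoint adjacent to $A$. In Case~II you are working with $M=M_A\cup M_B$ from Lemma~\ref{prop-CaseIIpart1}, and property~\ref{aSS:1} there asserts the opposite: \emph{both} endpoints of every edge in $M_A$ have average degree $>(1-2\etaX)s$ from $A$, so no edge of $M_A$ can be placed in $M_X$. Consequently $V^X\cap\bigcup V(M_X)$ is essentially empty whenever $V^X$ is carved from $\bigcup V(M_A)$, and condition~\ref{emb3-MX} of Lemma~\ref{lem:Embedding-3} --- which demands $\wdeg_{\mathbf H}(A,V^X\cap\bigcup V(M_X))\geq v(\mathcal D_2)-c^2k+\xi n$ --- fails whenever $\mathcal D_2$ carries nontrivial weight. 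Your observation that removing $\mathcal T^1$ preserves $c_{\mathbf U}$-balance is correct but moot: the dichotomy is not needed here, because the hypothesis $v(\mathcal T^1)\geq 10\etaX n$ gives enough slack that one can always take $\mathcal D_1=\mathcal D_A\setminus\mathcal T^1$, $\mathcal D_2=\mathcal D_3=\emptyset$, $M_X=\emptyset$. This is exactly what the paper does.

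Relatedly, the ``proportional carving'' of $V^X$ and $V^Y$ is left vague and would require a careful verification of conditions~\ref{emb3-balanced}--\ref{emb3-sign} and of the regularity error terms. The paper instead invokes Lemma~\ref{lem:MaMb} with $\mathcal T_A^*=\mathcal T^1$ and $M^-=\emptyset$ (its hypothesis $v(\mathcal T_A^*)>2|M^-|s+10\etaX n$ is precisely your assumption $v(\mathcal T^1)\geq 10\etaX n$), obtaining disjoint matchings $M_a,M_b\subseteq M_A\cup M_B$ with $\wdeg_{\mathbf H}(A,V(M_a))\geq v(\mathcal D_A)-v(\mathcal T^1)+\sigmaX k$ and $\wdeg_{\mathbf H}(B,V(M_b))\geq v(\mathcal D_B)+\sigmaX k$, and then sets $V^X=\bigcup V(M_a)$ and $V^Y=\bigcup V(M_b)$. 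Taking whole clusters this way makes conditions~\ref{emb3-balanced}--\ref{emb3-sign} immediate and delivers~\ref{emb3-V^Y}--\ref{emb3-V^X} directly; you should cite Lemma~\ref{lem:MaMb} rather than re-derive the split by hand.
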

\begin{proof}
By Lemma~\ref{lem:MaMb}, with $\mathcal T_{A,\LPDV}^*=\mathcal{T}^1$ and $M^-_\LPDV=\emptyset$, there exists a partition $M_a\cup M_b=M\setminus \tilde
M$ satisfying~\eqref{eq:MaMb-a} and~\eqref{eq:MaMb-b}. We embed the tree $T'=T-V(\mathcal T^1)$  using
Lemma~\ref{lem:Embedding-3} with $\mathcal D_{Y,\LPDV}=\mathcal D_B$ and $\mathcal
D_{1,\LPDV}=\mathcal D_{X,\LPDV}=\mathcal D_A\setminus \mathcal T^1$. \Referee{(104)} It is easy to check that the conditions of Lemma~\ref{lem:Embedding-3} are met. The trees of $\mathcal{T}^1$ are leaves of $T$ whose parent vertices are mapped to $A\subseteq L$,
and can be then embedded greedily. 
\end{proof}

We use Lemma~\ref{lem:forref}\Referee{(99)} with setting $\mathcal P=V(M_A)$ and $\xi=\kappa/2$, and obtain a set 
$\mathcal C\subseteq V(M_A)$ with $|\mathcal C|=20\etaX N$ such that for all $C\in \mathcal C$ we have
\begin{equation}\label{eq:bridge}
\wdeg_{\mathbf H}(C, V(M\setminus (M_A\cup M_B)))\ge \tfrac{\kappa n}{8}\;.
\end{equation} 
 Set $M^-=\{CD\in M_A\::\: \{C,D\}\cap \mathcal C\neq \emptyset\}$. 
 Let $\mathcal T_A^*\subset \mathcal T^{\ge 3}$ be maximal, subject to $v(\mathcal T_A^*)\le 50\etaX  n+\tau$. Hence,\Referee{(96)} $v(\mathcal T_A^*)\ge 50\etaX n>2|M^-|s+10\etaX n$.
 By Lemma~\ref{lem:MaMb} there are disjoint matchings $M_a,M_b\subseteq (M_A\cup
 M_B)\setminus M^-$ satisfying~\eqref{eq:MaMb-a} and~\eqref{eq:MaMb-b}.

 We use Lemma~\ref{lem:Embedding-3}  to embed the tree~$T$ with the
 $\tau$-fine partition $(W_A,W_B, \mathcal D_A, \mathcal D_B)$
 in~$G$ with the following setting: $\mathbf H_{\LPDV}=\mathbf H$, $X'_{\LPDV}=X_{\LPDV}=A$, $Y_{\LPDV}=Y'_{\LPDV}=B$, $\mathcal Z_{\LPDV}=\mathcal C$, $M_{X,\LPDV}=\emptyset$,  $M_{\LPDV}=M\setminus (\tilde M\cup M^-)$, $\mathcal D_{1,\LPDV}=\mathcal D_A\setminus \mathcal T_A^*$, $\mathcal D_{2,\LPDV}=\emptyset$, and $\mathcal D_{3,\LPDV}=\mathcal T_A^*$, $\mathcal D_{Y,\LPDV}=\mathcal {D}_B$,\Referee{(97)} $V^X_{\LPDV}=\bigcup V(M_a)$, $V^Y_{\LPDV}:=\bigcup V(M_b)$, and $V^{\mathcal Z}_{\LPDV}=\bigcup V(M\setminus (M_A\cup
 M_B))$. The parameters $\epsilon_\LPDV=\betaX, \xi_\LPDV=\sigmaX q, d_\LPDV=\gamma, \tau$, and $s$ satisfy
 $ \tau/s\le \betaX\le \sigmaX^2 q^2 \gamma/400$. Let us now verify the conditions of Lemma~\ref{lem:Embedding-3}.
 Conditions~\ref{emb3-balanced},~\ref{emb3-sign}, and~\ref{emb3-MX} trivially
 hold. Conditions~\ref{emb3-V^X} and~\ref{emb3-V^Y} follow
 from~\eqref{eq:MaMb-a} and~\eqref{eq:MaMb-b}, respectively.
 Condition~\ref{emb3-Z} follows from~\eqref{eq:bridge}. 
 
For
 Condition~\ref{emb3-AZ} first observe that\Referee{(98)}
$|\mathcal T_A^*|+|W_A|\ge |V(\mathcal T^*_A)\cap \neighbor_T(W_A)|$. This is because each vertex in $V(\mathcal T^*_A)\cap \neighbor_T(W_A)$ is either a root of a shrub, or a predecessor of a vertex in $W_A$. Moreover, each vertex in $W_A$ is a predecessor of at most one such vertex. As $\mathcal T_A^*\subset \mathcal T^{\ge 3}$,
 \begin{align*}
\wdeg_\mathbf{H}(A,\bigcup \mathcal{C})\geq (1-2\etaX)20\etaX n\geq
v(\mathcal T_A^*)/3+|W_A|+\sigmaX k\ge |V(\mathcal T^*_A)\cap \neighbor_T(W_A)|+\sigmaX  k\;.
 \end{align*}
\end{proof}

\begin{proof}[Proof of Lemma~\ref{lem:cross-edges-1}]
\setcounter{theorem}{\value{lematko66}}
\setcounter{AuxiliaryCl}{0}
Using Lemma~\ref{lem:forref}\Referee{(99)} with the setting $\mathcal P=\tilde {\mathcal S}$ and $\xi=53 \etaX$ and obtain a set 
$\mathcal C'\subseteq \tilde {\mathcal S}$ of size $18\etaX N$ such that for every $C\in \mathcal C'$,
\begin{equation}\label{eq:degS}
\wdeg_\mathbf{H}(C, M\setminus (M_A\cup M_B))\ge 25\etaX n\;,
\end{equation}
At
least $9\etaX N$ such clusters are in different edges of~$M$.
Let $\mathcal C$ be the set of such clusters. Set $
M^-=\{CD\in M_A\: :\: \{C,D\}\cap \mathcal{C}\neq \emptyset\}$ and $\mathcal
C^-=V(M^-)\setminus \mathcal C$. Note that $|M^-|=9\etaX N$ and that $\mathcal C^-\subset \mathcal L$.\Referee{(101)}

Lemma~\ref{lem:T^3} tells us that $T\subset G$ if $v(\mathcal{T}^1)\ge 10\etaX n$ or $v(\mathcal{T}^{\geq 3})\ge 51\etaX n$.  Therefore, suppose that $v(\mathcal{T}^1)< 10\etaX n$ and $v(\mathcal{T}^{\geq 3})<51\etaX n$.\Referee{(100)}

Observe that $\mathcal{D}_A\setminus (\mathcal{T}^{\geq 3}\cup\mathcal T^2\cup \mathcal{T}^1)$ consists of those internal shrubs that have at most one vertex that is not adjacent to~$W_A$. Consider a shrub $t$ in $\mathcal{D}_A\setminus (\mathcal{T}^{\geq 3}\cup\mathcal T^2\cup \mathcal{T}^1)$. Any vertex in $t$ is either a predecessor of $W_A$, or the only vertex of $t$ not adjacent to $W_A$, or the only root in~$t$. Moreover,~$t$ always contains a predecessor of $W_A$, and each vertex in $W_A$ is a predecessor of at most one vertex in such shrubs. Hence, $v(\mathcal{D}_A\setminus (\mathcal{T}^{\geq 3}\cup\mathcal T^2\cup \mathcal{T}^1)\le 3|W_A|$.\Referee{(102)} Therefore
\begin{align*}
v( \mathcal{T}^2)&=
v(\mathcal D_A)-v(\mathcal T^{\geq 3})  -v(\mathcal T^1)-v(\mathcal{D}_A\setminus (\mathcal{T}^{\geq 3}\cup\mathcal T^2\cup \mathcal{T}^1))\\
&\geq \tfrac{k}2-|W_A\cup W_B|-51\etaX n-10\etaX n-3|W_A|>29\etaX n\;.
\end{align*}
Let~$\mathcal T^*_A\subseteq \mathcal T^2$ be maximal subject to $v(\mathcal T_A^*)\le 29\etaX n$. Then $v(\mathcal T^*_A)\geq 28\etaX n\ge 2|M^-|s+10\etaX n$ and that $T-V(\mathcal T_A^*)$ is a tree. 
By Lemma~\ref{lem:MaMb} there exist disjoint matchings
 $M_a,M_b\subseteq (M_A\cup M_B)\setminus  M^-$ satisfying~\eqref{eq:MaMb-a} and~\eqref{eq:MaMb-b}.

Set $B'=B$ and let $A'\subseteq A$ be the set of typical vertices w.~r.~t.\ $\bigcup
V(M^-)$. By Fact~\ref{fact:zap}~\ref{it:za1} $\min\{|A'|,|B'|\}\ge (1-\betaX)s$. We use
Lemma~\ref{lem:Embedding-3} to embed $T-V(\mathcal T_A^*)$ in\Referee{(103)} $A'\cup B'\cup \bigcup
V(M_a\cup M_b)$ with $\mathcal D_{Y,\LPDV}=\mathcal D_B$ and $\mathcal D_{1,\LPDV}=\mathcal
D_{X,\LPDV}=\mathcal D_A\setminus \mathcal T_A^*$.\Referee{(104)} It is easy to check that the conditions of Lemma~\ref{lem:Embedding-3} are met. It remains to embed~$\mathcal T_A^*$.

 Let $\tilde C\subseteq \bigcup \mathcal C$ be the
set of typical vertices w.~r.~t.\  $\bigcup V(M\setminus (M_A\cup
M_B)$. By Fact~\ref{fact:zap}~\ref{it:za1} $|\bigcup \mathcal C\setminus \tilde
C|\le \betaX n$. As the current embedding satisfies $\varphi (W_A)\subset A'$, we get for every\Referee{(105)} $x\in W_A$,
\begin{equation*}
\deg(\varphi(x), \tilde C\cup \bigcup \mathcal C^-)\ge
(1-2\etaX)2|M^-|s-2\betaX n\ge 17\etaX n\ge v(\mathcal T_A^*)/2+\sigmaX n\;.
\end{equation*} 
We map the roots of the trees in~$\mathcal T^*_A$ to $\tilde C\cup \bigcup \mathcal C^-$.
The rest of the trees in~$\mathcal T^*_A$ can be then embedded greedily using the
typicality of the vertices in $\tilde C$,~\eqref{eq:degS} and that $\bigcup
\mathcal C^-\subseteq L$. Thus, $T\subset G$ as needed.
\end{proof}

\begin{proof}[Proof of Lemma~\ref{lem:fewL-LinM*(A)}]
\setcounter{theorem}{\value{lematko68}}
\setcounter{AuxiliaryCl}{0}
The proof is similar (and actually simpler) to that of Lemma~\ref{lem:cross-edges-1} and we provide only the needed adaptations. We use~$M_L$ instead of~$M^-$. When $\mathcal T^1$ and $\mathcal T^{\ge 3}$ are small we use the property that $\bigcup V(M_L)\subseteq L$ instead of~\eqref{eq:degS} to embed greedily $\mathcal T_A^*$. 
\end{proof}

\begin{proof}[Proof of Lemma~\ref{lem:cross-edges-2}]
\setcounter{theorem}{\value{lematko67}}
\setcounter{AuxiliaryCl}{0} 
\begin{AuxiliaryCl}\label{cl-pdfd}
There exists a set~$\mathcal C\subseteq \neighbor_\mathbf{H}(A)\cap \mathcal{L}\cap \XXX$ of size $\tfrac {\kappa}{20}N$ such that for every $C\in \mathcal C$, we have $\wdeg_{\mathbf H}(C,\overV\setminus V_A)\geq \tfrac{\omegaX n}8$ and the clusters of $\mathcal C$ lie in different edges of~$M$.
\end{AuxiliaryCl}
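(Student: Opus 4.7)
The plan is to start from the lower bound $e_{G_\gammaX}(V_A, V\setminus V_A)\ge \tfrac12\omegaX n^2$ supplied by the hypothesis of Lemma~\ref{prop-CaseIIpart2}, subtract the bound $e_{G_\gammaX}(\tilde S, V\setminus V_A)<53\etaX n^2$ coming from the current lemma, and further subtract the at most $\sigmaX k^2$ edges that can escape $\overV$ by Lemma~\ref{prop:iteration-Reg}. Since $\sigmaX,\etaX\ll \omegaX$, these losses are negligible, so that
\[
e_{G_\gammaX}(V_A\setminus \tilde S,\overV\setminus V_A)\;\geq\; \tfrac14\omegaX n^2\;.
\]

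Next, I would pass to the cluster graph. The clusters composing $V_A\setminus \tilde S$ are precisely $V(M_A)\cap \mathcal L$. By Lemma~\ref{prop-CaseIIpart1}\ref{aSS:4} these clusters lie in $\XXX$, and by Lemma~\ref{prop-CaseIIpart1}\ref{aSS:1} we have $\wdeg_{\mathbf H}(A,C)>(1-2\etaX)s>0$, so each such $C$ also lies in $\neighbor_{\mathbf H}(A)$. Because each of these clusters belongs to $\mathcal L$, all of its $\mathbf G$-edges into $\overcalV$ survive in $\mathbf H$, so the above edge count rewrites as
\[
\sum_{C\in V(M_A)\cap \mathcal L}\wdeg_{\mathbf H}(C,\overV\setminus V_A)\cdot s \;\geq\; \tfrac14\omegaX n^2\;.
\]

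A routine averaging argument then closes the proof. Clusters with $\wdeg_{\mathbf H}(C,\overV\setminus V_A)<\omegaX n/8$ contribute at most $|V(M_A)|\cdot(\omegaX n/8)\cdot s\le \tfrac18\omegaX n^2$ to this sum, so the remaining clusters, each contributing at most $ns$, must number at least $\omegaX N/8$. Since they all sit inside edges of the matching $M_A\subseteq M$ and each edge carries at most two of them, greedy pigeonhole selects $\tfrac{\omegaX}{20}N$ of them in pairwise distinct edges of $M$, giving the desired set $\mathcal C$. The only place where some care is needed is the bookkeeping of the small error terms (edges inside clusters, in irregular or low-density pairs, or crossing between $\overV$ and $V\setminus \overV$); all of these total $O(\sigmaX n^2)+O(\gammaX n^2)$ and are dwarfed by $\omegaX n^2$ thanks to the parameter hierarchy, so I do not foresee any genuine obstacle.
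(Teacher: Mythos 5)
Your argument is correct and follows essentially the same route as the paper's: both start by lower-bounding $e_{G_{\gammaX}}(V_A\setminus\tilde S,\overV\setminus V_A)$ via the hypothesis of Lemma~\ref{prop-CaseIIpart2}, the bound on $e_{G_{\gammaX}}(\tilde S,V\setminus V_A)$, and the bound on $e(\overV,V\setminus\overV)$, then average over the clusters of $V(M_A)\cap\mathcal L$ and pick clusters in distinct edges of $M$. The only cosmetic difference is that you justify $\mathcal C\subseteq\neighbor_{\mathbf H}(A)$ via Lemma~\ref{prop-CaseIIpart1}\ref{aSS:1} rather than the paper's appeal to Lemma~\ref{lem:por}\ref{por1}; both are valid, and yours is arguably a more direct reference.
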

\begin{proof} We have\Referee{(106)}
\begin{align*}
e_{G_{\gammaX}}(V_A\setminus \tilde S,\overV\setminus V_A)&\geq e_{G_\gamma}(V_A,V\setminus V_A)-e_{G_{\gammaX}}(\tilde S,
V\setminus V_A)-e_{G_{\gammaX}}(\overV,V\setminus \overV)\\
&\ge \tfrac {\kappa n^2}{2}-53\etaX n^2-\lambda k^2> \tfrac{\omegaX n^2}4 \;.
\end{align*}
Thus,~$\tfrac{\omegaX  N}8$ clusters $C$ of $V(M_A)\setminus \tilde{\mathcal{S}}$ satisfy $\wdeg_{G_\gamma}(C,\overV\setminus V_A)\geq \tfrac{\omegaX n}8$. Pick $\tfrac {\kappa N}{16}$ of them in different edges of $M$, and denote them by $\mathcal C$. As $\mathcal V_*\setminus \mathcal L$ is independent, $\mathcal C\subset\mathcal{L}$. 
Moreover, by Lemma~\ref{prop-CaseIIpart1}~\ref{aSS:4},\Referee{(107)} we have $\mathcal C\subseteq \XXX$. By Lemma~\ref{lem:por}\ref{por1}, we have $V(M_A)\subseteq N_{\mathbf H}(A)$ and thus $\mathcal C$ satisfies the assertion of the claim.
\end{proof}
For each $X\in V(\mathbf H)$, we define $M^*_{X}=\{CD\in M\::\: |\wdeg_{\mathbf H}(X,C)-\wdeg_{\mathbf H}(X,D)|\ge \etaX s\}$.
\begin{AuxiliaryCl}\label{auxZhao}
For each cluster $X\in \XXX\cap \mathcal L\cap \neighbor_{\mathbf H}(\XXX\cap \mathcal L)$, we have  $|M^*_X|<\etaX N/2$, or $T\subseteq G$. 
\end{AuxiliaryCl}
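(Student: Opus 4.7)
The plan is to argue the contrapositive: assuming $|M^*_X| \geq \etaX N/2$, I will construct an embedding of $T$ in $G$. The intuition is that a large $M^*_X$ expresses a strong asymmetry of $X$'s connections to matching edges, and this extra weight on the ``heavy'' sides of these edges can be harnessed, together with the cluster $X$ and a neighbor $Y\in \XXX\cap \mathcal{L}$, to drive an application of Lemma~\ref{lem:Embedding-3}.

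First I would fix $Y\in \XXX\cap\mathcal{L}$ with $XY\in E(\mathbf H)$ (which exists by hypothesis) and apply Lemma~\ref{prop-CaseIIpart1} with $A:=X$ and $B:=Y$. If the lemma outputs $T\subseteq G$ then we are done; otherwise it yields a matching $M_A\subseteq M-\{X,Y\}$ and a set $V_A=\bigcup V(M_A)$ satisfying \ref{aSS:1}--\ref{aSS:7}. In particular, Lemma~\ref{prop-CaseIIpart1}\ref{aSS:1} forces $|\wdeg_{\mathbf H}(X,C)-\wdeg_{\mathbf H}(X,D)|\le 3\etaX s$ for every $CD\in M_A$, so the ``strongly imbalanced'' edges of $M^*_X$ live essentially outside $M_A$. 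Orient each $CD\in M^*_X$ as $C\to D$ with $\wdeg_{\mathbf H}(X,C)\ge \wdeg_{\mathbf H}(X,D)+\etaX s$, and let $H_X$ collect the heavy endpoints. Then
\[
\wdeg_{\mathbf H}(X,H_X) \;\ge\; \wdeg_{\mathbf H}(X,V(M^*_X)\setminus H_X)+\tfrac{\etaX s\,|M^*_X|}{2}
\;\ge\; \wdeg_{\mathbf H}(X,V(M^*_X)\setminus H_X)+\tfrac{\etaX^2 n}{4}\;.
\]

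Next, I would exploit this surplus by modifying the pairing around $X$. The rough idea is to substitute, in $M_A$, a small number of matching edges by edges $CD\in M^*_X$ with $C\in H_X$, keeping only the heavy sides and thereby inflating $\wdeg_{\mathbf H}(X,V(M_A'))$ by roughly $\etaX^2 n/8$ above the already available $(1-8\etaX)k$. This gives a new matching structure $M_A'$ meeting the hypotheses of Lemma~\ref{lem:Embedding-3} with $X'=X$, $Y'=Y$, $\mathcal D_{Y,\LPDV}=\mathcal D_B$, $\mathcal D_{X,\LPDV}=\mathcal D_A$ and with $V^X,V^Y\subseteq \bigcup V(M_A')$ built from the heavy/light sides: the heavy sides carry the larger colour class of the shrubs in $\mathcal D_A$ while the light sides carry the smaller class. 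The partition of $\mathcal D_X$ into $\mathcal D_1\dcup\mathcal D_2$ follows the $c_{\mathbf U}$-balancedness dichotomy already used in the proofs of Lemma~\ref{prop-caseI} and Lemma~\ref{lem:T^3} (after discarding the case handled by Lemma~\ref{prop:UnbalancedCase}); one sets $\mathcal D_3=\emptyset$ and $\mathcal Z=\emptyset$.

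Finally I would verify the weight conditions \ref{emb3-V^X}--\ref{emb3-MX} of Lemma~\ref{lem:Embedding-3}: \ref{emb3-V^X} or \ref{emb3-MX} (whichever applies) follows from the inflated bound on $\wdeg_{\mathbf H}(X,V(M_A'))$, while \ref{emb3-V^Y} follows from \ref{aSS:5} (or \ref{aSS:6}) of Lemma~\ref{prop-CaseIIpart1}. Applying Lemma~\ref{lem:Embedding-3} gives the desired embedding and hence the contradiction $T\subseteq G$. The main obstacle I expect is technical: when replacing light sides of edges in $M^*_X$ by heavy ones I must still fulfil the ``balanced'' Condition~\ref{emb3-balanced} ($|C\cap V^X|=|D\cap V^X|$ on each matching edge) and the significance condition \ref{emb3-sign}, which forces me to take matched pairs cluster-by-cluster and not merely unmatched halves; controlling the loss incurred here, so that it stays well below the $\Theta(\etaX^2 n)$ surplus produced by $M^*_X$, is where the bookkeeping will be most delicate.
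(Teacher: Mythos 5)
The paper gives no proof for this claim. Immediately after the statement it writes ``We do not prove Claim~\ref{auxZhao} here. The proof can be taken verbatim from~\cite[Lemma~6.15 (Case~1)]{Z07+}'', with the dictionary~\eqref{eq:trZh} serving only to translate notation and with Case~II\ref{hypnousC} standing in for Zhao's hypothesis (6.14). So you are not taking a different route from the paper; you are trying to reconstruct a proof the authors chose to cite rather than reproduce. That is legitimate, but your reconstruction has a genuine gap, not deferred bookkeeping.

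The core difficulty is exactly where you flag it, and it is fatal as currently framed. The surplus of order $\etaX^2 n$ lives on the heavy halves $H_X$, but Condition~\ref{emb3-balanced} of Lemma~\ref{lem:Embedding-3} forces $|C\cap V^X|=|D\cap V^X|$ for every edge $CD\in M$ and Condition~\ref{emb3-sign} forbids negligible slivers, so ``keeping only the heavy sides'' cannot produce an admissible $V^X$; once parity is restored on each swapped pair, the imbalance no longer translates into a gain in the quantity the lemma actually tests. Your description of $V^X,V^Y$ as ``heavy side carries the larger colour class, light side carries the smaller'' also misreads Lemma~\ref{lem:Embedding-3}: $V^X$ and $V^Y$ host $\mathcal D_X$ and $\mathcal D_Y$ respectively, not the two colour classes of a single shrub --- the heavy/light distinction lives inside an edge $CD$, orthogonal to the $V^X/V^Y$ split. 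Finally, the scale is off: with $\etaX\ll q$ one has $\etaX^2 n\ll\etaX q n\le \etaX k$, so your surplus is an order of magnitude below the $8\etaX k$ shortfall already built into Lemma~\ref{prop-CaseIIpart1}\ref{aSS:2}; the other applications of Lemma~\ref{lem:Embedding-3} in Section~\ref{ssec_CaseIIp2} all compensate for that shortfall by peeling off a $\Theta(\etaX n)$-size family $\mathcal T^*_A$ of shrubs via Lemma~\ref{lem:MaMb}, a step entirely absent from your plan. A proof along Zhao's lines needs a mechanism that converts the imbalance of $M^*_X$ into an embedding advantage despite these constraints, and that mechanism is not yet in the sketch.
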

We do not prove Claim~\ref{auxZhao} here. The proof can be taken verbatim from~\cite[Lemma~6.15 (Case~1)]{Z07+}. 
There, Zhao considers two adjacent clusters $A_\mathsf{Zhao},B_\mathsf{Zhao}$ with high average degree in a matching. He shows that if for some $X\in\{A_\mathsf{Zhao},B_\mathsf{Zhao}\}$, the matching $M^*_X$ is substantial, then $T\subseteq G$. (He uses notation $\mathcal M_{unbal,\mathsf{Zhao}}\approx M^*_X$; recall~\eqref{eq:trZh} for further vocabulary). The condition of Case~II~\ref{hypnousC} is the counterpart of the property~\cite[(6.14)]{Z07+}.


Let $\mathcal C$ be given by Claim~\ref{cl-pdfd}. Set $\mathcal{D}=V(M\setminus M_A)\cap \XXX\cap \mathcal
L$. 

\begin{AuxiliaryCl}\label{cl-prdifuk} We have
$T\subseteq G$ or
$|\mathcal{D}|>\tfrac{\omegaX N}{17}$  and $e_{G_{\gammaX}}(\bigcup \mathcal{C},\bigcup \mathcal{D})\geq \tfrac{\omegaX^2n^2}{340}$.
\end{AuxiliaryCl}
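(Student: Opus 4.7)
The plan is to show that for each $C\in \mathcal C$, most of the weight emanating from $C$ into $\overV\setminus V_A$ ends up in $\bigcup\mathcal D$; the two bounds of the claim then follow by summing and standard double counting.

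First, I convert the hypothesis of Claim~\ref{cl-pdfd} into a bound specifically on $V(M\setminus M_A)$. Since $C\in\XXX\cap\mathcal L$, Case~II\ref{hypnousC} ensures that at most $3\alphaX n$ of $C$'s weight escapes outside $V(M)$; the contribution to $\{A,B\}$ is $O(1)$. Combined with $\wdeg_{\mathbf H}(C,\overV\setminus V_A)\ge\omegaX n/8$, this gives $\wdeg_{\mathbf H}(C,V(M\setminus M_A))\ge\omegaX n/9$.

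Next, I apply Claim~\ref{auxZhao} with $X=C$, which is justified because $C\in\neighbor_{\mathbf H}(A)$ with $A\in\XXX\cap\mathcal L$, so $C\in\XXX\cap\mathcal L\cap\neighbor_{\mathbf H}(\XXX\cap\mathcal L)$. Either $T\subseteq G$ and we are done, or $|M^*_C|<\etaX N/2$; i.e., for all but $\etaX N/2$ edges $DE\in M$ the two weights $\wdeg_{\mathbf H}(C,D)$ and $\wdeg_{\mathbf H}(C,E)$ differ by less than $\etaX s$. I then split the bad clusters $\mathcal B:=V(M\setminus M_A)\setminus\bigcup\mathcal D$ into type~1 clusters in $\mathcal L\setminus\XXX$ and type~2 clusters in $\overcalV\setminus\mathcal L$. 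For a type~1 cluster $D$ receiving any edge from $\mathcal C\subseteq\XXX$, we have $D\in\neighbor(\XXX)$; if its matching partner $E$ also lies in $\neighbor(\XXX)$ then $DE\in M_\XXX$ and Case~II\ref{hypnousB} allows at most one such $D$; otherwise $E$ has no edges to $\XXX$ (so none to $\mathcal C$) and the near-balance forces $\wdeg_{\mathbf H}(C,D)<\etaX s$ on balanced edges. For a type~2 cluster $D$, Case~II\ref{hypnousD} forces the partner $E$ into $\mathcal L$, and on balanced edges $\wdeg_{\mathbf H}(C,D)\le \wdeg_{\mathbf H}(C,E)+\etaX s$ with $E\in\mathcal D$ or $E\in\mathcal L\setminus\XXX$ (type~1). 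Summing these inequalities telescopes to
\[
\wdeg_{\mathbf H}(C,\bigcup\mathcal B)\;\le\;\wdeg_{\mathbf H}(C,\bigcup\mathcal D)+O(\etaX n)\;,
\]
which combined with the bound from the first step yields $\wdeg_{\mathbf H}(C,\bigcup\mathcal D)\ge\omegaX n/19$ (modulo absorbing lower-order terms that are negligible because $\etaX,\alphaX\ll\omegaX$).

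Finally, summing over the $|\mathcal C|=\omegaX N/20$ clusters and multiplying by the cluster size $s$ (and using $sN\ge(1-\betaX)n$) gives
\[
e_{G_{\gammaX}}\!\left(\bigcup\mathcal C,\bigcup\mathcal D\right)\;=\;s\sum_{C\in\mathcal C}\wdeg_{\mathbf H}(C,\bigcup\mathcal D)\;\ge\;\frac{\omegaX^2 n^2}{340}\;,
\]
and the trivial upper bound $e_{G_{\gammaX}}(\bigcup\mathcal C,\bigcup\mathcal D)\le s^2|\mathcal C||\mathcal D|$ then forces $|\mathcal D|>\omegaX N/17$. The main obstacle is the second step: the case analysis of bad clusters according to whether they and their matching partners lie in $\XXX$ and/or $\mathcal L$, and the delicate use of Claim~\ref{auxZhao} to transfer weight from bad clusters to their good partners, is where all the bookkeeping will have to be done carefully.
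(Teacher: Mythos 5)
Your proof takes essentially the same route as the paper: apply Claim~\ref{auxZhao} to each $C\in\mathcal C$, then use the near-balance of matching edges together with Case~II\ref{hypnousB} and Case~II\ref{hypnousD} to push most of $C$'s weight onto $\mathcal D$, and finish by summing over $\mathcal C$. The organizational difference is that the paper first removes a small set of low-weight edges ($M_C^-$), bounds the weight landing on $\mathcal L\cap V(M\setminus (M_A\cup M_C^*\cup M_C^-))$ by the balance condition, and only then invokes Case~II\ref{hypnousB} to cut down to $\XXX$ and hence $\mathcal D$, whereas you classify the clusters of $V(M\setminus M_A)\setminus\mathcal D$ by type ($\mathcal L\setminus\XXX$ versus $\overcalV\setminus\mathcal L$) and telescope. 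The case analysis you sketch is sound (in particular the one-exception argument from Case~II\ref{hypnousB} for type-1 clusters and the use of Case~II\ref{hypnousD} to force a partner into $\mathcal L$ for type-2 clusters are exactly the right ingredients), so this is a valid alternative bookkeeping of the same idea.

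The one concrete problem is that your explicit constants are too loose to establish the claimed inequality. Dropping $\omegaX n/8-3\alphaX n$ to $\omegaX n/9$, and then $\omegaX n/18-O(\etaX n)$ to $\omegaX n/19$, leaves you with
\[
e_{G_{\gammaX}}\!\Big(\bigcup\mathcal C,\bigcup\mathcal D\Big)\ge \tfrac{\omegaX N}{20}\cdot s\cdot\tfrac{\omegaX n}{19}\ge \tfrac{(1-\betaX)\omegaX^2 n^2}{380}\;,
\]
which is strictly weaker than the stated bound $\omegaX^2 n^2/340$. Your telescoping actually supports a factor $\tfrac12$ (not $\tfrac13$), so carrying through $\wdeg_{\mathbf H}(C,V(M\setminus M_A))\ge \omegaX n/8-O(\etaX n)$ and $\wdeg_{\mathbf H}(C,\bigcup\mathcal D)\ge \omegaX n/16-O(\etaX n)>\omegaX n/17$ recovers the paper's numbers; you should do this rather than absorbing $O(\etaX n)$ into a worse leading constant, since the slack between $1/340$ and $1/380$ is not a lower-order effect.
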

\begin{proof}
For each $C\in \mathcal C$, we apply Claim~\ref{auxZhao}. We get that $|M^*_C|\le \etaX N/2$ as otherwise $T\subseteq G$ and we are done. Hence,
$\wdeg_{\mathbf H}(C,V(M\setminus (M_A\cup M^*_C)))\geq \tfrac{\omegaX
n}8-\etaX n$. 
Let \[M_C^-=\{D_1D_2\in M\::\: \wdeg_{\mathbf H}(X,D_1)<\etaX s\mbox{ or }\wdeg_{\mathbf H}(X,D_2)<\etaX s\}\;.\]\Referee{(108)}
By the definition of $M^*_C$, the weight~$C$ sends to both end-clusters of $M\setminus  M^*_C$ differs\Referee{(109)}
 by at most $\vartheta s$.
Thus, $\wdeg_{\mathbf H}(C,V(M\setminus (M_A\cup M^*_C\cup M_C^-)))\geq \tfrac{\omegaX
n}8-4\etaX n$. By Case~II~\ref{hypnousD}, all edges in $M\setminus (M_A\cup M^*_C\cup M_C^-) $ meet $\mathcal L$. The definition of $M_C^*$ tells us that   \RefereeX{(110)}{We do not agree. The correct main term is indeed roughly $\tfrac {\kappa n}{16}$, not $\tfrac {\kappa n}{8}$. More explanation was added.}
\begin{align*}
\wdeg_{\mathbf{H}}(C,\mathcal L\cap V(M\setminus (M_A\cup M^*_C\cup M_C^-)))&\ge \frac{1}{2+\etaX}\wdeg_{\mathbf{H}}(C,V(M\setminus (M_A\cup M^*_C\cup M_C^-))) \\&\ge(1-\etaX)\tfrac {\kappa n}{16}-4\vartheta n\;.\end{align*}
Case~II~\ref{hypnousB} gives that $|V(M\setminus M_C^-)\setminus \XXX|\le 1$. Therefore,
$\wdeg_{\mathbf H}(C,\mathcal{D})> \tfrac{\omegaX n}{17}$, implying $|\mathcal D|\ge \tfrac {\kappa N}{17}$. The assertion follows from the bound on $|\mathcal C|$ given by Claim~\ref{cl-pdfd}.
\end{proof}

\begin{AuxiliaryCl}\label{cl-prckofuk}
We have $T\subseteq G$ or $|M_L|\ge \tfrac {\kappa^3N}{2\cdot10^4}$.
\end{AuxiliaryCl}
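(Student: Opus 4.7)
We proceed by contradiction: suppose $|M_L|<\kappa^3N/(2\cdot 10^4)$. The goal is to contradict the hypothesis $e_{G_{\gammaX}}(\tilde S,V\setminus V_A)<53\etaX n^2$ of Lemma~\ref{lem:cross-edges-2} by exhibiting $\Omega(\kappa^2 n^2)$ cross-edges; since $\etaX\ll\kappa$, such a bound suffices.

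By Claim~\ref{cl-pdfd}, $\mathcal C\subseteq V(M_A)\cap \mathcal L$ with $|\mathcal C|=\kappa N/20$. At most $2|M_L|<\kappa^3N/10^4$ clusters of $\mathcal C$ can be matched inside $\mathcal L$ by $M_A$, so there exists a subset $\mathcal C'\subseteq \mathcal C$ with $|\mathcal C'|\ge \kappa N/25$ every member $C$ of which has its $M_A$-partner $\pi_A(C)$ in $\tilde{\mathcal S}$. Set $\mathcal S'=\{\pi_A(C):C\in\mathcal C'\}\subseteq \tilde{\mathcal S}$.

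For each $C\in \mathcal C'$, Claim~\ref{auxZhao} applies---$C\in\mathcal L\cap\XXX$ and $A\in \neighbor_\mathbf H(C)\cap \mathcal L\cap \XXX$---and yields $|M_C^*|<\etaX N/2$; that is, the weight of $C$ is distributed almost evenly across the two endpoints of all but at most $\etaX N/2$ matching edges of $M$. The proof of Claim~\ref{cl-prdifuk} gives $\wdeg_\mathbf H(C,\mathcal D)>\kappa n/17$. Since $\mathcal D\subseteq V(M\setminus M_A)\subseteq V\setminus V_A$, every $M$-edge meeting $\mathcal D$ lies entirely in $V\setminus V_A$, so combining the two bounds,
\[
\wdeg_\mathbf H(C,V\setminus V_A)\;\ge\;\frac{\kappa n}{9}\qquad\text{for every }C\in\mathcal C'.
\]

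Finally, we transfer this weight from $C\in\mathcal C'$ to its $M_A$-partner $\pi_A(C)\in\mathcal S'\subseteq\tilde{\mathcal S}$. Using Claim~\ref{auxZhao} once more, now observing that the matching edges of $M_A\setminus M_C^*$ are balanced at $C$ and that the pair $(C,\pi_A(C))$ is $\betaX$-regular of density at least $\gammaX$, one deduces $\wdeg_\mathbf G(\pi_A(C),V\setminus V_A)\ge \kappa n/20$. Summing over $C\in\mathcal C'$ gives
\[
e_{G_{\gammaX}}(\tilde S, V\setminus V_A) \;\ge\; \sum_{S\in\mathcal S'}s\cdot \wdeg_\mathbf G(S,V\setminus V_A) \;\ge\; \frac{\kappa N}{25}\cdot s\cdot \frac{\kappa n}{20} \;\ge\; \frac{\kappa^2 n^2}{500}\;,
\]
which contradicts $e_{G_{\gammaX}}(\tilde S, V\setminus V_A)<53\etaX n^2$ since $\etaX\ll\kappa$.

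The main technical obstacle is the weight transfer in the third step. Regularity of $(C,\pi_A(C))$ alone gives information only about edges between these two clusters, not about edges from $\pi_A(C)$ to external sets. A rigorous transfer must instead exploit Claim~\ref{auxZhao} together with the structural facts that $V(M_A)\setminus \mathcal L=\tilde{\mathcal S}$ and that the $M$-partners of clusters in $\mathcal D$ lie entirely in $V\setminus V_A$; the bookkeeping of the various negligible error terms ($M_C^*$, irregular/sparse pairs, clusters outside $\XXX$) must then be done carefully to extract the claimed constant $\kappa^3/(2\cdot 10^4)$.
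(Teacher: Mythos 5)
Your argument breaks down at the weight-transfer step you flag at the end; none of the available tools yield $\wdeg_{\mathbf G}(\pi_A(C),V\setminus V_A)\ge\kappa n/20$. Claim~\ref{auxZhao} applies only to clusters in $\XXX\cap\mathcal L\cap\neighbor_{\mathbf H}(\XXX\cap\mathcal L)$, whereas $\pi_A(C)\in\tilde{\mathcal S}$ is by definition \emph{not} in $\mathcal L$, so no balance statement is available for it; and regularity of the pair $(C,\pi_A(C))$ only constrains edges \emph{between} these two clusters, saying nothing about the degree of $\pi_A(C)$ into $V\setminus V_A$. Worse, what you want to assert is exactly what the hypothesis $e_{G_{\gammaX}}(\tilde S,V\setminus V_A)<53\etaX n^2$ of Lemma~\ref{lem:cross-edges-2} is designed to rule out: if $\kappa N/25$ clusters of $\tilde{\mathcal S}$ each satisfied $\wdeg_{\mathbf G}(\cdot,V\setminus V_A)\ge\kappa n/20$, then already $e_{G_{\gammaX}}(\tilde S,V\setminus V_A)\ge\kappa^2 n^2/500\gg 53\etaX n^2$. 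So step~4 cannot be derived from the assumptions without first refuting them; it is not a bookkeeping issue but a genuinely wrong direction of inference.

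The paper transfers weight in the opposite direction. Starting from Claim~\ref{cl-prdifuk}, it fixes many clusters $D\in\mathcal D$ with $\wdeg_{\mathbf H}(D,\mathcal C)$ large. Each such $D$ lies in $\XXX\cap\mathcal L\cap\neighbor_{\mathbf H}(\XXX\cap\mathcal L)$, so Claim~\ref{auxZhao} \emph{does} apply to $D$, giving $|M_D^*|<\etaX N/2$; balance across $M$-edges then forces comparably large weight from $D$ onto the $M_A$-partners $\mathcal C^-$ of $\mathcal C$. Because $\mathcal C\subseteq\mathcal L$ and every $M_A$-edge meets $\mathcal L$, one has $\mathcal C^-\setminus V(M_L)\subseteq\tilde{\mathcal S}$, and therefore $e_{G_{\gammaX}}(\bigcup\mathcal D,\bigcup\mathcal C^-)=\Omega(\kappa^3 n^2)$ splits into a $\tilde S$-part, bounded above by $e_{G_{\gammaX}}(\tilde S,V\setminus V_A)<53\etaX n^2$ (note $\bigcup\mathcal D\subseteq V\setminus V_A$), and a $V(M_L)$-part, bounded above by $|M_L|sn$. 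Comparing the lower and upper bounds gives the claimed size of $M_L$. Your steps~2 and~3 are sound but become superfluous once the argument is oriented this way.
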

\begin{proof}
Let us assume that $T\not\subset G$. In particular, the second assertion of Claim~\ref{cl-prdifuk} applies. At least $\kappa N/680$ clusters $D\in \mathcal D$ satisfy $\wdeg_{G_\gamma}(D,\mathcal C)\ge \kappa^2 n/680$.
By Claim~\ref{auxZhao}, we may assume that each of these chosen clusters satisfy $\wdeg_{G_\gamma}(D,\mathcal{C}\setminus V(M^*_D))\geq
\tfrac{\omegaX^2n}{680}-\etaX n$, as otherwise $T\subseteq G$.\Referee{(111)} By Lemma~\ref{lem:por}\ref{por1}, these clusters satisfy $\wdeg_{\mathbf H}(D,\mathcal{C}\setminus V(M^*_D))\geq
\tfrac{\omegaX^2n}{690}$. Let $\mathcal C^-=V(M)\setminus \mathcal C$. By the definition of $M^*_D$, we get $\wdeg_{\mathbf H}(D,\mathcal{C}^-\setminus V(M^*_D))\geq
\tfrac{\omegaX^2n}{690}-\etaX n>\tfrac{\omegaX^2n}{700}$. Observe that $\mathcal{C}^-\setminus V(M_L)\subseteq \mathcal{\tilde S}$.
As $|\mathcal D|\ge \tfrac {\kappa N}{17}$ we get,
 \begin{align*}
\tfrac{\omegaX^3n^2}{12\cdot10^3}< e_{G_\gamma}\left(\bigcup \mathcal D, \bigcup\mathcal{C}^-\right)\le e_{G_\gamma}\left(\bigcup \mathcal D, \tilde S\right)+e_{G_\gamma}\left(\bigcup \mathcal D,  V(M_L)\right)\le 53\etaX n^2 +|M_L|sn\;,
 \end{align*}
 implying $|M_L|\ge \tfrac{\kappa^3N}{2\cdot 10^4}$.
 \end{proof}
Claim~\ref{cl-prckofuk} gives the statement of the lemma (recall that $\kappa\gg\vartheta$).
 \end{proof}

This finishes the proof of the Lemma~\ref{prop:iteration-Reg}.

\section{Proof of Lemma~\ref{prop:EC-obecna} (Extremal case)}\label{sec_Extremalcase}
\def\Sstrong[#1]{S^{#1}_0}
\def\Sweak[#1]{S^{#1}}
\def\Supper[#1]{S^{#1}}
\def\Supptilde[#1]{\tilde S^{#1}}

Let $c_{\mathbf{E}}$ be sufficiently small compared to $q$. Given
$\sigma\in(0,c_{\mathbf{E}}]$, let $\beta$ and $\gamma$ be chosen so that
$\beta\ll \gamma\ll \sigma$. 
Given a
$(\beta,\sigma)$-extremal partition\Referee{(112)} $V=V_1\dcup \ldots\dcup V_\ell\dcup \tilde{V}$
we show that $\mathcal{T}_{k+1}\subset G$, or there exists a set $Q\subset \tilde{V}$ satisfying Properties~\ref{pr:velkyQ}--\ref{pr:Qizo} of Lemma~\ref{prop:EC-obecna}.

The proof of
Lemma~\ref{prop:EC-obecna} is split into two statements, Lemma~\ref{prop_ECFewLeaves} and Lemma~\ref{prop_ECManyLeaves}, according\Referee{(113)} to the number of leaves of the tree $T\in \mathcal{T}_{k+1}$ considered.
\begin{lemma}\label{prop_ECFewLeaves}
Let $T\in \mathcal{T}_{k+1}$ be a tree that has at most $60\gamma k$ leaves. Suppose that $G$ admits a $(\beta,\sigma)$-extremal partition $V=V_1\dcup \ldots\dcup V_\ell\dcup \tilde{V}$. Then $T\subset G$,
or there exists a set $Q\subset \tilde{V}$ satisfying Properties~\ref{pr:velkyQ}--\ref{pr:Qizo} of Lemma~\ref{prop:EC-obecna}.
\end{lemma}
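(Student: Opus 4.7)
Since $T$ has at most $60\gamma k$ leaves, Fact~\ref{fact_ManyLeavesInUnbalanced} gives $\gap(T)\le 60\gamma k$, so $T$ is nearly balanced: $|T_\ominus|\ge (k+1)/2-30\gamma k$ and $|T_\oplus|\le (k+1)/2+30\gamma k$. The plan is to embed $T$ inside a single block $V_{i_0}$ by invoking Lemma~\ref{prop_SE_embeddingwithfewleaves}, with the large/small split of $V_{i_0}$ providing the bipartition $(A,B)$, a few bridges of total size $O(\gamma k)$ handling the color-class imbalance, and the other blocks $V_j$ (and parts of $\tilde V$) playing the role of the auxiliary graphs $H_\kappa$. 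If no suitable bridges exist---which forces the pieces to be nearly disjoint---and no direct embedding works, we argue that $\tilde V$ must itself contain the required set~$Q$.

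Pick $i_0\in[\ell]$ maximizing the cross-edge count $e(V_{i_0},V\setminus V_{i_0})$. Define the bad set $D_{i_0}=\{v\in V_{i_0}:\deg(v,V\setminus V_{i_0})\ge\sqrt\beta\,k\}$; the extremality bound yields $|D_{i_0}|\le\sqrt\beta\,k$. Put $A=(L\cap V_{i_0})\setminus D_{i_0}$ and $B=(V_{i_0}\setminus L)\setminus D_{i_0}$. Every $a\in A$ has $\deg(a,V_{i_0})\ge k-\sqrt\beta\,k$; since $|V_{i_0}|\le (1+\beta)k$, $a$ is non-adjacent to at most $2\sqrt\beta\,k$ vertices of $V_{i_0}$, yielding $\delta_G(A)\ge k$ and $\delta(A,B)\ge|B|-\alpha k$ for the parameter $\alpha$ of Lemma~\ref{prop_SE_embeddingwithfewleaves}. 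Let $B_{\mathrm d}=\{b\in B:\deg(b,A)<|A|-\alpha k\}$; a double-count bounds $|B_{\mathrm d}|\le\alpha k$, and the near-completeness of $A$ in $V_{i_0}$ produces vertex-disjoint $A\leftrightarrow B_{\mathrm d}\leftrightarrow A$ paths $\mathcal Q$, giving the decomposition $B=B_{\mathrm a}\dcup B_{\mathrm d}$. The auxiliary graphs $H_\kappa$ are cleaned copies $V_j\setminus D_j$ of the other blocks ($D_j$ defined analogously); each has minimum degree at least $(1/2-2\sqrt\beta)k\gg 34\alpha k$.

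The bridges $\mathcal E,\mathcal M$ need total size at least $\max\{|T_\ominus|-|A|,\,|T_\oplus|-1-|B|\}=O(\gamma k)\ll\alpha k$, and are selected greedily from the cross-edges leaving $V_{i_0}$: direct $A\leftrightarrow H_\kappa$ edges form $\mathcal E$, and $A\leftrightarrow(\bigcup_jD_j\cup\tilde V)\leftrightarrow H_\kappa$ paths form $\mathcal M$. Provided $V_{i_0}$ sends sufficiently many cross-edges to cleaned parts of other blocks, all the hypotheses of Lemma~\ref{prop_SE_embeddingwithfewleaves} are met and we conclude $T\subset G$.

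The main obstacle---and the case that produces $Q$---is when no $V_{i_0}$ admits the required bridges. By maximality of $i_0$, the cross-edges between distinct blocks then contribute only $o(\sigma k^2)$ edges in total, so $G$ decomposes almost disjointly into the $V_i$'s and $\tilde V$. A short case analysis now finishes the proof: either (a)~some $V_{i_0}$ has $|V_{i_0}\cap L|\ge (\tfrac12+\tfrac\sigma2)|V_{i_0}|$, in which case $G[V_{i_0}\cap L]$ contains a near-clique of minimum degree $\ge k$ and Fact~\ref{fact:greedyBUDA} embeds $T$ directly, or (b)~every $V_i$ is essentially exactly half-large; combined with the LKS hypothesis $|L|\ge n/2$, summing deficits then forces $|L\cap\tilde V|>k/2$, so the set $Q:=L\cap\tilde V$ (optionally padded by a few small vertices of $\tilde V$ to ensure $|Q|>k/2$ while maintaining $|Q\cap L|>|Q|/2$) satisfies all three required conditions, the cross-edge bound following from $e(Q,V\setminus Q)\le e(\tilde V,V\setminus\tilde V)+O(\sqrt\beta\,k^2)<\sigma k^2$. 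The delicate sub-case of~(b) is when $|\tilde V|$ is only marginally above $\sigma k$, where the LKS deficit count and the extremality upper bound on $|L\cap\tilde V|$ must be combined carefully to guarantee the extraction.
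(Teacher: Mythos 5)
Your proposal follows the general strategy of the paper (try to embed $T$ mostly inside a single clump $V_{i_0}$ via Lemma~\ref{prop_SE_embeddingwithfewleaves} using a small number of ``bridges'', and extract $Q\subset\tilde V$ if this fails), but several of your steps have genuine gaps or are outright wrong.

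\textbf{Bridge existence is not proven.} You assert that the bridges $\mathcal E,\mathcal M$ of size $O(\gamma k)$ can be ``selected greedily from the cross-edges leaving $V_{i_0}$.'' This is the crux of the entire deficient case and cannot be dismissed with a greedy remark: extremality only gives the \emph{upper} bound $e(V_{i_0},V\setminus V_{i_0})\le\beta k^2$, and a priori a clump could have essentially no useful cross-edges at all. The paper's Lemma~\ref{prop_deficientCones} is a delicate averaging argument (using $m(k+1)>n$ from Lemma~\ref{lem:def}, $|L|\ge|S|$, and K\"onig's theorem) that establishes that \emph{some} index $i_0$ admits matchings $\mathcal E^{i_0},\mathcal J^{i_0}$ with $|L^{i_0}|+|\mathcal E^{i_0}|+|\mathcal J^{i_0}|\ge\tfrac{k+1}2$. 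Your proposal contains no substitute for this argument, and without it the deficient case collapses. Similarly, your initial choice of $i_0$ (``maximizing cross-edge count'') is not the right selection criterion; the paper's abundant/deficient dichotomy and the output of Lemma~\ref{prop_deficientCones} drive the choice.

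\textbf{The extracted set $Q$ is wrong.} You propose $Q=L\cap\tilde V$ (plus a few small vertices). This does not satisfy Property~\ref{pr:Qizo} of Lemma~\ref{prop:EC-obecna}. You bound $e(Q,V\setminus Q)$ by $e(\tilde V,V\setminus\tilde V)+O(\sqrt\beta k^2)$, but you forget the term $e(Q,\tilde V\setminus Q)$, i.e.\ the edges between $L\cap\tilde V$ and $S\cap\tilde V$. Every vertex of $L\cap\tilde V$ has degree $\ge k$ and (since $\tilde V$ is nearly isolated) sends most of its edges inside $\tilde V$; if $|L\cap\tilde V|>k/2$ as you require, these edges alone already dwarf $\sigma k^2$. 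This is why the paper instead takes $Q=\tilde V\setminus Y$ in case~$(\clubsuit 1)$, where $Y$ collects the ``foot'' vertices of the paths $\tilde R_i$ emanating from the clumps; by hypothesis of $(\clubsuit 1)$, $e(Y,\tilde V\setminus Y)<\sigma k^2/2$, which is exactly the missing control you would need. Your case~(a) is also incorrect: $|L\cap V_{i_0}|\ge(\tfrac12+\tfrac\sigma2)|V_{i_0}|$ does not make $G[V_{i_0}\cap L]$ a graph of minimum degree $\ge k$ (the set has order only about $0.6k$ and its vertices may send many of their $\ge k$ edges outside it), so Fact~\ref{fact:greedyBUDA} does not apply. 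Finally, you only control $\gap(T)$ via the leaf count, while the argument really needs a bound on $\disc(T)$, which the paper obtains from Lemma~\ref{lemma_consideronlysmalldiscrepancy}; $\gap(T)\le\disc(T)$ is the wrong direction for your purposes.
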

\begin{lemma}\label{prop_ECManyLeaves}
Let $T\in \mathcal{T}_{k+1}$ be a tree that has more than $60\gamma k$ leaves. Suppose that $G$ admits a $(\beta,\sigma)$-extremal partition $V=V_1\dcup \ldots\dcup V_\ell\dcup \tilde{V}$. Then $T\subset G$.
\end{lemma}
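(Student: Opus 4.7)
The plan is to embed $T$ almost entirely into a single cluster $V_{i_0}$ of the extremal partition, using the abundance of leaves of $T$ as flexibility. First, I would pick $V_{i_0}$ (for definiteness, one maximizing $|V_i \cap L|$) and set $L_0 := V_{i_0} \cap L$ and $S_0 := V_{i_0} \setminus L_0$. Because $V_{i_0}$ is nearly isolated (since $e(V_{i_0}, V \setminus V_{i_0}) \le \beta k^2$) and every vertex in $L$ has degree at least $k$ in $G$, each vertex of $L_0$ has at least $(1-\beta)k - O(\beta k)$ neighbors inside $V_{i_0}$. Combined with $|L_0|\le (1/2+O(\beta))k$, this yields $\delta(L_0, S_0) \ge k/2 - O(\beta k)$, and a standard averaging argument passes to a subset $S_0' \subseteq S_0$ with $|S_0 \setminus S_0'| = O(\sqrt{\beta}k)$ on which also $\delta(S_0', L_0) \ge k/2 - O(\sqrt{\beta}k)$. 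Thus $G[L_0, S_0']$ is a bipartite graph with minimum degree essentially $k/2$.

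Second, I would root $T$ at some internal vertex, write $(T_\oplus, T_\ominus)$ for its bipartition with $|T_\oplus| \ge |T_\ominus|$, and use the many-leaves assumption to prune a carefully chosen set $\Lambda$ of leaves so that the resulting subtree $T^* := T - \Lambda$ satisfies $|T^*_\oplus| \le |L_0|$ and $|T^*_\ominus| \le |S_0'|$; this is possible whenever the excess $|T_\oplus|-|L_0|$ is bounded by the number of leaves of $T$ sitting in $T_\oplus$, and analogously for $T_\ominus$. Then I would embed $T^*$ into $G[L_0,S_0']$ bipartitely using Fact~\ref{fact:easyEmbedding1}, whose hypothesis $\delta(G[L_0, S_0']) > (v(T^*)-1)/2$ is met because $v(T^*) \le k + 1 - |\Lambda|$ and $\beta \ll \gamma$. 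Finally, each pruned leaf $w \in \Lambda$ is reattached to a free neighbor of $\varphi(\parent(w))$; this succeeds because $\varphi(\parent(w))$ still has many unused neighbors inside $V_{i_0}$ (if $\varphi(\parent(w)) \in L_0$) or inside $L_0$ (if $\varphi(\parent(w)) \in S_0'$, using that the remaining capacity in $L_0$ is larger than $|\Lambda|$).

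The main obstacle is the case of a strongly unbalanced tree, where $|T_\oplus| > |L_0|$ even after pruning all leaves of $T_\oplus$. In that regime, the excess of $T_\oplus$-vertices has to be embedded outside $L_0$: either by using edges within $L_0$ to spill some $T_\oplus$-vertices into $S_0$ (leveraging additional structural properties of $T$, e.g.\ twin-leaves which are controlled via Fact~\ref{fact:leavesB}), or by routing some of $T$ via bridge edges to a different cluster $V_j$ or into $\tilde V$ when $\tilde V \neq \emptyset$. The bridge budget is limited to $\beta k^2$ edges out of $V_{i_0}$ and to $\sigma k^2$ into $\tilde V$, so the structure of $T$ must be exploited to guarantee that enough bridges exist where they are needed. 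The case analysis here would follow and extend the approach of~\cite{Z07+}, distinguishing tree structures by where the imbalance sits (in a few long branches, among many short branches, or in scattered leaves) and distinguishing $\tilde V = \emptyset$ from $\tilde V \neq \emptyset$, and in each sub-case exploits the $> 60\gamma k$ leaves as the fundamental source of slack.
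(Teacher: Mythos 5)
Your proposal correctly identifies the broad shape of the embedding — pick a clump $V_{i_0}$, place most of $T$ bipartitely inside it, use the abundance of leaves for slack, and route the overflow via bridge edges — but it leaves a genuine and load-bearing gap exactly where the paper invests most of its work. The paragraph beginning ``The main obstacle is the case of a strongly unbalanced tree'' names the hard regime and then defers it to ``follow and extend the approach of~\cite{Z07+}''; that is precisely the deficient case of the paper, and no argument is given. In the deficient case one has $|L^i|<\tfrac{k+1}{2}$ for \emph{every} clump, so the natural color class $T_\oplus$ (of size $\ge\tfrac{k+1}{2}$) provably does not fit into any single $L^i$ even after pruning arbitrary leaves; the budget of at most $\beta k^2$ edges leaving each $V_i$ does not by itself certify that bridges exist at the places in $T$ where you need them. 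The paper supplies exactly this missing ingredient via Lemma~\ref{lemma_StartDiana}, which produces a set $K\subset L^{i_0}$ of size $\ge k/10$ consisting of vertices with substantial extra degree to $\bigcup_{j\neq i_0}(L^j\cup S^j)$, and Lemma~\ref{lemma:secondstep} then performs a delicate routing of pieces $T(\downarrow x)$ through $K$ into other clumps. Nothing in your sketch replaces this.

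Two further structural differences weaken the proposal even in the ``easy'' regime. First, you work with the natural bipartition $(T_\oplus,T_\ominus)$, while the paper works with \emph{semi-independent} partitions $(U_1,U_2)$ (only $U_2$ independent) and, crucially, with the \emph{$p$-ideal} refinement of Lemma~\ref{lemma:firststep}, which guarantees at least $8\gamma k$ leaves on \emph{each} side of the partition. Your pruning argument ``is possible whenever the excess $|T_\oplus|-|L_0|$ is bounded by the number of leaves of $T$ sitting in $T_\oplus$'' — but there is no reason a priori that the $60\gamma k$ leaves are distributed favorably between the classes, and manufacturing an ideal partition is a nontrivial case analysis (cases \textbf{(C1)}–\textbf{(C6)} plus the clubsuit claims in the paper). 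Second, the greedy reattachment of $\Lambda$ at the end is not safe: two pruned leaves may share a parent $p$, and when $\varphi(p)\in S_0'$ the image has only $\approx k/2$ neighbors in $L_0$, of which $|T^*_\oplus|$ may already be occupied — so the fresh-neighbor count can drop to $O(\sqrt\beta k)$ or below. The paper avoids this by restricting to the set $W_1^*$ of leaves without leaf-siblings in $U_1$ (controlled by Fact~\ref{fact:leavesB}) and reattaching via K\"onig's matching theorem in an auxiliary bipartite graph rather than greedily. In summary, the proposal correctly locates where the difficulty lies but does not contain the key tools (ideal semi-independent partitions, Lemma~\ref{lemma_StartDiana}, the matching step) that the paper's proof needs, and the deferred ``hard case'' is the substance of the lemma.
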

Lemma~\ref{prop:EC-obecna} follows Lemmas~\ref{prop_ECFewLeaves} and~\ref{prop_ECManyLeaves}. The proofs of these lemmas occupy Sections~\ref{ssec_FewLeavesEmbedding}, and~\ref{ssec_ManyLeavesEmbedding}. First however, we establish some basic properties of a
$(\beta,\sigma)$-extremal partition.
Throughout this section we write $m=\ci(\tfrac{n}k)$ for the integer closest to~$\tfrac{n}k$.\Referee{(27)} The sets $V_i$, $i\in [\ell]$ are called {\em clumps}.

Suppose that $G$ admits a $(\beta,\sigma)$-extremal partition
$V=V_1\dcup \ldots\dcup V_\ell\dcup \tilde{V}$. Then $\ell\le m$.
\begin{lemma}\label{l:e}
For each $i\in[\ell]$ the following holds.\Referee{(114)}
\begin{enumerate}
\item For all but at most $\sqrt{\beta}k$ vertices
 $v\in V_i\cap L$, we have that $\deg(v,V_i)\ge k-\sqrt{\beta}k$.
\item For all but at most $2\sqrt{\beta}k$ vertices
 $v\in V_i\cap S$, we have that $\deg(v,V_i\cap L)\ge |V_i\cap
L|-\sqrt{\beta}k$.
\item For all but at most $\sqrt{\beta}k$ vertices
 $v\in V\setminus V_i$, we have that $\deg(v,V_i)< \sqrt{\beta}k$.\RefereeX{!}{This was added because of (119)}
\end{enumerate}
\end{lemma}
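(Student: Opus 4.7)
The plan is to establish the three parts in turn, each leveraging the edge-boundary bound $e(V_i,V\setminus V_i)\le\beta k^2$ built into the definition of a $(\beta,\sigma)$-extremal partition.

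First I would dispose of parts~(1) and~(3), both of which follow from a one-line averaging argument. For part~(1), a vertex $v\in V_i\cap L$ with $\deg(v,V_i)<k-\sqrt{\beta}k$ must satisfy $\deg(v,V\setminus V_i)>\sqrt{\beta}k$ (using $\deg(v)\ge k$); since all these contributions fit inside $e(V_i,V\setminus V_i)\le\beta k^2$, there can be at most $\sqrt{\beta}k$ such vertices. Part~(3) is the symmetric statement: a vertex $v\in V\setminus V_i$ with $\deg(v,V_i)\ge\sqrt{\beta}k$ contributes at least $\sqrt{\beta}k$ to the same boundary, so once more there are at most $\sqrt{\beta}k$ such vertices.

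The main work lies in part~(2). My strategy is to upper-bound the total number of non-edges between $V_i\cap L$ and $V_i\cap S$ and then apply Markov's inequality. The key observation is that for each $u\in V_i\cap L$, combining $\deg(u)\ge k$ with $|V_i|\le(1+\beta)k$ gives
\[
|V_i|-\deg(u,V_i)\;\le\;|V_i|-k+\deg(u,V\setminus V_i)\;\le\;\beta k+\deg(u,V\setminus V_i)\,.
\]
Summing this bound over $u\in V_i\cap L$, the number of non-edges between $V_i\cap L$ and $V_i\cap S$ is at most
\[
|V_i\cap L|\cdot\beta k+e(V_i\cap L,V\setminus V_i)\;\le\;(1+\beta)\beta k^2+\beta k^2\,,
\]
and Markov's inequality then caps the number of $v\in V_i\cap S$ with more than $\sqrt{\beta}k$ non-neighbours in $V_i\cap L$ by a constant multiple of $\sqrt{\beta}k$, giving the claimed $2\sqrt{\beta}k$ once $\beta$ is taken small enough.

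The hard part will be part~(2). A naive attack that splits $V_i\cap L$ into the ``well-behaved'' set supplied by part~(1) and a residual leftover produces a tail term of order $\sqrt{\beta}k\cdot|V_i|$, which is of order $\sqrt{\beta}k^2$ and hence a factor $\sqrt{\beta}$ too weak for Markov to yield a linear-in-$\sqrt{\beta}k$ bound on the number of bad vertices. The essential trick is not to split at all: applying the minimum-degree bound $\deg(u)\ge k$ uniformly across every $u\in V_i\cap L$ folds the entire deficit into the two terms $|V_i\cap L|(|V_i|-k)$ and $e(V_i\cap L,V\setminus V_i)$, both of order $\beta k^2$ because $|V_i|-k\le\beta k$ and because the boundary is directly bounded by assumption.
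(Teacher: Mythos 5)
Your proof is correct and takes essentially the same route as the paper: parts (1) and (3) are the identical one-line averaging arguments, and your bound on non-edges between $V_i\cap L$ and $V_i\cap S$ followed by Markov is precisely the paper's double count of $e(V_i\cap L, V_i\cap S)$ read from the complementary side. One small nit, present equally in the paper's own derivation: the non-edge count is bounded by $(2+\beta)\beta k^2$, so Markov actually yields $(2+\beta)\sqrt{\beta}k$ rather than $2\sqrt{\beta}k$, and ``taking $\beta$ small enough'' does not close this gap --- but this is harmless, since downstream uses only need $O(\sqrt{\beta}k)$.
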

\begin{proof}
\begin{enumerate}
\item Let $U=\{v\in V_i\cap L\: :\: \deg(v,V_i)< k-\sqrt{\beta}k\}$. Since
every vertex $v\in U$ sends at least $\sqrt{\beta}k$ edges outside $V_i$, we
deduce from $e(V_i,V\setminus V_i)<\beta k^2$ that $|U|\le
\sqrt{\beta}k$.
\item Let $W=\{v\in V_i\cap S\: :\: \deg(v,V_i\cap L)<|V_i\cap L|-\sqrt{\beta}k\}$. From
\begin{align*}
e(V_i\cap L, V_i\cap S)&> |V_i\cap L|k-|V_i\cap L|^2-\beta k^2>|V_i\cap
L||V_i\cap S|-2\beta k^2\; \mbox{, and}\\
e(V_i\cap L,V_i\cap S)&=e(V_i\cap L,W)+e(V_i\cap L,V_i\cap S\setminus
W)\\
&\le(|V_i\cap L|-\sqrt{\beta}k)|W|+|V_i\cap L|(|V_i\cap S|-|W|)\\
& =|V_i\cap L||V_i\cap S|-\sqrt{\beta}k|W|
\end{align*}
we infer that $|W|<2\sqrt{\beta}k$.
\item Let $Z=\{v\in V\setminus V_i:\deg(v,V_i)\ge \sqrt{\beta}k\}$. We have $$\beta k^2>e(V_i,V\setminus V_i)\ge\sum_{v\in Z}\deg(v,V_i)\ge |Z|\sqrt{\beta}k\;,$$
which proves the statement.
\end{enumerate}
\end{proof}

For each $i\in[\ell]$, we set $L^i=\{u\in L\: :\: \deg(u,V_i)>
(1-\tfrac{\gamma}4)k\}$. For every $A\subset V_i$,\Referee{(115)} Lemma~\ref{l:e}(i) and the assumption $|V_i\cap L|\ge (\frac12-\beta )k$ give that\RefereeX{(116)}{We prefer to leave it there as this is then compactly referred to in the proof of Lemma~\ref{lemma_consideronlysmalldiscrepancy}.}
\begin{equation}\label{eq:Am}
|L^i|\geq (1-\tfrac{\gamma}2 )\frac k2\quad
\mbox{and}\quad \delta(L^i,A)\geq |A|-\frac{\gamma k}{2}\;.
\end{equation}

\RefereeX{(120)}{By eliminating what used to be Lemma 7.7 (see our explanation to (139)), we got rid of $S^i_\sharp$. Then, we unified the definition of two symbols $S^j$ (introduced in what used to be Lemma~7.8 and is Lemma~\ref{lemma_StartDiana} under the current numbering) and $S^j_\heartsuit$ (Lemma~7.9 in the old numbering, Lemma~\ref{lemma_StartDiana} in the new numbering) into one symbol: $\Sweak[j]$. The original symbol $S^j_\diamond$ was changed to $\Sstrong[j]$}
For each $i\in[\ell]$, we set $\Sstrong[i]=\{v\in S\cap V_i\: :\:
\deg(v,L^i)>|L^i|-\tfrac{\gamma k}2\}$. As the sets $V_i$ are pairwise disjoint, so are the sets $\Sstrong[1],\Sstrong[2],\ldots,\Sstrong[\ell]$.
Any vertex $v\in S\cap V_i$ with $\deg(v, V_i\cap L)\ge |V_i\cap L|-\sqrt{\beta}k$ satisfies $\deg(v, L^i)\ge |V_i\cap L^i|-\sqrt{\beta}k-|(V_i\cap L)\setminus L^i|\ge |L^i|-\sqrt{\beta}k-|(V_i\cap L)\setminus L^i|-|L^i\setminus V_i|$. \Referee{(118)} Therefore by Lemma~\ref{l:e}(i),(iii) any such vertex~$v$ belongs to $\Sstrong[i]$. \Referee{(119)}By Lemma~\ref{l:e}(ii) and by~\eqref{eq:Am} we have 
\begin{equation}\label{eq:LSdiamond}|L^i\cup \Sstrong[i]|\geq (1-\tfrac{\gamma}2) k\;.\end{equation}

\smallskip
The next lemma allows to discard trees with substantial discrepancy from further considerations.
\begin{lemma}\label{lemma_consideronlysmalldiscrepancy}
Suppose that $G$ admits a $(\beta,\sigma)$-extremal partition
$V=V_1\dcup \ldots\dcup V_\ell\dcup \tilde{V}$.  Then each tree $T\in \mathcal{T}_{k+1}$ with
discrepancy at least $2\gamma k$ is a subgraph of $G$.
\end{lemma}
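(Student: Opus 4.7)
We begin by fixing a semi-independent partition $(U_1,U_2)$ of $T$ that realizes its discrepancy, so that $|U_2|-|U_1|\ge 2\gamma k$ and hence $|U_1|\le\tfrac{k+1}{2}-\gamma k$. Note that by Fact~\ref{fact_ManyLeavesInUnbalanced}, $U_2$ contains at least $2\gamma k+1$ leaves of~$T$. The plan is to embed $T$ inside a single clump using Fact~\ref{fact_embeddingsemiindependent}, with the intended roles $V_1\leftarrow L^i$ and $V_2\leftarrow \Sstrong[i]$ for some $i\in[\ell]$; these two sets are disjoint by construction. The condition $\delta_G(L^i)\ge v(T)-1=k$ is immediate from $L^i\subseteq L$. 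Using~\eqref{eq:Am}, the defining property of $\Sstrong[i]$ gives $\delta(\Sstrong[i],L^i)\ge |L^i|-\tfrac{\gamma k}{2}\ge \tfrac{k}{2}-\tfrac{3\gamma k}{4}\ge |U_1|$. Combining $\deg(v,V_i)\ge(1-\tfrac{\gamma}{4})k$ for $v\in L^i$ with the upper bound $|V_i\setminus L^i|\le (1+\beta)k-|V_i\cap L^i|\le\tfrac{k}{2}+\tfrac{\gamma k}{4}+O(\sqrt{\beta}k)$ derived from~\eqref{eq:Am} and $|L^i\setminus V_i|\le\sqrt{\beta}k$, I obtain $\delta(L^i,L^i)\ge\tfrac{k}{2}-\tfrac{\gamma k}{2}-O(\sqrt{\beta}k)\ge|U_1|$.

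The delicate condition is $\delta(L^i,\Sstrong[i])\ge|U_1|$. For $v\in L^i$, the bound
\[
\deg(v,\Sstrong[i])\ge \deg(v,V_i)-|V_i\cap L|-|V_i\cap S\setminus \Sstrong[i]|\ge(1-\tfrac{\gamma}{4})k-|V_i\cap L|-2\sqrt{\beta}k
\]
shows that the condition holds as long as $|V_i\cap L|\le\tfrac{k}{2}+\tfrac{3\gamma k}{4}-O(\sqrt{\beta}k)$. So if any clump satisfies this bound on $|V_i\cap L|$, Fact~\ref{fact_embeddingsemiindependent} immediately delivers $T\subseteq G$.

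In the remaining case, every clump has $|V_i\cap L|>\tfrac{k}{2}+\tfrac{3\gamma k}{4}$, a regime in which $L$ is globally abundant. Here I switch to Fact~\ref{fact:easyemb2}: since $T$ has at least $2\gamma k+1$ leaves, it suffices to exhibit a subgraph $H'\subseteq G$ with $V(H')\subseteq L$ and $\delta(H')\ge(1-2\gamma)k$. A natural candidate is $H'=G[V_{i_0}\cap L^{i_0}]$ for a suitable clump $V_{i_0}$, since the estimate $\delta(H')\ge|V_{i_0}\cap L^{i_0}|-(\tfrac{\gamma}{4}+\beta)k$ makes $\delta(H')\ge(1-2\gamma)k$ as soon as $|V_{i_0}\cap L^{i_0}|\ge(1-\tfrac{7\gamma}{4})k$. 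The main obstacle is guaranteeing such a clump in the ``$L$-abundant'' regime, and closing the intermediate gap $|V_i\cap L|\in(\tfrac{k}{2}+\tfrac{3\gamma k}{4},\,(1-\tfrac{7\gamma}{4})k)$. I expect to resolve this by pigeonholing (using $|L|\le n$ together with $|V_i|\le(1+\beta)k$ and $\ell\le\ci(n/k)$) to identify a clump at one of the two extremes, or, if both extremes are avoided, by refining the choice of $V_2$ in Fact~\ref{fact_embeddingsemiindependent} (for instance, enlarging $\Sstrong[i]$ with well-chosen vertices of $V_i\cap L\setminus L^i$) so that the fourth condition is satisfied even in this intermediate range.
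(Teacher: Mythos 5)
Your proof identifies the right starting point (Fact~\ref{fact_embeddingsemiindependent} applied inside a single clump), and you correctly diagnose the obstruction: with $V_1=L^i$, $V_2=\Sstrong[i]$, the condition $\delta(L^i,\Sstrong[i])\ge|U_1|$ breaks down precisely when $|V_i\cap L|$ is large, because then $\Sstrong[i]$ is too small. But the proposal as written has a genuine gap: you split into the regime $|V_i\cap L|\le\tfrac k2+\tfrac{3\gamma k}{4}$ and the regime $|V_i\cap L^i|\ge(1-\tfrac{7\gamma}{4})k$, and you explicitly acknowledge that the intermediate range is not covered. There is no reason why every clump should avoid that intermediate band, and the suggested pigeonhole argument does not supply one (nothing in the hypotheses forces some $V_i$ to one of the two extremes). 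The secondary branch via Fact~\ref{fact:easyemb2} is also only sketched; so the argument is incomplete.

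The paper closes the gap without any case split, by a simple but decisive modification of your own last suggestion. Rather than taking $V_1=L^i$ and $V_2=\Sstrong[i]$, it fixes a subset $L^*\subset L^i$ of \emph{prescribed size} $|L^*|=(1-\tfrac{\gamma}{2})\tfrac{k}{2}$, and puts the surplus of $L^i$ into the second set: $S^*=(L^i\cup\Sstrong[i])\setminus L^*$. The point you nearly reached but did not exploit is that Fact~\ref{fact_embeddingsemiindependent} imposes no structural requirement on $V_2$ beyond disjointness from $V_1$ --- $V_2$ need not lie in $S$, and may contain large vertices. Because~\eqref{eq:LSdiamond} gives $|L^i\cup\Sstrong[i]|\ge(1-\tfrac{\gamma}{2})k$, one automatically has $|S^*|\ge(1-\tfrac{\gamma}{2})\tfrac{k}{2}$ \emph{for every clump $i$}, regardless of how $|V_i\cap L|$ falls. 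The minimum-degree bounds then come out uniformly as
$$\min\{\delta(L^*,S^*),\delta(S^*,L^*),\delta(L^*,L^*)\}\ge\left(1-\tfrac{3\gamma}{2}\right)\tfrac{k}{2}\ge|U_1|\;,$$
using~\eqref{eq:Am} and the definition of $\Sstrong[i]$, and $\delta(L^*)\ge k$ holds since $L^*\subset L$. No dichotomy on $|V_i\cap L|$ and no appeal to Fact~\ref{fact:easyemb2} is needed. Your fallback idea of ``enlarging $\Sstrong[i]$ with well-chosen vertices of $V_i\cap L\setminus L^i$'' is in the right spirit, but the cleaner move is to enlarge it with vertices of $L^i$ itself (namely $L^i\setminus L^*$), since those come with the degree guarantees already in hand.
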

\begin{proof}
Fix $i\in[\ell]$.\Referee{(121)} Choose $L^*\subseteq L^i$ with $|L^*|=(1-\tfrac{\gamma}2)\frac k2$, and set
$S^*=(L^i\cup \Sstrong[i])\setminus L^*$. By~\eqref{eq:LSdiamond}, $|S^*|\geq
(1-\tfrac{\gamma}2 )\frac k2$. Using~\eqref{eq:Am} and the definition of $\Sstrong[i]$,\Referee{(122)} we have\Referee{(123)}
$$\min\{\delta(L^*,S^*),\delta(S^*,L^*),\delta(L^*,L^*)\}\geq
(1-\tfrac{3\gamma}2)\frac{k}2\; .$$ Take a semi-independent partition
$(U_1,U_2)$ of $T$ witnessing that $\disc(T)\ge 2\gamma k$. We apply
Fact~\ref{fact_embeddingsemiindependent} to embed $T$ in $G$ using the sets $L^*$ and $S^*$.
\end{proof}

\begin{lemma}\label{lemma_deficientL^idecompose}
\begin{enumerate}[label={$(\roman{*})$}]
\item\label{part_Ldisjoint} The sets $\{L^i\}_{i\in[\ell]}$ are mutually disjoint, or $\mathcal{T}_{k+1} \subset G$.
\item\label{part_two} Suppose that $\tilde{V}=\emptyset$. If there exists a vertex $u\in L\setminus (\bigcup_i L^i)$, then
$\mathcal{T}_{k+1}\subset G$.
\end{enumerate}
\end{lemma}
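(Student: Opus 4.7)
The plan is to prove both parts by exhibiting, for an arbitrary $T \in \mathcal T_{k+1}$, an embedding into~$G$ whenever the forbidden structure is present. In both cases the vertex $u$ will play the role of the ``$(k{+}1)$-st vertex'' that a single clump's bipartite-like pair $(L^{i}, \Sstrong[i])$ of total size only $(1-\gamma/2)k$ cannot accommodate when $T$ has small discrepancy.

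For~\ref{part_Ldisjoint}, assume $u \in L^{i} \cap L^{j}$ with $i \neq j$. Since the clumps are pairwise disjoint, WLOG $u \notin V_i$. By Lemma~\ref{lemma_consideronlysmalldiscrepancy} I may assume $\disc(T) < 2\gamma k$. I would extract an end-subtree $T_0 = T(\downarrow x)$ of $T$ of size $\tilde m \in [\gamma k, 2\gamma k]$ (which can be obtained by walking up a root-to-leaf path and descending into a child of the first vertex whose end-subtree jumps past $2\gamma k$). Write $p = \parent(x)$. I then build an embedding $\varphi$ in three stages: first, set $\varphi(x) := u$; second, embed $T_0 - \{x\}$ inside $V_j$, sending each child of $x$ into $\neighbor_{V_j}(u)$, which is possible because $|\neighbor_{V_j}(u)| > (1-\gamma/4)k$ vastly exceeds $v(T_0)$ and $(L^{j}, \Sstrong[j])$ accommodates small forests greedily; third, embed $T - V(T_0)$ into $(L^{i}, \Sstrong[i]) \subset V_i$ via Fact~\ref{fact_embeddingsemiindependent}, placing $p$ on some vertex of $L^{i} \cap \neighbor(u)$, a set of size $\Theta(k)$. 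The decisive size check is that the reduced tree has at most $k+1 - \gamma k$ vertices, so any semi-independent partition $(U_1, U_2)$ of it has $|U_1| \leq (k+1-\gamma k)/2$, which is bounded by the minimum degree $\delta \geq k/2 - \gamma k/2 - O(\sqrt{\beta}k)$ that~\eqref{eq:Am} supplies for the bipartite-like pair.

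For~\ref{part_two}, $\tilde V = \emptyset$ forces $u \in V_{i_0}$ for some $i_0$, and $u \notin L^{i_0}$ gives $\deg(u, V_{i_0}) \leq (1-\gamma/4)k$, so $u$ has at least $\gamma k/4$ neighbors outside $V_{i_0}$. I split on $\deg(u, V_{i_0})$. If $\deg(u, V_{i_0}) \geq \gamma k$, then $u$ has at least one neighbor in $L^{i_0} \cup \Sstrong[i_0]$ since $|V_{i_0} \setminus (L^{i_0} \cup \Sstrong[i_0])| \leq (\beta + \gamma/2)k$; picking a leaf $\ell$ of $T$ with parent $p$, I embed $T - \ell$ into $V_{i_0}$ via Fact~\ref{fact_embeddingsemiindependent} with $u$ appended to the side of the bipartite pair that contains a neighbor of $u$ and with $\varphi(p) := u$, and then map $\ell$ to any neighbor of $u$ outside $V_{i_0}$. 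If instead $\deg(u, V_{i_0}) < \gamma k$, then pigeonhole over the $O(1/q)$ other clumps yields $i_1$ with $\deg(u, V_{i_1}) = \Omega_q(k) \gg \gamma k$, and I then repeat the argument of~\ref{part_Ldisjoint} with $V_{i_1}$ in place of $V_j$ and $V_{i_0} \setminus \{u\}$ in place of $V_i$, using that $u \notin L^{i_0} \cup \Sstrong[i_0]$ preserves the bipartite-like structure on the main clump after removing~$u$.

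The main obstacle will be the color-class constraint: the vertex $p$ must be placed on a neighbor of $u$ in a specific side of the bipartite pair, while $u$'s neighbors in the relevant clump may lie entirely on one side. I would resolve this by choosing the parity of $x$, hence of $p$, in the end-subtree extraction: because consecutive end-subtrees along a root-to-leaf path have roots of alternating parity, the size-selection procedure can be refined to prescribe the parity of the root as well.
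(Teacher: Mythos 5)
Your proposal has genuine gaps in both parts, and in both cases the core difficulty is the same: the clump $V_i$ simply does not have room for a tree with $k+1 - O(\gamma k)$ vertices when $\disc(T)<2\gamma k$, so the amount of tree you export to other clumps has to be $\Theta(k)$ (or at least $\Omega(\gamma k)$ with a generous constant), not $O(\gamma k)$ or a single vertex.

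For part~\ref{part_Ldisjoint}, you cut off $T_0$ of size only $\gamma k \le \tilde m \le 2\gamma k$ and want to put the remaining $T-V(T_0)$, which has $k+1-\tilde m$ vertices, into the pair $(L^i,\Sstrong[i])$ via Fact~\ref{fact_embeddingsemiindependent}. Any semi-independent partition $(U_1,U_2)$ of $T-V(T_0)$ satisfies $|U_1|\le (k+1-\tilde m)/2$, which in the worst case $\tilde m=\gamma k$ is $k/2 + 1/2 - \gamma k/2$. The degree you actually get from~\eqref{eq:Am} and the definition of $\Sstrong[i]$ is bounded by $\min\{|L^i|,|\Sstrong[i]|\}-\gamma k/2$, and since $|\Sstrong[i]|$ can be as small as roughly $(1-\gamma/2)k/2$, the honest bound is $k/2-3\gamma k/4$, not the $k/2-\gamma k/2$ you state. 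Either way, $k/2+1/2-\gamma k/2 > k/2-3\gamma k/4$, so the hypothesis $\min\{\ldots\}\ge |U_1|$ of Fact~\ref{fact_embeddingsemiindependent} fails; even with your optimistic bound the inequality misses by $1/2$. The paper avoids this entirely by extracting a full-subtree $\tilde T$ of size in $[k/6,k/3]$ (Fact~\ref{fact_fullsubtrees}), mapping its root to $u$, and embedding $\tilde T$ in clump $i$ and $T-\tilde T$ in clump $j$ \emph{both} via the plain bipartite greedy embedding into $G[A_i,B_i]$ and $G[A_j,B_j]$: because the larger piece has $\le 5k/6$ vertices, Fact~\ref{fact_cutnodiscrepancy} bounds its larger colour class by $5k/12+2\gamma k$, comfortably below the degree $(1/2-5\gamma/4)k$. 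The key is that both pieces are bounded away from $k$; your $T - V(T_0)$ is not.

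For part~\ref{part_two}, the proposal is fundamentally insufficient. In the deficient case (which is exactly the nontrivial one here, as $\tilde V=\emptyset$), every clump has $|L^{i_0}|\le k/2$ and $|L^{i_0}\cup \Sstrong[i_0]|$ can be as small as $(1-\gamma/2)k$. So a tree with $k+1$ vertices and discrepancy $<2\gamma k$ has a colour class of size $\ge (k+1)/2-\gamma k$ on each side that must be accommodated, and there is a volume deficit of order $\gamma k$ in each clump. Removing a single leaf $\ell$ from $T$ and routing it through $u$ to another clump exports exactly one vertex, leaving the deficit untouched. Moreover, $T-\ell$ has $k$ vertices and discrepancy $<2\gamma k+1$, so its best semi-independent partition has $|U_1| > k/2 - \gamma k - 1/2$, while the clump degree is again only about $k/2 - 3\gamma k/4$; for balanced $T$ (discrepancy close to $0$) this check decisively fails. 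Your fallback (``if $\deg(u,V_{i_0})<\gamma k$, find $i_1$ and repeat~\ref{part_Ldisjoint}'') also doesn't go through, because the argument of~\ref{part_Ldisjoint} crucially used $u\in L^i\cap L^j$, i.e.\ $u$ has degree $>(1-\gamma/4)k$ into \emph{each} of $V_i$ and $V_j$; here $u$ only has degree $\Omega_q(k)\ll (1-\gamma/4)k$ into $V_{i_1}$, so $u$ cannot serve as a high-degree hub inside $V_{i_1}$ for a greedy embedding. What is needed is precisely the paper's mechanism: the observation~\eqref{eq:degTWo} that $u$ has degree $\ge \gamma k/(3(m-1))$ into at least two distinct clumps, a full-subtree $\tilde T$ of size $[0.3k,0.6k]$ rooted at $u$, and the bin-packing assignment of Claim~\ref{AC:assExists} that distributes the $2^+$-components of $T-r$ across the clumps so that in each clump the load stays below $(1-\gamma/3)k$ while respecting the degree budget $\deg(u,X_j\cup Y_j)$. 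This exports $\Theta(k)$ vertices from the main clump, which is the scale the deficit requires.
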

\begin{proof}
For each $i\in[\ell]$, fix a set $A_i\subset L^i$ of size
$(\tfrac12-\tfrac{\gamma}4)k$, and set $B_i=(L^i\cup \Sstrong[i])\setminus
A_i$. By~\eqref{eq:Am},~\eqref{eq:LSdiamond} and the definition of the set $\Sstrong[i]$ we have
\begin{equation}\label{eq:newly}
\delta(G[A_i,B_i])\ge \left(\frac12-\frac{5\gamma}4\right)k\;. 
\end{equation}

\noindent\emph{Proof of Part~\ref{part_Ldisjoint}.}~~
Suppose that there exist distinct indices
$i,j\in[\ell]$ and a vertex $u\in L^i\cap L^j$. Let $T\in\mathcal{T}_{k+1}$
be arbitrary. By Lemma~\ref{l:e}(iii), we have\Referee{(124)}
\begin{equation}\label{eq:interSm}
|L^i\cap L^j|<\frac{k}{100}\;.
\end{equation}

By Lemma~\ref{lemma_consideronlysmalldiscrepancy} we can assume in
the following that $\disc(T)< 2\gamma k$.  By Fact~\ref{fact_fullsubtrees} there exists a
full-subtree $\tilde{T}\subset T$ rooted at a vertex $r$ such that
$v(\tilde{T})\in [\tfrac{k}6,\tfrac{k}3]$. We map $r$ to $u$, and embed the tree $\tilde{T}$ in $G[A_i,B_i]$ greedily. This is possible since
$$\max\{|T_\oplus\cap V(\tilde{T})|,|T_\ominus\cap
V(\tilde{T})|\}<\tfrac{v(\tilde{T})}2+2\gamma k\le \tfrac{k}6+2\gamma k$$ by
Fact~\ref{fact_cutnodiscrepancy}, and the graph $G[A_i,B_i]$ satisfies~\eqref{eq:newly}.\Referee{(125)} It remains to embed the tree~$T-\tilde T$.\Referee{(126)} By Fact~\ref{fact_cutnodiscrepancy}, we have $\min\{|T_\oplus\cap V(T-\tilde{T})|,|T_\ominus\cap
V(T-\tilde{T})\}|>\tfrac{v(T-\tilde{T})}2-2\gamma k$, and thus
$\max\{|T_\oplus\cap V(T-\tilde{T})|,|T_\ominus\cap
V(T-\tilde{T})|\}<\tfrac{5k}{12}+2\gamma k$. We embed $T-\tilde{T}$ in $G[A_j,B_j]$ greedily (avoiding the previously used vertices of $L^i\cap L^j$;
we use~\eqref{eq:interSm} to bound the number of occupied vertices).

\noindent\emph{Proof of Part~\ref{part_two}.}~~
 Suppose that there exists a vertex $u\in L\setminus \bigcup_i L^i$. By Part~\ref{part_Ldisjoint} of the lemma, we may assume that the sets $L^i$ are pairwise disjoint. 
Let\Referee{(127)}
\begin{align*}
X_i&=\{u\in A_i\: :\: \deg(u,V_i)> (1-\tfrac{\gamma}{13m})k\}\;\mbox{,
and}\\ Y_i&=\{u\in B_i\: :\: \deg(u,L^i)>|L^i|-\tfrac{\gamma
k}{13m}\}\;\mbox{.}
\end{align*}
(In applications, we use that $\deg(u,X_i)>|X_i|-\tfrac{\gamma k}{13 m}$ for every $u\in Y_i$.) Applying Lemma~\ref{l:e}~(i)--(ii) to $L^i,\Sstrong[i],X_i$ and $Y_i$, we get that\Referee{(128)}
\begin{equation}\label{eq_clusterfilledwholeXY}
|V_i\setminus (X_i\cup Y_i)|<\tfrac{\gamma k}{6m^2} \; \mbox{.}
\end{equation}
As $X_i\subset L^i$ and $Y_i\subset \Sstrong[i]$, all the sets $X_i$ and $Y_i$
are pairwise disjoint.
Without loss of generality, we assume that $\deg(u,X_1\cup Y_1)\ge \ldots \ge \deg(u,X_m\cup Y_m)$. As $u\in L\setminus L^1$ we have
\begin{align*}
 k &\le \deg(u,L)\le \sum_{i=1}^m \deg(u,X_i\cup Y_i)+\tfrac{\gamma k}{6 m}\le (1-\tfrac\gamma2)k+\sum_{i=2}^m \deg(u,X_i\cup Y_i)+\tfrac{\gamma k}{6 m}\\
 &\le
(1-\tfrac\gamma3)k+(m-1) \deg(u,X_2\cup Y_2)\;.
\end{align*}
This yields that
\begin{equation}\label{eq:degTWo}
\deg(u,X_1\cup Y_1)\ge\deg(u,X_2\cup Y_2)\ge \frac{\gamma k}{3(m-1)}\ge 2\;.
\end{equation}

 Let $T\in \mathcal{T}_{k+1}$ be arbitrary. Analogously as
in the proof of Lemma~\ref{lemma_consideronlysmalldiscrepancy} we have
$T\subset G$ if $\disc(T)\ge\tfrac{\gamma k}{6m}$. Therefore we assume
that $\disc(T)<\tfrac{\gamma k}{6m}$. By Fact~\ref{fact_fullsubtrees}
there exists a full-subtree $\tilde{T}\subset T$ rooted at a vertex $r$ such
that $v(\tilde{T})\in [0.3k,0.6k]$. 
Let $D$ be the set of leaves of $T$ in $\neighbor_T(r)$.\Referee{(129)} We
first embed the tree $T-D$, mapping $r$ to $u$, as described below. The embedding is then
extended to an embedding of $T$ using the fact that $u\in L$.\Referee{(130)}

A {\em $2^+$-component} is a component of the forest $T-r$ of order at least
two. Let $\mathcal{C}$ be the family of all $2^+$-components. For each subfamily $\mathcal{C'}\subset \mathcal C$, we have by Fact~\ref{fact_cutnodiscrepancy}
and by the assumption $\disc(T)\le \tfrac{\gamma k}{6m}$ that
\begin{equation}\label{eq_2cutRbalanced}
\max\{|V(\mathcal{C}')\cap T_\ominus|,|V(\mathcal{C}')\cap
T_\oplus|\}<\tfrac{|V(\mathcal{C}')|}2+\tfrac{\gamma k}{12m}+1 \;
\mbox{.}
\end{equation}

By~\eqref{eq_clusterfilledwholeXY} at most $\tfrac{\gamma k}{6m}$\Referee{(131)}
vertices of the graph $G$ are not contained in $\bigcup_i (X_i\cup Y_i)$. Thus,
$\deg(u,\bigcup_i (X_i\cup Y_i))\ge (1-\tfrac{\gamma}{6m})k$. We 
assign each $2^+$-component $C\in \mathcal{C}$ an index $i_C\in [m]$ such that $C$ will be mapped to the clump $V_{i_C}$.\Referee{(132)} For each $j\in [m]$ we shall require:\Referee{(133)}
\begin{align}
\label{eq_bounddegreecluster}\deg(u,X_j\cup Y_j) &\ge \left|\{C\in
\mathcal{C}\::\: i_C=j\}\right|\;\mbox{, and}\\ \label{eq_boundToEmbedIntoVi}   \sum_{\substack{C\in\mathcal{C}\\i_C=j}} v(C)
&\le (1-\tfrac{\gamma}3)k \;\mbox{.}
\end{align}
\begin{AuxiliaryCl}\label{AC:assExists}
There exists a family $\{i_C\}_{C\in\mathcal C}$ such
that~\eqref{eq_bounddegreecluster} and~\eqref{eq_boundToEmbedIntoVi} are
satisfied.\RefereeX{(134)-(135)}{The proof rewritten completely.}
\end{AuxiliaryCl}
\begin{proof}
We order the $2^+$-components as $C_1,\ldots,C_{|\mathcal{C}|}$ so that $v(C_1)\ge v(C_2)\ge\ldots\ge v(C_{|\mathcal{C}|})$.  For $j=1,\ldots,|\mathcal C|$, take the smallest index $i\in [m]$ with the property that after assigning $i_{C_j}=i$, the properties~\eqref{eq_bounddegreecluster} and \eqref{eq_boundToEmbedIntoVi} are satisfied for the partial assignment $\{i_{C_{j'}}\}_{j'\le j}$. If for a given $j$ there exists no such value $i$ we just mark $C_j$ as unassigned and proceed with $j+1$. 

We thus need to check that actually each $2^+$-component $C_j$ was assigned. Suppose for a contradiction that $C_g$ was not. We have $v(C_1)\le 0.7k$, and for $\ell\ge 2$ we have $v(C_\ell)\le \frac k \ell$. These bounds and~\eqref{eq:degTWo} guarantee us that $C_1,\ldots,C_4$ can always be assigned; one assignment satisfying~\eqref{eq_bounddegreecluster} and \eqref{eq_boundToEmbedIntoVi} is $i_{C_1}=i_{C_4}=1,i_{C_2}=i_{C_3}=2$. Thus $g>4$, and consequently $v(C_g)\le 0.2k$.

To finish the argument, we distinguish two cases. First, assume that $\deg(u,X_1\cup Y_1)\ge 0.5k$. Since $v(C)\ge 2$ for each $C\in\mathcal C$, property~\eqref{eq_bounddegreecluster} for $j=1$ holds trivially. As $C_g$ could not be assigned with $i_{C_g}=1$,  by~\eqref{eq_boundToEmbedIntoVi} we get that $\sum_{i_C=1} v(C)> (1-\tfrac{\gamma}3)k -v(C_g)$. In particular, the number of $2^+$-components $C$ that are unassigned, or have $i_C\neq 1$ is less than $1+\tfrac{\gamma k}6$. Further, the total order of the $2^+$-components to be assigned to other clumps is at most $v(C_g)+\frac{\gamma k}3<0.4k$. Thus,~\eqref{eq_boundToEmbedIntoVi} holds trivially for $j>1$. The reason why the component~$C_g$ was not assigned is that it did not satisfy~\eqref{eq_bounddegreecluster} for any $j>1$. Hence, by~\eqref{eq_clusterfilledwholeXY} we have
\[
1+\tfrac{\gamma k}6>\sum_{j>1}\deg(u,X_j\cup Y_j)\ge k-\deg(u,X_1\cup Y_1)-\sum_{j=1}^m|V_j\setminus(X_j\cup Y_j)|\ge \frac k3\;,
\]
a contradiction with the choice of $\gamma$.

Now, consider the case that $\deg(u,X_1\cup Y_1)< 0.5k$. Then $\deg(u,X_2\cup Y_2)< 0.5k$. Observe that for $j=1,2$ we have
$\sum_{i_C=j} v(C) \ge 2\deg(u,X_j\cup Y_j)-\frac{\gamma k}{3}-v(C_g)$, as otherwise we could have assigned $i_{C_g}=j$ without violating~\eqref{eq_bounddegreecluster} and~\eqref{eq_boundToEmbedIntoVi}. For $j>2$ by similar arguments we have $\sum_{i_C=j} v(C) \ge \min\{0.7k,2\deg(u,X_j\cup Y_j)\}$. Summing these bounds, we get that
\begin{equation}\label{eq:mamvelmiradsvouzenu}
\sum_{C\in\mathcal C} v(C) \ge 2\deg(u,X_1\cup Y_1)+2\deg(u,X_2\cup Y_2)-2\frac{\gamma k}{3}-2v(C_g)+\sum_{j=3}^m  \min\{0.7k,2\deg(u,X_j\cup Y_j)\}\;.
\end{equation}
Suppose that for some $j>2$ we have $0.7k\le 2\deg(u,X_j\cup Y_j)$. Then $2\deg(u,X_1\cup Y_1)\ge 2\deg(u,X_2\cup Y_2)\ge 0.7k$, and thus
$$\sum_{C\in\mathcal C} v(C) \ge 0.7k+0.7k-2\frac{\gamma k}{3}-2v(C_g) +0.7k\ge 1.6k>k\;,
$$
where we used that $v(C_g)\le 0.2k$. This is a contradiction. Thus, we can assume that for all $j>2$, $0.7k> 2\deg(u,X_j\cup Y_j)$. Plugging into~\eqref{eq:mamvelmiradsvouzenu} we get
$$
\sum_{C\in\mathcal C} v(C) \ge \sum_{j=1}^m 2\deg(u,X_j\cup Y_j)-2\frac{\gamma k}{3}-2v(C_g)\ge 2\cdot 0.9k-2\frac{\gamma k}{3}-2\cdot 0.2k>k\;,
$$
which again gives a contradiction.
\end{proof}

We embed the tree $T-D$ as follows. Let us consider the indices $\{i_C\}_{C\in\mathcal C}$ from Claim~\ref{AC:assExists}. The vertex $r$ is mapped to $u$. For each
component $C\in \mathcal{C}$ we map its root $r_C\in V(C)\cap \neighbor_T(r)$ to one vertex from $(X_{i_C}\cup Y_{i_C})\cap \neighbor_G(u)$ (so that distinct
roots are mapped to distinct vertices). We denote the image of the root $r_C$ by
$\varphi(r_C)$. The mapping of the roots is extended to an embedding of
all $2^+$-components. This can be done greedily since each of the graphs
$G[X_i,Y_i]$ has minimum degree at least
$(\tfrac12-\tfrac{\gamma}{12m})k+1$, and we have by a double
application of~\eqref{eq_2cutRbalanced} that\Referee{(136)}
\begin{align*}
\sum_{\substack{C\in\mathcal{C}\\ \varphi(r_C)\in X_i}}|V(C)\cap
T_\oplus|+\sum_{\substack{C\in\mathcal{C}\\ \varphi(r_C)\in Y_i}}|V(C)\cap
T_\ominus|&<(1-\tfrac{\gamma}3)\tfrac{k}2+2(\tfrac{\gamma
k}{12m}+1)\le\delta(G[X_i,Y_i]) \;\mbox{, and}\\
\sum_{\substack{C\in\mathcal{C}\\ \varphi(r_C)\in X_i}}|V(C)\cap
T_\ominus|+\sum_{\substack{C\in\mathcal{C}\\ \varphi(r_C)\in Y_i}}|V(C)\cap
T_\oplus|&<(1-\tfrac{\gamma}3)\tfrac{k}2+2(\tfrac{\gamma
k}{12m}+1)\le\delta(G[X_i,Y_i]) \;\mbox{.}
\end{align*}
\end{proof}

Much of the work for proving Lemma~\ref{prop:EC-obecna} splits according to the following distinction. A $(\beta,\sigma)$-extremal partition is said to be {\em abundant} if
there exists an index $i\in[\ell]$ with $|L^i|\ge \tfrac{k+1}2$. It is called {\em deficient} otherwise.\Referee{(117)}

We now derive properties of $G$ in the deficient case. First, we
observe that $G$ is decomposed into clumps.
\begin{lemma}\label{lem:def}
Suppose that $G$ admits a $(\beta,\sigma)$-extremal deficient partition
$V=V_1\dcup\ldots\dcup V_\ell\dcup\tilde{V}$. Then $\tilde V=\emptyset$, and
$\ell=m$. Further,
\begin{align}\label{eq_rkn}
m(k+1)> n\;\mbox{.}
\end{align}
\end{lemma}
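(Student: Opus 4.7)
The plan is to prove the three conclusions in turn, and throughout I assume $T\not\subseteq G$ for every $T\in\mathcal T_{k+1}$ (otherwise the ambient conclusion of Lemma~\ref{prop:EC-obecna} already holds).

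\noindent\textbf{Step 1: $\tilde V=\emptyset$.} I argue by contradiction. Suppose $\tilde V\neq\emptyset$, so $|\tilde V|>\sigma k$ and $|\tilde V\cap L|\leq(\tfrac12-\sigma)|\tilde V|$ by extremality. By Lemma~\ref{l:e}(i), combined with $\sqrt\beta<\gamma/4$ (which holds since $\beta\ll\gamma$), at most $\sqrt\beta\,k$ vertices of $V_i\cap L$ fail to satisfy $\deg(v,V_i)>(1-\gamma/4)k$, so $|V_i\cap L|\leq|L^i|+\sqrt\beta\,k<\tfrac{k+1}{2}+\sqrt\beta\,k$ by deficiency. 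Summing over $i\in[\ell]$ and using the LKS property $|L|\geq n/2$,
\[
\frac n2 \;\leq\; \ell\!\left(\frac{k+1}{2}+\sqrt\beta\,k\right) + \left(\frac12-\sigma\right)|\tilde V|.
\]
Together with $n-|\tilde V|=\sum_i|V_i|\geq\ell(1-\beta)k$, this rearranges to
\[
2\sigma|\tilde V|\;\leq\;\ell\bigl(1+\beta k+2\sqrt\beta\,k\bigr).
\]
Since $\ell\leq m\leq\lceil 1/q\rceil$ and $\beta\ll q^2\sigma^4$, the right-hand side is strictly less than $2\sigma\cdot\sigma k$ once $n$ is large enough, contradicting $|\tilde V|>\sigma k$.

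\noindent\textbf{Step 2: $\ell=m$.} With $\tilde V=\emptyset$, $n=\sum_{i=1}^\ell|V_i|\in[\ell(1-\beta)k,\ell(1+\beta)k]$, so $|\ell-n/k|\leq 2\beta\cdot n/k\leq 2\beta m<\tfrac12$ (using $\beta\ll q$). Since $m=\ci(n/k)$ is the unique integer within $\tfrac12$ of $n/k$, this forces $\ell=m$.

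\noindent\textbf{Step 3: $m(k+1)>n$.} Here I appeal to Lemma~\ref{lemma_deficientL^idecompose}. By part~\ref{part_Ldisjoint} the sets $\{L^i\}_{i=1}^m$ are pairwise disjoint (since the alternative $T\subseteq G$ is excluded), and by part~\ref{part_two}, now that $\tilde V=\emptyset$, every large vertex lies in some $L^i$, i.e.\ $L\subseteq\bigcup_{i=1}^m L^i$. Combining with deficiency,
\[
|L|\;=\;\sum_{i=1}^m|L^i|\;<\;m\cdot\frac{k+1}{2}.
\]
Paired with $|L|\geq n/2$, this yields $n<m(k+1)$, as required.

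The main obstacle is Step~1: one must arrange the parameter hierarchy $\beta\ll\gamma\ll\sigma\ll q$ so that the $\sqrt\beta\,k$ slack coming from Lemma~\ref{l:e} is dominated by $\sigma^2 k/m$, while simultaneously keeping additive constants like $\ell/(2\sigma)$ below $\sigma k$ for $n\geq n_0$. Steps~2 and 3 are then essentially arithmetic together with a direct appeal to Lemma~\ref{lemma_deficientL^idecompose}.
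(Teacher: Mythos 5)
Your Steps~1 and~2 track the paper closely: the paper also deduces $\tilde V=\emptyset$ by comparing the count of $L$-vertices to that of $S$-vertices over the clumps, using the deficiency bound and $(\beta,\sigma)$-extremality, and then gets $\ell=m$ from $n\approx\ell k$. Your version is more explicit about the role of Lemma~\ref{l:e}(i) in transferring the deficiency hypothesis (which is stated for $L^i$) to a bound on $|L\cap V_i|$; the paper simply writes $|L\cap V_i|\le k/2$, which is a mild conflation of $L^i$ with $L\cap V_i$ and carries an implicit $O(\sqrt\beta\,k)$ slack. That slack is harmless in Step~1, where everything is compared against $\sigma k$.

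Step~3 is where you genuinely diverge. The paper derives $m(k+1)>n$ from the same direct count $n\le 2|L|=2\sum_i|L\cap V_i|<m(k+1)$, again invoking $|L\cap V_i|\le k/2$ — but at this precision the $\sqrt\beta\,k$ gap between $|L\cap V_i|$ and $|L^i|$ is no longer negligible, since $\sqrt\beta\,k\gg 1$ for large $k$. You instead route through Lemma~\ref{lemma_deficientL^idecompose}: assuming $T\not\subseteq G$, the $L^i$ are pairwise disjoint and (since $\tilde V=\emptyset$) exhaust $L$, so $|L|=\sum_i|L^i|<m\tfrac{k+1}{2}$ \emph{exactly}, with no error term, and $n\le 2|L|<m(k+1)$ follows cleanly. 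This buys you the sharp bound without any parameter juggling, but it comes at a price: you have conditioned on $T\not\subseteq G$, whereas Lemma~\ref{lem:def} is stated unconditionally. Since $\eqref{eq_rkn}$ is only ever used inside arguments (Lemma~\ref{prop_deficientCones} and downstream) that already carry the $\mathcal T_{k+1}\subseteq G$ disjunct, the conditioning does no practical harm — but if you want to prove the lemma exactly as stated you would need to either flag that $\eqref{eq_rkn}$ holds "or $\mathcal T_{k+1}\subseteq G$," or go back and show that the cruder $|L\cap V_i|\le|L^i|+\sqrt\beta\,k$ bound still gives the conclusion needed where $\eqref{eq_rkn}$ is invoked (which it does, since only the weak inequality $m(k+1)\ge n$ is used there and a small slack can be absorbed by the other margins). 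Either fix is routine; the structure of your argument is sound and arguably more robust than the paper's.
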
 
\begin{proof}
Since the partition is deficient we have $|L\cap V_i|\le \frac k2$ for all $i\in[\ell]$. Thus by the definition of $(\beta,\sigma)$-extremality,
\Referee{(137)}
we have $|L|\le\ell\tfrac{k}2+(\tfrac12-\sigma)|\tilde V|$, and $|S|>\ell
(1-\beta)\tfrac{k}2+(\tfrac12+\sigma)|\tilde V|$. Since $|L|\ge |S|$, we infer that\Referee{(138)}
$|\tilde V|<\frac{\gamma \ell k}{4\sigma}$. This in turn implies that $\tilde V=\emptyset$.
Thus, $\ell=m$. To get the bound~\eqref{eq_rkn}, we observe that
$$n=|L|+|S|\le 2|L|=2\sum_{i=1}^m |L\cap V_i|<2m
\frac{k+1}2\;.$$
\end{proof}

Lemmas~\ref{lemma_StartDiana} and~\ref{prop_deficientCones} deal with the
deficient case. It may happen that none of the
clumps is suitable for the embedding of the tree $T\in\mathcal{T}_{k+1}$. For this
reason, we must find connecting structures that allow us to distribute parts of
$T$ to different clumps. Each lemma is used for a different type of trees.\RefereeX{(139)-(141)}{What used to be Lemma 7.7 is not needed anymore. That is, during the previous revision, parts relying on that lemma were reworked. We realized the fact that the lemma was actually abundant only now. (That brought us to crosscheck if we really need all the lemmas in the paper)}

For $j\in [m]$, set $\Supper[j]=\{v\in S\::\: \deg(v,L^j)\geq \tfrac{k}{5m}\}$.

\begin{figure}[t]
  \centering
\subfigure[Connecting structure guaranteed by Lemma~\ref{lemma_StartDiana}.]{
\includegraphics[scale=1.6]{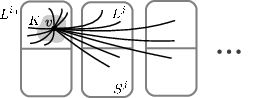}
  \label{fig_StartDiana}
}
  \hspace{.2in}
\subfigure[Connecting structure guaranteed by Lemma~\ref{prop_deficientCones}.]{
\includegraphics[scale=1.6]{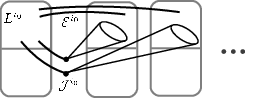}
  \label{fig_deficientCones}
}\caption{Structures in Lemma~\ref{lemma_StartDiana} and Lemma~\ref{prop_deficientCones}}
\end{figure}
\begin{lemma}\label{lemma_StartDiana}
Suppose that $G$ admits a $(\beta,\sigma)$-extremal deficient partition
$V=V_1\dcup\ldots\dcup V_m$, such that
$\{L^i\}_{i=1}^m$ is a partition of $L$.  Then there exist an index $i_0\in
[m]$ such that we have $|K|\ge k/10$ for the set 
\begin{equation}\label{eq:defK}K=\left\{v\in L^{i_0}\::\:\deg(v,L^{i_0})+\deg(v,\bigcup_{j\neq i_0}(L^j\cup \Supper[j]))\geq \frac{k+1}2\right\}\;.
\end{equation}
\end{lemma}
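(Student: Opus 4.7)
I would argue by contradiction, supposing $|K_{i_0}|<k/10$ for every $i_0\in[m]$, where $K_{i_0}$ denotes the set $K$ of the lemma defined with respect to the index $i_0$. The first step is to reduce to the case where every $v\in L$ has $\deg(v)=k$ exactly. Since the main proof lets us assume $G$ is LKS-minimal (so that every $L$--$S$ edge has its $L$-endpoint of degree exactly $k$, and every $L$--$L$ edge has at least one such endpoint), a vertex $v\in L$ with $\deg(v)>k$ would have all its neighbors in $L$, yielding $\deg(v,L)\ge k+1$. In the deficient case $|L^{i_v}|\le k/2$, so we would need $\deg(v,L\setminus L^{i_v})\ge (k+3)/2$. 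But extremality of $V_{i_v}$ together with Lemma~\ref{l:e}(i) force $\deg(v,L\setminus L^{i_v})\le \deg(v,V\setminus V_{i_v})+\sqrt{\beta}k\le \gamma k/4+\sqrt{\beta}k$, a contradiction.

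Next I would recast the $K$-condition. Set $\mathrm{TU}=S\setminus\bigcup_j \Supper[j]$ and, for each $i_0$, $P_{i_0}=\Supper[i_0]\setminus\bigcup_{j\neq i_0}\Supper[j]$. The sets $P_{i_0}$ are pairwise disjoint subsets of~$S$, and Lemma~\ref{l:e}(iii) (applied to each $j\neq i_0$) shows that each $P_{i_0}$ is contained in $V_{i_0}\cap S$ up to at most $(m-1)\sqrt{\beta}k$ exceptions. Using $\deg(v)=k$ for $v\in L$, a direct calculation gives
$$
\deg(v,L^{i_v})+\deg\bigl(v,\textstyle\bigcup_{j\neq i_v}(L^j\cup \Supper[j])\bigr)=k-\deg(v,\mathrm{TU})-\deg(v,P_{i_v}),
$$
so that $v\notin K_{i_v}$ iff $\deg(v,\mathrm{TU})+\deg(v,P_{i_v})\ge (k+1)/2$. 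Summing over $v\in L\setminus K$ (with $|K|:=\sum_{i_0}|K_{i_0}|<mk/10$) yields the lower bound
$$
e(L,\mathrm{TU})+\sum_{i_0} e(L^{i_0},P_{i_0})\ge (|L|-|K|)(k+1)/2.
$$

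To upper-bound the left side, note every $w\in \mathrm{TU}$ has $\deg(w,L)=\sum_j\deg(w,L^j)<m\cdot k/(5m)=k/5$, giving $e(L,\mathrm{TU})\le|S|k/5$; and since $P_{i_0}\subseteq V_{i_0}\cap S$ up to the error above and $|V_{i_0}\cap S|\le(1/2+2\beta)k$ by extremality, $\sum_{i_0} e(L^{i_0},P_{i_0})\le\sum_{i_0}|L^{i_0}|\,|V_{i_0}\cap S|+O(\sqrt{\beta}mk^2)$. Combining with $|L|\ge n/2$, $|L^{i_0}|\le k/2$, and $mk\le n+k/2$ (from Lemma~\ref{lem:def}) then renders the two bounds incompatible. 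The main obstacle will be closing the narrow quantitative gap between the two sides of the count; this will require using LKS-minimality more precisely (which enforces $e(L,S)=k|L|-2e(L)$ exactly) together with the strict deficiency $|L^{i_0}|<(k+1)/2$, and the specific constant $k/10$ in the conclusion should drop out of the final balance.
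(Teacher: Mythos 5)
Your proposal runs into three problems, the most serious of which you already acknowledge at the end.

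\textbf{The double-count does not close, and it is not a ``narrow'' gap.} Testing your two estimates against the model extremal configuration where each $V_{i}$ is a split graph with $|V_i\cap L|\approx|V_i\cap S|\approx k/2$ and almost all of $V_i\cap S$ lies in $S^{i}$: your lower bound gives roughly $(|L|-|K|)\tfrac{k}{2}\gtrsim \bigl(\tfrac{mk}{2}-\tfrac{mk}{10}\bigr)\tfrac{k}{2}=\tfrac{mk^2}{5}$, while the upper bound $\sum_{i_0}|L^{i_0}|\,|V_{i_0}\cap S|$ is roughly $m\cdot\tfrac{k}{2}\cdot\tfrac{k}{2}=\tfrac{mk^2}{4}$ and the term $e(L,\mathrm{TU})$ contributes essentially nothing when $\mathrm{TU}$ is small. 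So the upper bound sits strictly above the lower bound with a margin of order $mk^2$, and it is not rescued by the $O(\gamma k)$- or $O(\sqrt{\beta}mk^2)$-size corrections you mention. The suggestion that ``LKS-minimality more precisely'' and the strict deficiency will close it is not substantiated; as stated this is not a proof.

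\textbf{LKS-minimality is not available here.} The assumption that $G$ is LKS-minimal is made in the paper only inside the proof of Lemma~\ref{prop:iteration-Reg} (Section~\ref{sec_TheProof}), not in Section~\ref{sec_Extremalcase}, where Lemma~\ref{lemma_StartDiana} lives. Passing to an LKS-minimal spanning subgraph $G'\subseteq G$ does not trivially preserve a $(\beta,\sigma)$-extremal partition of $G$, because the set $L$ may shrink and the condition $|V_i\cap L|\ge(\tfrac12-\beta)k$ is stated for $G$. So you cannot freely invoke the consequences you list (every $L$--$S$ edge has its $L$-endpoint of degree exactly $k$, etc.) without an additional argument.

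\textbf{The reduction to $\deg(v)=k$ is both circular and unnecessary.} In the step where you bound $\deg(v,L\setminus L^{i_v})\le\deg(v,V\setminus V_{i_v})+\sqrt{\beta}k\le\gamma k/4+\sqrt{\beta}k$, the inner inequality $\deg(v,V\setminus V_{i_v})\le\gamma k/4$ requires $\deg(v)\le k$ (since $v\in L^{i_v}$ only gives $\deg(v,V_{i_v})>(1-\gamma/4)k$), which is precisely what you are trying to prove. There is a fix (compare the two expressions for $\deg(v,L\setminus L^{i_v})$ so the $\deg(v)$ terms cancel), but the observation is moot: with $\deg(v)\ge k$ the identity you derive becomes the inequality $\deg(v,\mathrm{TU})+\deg(v,P_{i_v})\ge\deg(v)-\deg(v,L^{i_v})-\deg\bigl(v,\bigcup_{j\ne i_v}(L^j\cup S^j)\bigr)\ge k-\ldots$, which is all you use, so the whole first paragraph can be dropped.

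For comparison, the paper's proof does not argue by contradiction via a global edge count. It first chooses a \emph{disjoint} refinement $\Supptilde[1],\ldots,\Supptilde[m]$ of $\bigcup_j S^j$ with $\Supptilde[j]\subseteq S^j$, introduces $S^-=S\setminus\bigcup_j S^j$, and picks $i_0$ to maximise $\tfrac{k}{2}-|\Supptilde[i_0]|$. For $v\in L^{i_0}\setminus K$ one then has $\deg(v,S^-)>\tfrac{k}{2}-|\Supptilde[i_0]|$, so $e(L^{i_0}\setminus K,S^-)>|L^{i_0}\setminus K|(\tfrac{k}{2}-|\Supptilde[i_0]|)$, while by definition of $S^-$ this same quantity is $<|S^-|\tfrac{k}{5m}$. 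Comparing with the averaging bound $\exists i:\ |L^i|\ge|\Supptilde[i]|+|S^-|/m$ and using the extremal choice of $i_0$ kills the factor $\tfrac{k}{2}-|\Supptilde[i_0]|$ on both sides and yields $|L^{i_0}\setminus K|<k/5$ outright, whence $|K|\ge|L^{i_0}|-k/5\ge k/10$ by~\eqref{eq:Am}. The disjointification $\Supptilde[j]$ and the extremal index selection are precisely the ideas your double-counting lacks.
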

\begin{proof}We partition $\bigcup_j\Supper[j]$ into sets $\Supptilde[j]$, $j\in [m]$ such
that $\Supptilde[j]\subseteq \Supper[j]$. As $|L|\geq |S|$, there exists an index $i\in
[m]$ such that $|\Supptilde[i]|\leq |L^i|\leq \tfrac{k}2$. Without loss of
generality, assume that $\tfrac{k}2-|\Supptilde[1]|$ is the maximum value among all
the values $\tfrac{k}2-|\Supptilde[i]|$ ($i\in[m]$); then $i_0=1$ is the index asserted by the lemma. We have that $\tfrac{k}2-|\Supptilde[1]|$ is non-negative. For each vertex $v\in L^1\setminus K$, we have\RefereeX{(142)}{This simply follows from the fact that $\Supptilde[j]\subset \Supper[j]$. In other words, $S\setminus \Supptilde[j]\supset S\setminus \Supptilde[j]$ (and the same for a union over several $j$'s. We do not think this needs to be explained.}
$$\deg(v,S\setminus \bigcup_{j\neq 1}\Supptilde[j])\geq \deg(v,S\setminus
\bigcup_{j\neq 1}\Supper[j])\ge\tfrac{k}2.$$ Thus $\deg(v,S^-)>\tfrac{k}2-|\Supptilde[1]|$, where $S^-=\{u\in S\::\: \deg(u,L^i)<\tfrac{k}{5m},\forall i=1,\dots,m\}$. We have\Referee{(143)}
\begin{equation}\label{eq:velikostS-}
|S^-|\frac {k}{5m}>e(L^1\setminus K,S^-)\ge|L^1\setminus K|\left(\frac k2-|\Supptilde[1]|\right)\;.
\end{equation}
On the other hand, as $\sum_{j}|L^j|=|L|\geq |S|=\sum_{j}|\Supptilde[j]|+|S^-|$,
there exists an index $i\in [m]$ such that $|L^i|\geq |
\Supptilde[i]|+\tfrac{|S^-|}{m}$. From the maximality of $\tfrac{k}2-|\Supptilde[1]|$ and from~\eqref{eq:velikostS-} we deduce that $$\frac k2-|\Supptilde[1]|\geq \frac k2-|\Supptilde[i]|\geq |L^i|-|\Supptilde[i]|\geq \frac {|S^-|}m>\frac {5|L^1\setminus K|}{k}\left(\frac k2-|\Supptilde[1]|\right).$$This implies that $k>5|L^1\setminus K|$, and the asserted bound on $|K|$ follows from~\eqref{eq:Am}.
\end{proof}

\begin{lemma}\label{prop_deficientCones}
Suppose that $G$ admits a $(\beta,\sigma)$-extremal deficient partition $V=V_1\dcup\ldots\dcup V_m$. Furthermore, suppose that the sets $\{L^{i}\}_{i\in[m]}$ partition the set $L$. 

Then there exists an index $i_0\in[m]$ and
matchings $\mathcal{E}^{i_0}$, and $\mathcal{J}^{i_0}$ such that the following hold.
\begin{enumerate}[label={$(\roman{*})$}]
\item\label{IWHS1} $\mathcal{E}^{i_0}$ is an $L^{i_0}\leftrightarrow (L\setminus L^{i_0})$-matching, $\mathcal{J}^{i_0}$ is an $L^{i_0}\leftrightarrow \bigcup_{i\neq i_0}\Sweak[i]$-matching.
\item\label{IWHS3} $V(\mathcal{E}^{i_0})\cap V(\mathcal{J}^{i_0})=\emptyset$.
\item\label{IWHS4} $|L^{i_0}|+|\mathcal{E}^{i_0}|+|\mathcal{J}^{i_0}|\ge \frac{k+1}2$.
\item\label{IWHS5} $|\mathcal{E}^{i_0}|+|\mathcal{J}^{i_0}|< \gamma k$.\Referee{(144)}
\end{enumerate}
\end{lemma}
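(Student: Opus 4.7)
The plan is to leverage Lemma~\ref{lemma_StartDiana} and then construct the two matchings by a straightforward greedy procedure on the reservoir of vertices $K$ that it provides.

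First I would apply Lemma~\ref{lemma_StartDiana} to obtain an index $i_0\in[m]$ and a set $K\subseteq L^{i_0}$ of size $|K|\ge k/10$ such that every $v\in K$ satisfies
\[
\deg(v,L^{i_0})+\deg\Bigl(v,\bigcup_{j\ne i_0}(L^j\cup \Sweak[j])\Bigr)\ge \tfrac{k+1}{2}.
\]
Since $\{L^j\}_{j\in[m]}$ is assumed to partition $L$, the second term here equals $\deg(v,T)$, where $T:=(L\setminus L^{i_0})\cup \bigcup_{j\ne i_0}\Sweak[j]$ is the target set in which the non-$L^{i_0}$ endpoints of the edges of $\mathcal{E}^{i_0}\cup\mathcal{J}^{i_0}$ will land.

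Set $d=\lceil \tfrac{k+1}{2}-|L^{i_0}|\rceil$. The partition is deficient, so $|L^{i_0}|<\tfrac{k+1}{2}$ and hence $d\ge 1$; on the other hand, \eqref{eq:Am} yields $|L^{i_0}|\ge (1-\tfrac{\gamma}{2})\tfrac{k}{2}$, so $d\le \tfrac{\gamma k}{4}+1<\gamma k$ (this upper bound will eventually give~\ref{IWHS5}). Combining the displayed inequality with the trivial bound $\deg(v,L^{i_0})\le |L^{i_0}|$ shows that $\deg(v,T)\ge d$ for every $v\in K$.

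The greedy procedure then runs as follows. Initialize $\mathcal{E}^{i_0}=\mathcal{J}^{i_0}=\emptyset$. While the total matching has fewer than $d$ edges, pick any $v\in K$ not yet saturated (which is possible because $|K|\ge k/10$ vastly exceeds $d$), and then pick any neighbour $w\in T$ of $v$ not yet saturated (which is possible because $v$ has at least $d$ neighbours in $T$ while strictly fewer than $d$ of them are already used); route the edge $vw$ to $\mathcal{E}^{i_0}$ if $w\in L\setminus L^{i_0}$ and to $\mathcal{J}^{i_0}$ otherwise, the dichotomy being unambiguous since $L\cap S=\emptyset$. After $d$ iterations, \ref{IWHS1} and \ref{IWHS3} hold by construction, \ref{IWHS4} follows from $|\mathcal{E}^{i_0}|+|\mathcal{J}^{i_0}|=d\ge \tfrac{k+1}{2}-|L^{i_0}|$, and \ref{IWHS5} is given by the upper bound $d<\gamma k$ established above. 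I do not foresee any genuine obstacle in this proof: the substantive work already sits inside Lemma~\ref{lemma_StartDiana}, and Lemma~\ref{prop_deficientCones} is a short greedy extraction on top of it.
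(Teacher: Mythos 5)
Your proof is correct, and it takes a genuinely different route from the paper's. The paper proves Lemma~\ref{prop_deficientCones} from scratch: it builds, for every index $i$, an $L^i\leftrightarrow(L\setminus L^i)$ matching $\mathcal E^i$ and an $L^i\leftrightarrow(S\setminus S^i)$ matching $\mathcal D^i$ (the latter via a black/grey edge-colouring and K\"onig's theorem), then combines the bound $m(k+1)>n$ from Lemma~\ref{lem:def} with an averaging argument to extract an index $i_0$, and finally reassembles pieces of $\mathcal D^{i_0}$ together with an additional matching $\mathcal J_2$ to get $\mathcal J^{i_0}$. You sidestep all of this by invoking Lemma~\ref{lemma_StartDiana}, which the paper proves independently and uses only for a different family of trees. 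The point is that $\deg(v,L^{i_0})\le|L^{i_0}|$ converts the degree lower bound defining $K$ into a guarantee that each $v\in K$ has at least $d=\lceil\tfrac{k+1}{2}-|L^{i_0}|\rceil\ge 1$ neighbours in the target set $T=(L\setminus L^{i_0})\cup\bigcup_{j\neq i_0}S^j$, and since $|K|\ge k/10\gg d<\gamma k$ a trivial greedy run produces $d$ pairwise vertex-disjoint edges from $K$ into $T$, which you then split into $\mathcal E^{i_0}$ and $\mathcal J^{i_0}$ according to whether the far endpoint is large or small (unambiguous since $L\cap S=\emptyset$, and $T\cap L^{i_0}=\emptyset$ because the $L^j$ partition $L$). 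All the verifications you give — the deficiency giving $d\ge 1$, inequality~\eqref{eq:Am} giving $d\le\tfrac{\gamma k}{4}+\tfrac32<\gamma k$, the fresh-vertex counting in the greedy loop — check out. What the paper's self-contained route buys is independence of the two lemmas as presented; what your route buys is economy, and it also exposes the fact that Lemma~\ref{prop_deficientCones} is really a short corollary of Lemma~\ref{lemma_StartDiana} rather than a parallel result.
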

\begin{proof}
By Lemma~\ref{l:e} we have that $|\Sweak[i]|>(\tfrac12-\gamma)k$. We first find for each $i\in[m]$ two vertex-disjoint matchings $\mathcal E^i$ and $\mathcal D^i$, such that $\mathcal E^i$ is an $L^i\leftrightarrow (L\setminus L^i)$-matching, $\mathcal D^i$ is an $L^i\leftrightarrow (S\setminus \Sweak[i])$-matching, and such that
the matchings $\{\mathcal D^i\}_{i\in[m]}$ are pairwise vertex-disjoint.

For each $i\in[m]$, take $\mathcal E^i$ to be a maximum $L^i\leftrightarrow (L\setminus L^i)$
matching. If $|L^i|+|\Sweak[i]|+|\mathcal E^i|>k+1$, we truncate $\mathcal E^i$ so that
$|L^i|+|\Sweak[i]|+|\mathcal E^i|=\max\{k+1,|L^i|+|\Sweak[i]|\}$. Let us assume that
\begin{equation}\label{eq_sizesordered}
|L^1|+|\Sweak[1]|+|\mathcal E^1|\ge|L^2|+|\Sweak[2]|+|\mathcal E^2|\ge\ldots\ge|L^m|+|\Sweak[m]|+|\mathcal E^m| \; \mbox{.}
\end{equation}
Start with $i=1$, and increase the index $i$ gradually. Take $\mathcal D^i$ to be a
maximum $(L^i\setminus V(\mathcal E^i))\leftrightarrow (S\setminus (\Sweak[i]\cup\bigcup_{j<i}V(\mathcal D^j)))$
matching and truncate it so that $|L^i|+|\Sweak[i]|+|\mathcal E^i|+|\mathcal D^i|=\max\{k+1,|L^i|+|\Sweak[i]|+|\mathcal E^i|\}$. Such a matching $\mathcal D^i$ exists. Indeed, if $|L^i|+|\Sweak[i]|+|\mathcal E^i|\ge k+1$, then set $\mathcal D^i=\emptyset$. Otherwise, we find a matching $\mathcal D^i$ of size $d_i=k+1-|L^i|-|\Sweak[i]|-|\mathcal E^i|$ as follows.\Referee{(145)} Set $B_i=S\cap\bigcup_{j<i}V(\mathcal D^j)$. From the sizes of the matchings $\mathcal D^j$ ($j<i$) and the ordering given by~\eqref{eq_sizesordered} we get $|B_i|< m d_i$. Each vertex $u\in L^i$ has at least $d_i$ neighbors outside $L^i\cup \Sweak[i]\cup V(\mathcal E^i)$. Color arbitrary $d_i$ edges emanating from each vertex $u\in L^i$ outside $L^i\cup \Sweak[i]\cup V(\mathcal E^i)$ by black, and the remaining edges incident with $u$ by grey. We have
\begin{equation}\label{eq_manyblackedgesemanating}
e_\mathrm{black}\big(L^i\setminus V(\mathcal E^i),S\setminus (\Sweak[i]\cup
B_i)\big)>d_i(\tfrac12-3\gamma)k-m d_i\frac{k}{5m}>\frac{d_ik}5
\;\mbox{.}
\end{equation}
Since the maximum degree in the graph $G_\mathrm{black}[L^i\setminus
V(E^i),S\setminus (\Sweak[i]\cup B_i)]$ is upper-bounded by
$\max\{\tfrac{k}{5m},d_i\}=\tfrac{k}{5m}$, there
is no vertex cover of $G_\mathrm{black}[L^i\setminus V(E^i),S\setminus
(\Sweak[i]\cup B_i)]$ of size less than $(\tfrac{d_ik}5)/(\tfrac{k}{5m})\ge d_i$. Hence, by
K\"onig's Matching Theorem, there exists a matching $\mathcal D^i$ of size $d_i$ with the desired properties. We set $X_i=V(\mathcal D^i)\setminus L^i$.

Let us summarize the properties of the obtained structure. For each $i\in [m]$ we have
\begin{align}
\label{eq_clustersatleastk+1}|L^i|+|\Sweak[i]|+|\mathcal E^i|+|X_i|&\ge k+1 \mbox{, and}\\
\label{eq_Xidisjoint}X_i\cap \bigcup_{j\not=i} X_j=\emptyset
\quad&\mbox{and}\quad \Sweak[i]\cap X_i=\emptyset \; \mbox{.}
\end{align}
There is an index $i_0\in[m]$ such that sufficiently many vertices from $\Sweak[i_0]\cup X^{i_0}$ are contained in $\bigcup_{j\not =i_0}\Sweak[j]$, giving the desired bridges from the clump $V_{i_0}$. Indeed,
\begin{align*}
n-|L|&\ge\left|\bigcup_i(\Sweak[i]\cup X_i)\right|
     \overset{\eqref{eq_Xidisjoint}}\ge \sum_i |\Sweak[i]|+\sum_i |X_i|-\sum_i
\left|(\Sweak[i]\cup X_i)\cap \bigcup_{j\not=i}\Sweak[j]\right|\\
     &\overset{\eqref{eq_clustersatleastk+1}}\ge m(k+1)-|L|-\sum_i
\left|(\Sweak[i]\cup X_i)\cap \bigcup_{j\not=i}\Sweak[j]\right|-\sum_i|\mathcal E^i| \; \mbox{,}
\end{align*}
which yields
\begin{align*}
\sum_i \left(|L^i|+|\mathcal E^i|+\left|(\Sweak[i]\cup
X_i)\cap\bigcup_{j\not=i}\Sweak[j]\right|\right)&\ge |L|+m(k+1)-n\ge
m(k+1)-\frac{n}2\\
&\overset{\eqref{eq_rkn}}\ge \frac{m(k+1)}2 \; \mbox{.}
\end{align*}
By averaging, there exists an index $i_0\in[m]$ such that
\begin{equation}\label{eq_LiHunted}
|L^{i_0}|+|\mathcal E^{i_0}|+\left|(\Sweak[i_0]\cup X_{i_0})\cap\bigcup_{j\not={i_0}}\Sweak[j]\right|\ge \frac{k+1}2 \;
\mbox{.}
\end{equation}
It remains to define $\mathcal{J}^{i_0}$. Let $\mathcal{J}_1=\{e\in \mathcal D^{i_0}\::\: e\cap \bigcup_{j\not=i_0}\Sweak[j]\not=\emptyset\}$. Set $Q=\Sweak[i_0]\cap \bigcup_{j\not=i_0}\Sweak[j]$. Let~$\mathcal{J}_2$ be any matching in $G[Q,L^{i_0}\setminus V(\mathcal{E}^{i_0}\cup \mathcal{J}_1)]$ that covers~$Q$. Since $|Q|<\gamma k$, we can find such a matching greedily. Set $\mathcal{J}^{i_0}=\mathcal{J}_1\cup \mathcal{J}_2$. 

Properties~\ref{IWHS1}--\ref{IWHS3} of the lemma are clear from the construction. Property~\ref{IWHS4} follows from~\eqref{eq_LiHunted}, and using that $|\mathcal{J}_1|=|X_{i_0}\cap\bigcup_{j\not={i_0}}\Sweak[j]|$ and $|\mathcal{J}_2|=|\Sweak[i_0]\cap\bigcup_{j\not={i_0}}\Sweak[j]|$.

Last,~\eqref{eq:Am} tells us that we can truncate $\mathcal{E}^{i_0}$ and $\mathcal{J}^{i_0}$ so that~\ref{IWHS5} is satisfied without violating~\ref{IWHS4}. This truncation preserved properties~\ref{IWHS1}--\ref{IWHS3}.\Referee{(144)}
\end{proof}

\subsection{Proof of Lemma~\ref{prop_ECFewLeaves}}\label{ssec_FewLeavesEmbedding}
\RefereeX{(146)}{The initial part of the proof was rewritten quite a bit. As a cosequence, (147), (149) and (151) are not relevant anymore (the text was replaced).}
Suppose that $T$ and $G$ satisfy the hypothesis of Lemma~\ref{prop_ECFewLeaves}. By Lemma~\ref{lemma_consideronlysmalldiscrepancy}, we can assume that $T$ has discrepancy less than $2\gamma k$. In particular, 
\begin{equation}\label{eq:doyouwantme}
|T_\oplus|\le\frac k2+\gamma k\;. 
\end{equation}

Recall that if $G$ is deficient then by Lemma~\ref{lem:def} we have $\tilde V=\emptyset$.
For each $i\in[\ell]$ we define $X^i=\{v\in V_i\: :\:
\deg(v,L^i)>\tfrac{k}{5m}\}$. 
If $G$ is abundant, we set $\Lambda\subset[\ell]$ to be
the set of indices $i_0$ such that $|L^{i_0}|\ge \tfrac{k+1}2$, and set
$\mathcal{E}^{i_0}=\mathcal{J}^{i_0}=\emptyset$. If $G$ is deficient, we apply
Lemma~\ref{prop_deficientCones} to obtain sets $\Sweak[i]$, an index $i_0$, and two matchings
$\mathcal{E}^{i_0}$ and $\mathcal{J}^{i_0}$ such that
\begin{equation}\label{eq:Biza}
|L^{i_0}|+|\mathcal{E}^{i_0}|+|\mathcal{J}^{i_0}|\ge \tfrac{k+1}2\ge|T_\ominus| \;.
\end{equation} We
then set $\Lambda=\{i_0\}$.

For each $i_0\in \Lambda$ individually,\Referee{(148)} we shall try to embed the tree $T$ so that most of the vertices of~$T$ are embedded in
$V_{i_0}$. We show that if all the attempts fail, then there exists a
set $Q$ satisfying the assertions of Lemma~\ref{prop:EC-obecna}.

Fix $i_0\in\Lambda$. Let $F^{i_0}=V(\mathcal{E}^{i_0})\cup V(\mathcal{J}^{i_0})$. By Lemma~\ref{prop_deficientCones}\ref{IWHS5}, $|F^{i_0}\cap L^{i_0}|\le \gamma k$. Take a maximum family
$\mathcal{P}$ of vertex-disjoint $(L^{i_0}\setminus F^{i_0})\leftrightarrow (V\setminus
(L^{i_0}\cup \Sstrong[i_0]\cup F^{i_0}))\leftrightarrow(L^{i_0}\setminus F^{i_0})$-paths.

\begin{Claim71}\label{cla1}
If $|L^{i_0}\cup \Sstrong[i_0]\cup F^{i_0}|+|\mathcal P|\ge k-1$ then $T\subset G$.
\end{Claim71}
\begin{proof}
Consider a family of paths $\mathcal{P}'\subset\mathcal{P}$ by truncating $\mathcal{P}$ so that $|\mathcal{P}'|=\min\{|\mathcal{P}|,30\gamma k\}$. By~\eqref{eq:LSdiamond}, $|L^{i_0}\cup \Sstrong[i_0]\cup F^{i_0}|+|\mathcal P'|\ge k-1$. Observe that $V(\mathcal P')\setminus L^{i_0}$ are the middle vertices of~$\mathcal{P}'$. Fix a set $A\subset L^{i_0}$ of size $|T_\ominus|-|\mathcal{J}^{i_0}|-|\mathcal{E}^{i_0}|$ and which contains
$(F^{i_0}\cup V(\mathcal P'))\cap L^{i_0}$. This is possible by~\eqref{eq:Biza} and by\Referee{(150)}
$$|(F^{i_0}\cup V(\mathcal P'))\cap L^{i_0}|\lBy{L\ref{prop_deficientCones}\ref{IWHS5}} 31\gamma k<
\tfrac k2-2\gamma k \lBy{\eqref{eq:doyouwantme}} |T_\ominus|-|\mathcal{J}^{i_0}|-|\mathcal{E}^{i_0}|\;.$$

We apply Lemma~\ref{prop_SE_embeddingwithfewleaves}, setting the parameters of the lemma as $\alpha=60\gamma$, $A, B_\mathrm{a}=(L^{i_0}\setminus
A)\cup \Sstrong[i_0],
B_\mathrm{d}=V(\mathcal P')\setminus L^{i_0},\mathcal{Q}=\mathcal{P}',\mathcal{E}=\mathcal{E}
^{i_0}\cup \mathcal{J}^{i_0},\mathcal{M}=\emptyset, I=[\ell]\setminus\{i_0\}$, and $H_\kappa=G[L^\kappa\cup \Sweak[\kappa]]$ (for each $\kappa\in I$) to get $T\subseteq G$. To check Condition \ref{it:delAB} of Lemma~\ref{prop_SE_embeddingwithfewleaves}, let us consider an arbitrary vertex $v\in A$.
\begin{align}
\begin{split}\deg(v,B_\mathrm{a}\cup B_\mathrm{d})&\geBy{\eqref{eq:Am}}  |(B_\mathrm{a}\cup B_\mathrm{d})\cap V_i|-\gamma k \\
&\ge |B_\mathrm{a}\cup B_\mathrm{d}|-|L^{i_0}\setminus V_{i_0}|-|\Sstrong[i_0]\setminus V_{i_0}|-|\mathcal P'|-\gamma k\\
&\ge |B_\mathrm{a}\cup B_\mathrm{d}|-\sqrt{\beta}k-\sqrt{\beta}k-30\gamma k-\gamma k\ge |B_\mathrm{a}\cup B_\mathrm{d}|-60\gamma k\;,
\label{eq:degvB}
\end{split}
\end{align}
where the last line follows from Lemma~\ref{l:e}(iii) combined with the definition of $L^{i_0}$, $\Sstrong[i_0]$, and~\eqref{eq:Am}. Other conditions of Lemma~\ref{prop_SE_embeddingwithfewleaves} are easy to check.
\end{proof}

It remains to consider the case that $|L^{i_0}\cup \Sstrong[i_0]\cup F^{i_0}|+|\mathcal P|\le k-2$. From~\eqref{eq:LSdiamond}, we have\Referee{(152)} $|\mathcal{P}|<\gamma k$. Consider an arbitrary vertex $u\in L^{i_0}\setminus
(F^{i_0} \cup V(\mathcal{P}))$. Since $\deg(u)\ge k$,\Referee{(153)} there are at least two edges $ux^1_u$ and $ux^2_u$ that emanate into $V\setminus (L^{i_0}\cup \Sstrong[i_0]\cup F^{i_0})$. By the maximality of $\mathcal{P}$ all the vertices
$x^1_u,x^2_u$, $u\in L^{i_0}\setminus (F^{i_0} \cup V(\mathcal{P}))$,\Referee{(154)} are pairwise distinct. Set $R_{i_0}=\bigcup_{u\in L^{i_0}\setminus (F^{i_0} \cup
V(\mathcal{P}))}\{x^1_u,x^2_u\}$ and $\tilde R_{i_0}= R_{i_0}\cap \tilde V$.
\begin{Claim71}\label{claMatching}
For an arbitrary set $U\subset R_{i_0}$ there exists a $U\leftrightarrow (L^{i_0}\setminus (F^{i_0} \cup V(\mathcal{P}))$ matching $\mathcal F_{i_0}$ with $|\mathcal F_{i_0}|\ge \frac{|U|}2$.
\end{Claim71}
\begin{proof}
For $q=1,2$, let $U_q=\{u\in L^{i_0}\setminus (F^{i_0} \cup V(\mathcal{P})\::\:x^q_u\in U\}$. There exists $q\in[2]$ such that $|U_q|\ge \frac{|U|}2$. The desired matching $\mathcal F_{i_0}$ is then $\{ux^q_u\}_{u\in U_q}$.
\end{proof}
\begin{Claim71}\label{cla2}\RefereeX{(156),(155)}{The set $Y_{i_0}$ is not defined anymore}
If $|\tilde R_{i_0}|\le 2|L^{i_0}|-7m\gamma k$ then $T\subset G$.
\end{Claim71} 
\begin{proof}
Observe that 
\begin{align*}
\left|R_{i_0}\cap \bigcup_{i\in [\ell]} (L^i\cup X^i)\right|&\ge |R_{i_0}|-|\tilde R_{i_0}|-\left|V\setminus\left(\tilde V\cup
\bigcup_{i\in[\ell]}X^i\right)\right|\\
&\geBy{L\ref{l:e}} 2|L^{i_0}\setminus (F^{i_0}\cup V(\mathcal P))|-(2|L^{i_0}|-7m\gamma k)-m\sqrt{\beta} k\\
&\ge 2|L^{i_0}|-4\gamma k - 2|L^{i_0}|+7m\gamma k-m\sqrt{\beta} k\ge 2\gamma k\;.
\end{align*}
By Claim~\ref{claMatching}, there exists an $(L^{i_0}\setminus F^{i_0}) \leftrightarrow (R_{i_0}\cap \bigcup_{i\in [\ell]} (L^i\cup X^i))$ matching $\mathcal N$ of size $\gamma k$.
Fix a set $A\subset L^{i_0}$ of size $|T_\ominus|-|\mathcal{J}^{i_0}|-|\mathcal{E}^{i_0}|$ and which contains
$(F^{i_0}\cup V(\mathcal N))\cap L^{i_0}$. We apply Lemma~\ref{prop_SE_embeddingwithfewleaves}  with parameters
$\alpha=60\gamma$, $A, B_\mathrm{a}=(L^{i_0}\setminus
A)\cup \Sstrong[i_0],
B_\mathrm{d}=\mathcal{Q}=\mathcal{M}=\emptyset,\mathcal{E}=\mathcal{E}
^{i_0}\cup \mathcal{J}^{i_0}\cup\mathcal N, I=[\ell]\setminus\{i_0\}$, and $H_\kappa=G[L^\kappa\cup \Sweak[\kappa]]$ (for each $\kappa\in I$) 
and get that $T\subset G$. Condition~\ref{it:BEM} of Lemma~\ref{prop_SE_embeddingwithfewleaves} follows from~\eqref{eq:LSdiamond}. Condition~\ref{it:delAB} is checked analogously as in~\eqref{eq:degvB}. Other conditions are easy to verify.
\end{proof}

Putting Claim~\ref{cla1} and Claim~\ref{cla2} together, we can assume that for each $i\in\Lambda$,  we have \Referee{(157)}
\begin{equation}\label{eq:Ri}
|\tilde R_{i}|> 2|L^{i}|-7m\gamma k\;.
\end{equation}

Suppose that there exists an index $i_0\in \Lambda$ such that 
\begin{equation}\label{eq:wc}
\left|\tilde R_{i_0}\cap \bigcup_{i\in \Lambda\setminus\{i_0\}}\tilde R_i\right|\ge 8\gamma k\;.
\end{equation} 
Claim~\ref{claMatching} gives an $(L^{i_0}\setminus F^{i_0})\leftrightarrow(\tilde R_{i_0}\cap \bigcup_{i\in\Lambda\setminus\{i_0\}}\tilde R_i)$ 
matching $\mathcal M_1$ of size $4\gamma k$. Further applications of Claim~\ref{claMatching} for indices in $i\in \Lambda\setminus\{i_0\}$ and sets $U=V(M_1)\cap \tilde R_{i_0}\cap \tilde R_i$ yield a $\left(V(\mathcal M_1)\cap \tilde R_{i_0}\cap \bigcup_{i\in\Lambda\setminus\{i_0\}}\tilde R_i\right)\leftrightarrow \left(\bigcup_{i\in
\Lambda\setminus\{i_0\}}L^i\right)$ matching $\mathcal M_2$ of size $2\gamma k$. From this matching choose a matching $\mathcal M_3$ of size $\gamma k$ that is disjoint from $F^{i_0}$. Extend the edges of $\mathcal M_3$  by edges of~$\mathcal M_1$. This leads to $\gamma k$ vertex-disjoint $L^{i_0}\leftrightarrow(\tilde R_{i_0}\cap \bigcup_{i\in\Lambda\setminus\{i_0\}}\tilde R_i)\leftrightarrow \left(\bigcup_{i\in
\Lambda\setminus\{i_0\}}L^i\right)$-paths, denoted by $\mathcal M$. \Referee{(158)}
 Fix a set $A\subset L^{i_0}$ of size $|T_\ominus|-|\mathcal{J}^{i_0}|-|\mathcal{E}^{i_0}|$ and which contains
$(F^{i_0}\cup V(\mathcal M))\cap L^{i_0}$. This is possible by~\eqref{eq:Biza} and by
$$|(F^{i_0}\cup V(\mathcal M))\cap L^{i_0}|\lBy{L\ref{prop_deficientCones}\ref{IWHS5}} 2\gamma k \lBy{\eqref{eq:doyouwantme}} |T_\ominus|-|\mathcal{J}^{i_0}|-|\mathcal{E}^{i_0}|\;.$$
We apply Lemma~\ref{prop_SE_embeddingwithfewleaves},  setting the parameters of the lemma as $\alpha=60\gamma$, $A, B_\mathrm{a}=(L^{i_0}\setminus
A)\cup \Sstrong[i_0],
B_\mathrm{d}=\mathcal{Q}=\emptyset,\mathcal{E}=\mathcal{E}
^{i_0}\cup \mathcal{J}^{i_0},\mathcal{M}, I=[\ell]\setminus\{i_0\}$, and $H_\kappa=G[L^\kappa\cup \Sweak[\kappa]]$ (for each $\kappa\in I$) to get $T\subseteq G$. Condition~\ref{it:BEM} of Lemma~\ref{prop_SE_embeddingwithfewleaves} follows from~\eqref{eq:LSdiamond}. Condition~\ref{it:delAB} is checked as in~\eqref{eq:degvB}. Consequently, $T\subset G$.

We assume in the rest that no index $i_0$ satisfies~\eqref{eq:wc}. We have 
\begin{equation}\label{eq_Y2sum}
\left|\bigcup_{i\in\Lambda}\tilde R_i\right|\ge \sum_{i\in \Lambda}(|\tilde R_i|-|\tilde R_i\cap
\bigcup_{j\in \Lambda\setminus\{i_0\}}\tilde R_j|)\geBy{\eqref{eq:Ri},$\lnot$\eqref{eq:wc}} 2\sum_{i\in\Lambda}|L^{i}|-15m^2\gamma k
\;\mbox{.}
\end{equation} Set
$Y=\bigcup_{i\in\Lambda}\tilde R_i$.

We distinguish three cases:
\begin{itemize}
\item[$(\clubsuit 1)$] {\em We have that $|L\cap Y|\le \tfrac{k}8$ and
$e(Y,\tilde{V}\setminus Y)<\sigma k^2/2$.}\\
{\em Solution of $(\clubsuit 1)$}: We show that the set
$Q=\tilde{V}\setminus Y$ satisfies the assertions of
Lemma~\ref{prop:EC-obecna}.

First, we prove the property of
Lemma~\ref{prop:EC-obecna}~\ref{pr:QL}. By the hypothesis of $(\clubsuit
1)$, not many vertices in $Y$ are large. Thus the ratio of the large vertices in the graph $G[\bigcup_{i\in\Lambda}V_i\cup Y]$ is substantially
smaller
than one half. Then there must be substantially more than
half of the large vertices in the complementary set $Q$,
and the property follows. We make the idea rigorous by
the following calculations. For each $i\in\Lambda$ set $l_i=|L^i|$.
\begin{align*}
\frac12 n&\le |L|\le(\ell-|\Lambda|)\tfrac{k}2+\sum_{i\in\Lambda}l_i+|L\cap
Y|+|L\cap Q|+|L\setminus (\tilde{V}\cup\bigcup_{j\in[\ell]}L^j)|\\
&<(\ell-|\Lambda|)\tfrac{k}2+\sum_{i\in\Lambda}l_i+\tfrac{k}8+|L\cap
Q|+\gamma n\;\mbox{.}
\end{align*}
Thus,
\begin{equation}\label{eq:stress}
\begin{aligned}
|L\cap Q|&>
\frac12n-(\ell-|\Lambda|)\tfrac{k}2-\sum_{i\in\Lambda}l_i-\tfrac{k}8-\gamma n\\
&>
\frac12\left(|\tilde{V}|-2\sum_{i\in\Lambda}l_i\right)+|\Lambda|\tfrac{k}2-\tfrac{k}8-2\gamma
n\\
&\overset{\eqref{eq_Y2sum}}>\frac12|Q|+|\Lambda|\tfrac{k}2-\tfrac{k}7\ge\frac12|Q|+\frac5{14}k\;\mbox{,}
\end{aligned}
\end{equation}
which was needed to show the property of
Lemma~\ref{prop:EC-obecna}~\ref{pr:QL}. Looking back at~\eqref{eq:stress},
we see that $|Q|\ge\frac12|Q|+\frac5{14}k$, and thus also the property of
Lemma~\ref{prop:EC-obecna}~\ref{pr:velkyQ} follows.

Finally, to infer the property of
Lemma~\ref{prop:EC-obecna}~\ref{pr:Qizo} we write $$e(Q,V\setminus Q)\le
e(Y,\tilde{V}\setminus Y)+e(\tilde{V},V\setminus \tilde{V})<\sigma k^2/2+\beta
k^2\le \sigma k^2\;.$$ The bound on the first summand follows from the
hypothesis of $(\clubsuit 1)$, the bound on the second summand follows from the
$(\beta,\sigma)$-extremality.

\item[$(\clubsuit 2)$] {\em We have that $|L\cap Y|> \tfrac{k}8$ and
$e(Y,\tilde{V}\setminus Y)<\sigma k^2/2$}.\\
{\em Solution of $(\clubsuit 2)$}: We show that $T\subset G$. The hypothesis of\Referee{(159)}
$(\clubsuit 2)$ gives $e(G[Y])\ge \frac12|L\cap Y|k-e(Y,\tilde{V}\setminus Y)\ge \frac{k^2}{20}$. 
The average degree in $G[Y]$ is $\frac{2e(G[Y])}{|Y|}\ge \frac{k^2}{10 n}\ge \tfrac{qk}{10}$. 
There exists a subgraph $H_*\subset G[Y]$ with $\delta(H_*)\ge
\tfrac{q k}{20}$. By averaging, there exists an index $i_0\in \Lambda$ such that
\begin{equation}\label{eq_H*}
|\tilde{R}_{i_0}\cap V(H_*)|>\tfrac{qk}{20m}\; \mbox{.}
\end{equation}
Fix such an index $i_0$. By~\eqref{eq_H*} there exists an
$L^{i_0}\leftrightarrow V(H_*)$-matching $\mathcal{E}$ of size $30 \gamma  k$. 
Fix a set~$A\subseteq L^{i_0}$ of size $|T_\ominus|-|\mathcal{E}|$ containing~$V(\mathcal{E})\cap L^{i_0}$. Such a set exists by~\eqref{eq:Am}.
By
Lemma~\ref{prop_SE_embeddingwithfewleaves} (with $\alpha=60\gamma$, $A, B_\mathrm{a}=\Sstrong[i_0],
B_\mathrm{d}=\mathcal{Q}=\mathcal{M}=\emptyset,\mathcal{E},$  and $H_*$, $I=\{*\}$) we
get that $T\subset G$. To check Condition~\ref{it:BEM}, observe that, by~\eqref{eq:LSdiamond} and the fact that we are the deficient case, we have $|\Sstrong[i_0]|+|\mathcal E|\ge \frac k2-\gamma k + 30 \gamma k\ge |T_\oplus|$. Condition~\ref{it:delAB} follows from~\eqref{eq:Am}. Other conditions are straightforward.

\item[$(\clubsuit 3)$]{\em We have that $e(Y,\tilde{V}\setminus Y)\ge
\sigma k^2/2$.}\\ {\em Solution of $(\clubsuit 3)$}: We show that $T\subset G$.
The average degree of the bipartite graph $G[Y,\tilde{V}\setminus Y]$ is at least $q \sigma k$. Thus there exists a
graph $H_*\subset G[Y,\tilde{V}\setminus Y]$ with
$\delta(H_*)\ge \tfrac{q\sigma k}2$.
There must be an index $i_0\in\Lambda$ such that $|\tilde{R}_{i_0}\cap
V(H_*)|>\tfrac{\sigma q k}{2m}$. Fix such an index $i_0$, find
a matching $\mathcal{E}$ and set $A$ as in $(\clubsuit 2)$. We apply 
Lemma~\ref{prop_SE_embeddingwithfewleaves} as in $(\clubsuit 2)$.
\end{itemize}

\subsection{Proof of Lemma~\ref{prop_ECManyLeaves}}\label{ssec_ManyLeavesEmbedding}
In order to prove Lemma~\ref{prop_ECManyLeaves} we need the following  auxiliary lemma.\RefereeX{(160),(161)}{The Lemma containing these referee's comments is not needed. See the remark regarding comments (139)-(141)}

\begin{lemma}\label{lem:vnor-obecPartition}
Let $F$ be a rooted forest with a partition $V(F)=O_1\cup O_2$, such that $O_2$
is independent. Let $W$ be the set of leaves of $F$ and set $P=\{u\in O_2\::\: |W\cap \children(u)|=1\}$. Let $H$ be a graph and let $A,B\subseteq V(H)$ be two disjoint sets such that for\Referee{(162)} some $f\ge 0$ we have $|A|\geq |O_1|$,  $\min\{\delta(A,A),\delta(B,A)\}> |O_1|-f$, $\delta(A,B)> |B|-f$, $|B|\geq |O_2\setminus W|$, and $\delta(A)\geq v(F)-1$. If $|P|\geq 2f$, then there exists an embedding $\varphi$ of $F$ in $H$ such that $\varphi(O_1)\subseteq A$.
\end{lemma}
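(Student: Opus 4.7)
The plan is to embed $F$ in three phases. I first set $L_1^P=\{w_u:u\in P\}\subseteq O_1\cap W$ for the leaf-children of $P$-vertices (so $|L_1^P|=|P|\ge 2f$) and $L_2=O_2\cap W$. In Phase~1 I will embed the stripped forest $F^\circ:=F-L_1^P-L_2$ into $H$ with $\varphi(O_1\setminus L_1^P)\subseteq A$ and $\varphi(O_2\setminus W)\subseteq B$. In Phase~2 I will extend $\varphi$ to $L_1^P$ by mapping each leaf $w_u$ to an unused vertex of $A$ adjacent to $\varphi(u)\in B$. In Phase~3 I will extend $\varphi$ to $L_2$ by placing each such leaf at an unused neighbor of its parent's image in $A$; this is immediate since $\delta(A)\ge v(F)-1$ while fewer than $v(F)$ vertices are ever embedded.

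Phase~2 will follow from Hall's theorem applied to the bipartite graph with parts $P$ and $A':=A\setminus\varphi(O_1\setminus L_1^P)$, where $u\in P$ is joined to $a\in A'$ iff $\varphi(u)a\in E(H)$. Since $|A|\ge|O_1|$, we have $|A'|\ge|P|$. By $\delta(B,A)>|O_1|-f$ each $u\in P$ has at least $|P|-f+1\ge f+1$ neighbors in $A'$, and by $\delta(A,B)>|B|-f$ each $a\in A'$ has at most $f-1$ non-neighbors in $\varphi(P)$. Hall's condition for $P'\subseteq P$ splits into two cases: if $|P'|\le f+1$ then the degree of a single vertex already dominates $|P'|$, and if $|P'|\ge f+2$ then any $a\in A'\setminus N(P')$ would contribute at least $|P'|\ge f+2$ non-neighbors to $\varphi(P')\subseteq\varphi(P)$, contradicting the bound $f-1$.

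Phase~1 is the main challenge. It starts by embedding $F[O_1\setminus L_1^P]$ greedily into $H[A]$: this works because $\delta(H[A])\ge |O_1|-f+1$ and each tree of this subforest has at most $|O_1|-|P|\le |O_1|-f+2$ vertices (using $|P|\ge 2f$). It then remains to embed $O_2\setminus W$ into $B$ so that each $u\in O_2\setminus W$ is mapped to a vertex adjacent to $\varphi(v)$ for every $v\in N_F(u)\cap(O_1\setminus L_1^P)$. I will accomplish this by another Hall-type matching argument, exploiting that each $a\in A$ has fewer than $f$ non-neighbors in $B$ while $|B|\ge|O_2\setminus W|$, and using the forest structure of $F$ to control the bad configurations. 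The hard part will be verifying Hall's condition in this last step, where the counting must combine the degree deficiencies in $H[A,B]$ with the structure of $F$; the assumption $|P|\ge 2f$ provides the slack both for the greedy embedding in $H[A]$ and, indirectly, for the feasibility of this matching.
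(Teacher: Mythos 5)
Your Phase~2 (Hall on $P$ vs.\ $A'$) and Phase~3 are sound, and they are in the same spirit as the paper's K\"onig step. The fatal problem is Phase~1, which takes a genuinely different route from the paper — and that route does not work.

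You propose to embed $F[O_1\setminus L_1^P]$ entirely into $A$ \emph{before} placing any vertex of $O_2\setminus W$. Once the $O_1$-side images are fixed, each $u\in O_2\setminus W$ must be sent to a vertex of $B$ that is simultaneously adjacent to \emph{every} already-placed $F$-neighbour of $u$ in $A$. Nothing in the hypotheses bounds $\deg_F(u)$: a vertex of $O_2$ may have arbitrarily many children in $O_1$. If $u$ has $d$ such neighbours, the set of ``bad'' vertices of $B$ excluded by the condition $\delta(A,B)>|B|-f$ has size up to $d(f-1)$, and $|B|$ is only guaranteed to be $\ge|O_2\setminus W|$, so for moderate $f$ and one high-degree $u$ there may be no admissible image at all — no matter how cleverly you set up a Hall-type matching afterwards, because the obstruction is a single vertex with an empty common neighbourhood, not a set with a small collective neighbourhood. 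A concrete instance: root $r\in O_1$ with one child $u_0\in O_2$, $u_0$ having children $v_1,\dots,v_d\in O_1$, each $v_i$ having a child $u_i\in O_2$ with a leaf child $w_i\in O_1$. Then $|P|=d\ge 2f$, $O_2\setminus W=\{u_0,\dots,u_d\}$, and in your scheme $u_0$ must land on a common $B$-neighbour of $d+1$ prescribed vertices of $A$, excluding up to $(d+1)(f-1)$ vertices from a $B$ that may have only $d+1$ vertices.

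The paper avoids this entirely. It fixes a subset $P'\subseteq P$ of size exactly $2f$, deletes from $F$ the leaves $W'=W\cap(O_2\cup \neighbor(P'))$, and embeds the remaining forest $F'$ greedily into $A\cup B$ \emph{simultaneously}, proceeding top-down from the roots with $O_1$-vertices to $A$ and $O_2$-vertices to $B$. In a top-down greedy embedding every vertex only needs to be adjacent to the image of its single parent, so one never has to find a common neighbour of several already-fixed images. The K\"onig/Hall step is then used only for the $2f$ removed leaves in $O_1$, and $W\cap O_2$ is added last using $\delta(A)\ge v(F)-1$, as in your Phase~3. Your plan cannot be repaired within the two-phase (``all of $A$ first, then all of $B$'') framework; you need the interleaved traversal.
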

\begin{proof}\Referee{(163)}
Choose a subset $P'\subseteq P$ of size $|P'|=2f$. Consider the subtree $F'=F-W'$, where $W'=W\cap (O_2\cup \neighbor(P'))$. We embed greedily the tree $F'$ in $A\cup B$, so that $V(F')\cap O_1$ maps to~$A$ and $V(F')\cap O_2$ maps to~$B$. Denote this embedding by~$\varphi'$. Next we want to embed the leaves $W'\cap O_1$ in~$A$. Let $A'=A\setminus \varphi (V(F'))$. We have $|A'|\geq 2f=|\varphi'(P')|$, 
$\delta(\varphi(P'),A')> f=\tfrac{|P'|}2$, and $\delta(A',\varphi(P'))>
f=\tfrac{|P'|}2$. By K\"onig's matching theorem, there exists a matching~$M$ in $H[A',\varphi' (P')]$ that covers $\varphi' (P')$.

We extend $\varphi'$ to an embedding $\varphi$ of~$F$, by embedding $W'\cap O_1$ according to the matching~$M$, and by embedding $W\cap O_2$ greedily (this is guaranteed by the minimum degree condition of the set~$A$).\end{proof}

A semi-independent partition $(U_1,U_2)$ of a tree $F$ is {\em $p$-ideal} if each of the vertex sets~$U_1$ and~$U_2$ contains at least $p$ leaves of $F$.
If $\disc(T)\ge 2\gamma k$, then Lemma~\ref{lemma_consideronlysmalldiscrepancy}
 ensures that $T\subset G$. Therefore, the proof of Lemma~\ref{prop_ECManyLeaves} follows from Lemma~\ref{lemma:firststep} and~\ref{lemma:secondstep} below.
\begin{lemma}\label{lemma:firststep}
\addtocounter{lemmafirststep}{\value{theorem}}
If we are in the setting of Lemma~\ref{prop_ECManyLeaves} and $\disc(T)< 2\gamma k$, then $T$ has an $8\gamma k$-ideal semi-independent partition, or $T\subset G$.
\end{lemma}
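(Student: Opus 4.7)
The plan is to first try the canonical color class partition $(U_1, U_2) = (T_\ominus, T_\oplus)$, which is semi-independent since $|T_\ominus| \le |T_\oplus|$ and $T_\oplus$ is independent. Let $\lambda_\ominus$ and $\lambda_\oplus$ denote the numbers of leaves of $T$ in the two classes. If both $\lambda_\ominus \ge 8\gamma k$ and $\lambda_\oplus \ge 8\gamma k$, this partition is already $8\gamma k$-ideal and we are done. Otherwise, since $\lambda_\ominus + \lambda_\oplus > 60\gamma k$, exactly one class has fewer than $8\gamma k$ leaves and the other has more than $52\gamma k$.

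Consider first the sub-case $\lambda_\ominus < 8\gamma k$. I would try to remedy the failure by moving a set $M$ of leaves of $T$ from $T_\oplus$ into $U_1$: the partition $(T_\ominus \cup M,\; T_\oplus \setminus M)$ is still semi-independent provided $|M| \le \gap(T)/2$, since $T_\oplus \setminus M$ remains independent. Choosing $|M| = 8\gamma k - \lambda_\ominus$ yields an $8\gamma k$-ideal partition whenever $\lambda_\ominus \ge 8\gamma k - \gap(T)/2$. Thus we are left with either $\lambda_\ominus < 8\gamma k - \gap(T)/2$, or the other sub-case $\lambda_\oplus < 8\gamma k$, where no analogous shift is available because moving vertices into $U_2$ would in general destroy its independence. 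In both of these residual cases the class with more leaves contains at least $52\gamma k$ of them, and applying Fact~\ref{fact:leavesB} with the smaller-leaf class playing the role of $U_2$ (so that $|U_1|-|U_2| \le \gap(T)$) shows that at most $\disc(T) + \gap(T) \le 2\disc(T) < 4\gamma k$ of these leaves have a leaf-sibling in the same class. Consequently the set $P$ of vertices in the smaller-leaf class having exactly one leaf-child satisfies $|P| \ge 48\gamma k$.

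For the residual cases I plan to embed $T$ directly in $G$ by invoking Lemma~\ref{lem:vnor-obecPartition} with $F = T$ rooted at an internal vertex, $O_2$ taken as the smaller-leaf class (which is independent and contains $P$), $O_1$ the opposite class, $W$ the set of leaves of $T$, and slack parameter $f = 24\gamma k$ (so $|P| \ge 2f$). The sets $A, B \subseteq V(G)$ required by the lemma are taken within a clump $V_{i_0}$: $A \subseteq L^{i_0}$ and $B \subseteq \Sstrong[i_0]$, so that $\delta(A) \ge k = v(T) - 1$ holds automatically, and the internal and bipartite degree conditions on $A$ and $B$ follow from~\eqref{eq:Am} together with the definition of $\Sstrong[i_0]$. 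The main technical obstacle is to ensure $|A| \ge |O_1|$: in the abundant case the bound $|L^{i_0}| \ge (k+1)/2$ may fall short of $|T_\oplus|$ by up to $\gap(T)/2 < \gamma k$, and in the deficient case no single clump has that many large vertices at all. In both situations I would choose $i_0$ via Lemma~\ref{lemma_StartDiana} or Lemma~\ref{prop_deficientCones}, then enlarge $A$ by the missing number of large vertices accessed through the connecting matchings $\mathcal{E}^{i_0}$ and $\mathcal{J}^{i_0}$, verifying that the extended $A$ still meets the required degree conditions.
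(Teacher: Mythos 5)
Your initial split is fine: if both colour classes host at least $8\gamma k$ leaves the pair $(T_\ominus,T_\oplus)$ already works, and otherwise exactly one class has fewer than $8\gamma k$ of the $>60\gamma k$ leaves. Your shift trick for $\lambda_\ominus<8\gamma k$ is correct as far as it goes, but notice that $\gap(T)\le\disc(T)<2\gamma k$, so $\gap(T)/2<\gamma k$ and the shift only rescues the sliver $\lambda_\ominus\in[8\gamma k-\gap(T)/2,\,8\gamma k)$; essentially the whole range $\lambda_\ominus<8\gamma k$ survives into your ``residual case.''

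That residual case is where your argument has a genuine gap. When $T_\ominus$ is the class with few leaves (and $T_\oplus$ has $\ge52\gamma k$), you propose to feed Lemma~\ref{lem:vnor-obecPartition} the canonical partition with ``the smaller-leaf class playing the role of $U_2$'' (equivalently $O_2$). But for both Fact~\ref{fact:leavesB} and Lemma~\ref{lem:vnor-obecPartition} the set cast as $U_2$/$O_2$ must be \emph{independent}, and in a semi-independent partition $(T_\ominus,T_\oplus)$ only $T_\oplus$ is guaranteed independent. Taking $O_2=T_\ominus$ is therefore not legitimate. If you instead take $O_2=T_\oplus$, then the leaf-children of $O_2$-vertices lie in $T_\ominus$, and there are only $\lambda_\ominus<8\gamma k$ of them, so the set $P$ of Lemma~\ref{lem:vnor-obecPartition} is far too small to choose $f=24\gamma k$. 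This is precisely the paper's hard Case~3 ($w_\ominus<8\gamma k$): the paper does not attempt to salvage the colour-class partition, but instead uses Fact~\ref{fact_fullsubtrees}\ref{tesco2} to extract a full-subtree $\tilde T$ with $\Theta(\gamma k)$ leaves, then re-colours $V(\tilde T)$ (or $V(\tilde T)\setminus\{r\}$) to manufacture a \emph{new} semi-independent partition. Six parity subcases~(C1)--(C6) are distinguished; (C1)--(C4) immediately give an $8\gamma k$-ideal partition, while (C5)--(C6) need Claims~\ref{club1}--\ref{club3} plus the abundant/deficient dichotomy (($\diamondsuit$1),($\diamondsuit$2)) using Lemma~\ref{lemma_StartDiana}. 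None of this appears in your sketch, and the missing construction is the core of the lemma.

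A secondary point: for the other residual case ($\lambda_\oplus<8\gamma k$) your idea of embedding via Lemma~\ref{lem:vnor-obecPartition} is plausible, but the paper handles it more economically and without any embedding: if $|\parent(W_\ominus)|\le16\gamma k$ one flips $W_\ominus\cup\parent(W_\ominus)$ to force $\disc(T)\ge72\gamma k$ (a contradiction), and if $|\parent(W_\ominus)|>16\gamma k$ one flips an $8\gamma k$-sized piece $P'\cup(\neighbor(P')\cap W_\ominus)$ to produce the required $8\gamma k$-ideal semi-independent partition directly. So even where your approach could in principle work, it invokes heavy machinery (the clump structure, degree conditions, the abundant/deficient enlargement of $A$) that the lemma is specifically designed to avoid until after this combinatorial preprocessing.
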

\begin{lemma}\label{lemma:secondstep}
\addtocounter{lemmasecondstep}{\value{theorem}}
If we are in the setting of Lemma~\ref{prop_ECManyLeaves}, $\disc(T)< 2\gamma k$, and $T$ has an $8\gamma k$-ideal semi-independent partition then $T\subset G$.
\end{lemma}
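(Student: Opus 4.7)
The plan is to apply Lemma~\ref{lem:vnor-obecPartition} to $T$ with $O_1 = U_1$ and $O_2 = U_2$, using target sets $A, B \subseteq V(G)$ constructed from a single clump $V_{i_0}$ (in the abundant case) or from a clump augmented by the bridges of Lemma~\ref{prop_deficientCones} (in the deficient case).

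I first check the leaf-budget $|P| \ge 2f$ required by Lemma~\ref{lem:vnor-obecPartition}, where $P = \{u \in U_2 : u \text{ has exactly one leaf-child of } T\}$. Applying Fact~\ref{fact:leavesB} with threshold $\ell = 2\gamma k \ge \disc(T)$ to the partition $(U_1, U_2)$, at most $2\gamma k$ leaves of $U_1$ share a parent with another leaf of $U_1$. Combined with the $8\gamma k$-ideal hypothesis, at least $6\gamma k$ leaves of $U_1$ have pairwise distinct parents and no leaf-sibling in $U_1$; each such leaf whose parent lies in $U_2$ contributes a distinct element to $P$ (every child of a $U_2$-vertex lies in $U_1$ by semi-independence). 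Assuming enough of these ``isolated'' leaves have their parent in $U_2$, we obtain $|P| \ge 6\gamma k$ and set $f = 3\gamma k$; the exceptional leaves whose parent lies in $U_1$ can be pruned from $T$ and reattached at the end of the embedding using $\delta_G(A) \ge k$.

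In the \textbf{abundant case}, choose $i_0 \in [\ell]$ with $|L^{i_0}| \ge (k+1)/2$. By Lemma~\ref{l:e}(iii) and~\eqref{eq:Am}, $|L^{i_0} \cap V_{i_0}| \ge |L^{i_0}| - \sqrt{\beta}\,k \ge |U_1|$, so I pick $A \subseteq L^{i_0} \cap V_{i_0}$ of size exactly $|U_1|$ and set $B = (L^{i_0} \cup \Sstrong[i_0]) \setminus A$. The size bound $|B| \ge |U_2 \setminus W|$ (where $W$ is the leaf set of $T$) follows from~\eqref{eq:LSdiamond}, which gives $|B| \ge (1-\gamma/2)k - |U_1|$, while $|U_2 \setminus W| \le (k+1) - |U_1| - 8\gamma k$. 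The degree conditions $\delta(A,A), \delta(B,A) > |U_1| - f$ and $\delta(A,B) > |B| - f$ follow from the defining properties of $L^{i_0}$ and $\Sstrong[i_0]$ together with Lemma~\ref{l:e}, and $\delta_G(A) \ge k = v(T) - 1$ since $A \subseteq L$. Lemma~\ref{lem:vnor-obecPartition} then produces the desired embedding.

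In the \textbf{deficient case}, Lemma~\ref{lem:def} yields $\tilde V = \emptyset$ and $\ell = m$; by Lemma~\ref{lemma_deficientL^idecompose} we may assume that $\{L^i\}_i$ partitions $L$. Applying Lemma~\ref{prop_deficientCones} produces an index $i_0$ and matchings $\mathcal{E}^{i_0}, \mathcal{J}^{i_0}$ with $|L^{i_0}| + |\mathcal{E}^{i_0}| + |\mathcal{J}^{i_0}| \ge (k+1)/2 \ge |U_1|$ and $|\mathcal{E}^{i_0}| + |\mathcal{J}^{i_0}| < \gamma k$. Form $A$ from $L^{i_0}$ together with the $L^{i_0}$-side endpoints of these bridges; the far endpoints are reserved as escape routes for the few vertices of $T$ that must be embedded outside $V_{i_0}$. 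The embedding mirrors the abundant case but routes at most $|\mathcal{E}^{i_0}| + |\mathcal{J}^{i_0}| < \gamma k$ vertices of $T$ across the bridges. The main difficulty is precisely here: Lemma~\ref{lem:vnor-obecPartition} does not directly handle bridges, so a careful adaptation of its embedding procedure is required, with the $O(\gamma k)$ additional deficiency introduced by the bridges still being absorbed by the leaf budget $|P| \ge 6\gamma k$.
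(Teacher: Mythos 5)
Your abundant case is essentially the paper's argument: the paper embeds $T-(W_1^*\cup W_2)$ greedily into $G[L^i\cup S^i_0]$ and then handles the leftover leaves $W_1^*$ via a K\"onig matching, which is precisely what unwinding Lemma~\ref{lem:vnor-obecPartition} does. Invoking the lemma directly rather than repeating its proof inline is a reasonable cosmetic change; the leaf-budget computation via Fact~\ref{fact:leavesB} and the degree checks are the same. (Two small caveats you should nail down: you need the parents of the $6\gamma k$ ``isolated'' leaves to actually land in $U_2$ — as you note, this requires a short argument, e.g.\ by flipping any exceptional leaf into $U_2$ and invoking $\disc(T)<2\gamma k$; and you need $A\subseteq V_{i_0}$ for the clump degree bounds~\eqref{eq:Am} to apply, which costs you the $\sqrt{\beta}k$ vertices of $L^{i_0}\setminus V_{i_0}$, so take $f$ a bit larger.)

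The deficient case is where the proposal breaks down, and that is where nearly all the work in the paper's proof is. First, you reach for the wrong connecting structure: the paper uses Lemma~\ref{lemma_StartDiana}, which supplies a set $K\subseteq L^{i_0}$ of size $\ge k/10$ whose vertices have combined degree at least $(k+1)/2$ into $L^{i_0}\cup\bigcup_{j\neq i_0}(L^j\cup S^j)$. This is the natural object for the many-leaves regime, because it lets one re-route individual vertices (leaves, or parents of leaves) from $L^{i_0}$ into other clumps' $L^j\cup S^j$ when $L^{i_0}$ runs out of room. You instead reach for Lemma~\ref{prop_deficientCones}, which produces two \emph{matchings} $\mathcal{E}^{i_0},\mathcal{J}^{i_0}$ of total size $<\gamma k$; that lemma is tailored to the few-leaves regime (it is used only in Lemma~\ref{prop_ECFewLeaves}, fed into Lemma~\ref{prop_SE_embeddingwithfewleaves}), where one needs to divert whole subtrees down a bounded number of paths. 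In the present lemma each re-routed vertex may need to go to a \emph{different} external clump depending on where its parent landed, and a $<\gamma k$-size matching of bridges emanating from $L^{i_0}$ does not obviously give you that flexibility.

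Second, and more importantly, the sentence ``a careful adaptation of its embedding procedure is required, with the $O(\gamma k)$ additional deficiency\ldots absorbed by the leaf budget'' is not a proof; it is exactly the step the paper spends several pages on. Concretely: Lemma~\ref{lem:vnor-obecPartition} requires $|A|\ge|O_1|$, and in the deficient case $|L^{i_0}|<\tfrac{k+1}{2}$ while $|O_1|=|U_1|$ can equal $\lceil(k+1)/2\rceil$, so $A=L^{i_0}$ is too small; you cannot enlarge $A$ by the far bridge endpoints because those vertices do not have the degree conditions $\delta(A,A),\delta(A,B)$ into $V_{i_0}$ that the lemma needs. The paper resolves this tension with a carefully chosen full-subtree $\tilde T$, the set $W_1^{***}$, the partial assignment of subtrees $T(\downarrow x)$ to indices $j_x$ subject to degree and space constraints~\ref{it:enoughneighbours}--\ref{it:enoughspace Vj}, and separate arguments depending on whether some external clump saturates (Claim~\ref{cl:U1big}) or not (Claim~\ref{AC:caj} and the counting around~\eqref{eq:outtLi}). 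None of that machinery survives in your proposal, and there is no visible shortcut. As written, the deficient case is a genuine gap.
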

\begin{proof}[Proof of Lemma~\ref{lemma:firststep}]
\setcounter{theorem}{\value{lemmafirststep}}
We partition the set $W$ of leaves of $T$ into $W_\oplus=W\cap T_\oplus$ and $W_\ominus=W\cap T_\ominus$. Set $w_\oplus=|W_\oplus|$ and
$w_\ominus=|W_\ominus|$. We have that $w_\oplus+w_\ominus\ge 60\gamma k$. We
distinguish three cases based on the values of $w_\oplus$ and $w_\ominus$.

\noindent $\bullet$ \emph{We have $w_\oplus\ge 8\gamma k$ and $w_\ominus\ge 8\gamma k$.}\\ Then $(T_\ominus,T_\oplus)$ is an $8\gamma k$-ideal semi-independent partition.

\smallskip
\noindent $\bullet$ \emph{We have $w_\oplus< 8\gamma k$.}\\ Then we have $w_\ominus\ge 52\gamma k$. We distinguish two subcases.
  \begin{itemize}
   \item If $|\parent(W_\ominus)|\le 16\gamma k$, we consider the sets $U_1=T_\ominus\div (W_\ominus\cup\parent(W_\ominus))$ and $U_2=T_\oplus\div (W_\ominus\cup\parent(W_\ominus))$. The partition $(U_1,U_2)$ is semi-independent with $|U_2|-|U_1|\geq 72\gamma k$, a contradiction with the assumption $\disc(T)<2\gamma k$.
   \item If $|\parent(W_\ominus)|> 16\gamma k$, we choose an arbitrary subset $P'\subset \parent(W_\ominus)$ with $|P'|=8\gamma k$ and set $W'_\ominus=\neighbor(P')\cap W_\ominus$. The partition $(U_1,U_2)$, defined by $U_1=T_\ominus\div (W'_\ominus\cup P')$ and $U_2=T_\oplus\div (W'_\ominus\cup P')$, is an $8\gamma k$-ideal semi-independent partition.
  \end{itemize}
  
\smallskip
\noindent $\bullet$ \emph{We have $w_\ominus<8\gamma k$.}\\We use Fact~\ref{fact_fullsubtrees}~\ref{tesco2} to
 find a full-subtree $\tilde{T}\subset T$ rooted at a vertex $r$ with $p$
 proper leaves, where $p\in[20\gamma k, 40\gamma k]$. The choice of
 $\tilde{T}$ has the property that
     \begin{equation}\label{eq:10gammak}
     \min\{|W_\oplus\cap V(\tilde T)|,|W_\oplus\setminus V(\tilde
     T)|\}\geq 12\gamma k\;.
     \end{equation}
     Set $d=|V(\tilde{T})\cap T_\oplus|-|V(\tilde{T})\cap T_\ominus|$. We distinguish six subcases.\\
     \begin{tabular}{ll}
{\bf(C1)}~$r\in T_\oplus$ and $d\le \tfrac{\gap(T)}2$,
&{\bf(C2)}~$r\in T_\ominus$ and $d\ge \tfrac{\gap(T)}2$,\\
{\bf(C3)}~$r\in T_\oplus$ and $d\ge \tfrac{\gap(T)}2+1$,
&{\bf(C4)}~$r\in T_\ominus$ and $d\le \tfrac{\gap(T)}2-1$,\\
{\bf(C5)}~$r\in T_\oplus$ and $d= \tfrac{\gap(T)+1}2$,
&{\bf(C6)}~$r\in T_\ominus$ and $d= \tfrac{\gap(T)-1}2$.
\end{tabular}\\

In cases {\bf(C1)}--{\bf(C4)} we obtain a semi-independent
partition by flipping either $V(\tilde{T})$ (in cases {\bf(C1)} and {\bf(C2)}) or
$V(\tilde{T})\setminus\{r\}$ (in cases {\bf(C3)} and {\bf(C4)}) in the original
partition $(T_\ominus,T_\oplus)$. In these cases, the obtained partition is indeed
$8\gamma k$-ideal by~\eqref{eq:10gammak}. 

In the rest, we consider only the case {\bf(C5)}, the case {\bf(C6)} being
analogous. Notice that $\gap(T)$ has the same parity as $v(T)=k+1$. Thus, the integrality of $d$ gives that $k$ is even.\Referee{(164)} We set $O_1=T_\ominus\div V(\tilde{T})$ and $O_2=T_\oplus\div
V(\tilde{T})$. We have that $|O_1|=\tfrac{k+2}2$, and $|O_2|=\tfrac{k}2$.
\begin{AuxiliaryCl}\label{club1}\Referee{(165)}
We have $\parent(O_1\cap W)\subset O_2$, or $T$ has an $8\gamma k$-ideal semi-independent partition.
\end{AuxiliaryCl}
\begin{proof}
The existence of a vertex $u\in O_1\cap W$ whose parent lies in $O_1$ would yield a semi-independent partition $(O_1\setminus \{u\},O_2\cup\{u\})$, which would be by~\eqref{eq:10gammak} $8\gamma k$-ideal.
\end{proof}
\begin{AuxiliaryCl}\label{club2}
If there exist two distinct leaves
$z_1,z_2\in O_1$ with a common neighbor $\{x\}=\parent(\{z_1,z_2\})$, then $T$ has an $8\gamma k$-ideal semi-independent partition.
\end{AuxiliaryCl} 
\begin{proof}
By Claim~\ref{club1} we can assume that $x\in O_2$. Set $U_1=O_1\div\{x,z_1,z_2\}$ and
$U_2=O_2\div\{x,z_1,z_2\}$. Then $|U_1|=\tfrac{k}2$, $|U_2|=\tfrac{k}2+1$, $|U_1\cap W|=|O_1\cap W|-2$, and $|U_2\cap
W|=|O_2\cap W|+2$. By~\eqref{eq:10gammak}, the partition $(U_1,U_2)$ is
$8\gamma k$-ideal semi-independent.
\end{proof}By the two claims above, we restrict ourselves to the case that $\parent(O_1\cap W)\subset O_2$, and the leaves in $O_1$ have pairwise distinct parents.

\begin{AuxiliaryCl}\label{club3}For the set $O_1^*=\{y\in O_1\cap W\::\:\deg(\parent(y))=2\}$,\Referee{(166)} we have $|O_1^*|> 1.5\gamma k$.
\end{AuxiliaryCl}
\begin{proof}Recall that every vertex in $\parent(O_1\cap W)$ has exactly one leaf-child in~$O_1$. Set $W_*=V(\tilde{T})\cap W_\oplus$ and $T_*=\tilde{T}- W_*$. By~\eqref{eq:10gammak}, we have $|W_*|\ge 12\gamma k$. \RefereeX{(167)}{Actually, even with the suggested change, the calculation was wrong. We also changed $|W_*|\ge 10\gamma k$ to $|W_*|\ge 12\gamma k$.}
\begin{align*}
|V(T_*)\cap T_\ominus|&=|V(\tilde{T})\cap
T_\ominus| \overset{\text{Fact~\ref{fact_cutnodiscrepancy}}}>
|V(\tilde{T})\cap T_\oplus|-2.5\gamma k\\ &=|V(T_*)\cap T_\oplus|+|W_*|-2.5\gamma k \ge |V(T_*)\cap T_\oplus|+9.5\gamma k\;.
\end{align*}By Fact~\ref{fact_ManyLeavesInUnbalanced}, the tree~$T_*$ contains at least $9.5\gamma k$ leaves from~$T_\ominus$. These leaves are also leaves of~$\tilde{T}$, with $|O_1^*|$ exceptions caused by $\parent(O_1^*)$. Since $w_\ominus<8\gamma k$, we must have $|O_1^*|>1.5\gamma k$.
\end{proof}

We show that $T\subseteq G$ in two cases {\bf ($\diamondsuit$1)} and {\bf ($\diamondsuit$2)} separately, based on whether $G$ is in the abundant or deficient configuration.

{\bf ($\diamondsuit$1)}~~If $G$ admits an abundant partition, then there exists
an index $i\in[\ell]$ such that $|L^i|\geq \tfrac{k+1}2$. As $k$ is even,
$|L^i|\geq \tfrac{k+2}2$. Choose $L_*\subset L^i$ such that
$|L_*|=\tfrac{k+2}2$. Define $Z=\{u\in W\cap O_1: \parent(u)\in O_2 \}$. 
Suppose that $|(W\cap O_1)\setminus Z|> \gamma k$. Then consider the partition $(U_1,U_2)$ with $U_1=O_1\setminus ((W\cap O_1)\setminus Z)$ and $U_2=O_2\cup ((W\cap O_1)\setminus Z)$.\Referee{(169)}  We have $|U_2|-|U_1|>2\gamma k$, a contradiction to $\disc(T)\leq 2\gamma k$. Thus $|(W\cap O_1)\setminus Z|\leq \gamma k$.
Let $Z'\subset Z$ be the set of leaves in~$Z$ with no sibling in~$Z$.\Referee{(168)} Observe that Fact~\ref{fact:leavesB} gives $|Z\setminus Z'|\le 2\gamma k$.
We can now use Lemma~\ref{lem:vnor-obecPartition} with $A=L_*$, $B=\Sstrong[i]\cup(L^i\setminus L_*)$, $f=\gamma k$, and the
partition $(O_1,O_2)$ of $T$ to get $T\subset G$.
Indeed, the above bounds imply that the set $P$ (as defined in
Lemma~\ref{lem:vnor-obecPartition}) is large.

{\bf ($\diamondsuit$2)}~~Suppose that $G$ is in the deficient configuration. Consider the index $i\in [m]$ and the sets~$\Supper[j]$, and $K\subset L^i$ given by Lemma~\ref{lemma_StartDiana}. Let us discard from $O_1^*$ arbitrary vertices so that we have $|O_1^*|=1.5\gamma k$ (cf.~Claim~\ref{club3}). \Referee{(170)}We embed greedily the tree $T^-=T-(O_1^*\cup\parent(O_1^*))$ in $G[L^i\cup \Sstrong[i]]$ using~$L^i$ to host $O_1\setminus O_1^*$ and $\Sstrong[i]$ to host $O_2\setminus \parent(O_1^*)$, and so that the vertices of $\parent(\parent(O_1^*))$ are always mapped to $K$. Such an embedding exists by~\eqref{eq:Am} and~\eqref{eq:LSdiamond}, and because $|\parent(\parent(O_1^*))|\le|\parent(O_1^*)|=|O_1^*|\le 1.5\gamma k$, and $|K|\ge \frac{k}{10}$. It remains to extend the embedding of $T^-$ first to $\parent(O_1^*)$ and then to $O_1^*$. For any vertex in $\parent(\parent(O_1^*))$ mapped to a vertex in $K$, we embed its child from $\parent(O_1^*)$ greedily to $L^i\cup\bigcup_{j\neq i}(L^j\cup \Supper[j])$. 
This way, only vertices of $(O_1\setminus O_1^*)\cup\parent(O_1^*)$ could be embedded in $L^i$. As $|(O_1\setminus O_1^*)\cup\parent(O_1^*)|=|O_1|=\frac{k}2+1=\lceil\frac{k+1}2\rceil$, we can extend the embedding to $\parent(O_1^*)$ by~\eqref{eq:defK}. In the last step, we extend the embedding to $O_1^*$. 
Consider an arbitrary vertex $x\in\parent(O_1^*)$. The vertex $x$ was embedded to~$L^i$, or to $\bigcup_{j\neq i}(L^j\cup \Supper[j])$. If $x$ is mapped to $\bigcup_{j}L^j$, we use the high degree of those vertices to extend the embedding to the child of $x$. In the case $x$ was mapped to $v\in \Supper[j]$ for some $j\neq i$, observe that only vertices from 
$O_1^*\cup\parent(O_1^*)$ could have been mapped to $L^j$. As $|O_1^*\cup\parent(O_1^*)|= 2|O_1^*|=3\gamma k$, the definition of $\Supper[j]$ tells us that $\deg(v, L^j)\ge \frac{k}{5m}$ and we can map the child of $x$ to $L^j$.\Referee{(171)}
\end{proof}

\smallskip
\begin{proof}[Proof of Lemma~\ref{lemma:secondstep}]
\setcounter{theorem}{\value{lemmasecondstep}}
We assume that $T$ has an $8\gamma k$-ideal semi-independent partition $(U_1,U_2)$. Let $W_2$ be the leaves in $U_2$, and let $W_1^*$ be\Referee{(172)} those leaves in $U_1$ which have no leaf-sibling in $U_1$. By Fact~\ref{fact:leavesB}, we have $|W_1^*|\ge 6\gamma k$.

\medskip

First, we show how to resolve the situation in the abundant case. Let $i$ be such that $|L^i|\ge \frac{k+1}2$.  We first embed $T-(W_1^*\cup W_2)$ in $G[L^i\cup \Sstrong[i]]$, using $L^i$ to host $U_1\setminus W_1^*$, and $\Sstrong[i]$ to host $U_2\setminus W_2$. Properties~\eqref{eq:Am} and~\eqref{eq:LSdiamond} tell us that such an embedding exists.

Next, we map $W_1^*$ to\Referee{(173)} the set $L^*\subset L^i$ of unused vertices of $L^i$. To this end, consider an auxiliary bipartite graph $H$ whose two colour classes are $L^*$ and $\parent(W_1^*)$.\Referee{(174)} A pair $vx$, $v\in L^*$, $x\in \parent(W_1^*)$ forms an edge in~$H$ if~$x$ was mapped to a vertex that is adjacent to~$v$ in~$G$. By the definition of~$\Sstrong[i]$, and by~\eqref{eq:Am}, we have $\delta_H(\parent(W_1^*),L^*)\ge |L^*|-\gamma k/2$, and $\delta_H(L^*,\parent(W_1^*))\ge |\parent(W_1^*)|-\gamma k/2=|W_1^*|-\gamma k/2$. We conclude that~$H$ has no vertex cover of size less than $\min\{|W_1^*|,|L^*|\}=|W_1^*|$. By K\"onig's Theorem, there exists a matching covering $\parent(W_1^*)$ in $H$. This matching tells us how to embed $W_1^*$. In the last step, we embed $W_2$. This can be done greedily as $\parent(W_2)$ were mapped to~$L$.

\medskip
It remains to resolve the situation in the deficient case. Consider the index $i\in [m]$ and the sets $\Supper[j]$, and $K\subset L^i$ given by Lemma~\ref{lemma_StartDiana}. Set $W_1^{**}=\{x\in W_1^*\::\:\deg(\parent(x))\le \gamma (k+1)\}$.\Referee{(175)} The degree sum formula for trees gives $|W_1^{**}|\ge |W_1^*|-2/\gamma>5.9\gamma k$. Let $\tilde T\subset V(T)$ be a full-subtree rooted at a vertex $r\in V(T)$,\Referee{(176)} such that $v(\tilde T)\in [k/4,k/2]$. The existence of $\tilde T$ is guaranteed by Fact~\ref{fact_fullsubtrees}. Let $W_1^{***}\subseteq W_1^{**}\setminus \neighbor(r)$ be a set of size $5.8 \gamma k$. This is possible, as by the definition of~$W_1^*$ we have $|W_1^{**}\cap \neighbor(r)|\le 1$.\Referee{(177)} Observe that $|W_1^{***}\cap V(\tilde T)|\ge 2.9\gamma k$ or $|W_1^{***}\setminus V(\tilde T)|\ge 2.9\gamma k$.

First assume the former case. Let $X=\{x\in \parent(W_1^{***}\cap V(\tilde T))\::\: \parent(x)\in U_1\}$. Observe that $X\subseteq V(\tilde T)\setminus \{r\}$. Thus
\begin{equation}\label{eq:v(Tdownarrow)}
\sum_{x\in X} v(T(\downarrow x))\le v(\tilde T)\le \frac k2\;.
\end{equation}
\RefereeX{(178)}{The sentence was removed}
We begin embedding greedily the tree $T'=T-W_2-\bigcup_{x\in X}T(\downarrow x)$ so that~$U_1$ is mapped to $L^i$, $\parent(X)$ is mapped to $K$, and $U_2$ is mapped to $\Sstrong[i]$.  We can do so, as $ |\parent(X)|\le |W_1^{***}|= 5.8 \gamma k\le |K|$ (c.f. Lemma~\ref{lemma_StartDiana})\Referee{(179)}. Such an embedding exists by~\eqref{eq:Am} and~\eqref{eq:LSdiamond}. 

For every $x\in X$, we sequentially assign an index $j_x\in [m]$ to denote where $T(\downarrow x)$ will be embedded, according to the following rule. Let $X'\subset X$ be the set of those $y$'s for which the index $j_y$ has been assigned in previous rounds.
Let $v_x\in K$ be the image of $\parent(x)$.
If there exists any index  $j\neq i$ such that 
\begin{enumerate}[label={$(\roman{*})$}]
\item \label{it:enoughneighbours}$\deg(v_x,(L^j\cup \Sweak[j]))>|\{y\in X'\::\: j_y=j\}|$, and 
\item \label{it:enoughspace Vj}$\sum_{y\in X'\::\: j_y=j}v(T(\downarrow y))\le k/(5m) -2\gamma k\;,$
\end{enumerate}  
then choose such an index $j$ and fix $j_x=j$. If no such index $j\neq i$ exists, than set $j_x=i$.

\RefereeX{!}{The proof was expanded substantially from this point on.}
The assignment finished, for every $x\in X$ with $j_x\neq i$ we map $x$ to $\neighbor (v_x)\cap (L^{j_x}\cup \Sweak[j_x]) $. 
This is possible thanks to Condition~\ref{it:enoughneighbours}. Having mapped all $X_{\neq i}=\{y\in X\::\: j_y\neq i\}$, we embed $\{\children(x), x\in X_{\neq i}\}$ in $L^{j_x}\cup \Sstrong[j_x]$ (see Figure~\ref{fig:xInSweak}). 
Even if~$x$ is mapped to $\Sweak[j_x]$, the at most $\gamma (k+1)$ children of~$x$ (cf.\ definition of $W_1^{**}$) can be embedded thanks to Condition~\ref{it:enoughspace Vj} and the definition of $\Sweak[j]$. 
Having embedded all the vertices $\bigcup_{x\in X_{\neq i}}\children(x)$,  we continue as follows.
For each $x\in X_{\neq i}$ we embed the rest of $T(\downarrow x)$ greedily in $L^{j_x}\cup \Sstrong[j_x]$, which is possible by~\eqref{eq:v(Tdownarrow)}.
We are finished with embedding~$T$ in the case that $j_x\neq i$ for all $x\in X$. Thus, assume that 
\begin{equation}\label{eq:Cond1NOT}
j_x=i \text{ for some $x\in X$\;.}
\end{equation}

Suppose that $\sum_{y\in X\::\: j_y=j}v(T(\downarrow y))> k/(5m) -2\gamma k$ for some $j\neq i$. Set $\mathcal D=\{ T(\downarrow y)\::\: j_y=j\}$. 
\begin{AuxiliaryCl}
\label{cl:U1big}
We have $|U_1\cap V(\mathcal D)|\ge 500 \gamma k$.
\end{AuxiliaryCl}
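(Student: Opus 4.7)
The plan is to argue by contradiction. Suppose $|U_1\cap V(\mathcal D)|<500\gamma k$; I will construct a semi-independent partition of $T$ whose discrepancy exceeds $2\gamma k$, contradicting the standing assumption $\disc(T)<2\gamma k$ of Lemma~\ref{lemma:secondstep}.

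First I extract a canonical antichain. Let $Y^*\subseteq\{y\in X:j_y=j\}$ be the set of its $\preceq$-maximal elements. Being an antichain, the subtrees $\{T(\downarrow y)\}_{y\in Y^*}$ are pairwise vertex-disjoint and their union equals $V(\mathcal D)$; moreover, each $T(\downarrow y)$ is a single connected component of the forest $T-\parent(Y^*)$. The crucial point is that for distinct $y,y'\in Y^*$, $\preceq$-incomparability forces $\parent(y')\notin V(T(\downarrow y))$, so $\parent(Y^*)$ genuinely separates the shrubs from each other and from the rest of $T$. The cardinality $|Y^*|$ is small: two leaves in $W_1^*$ cannot share a parent (otherwise they would be leaf-siblings in $U_1$, in violation of the definition of $W_1^*$), so the map sending each $x\in X$ to its unique leaf-child in $W_1^{***}$ is injective. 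Hence $|Y^*|\le|X|\le|W_1^{***}|=5.8\gamma k$ and consequently $|\parent(Y^*)|\le 5.8\gamma k$.

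Next I assemble the partition. Let $\mathcal K$ consist of the remaining components of $T-\parent(Y^*)$, and for each $K\in\mathcal K$ fix its tree bipartition $(V(K)_\oplus,V(K)_\ominus)$ so that $|V(K)_\oplus|\ge|V(K)_\ominus|$. Define
\begin{align*}
W_1 &:=\parent(Y^*)\cup(U_1\cap V(\mathcal D))\cup\bigcup_{K\in\mathcal K}V(K)_\ominus,\\
W_2 &:=(U_2\cap V(\mathcal D))\cup\bigcup_{K\in\mathcal K}V(K)_\oplus.
\end{align*}
The set $W_2$ is independent: each of its constituents is either a subset of the independent set $U_2$ or a color class of a tree, and these constituents lie in pairwise non-adjacent components of $T-\parent(Y^*)$, with the separating vertices $\parent(Y^*)$ placed safely in $W_1$.

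Since $\sum_{K\in\mathcal K}(|V(K)_\oplus|-|V(K)_\ominus|)\ge 0$, I obtain
\[
|W_2|-|W_1|\;\ge\;\bigl(|U_2\cap V(\mathcal D)|-|U_1\cap V(\mathcal D)|\bigr)-|\parent(Y^*)|.
\]
The contradiction hypothesis gives $|U_2\cap V(\mathcal D)|>v(\mathcal D)-500\gamma k$, so $|U_2\cap V(\mathcal D)|-|U_1\cap V(\mathcal D)|>v(\mathcal D)-1000\gamma k>k/(5m)-1002\gamma k$. Combined with $|\parent(Y^*)|\le 5.8\gamma k$, this gives $|W_2|-|W_1|>k/(5m)-1008\gamma k>2\gamma k$, where the last inequality uses $\gamma\ll 1/m$ (valid since $\gamma\ll c_{\mathbf{E}}\ll q$ and $m=\ci(n/k)$ is bounded by a function of $q$). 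Thus $(W_1,W_2)$ is a semi-independent partition whose discrepancy exceeds $2\gamma k$, contradicting $\disc(T)<2\gamma k$. The main delicate point I expect will be verifying that $W_2$ remains independent, which hinges precisely on $Y^*$ being an antichain so that $\parent(Y^*)$ truly disconnects $V(\mathcal D)$ from its complement.
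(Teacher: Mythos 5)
Your argument is correct, and it takes a genuinely different route from the paper. The paper's proof splits $\mathcal D$ into components of order at most $10$ (``small'') and more than $10$ (``large''): if small components account for at least half of $v(\mathcal D)$, one counts directly the $W_1^{***}$-vertex that every component necessarily contains; otherwise the large components are further split according to the ratio $|U_2\cap V(D)|/|U_1\cap V(D)|$, and a local discrepancy argument (a modified partition supported on $T-\mathcal D_2$) shows the ``mostly-$U_2$'' components are few. You instead run a single global contradiction: keep the given $(U_1,U_2)$ restricted to $V(\mathcal D)$, switch to the intrinsic tree bipartition on the remaining components of $T-\parent(Y^*)$, and absorb the separator $\parent(Y^*)$ into $W_1$. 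This avoids the size case analysis entirely, at the modest cost of verifying that $Y^*$ is an antichain so that the subtrees $T(\downarrow y)$ really are the components of $T-\parent(Y^*)$ and $W_2$ stays independent -- a point the paper elides by treating the members of $\mathcal D$ as disjoint components throughout. One small caveat worth making explicit: you pass from the hypothesis $\sum_{y:j_y=j}v(T(\downarrow y))>k/(5m)-2\gamma k$ to $v(\mathcal D)>k/(5m)-2\gamma k$; these agree exactly when the trees $\{T(\downarrow y):j_y=j\}$ are pairwise disjoint, which is the intended reading (and is what the paper's own proof tacitly uses when it treats $\mathcal D$ as a family of components), and which your antichain $Y^*$ then certifies. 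With that understood, the numerics close: $|W_2|-|W_1|>k/(5m)-1008\gamma k>2\gamma k$ once $\gamma m\ll 1$, which holds since $m\le\lceil 1/q\rceil$ and $\gamma\ll q$.
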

\begin{proof}
First, consider the case that the total order of \emph{small components} of $\mathcal D$, defined as with at most $10$ vertices, is at least $| V(\mathcal D)|/2$. 
In each such component, there is at least one vertex of $W_1^{***}\subseteq U_1$. Hence $|U_1\cap V(\mathcal D)|\ge \frac 1{10}\cdot \frac{|V(\mathcal D)|}{2}\ge \frac{k}{200 m}>500 \gamma k$.

Next, consider the case that the total order of \emph{large components} of $\mathcal D$ (those having more than 10 vertices) is more than $|V(\mathcal D)|/2$. 
Let $\mathcal D_1$ be those large components $D\in \mathcal D$ with $|U_2\cap V(D)|<10|U_1\cap V(D)|$, and let $\mathcal D_2$ be those large components $D\in \mathcal D$ with $|U_2\cap V(D)|\ge 10|U_1\cap V(D)|$. 
Consider the tree $T''=T-\mathcal D_2$, and its colour classes $T''_\oplus$ and $T''_\ominus$. Let $R$ be the roots of the trees in $\mathcal D_2$. 
We have $|R|\le |V(\mathcal D_2)|/10$. 
Set the partition $V(T)=U_1'\dcup U_2'$, where $U_2'=T''_\oplus\cup (U_2\cap V(\mathcal D_2))\setminus R$ and $U_1'=V(T)\setminus U_2'=T''_\ominus\cup (U_1\cap V(\mathcal D_2))\cup R$. 
Observe that $U_2'$ is an independent set. 
As $\disc(T)<2\gamma k$, we have
$$2\gamma k>|U_2'|-|U_1'|\ge |U_2\cap V(\mathcal D_2)|-|U_1\cap V(\mathcal D_2)|-2|R|\ge (\tfrac{9}{11}-\tfrac{1}{5})|V(\mathcal D_2)|\;.$$
We conclude that $|V(\mathcal D_2)|<4\gamma k$. In particular, we have $|V(\mathcal D_1)|\ge |V(\mathcal D)|/4$. Then
$$|U_1\cap V(\mathcal D)|\ge|U_1\cap V(\mathcal D_1)|\ge \tfrac1{11}\cdot
\tfrac{|V(\mathcal D)|}4>500\gamma k\; 
,$$ as needed. 
\end{proof}
Recall that we have embedded the entire tree $T$ except the set $M=\bigcup_{x\in X\setminus X_{\neq i}} V(T(\downarrow x))$.
Let $Q\subset U_2\cap M$ be a set of size $\max\{400\gamma k, |U_2\cap M|\}$. To finish the embedding of $T$, we embed greedily the vertices of $Q\cup (U_1\cap M)$ in $L^i$ and the vertices of $(U_2\cap M)\setminus Q$ in $\Sstrong[i]$.
Prior to this embedding, by Claim~\ref{cl:U1big}, at least $500\gamma k$ vertices of $U_1$ had been embedded outside of $L^i$. Thus, the minimum-degree conditions~\eqref{eq:Am} guarantee that such a greedy embedding indeed exists.

Thus, it remains to consider that $\sum_{y\in X\::\: j_y=j}v(T(\downarrow y))\le k/(5m) -2\gamma k$ for all $j\neq i$. At the same time, we had not been able to satisfy Condition~\ref{it:enoughneighbours} for any $j\not= i$ for the vertex $x$ from~\eqref{eq:Cond1NOT}. Then $\deg(v_x,\bigcup _{j\neq i}(L^j\cup \Sweak[j]))\leq |\{y\in X\::\: j_y\neq i\}|$.

At least 
\begin{equation}\label{eq:outtLi}
(k+1)/2-|L^i|\ge |U_1|-|L^i|
\end{equation}
vertices of~$U_1$ were embedded outside~$L^i$. Indeed, at least $\deg(v_x,\bigcup _{j\neq i}(L^j\cup \Sweak[j]))$ vertices $x\in X$ were assigned some $j_x\neq i$. Each corresponding tree $T(\downarrow x)$ contains at least one child of~$x$, belonging to $W_1^{***}\subseteq U_1$, which was thus embedded outside $L^i$. Using~\eqref{eq:defK}, we get  \eqref{eq:outtLi}.

 It remains to embed the trees $\{T(\downarrow x)\::\: x\in X\setminus X_{\neq i}\}$. 
We first embed all the trees $T(\downarrow x)\setminus (W_{1}^{***}\cup W_2)$, $x\in X\setminus X_{\neq i}$. The extension  to $W_{1}^{***}\cup W_2$ will be done at the very end.

Set $W_X=W_1^{***}\cap \left (\bigcup_{x\in X\setminus X_{\neq i}}V(T(\downarrow x))\right)$ 
and $W_Y=W_1^{***}\cap \left (\bigcup_{x\in  X_{\neq i}}V(T(\downarrow x))\right)$. 
\begin{AuxiliaryCl}\label{AC:caj}
We have $|W_X\cup W_Y|\ge 1.9\gamma k$.
\end{AuxiliaryCl}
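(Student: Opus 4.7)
The plan is to notice that $W_X \cup W_Y = W_1^{***} \cap \bigcup_{x \in X} V(T(\downarrow x))$, so if a leaf $w \in W_1^{***} \cap V(\tilde T)$ has parent $p_w := \parent(w)$ lying in~$X$, then $w \preceq p_w$ gives $w \in V(T(\downarrow p_w))$, and consequently $w \in W_X \cup W_Y$. Since $w \neq r$ and $w \notin \neighbor(r)$, the parent $p_w$ belongs to $V(\tilde T)\setminus \{r\}$; in particular $p_w$ is not the root of~$T$, so $\parent(p_w)$ is well-defined, and by the very definition of~$X$, the condition $p_w \in X$ is equivalent to $\parent(p_w) \in U_1$.

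It therefore suffices to show that at most $\gamma k$ leaves $w \in W_1^{***} \cap V(\tilde T)$ have $\parent(p_w) \in U_2$; combined with the hypothesis $|W_1^{***} \cap V(\tilde T)| \ge 2.9\gamma k$ (which is the case we are currently in), this yields $|W_X \cup W_Y| \ge 1.9\gamma k$. For every such ``bad'' leaf~$w$, the independence of $U_2$ forces $p_w \in U_1$, so every bad leaf belongs to the set
$$W' := \{w \in W_1^{***} \cap V(\tilde T) \::\: p_w \in U_1\}$$
of leaves in $U_1$ whose parents also lie in $U_1$.

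The main step will be to bound $|W'| < \gamma k$ by exploiting $\disc(T) < 2\gamma k$. The plan is to flip $W'$ from $U_1$ to~$U_2$: since $v(T) \ge 3$ no two leaves of~$T$ are adjacent, and each leaf in~$W'$ has its unique neighbour $p_w$ in~$U_1$, hence the modified partition $(U_1 \setminus W',\, U_2 \cup W')$ remains semi-independent. Its discrepancy equals $(|U_2|-|U_1|) + 2|W'| \ge 2|W'|$, because $|U_2|\ge |U_1|$ in the original partition. Since $\disc(T)$ upper-bounds the discrepancy of every semi-independent partition, we get $2|W'|\le \disc(T) < 2\gamma k$, i.e.\ $|W'| < \gamma k$. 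The main obstacle is identifying the correct modification of the partition $(U_1,U_2)$ so that a contradiction with the discrepancy bound captures precisely the set of leaves to be bounded; once this is in place, the argument reduces to elementary bookkeeping.
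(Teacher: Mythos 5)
Your proposal is correct and follows essentially the same route as the paper: both arguments hinge on flipping from $U_1$ to $U_2$ the set of leaves in $W_1^{***}\cap V(\tilde T)$ whose parent lies in $U_1$ (your $W'$, the paper's $(W_1^{***}\cap V(\tilde T))\setminus\tilde W$) and using $\disc(T)<2\gamma k$ to bound its size. The only cosmetic difference is that you directly characterize the ``good'' leaves by $\parent(p_w)\in U_1$ (equivalently $p_w\in X$), whereas the paper instead works with the subset $\tilde W$ of leaves having $p_w\in U_2$ and invokes independence of $U_2$ to reach the same conclusion; the resulting arithmetic is equivalent.
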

\begin{proof}Let $\tilde W$ be all vertices in $W_1^{***}\cap V(\tilde T)\subseteq U_1$ whose parent lies in $U_2$.  For $y\in \tilde W$, the independence of $U_2$ implies that $\parent(\parent (y))\in U_1$. Thus by the definition of $X$, we have that $\parent(y)\in X$ and thus $y\in W_1^{***}\cap \left(\bigcup_{x\in X}V(T(\downarrow x))\right)=W_X\cup W_Y$. Hence, $\tilde W\subset W_X\cup W_Y$.

The semi-independent partition 
$$\big(U_1\setminus ((W_1^{***}\cap V(\tilde T))\setminus \tilde W)
\:,\:
U_2\cup ((W_1^{***}\cap V(\tilde T))\setminus \tilde W)\big)$$ has gap 
\begin{align*}
&\left(|U_2|+|W_1^{***}\cap V(\tilde T)|-|\tilde W|\right)
-\left(|U_1|+|\tilde W|-|W_1^{***}\cap V(\tilde T)|\right)\\
&\ge 
\left(|W_1^{***}\cap V(\tilde T)|-|\tilde W|\right)
-\left(|\tilde W|-|W_1^{***}\cap V(\tilde T)|\right)\ge 2\cdot 2.9\gamma k-2|\tilde W|\;.
\end{align*}
Since $\disc (T)<2\gamma k$, we get $|\tilde W|\ge 1.9\gamma k$.
\end{proof}

Set $N=\bigcup_{x\in X_{\neq i}}V(T(\downarrow x))$.
By Claim~\ref{AC:caj}, we have that $|W_Y|\ge  0.75\gamma k$, or $|W_X|\ge  0.75\gamma k$. Hence, 
\begin{equation}\label{eq:U1WXN}
|U_1\setminus (W_X\cup N)|\le \frac {k+1}2- 0.75\gamma k\leBy{\eqref{eq:Am}} |L^i|- \frac {\gamma k}{2}\;.
\end{equation}

 For a fixed $x\in X\setminus X_{\neq i}$,  we proceed embedding $T(\downarrow x)\setminus W_X$ greedily in $G[L^i\cup \Sstrong[i]]$, using~$L^i$ to host $(U_1\cap V(T(\downarrow x)))\setminus W_X$, and $\Sstrong[i]$ to host $(U_2\cap V(T(\downarrow x)))\setminus W_2$. 
By~\eqref{eq:U1WXN},~\eqref{eq:Am}, and the definition of $\Sstrong[i]$, we have  $\deg(v_x, L^i)\ge |L^i|-\gamma k/2$ and 
$\delta(\Sstrong[i], L^i)\ge |L^i|-\gamma k/2$ is sufficient to accommodate the vertices from $U_1\setminus (W_X\cup N)$ in $L^i$. 
As for the vertices that need to be mapped to $\Sstrong[i]$, recall that the fact that $(U_1,U_2)$ is $8\gamma k$-ideal yields $|W_2|\ge 8\gamma k$. Together with the fact that we are considering the deficient case, we get that at most 
\[|U_2\setminus W_2|\le (\frac k2+\gamma k)-8\gamma k\le |\Sstrong[i]|-7\gamma k\] vertices are mapped to $\Sstrong[i]$. Hence, the minimum degree  of vertices of $L^i$ to $\Sstrong[i]$ is sufficient for a greedy embedding.

\begin{figure}[t]\label{fig:xInSweak}
\centering
\subfigure[{In case $x$ is mapped to $L^j$ we can embed the tree $T(\downarrow x)$ greedily in $G[L^j, \Sstrong[j]]$.}]
{
\hspace{0.8in}
\includegraphics[scale=1.2]{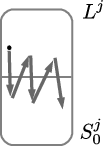}
\hspace{0.8in}
}
\hspace{0.2in}
\subfigure[{In case $x$ is mapped to $v\in\Sweak[j]$ (but not necessarily in $\Sstrong[j]$) we first embed all its children to $L^j$. To this end we make use of Condition~\ref{it:enoughspace Vj}. The rest of the embedding goes in $G[L^j, \Sstrong[j]]$.}]{
\hspace{0.6in}
\includegraphics[scale=1.2]{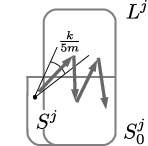}
\hspace{0.6in}
}
\caption{Embedding the tree $T(\downarrow x)$ in Lemma~\ref{lemma:secondstep}. The placement of $x$ is denoted by a black dot. The embedding the proceeds following the arrows.}
\end{figure}

The next stage is to embed the vertices of $W_X$. \Referee{(180)} Let $L^*\subseteq L^i$ be the set of unused vertices. We consider a bipartite graph~$H$ whose two colour classes are $L^*$ and $\parent(W_X)$. A pair $vx$, $v\in L^*$, $x\in \parent(W_X)$ forms an edge in~$H$ if $x$ was mapped to a vertex that is adjacent to~$v$ in~$G$. By the definition  of~$\Sstrong[i]$, and by~\eqref{eq:Am}, we have $\delta_H(\parent(W_X),L^*)\ge |L^*|-\gamma k/2$, 
and $\delta_H(L^*,\parent(W_X)\ge |\parent(W_X)|-\gamma k/2=| W_X|-\gamma k/2$. 
We conclude that $H$ has no vertex cover of size less than $\min\{|W_X|,|L^*|\}$.
As we did not embed any vertex from $W_X$ yet, and by~\eqref{eq:outtLi} we mapped to $L^i$ at most $|U_1|-(|U_1|-|L^i|)-|W_X|=|L^i|-|W_X|$ vertices, we get $|L^*|\ge  |W_X|$ and thus the minimum vertex cover has size at least $|W_X|$. By K\"onig's Theorem, there exists a matching covering $\parent(W_X)$ in $H$. This matching tells us how to embed $W_X$. In the last step, we embed~$W_2$. This can be done greedily as $\parent(W_2)$ were mapped to~$L$.

\medskip
The case $|W_1^{***}\setminus V(\tilde T)|\ge 2.9\gamma k$ is treated similarly, the difference being that this time we start with $X=\{x\in \parent(W_1^{***}\setminus V(\tilde T))\::\: \parent(x)\in U_1\}$.
\end{proof}
\section*{Acknowledgement}
We would like to thank Yi Zhao for thorough discussions over his
paper~\cite{Z07+}. This work is a part of the Masters thesis of JH written under the supervision of Daniel Kr\'al'. The second reader was Zden\v ek Dvo\v r\'ak. Dan and Zden\v ek made useful comments on previous versions of the manuscript.
Mikl\'os Simonovits and Endre Szemer\'edi encouraged us during the project. We further thank two referees for their very
detailed comments.

JH was supported in part by the grant GAUK 202-10/258009.
The work leading to these results was partially carried out while DP was affiliated  to the Institute for Theoretical Computer Science, Faculty of Mathematics and Physics, Charles University, Malostransk\'e n\'am\v{e}st\'{\i}~25, 118~00 Prague, Czech Republic, and to the Alfr\'ed R\'enyi Institute of Mathematics, Hungarian Academy of Sciences, Re\'altanoda utca 13-15, H-1053, Budapest, Hungary. The Institute for Theoretical Computer Science of Charles University is supported as project 1M0545 by Czech Ministry of Education. DP was partially supported by the FIST (Finite Structures) project, in the framework of the European Community's ``Transfer of Knowledge'' programme.

\bibliographystyle{plain}
\bibliography{bibl}

\appendix
\def\LPDJ{{\mathrm{L\ref{lemma_Embedding1}}}}
\section{Proofs of some auxiliary facts}
Proofs of several auxiliary statements
were omitted in the main body of the paper. Here we give these proofs.

\subsection{Proof of Lemma~\ref{prop_SE_embeddingwithfewleaves}}
For the proof we need the following two statements. The first one is a  simple corollary of Hall's Matching Theorem.
\begin{lemma}\label{prop_Konig_v1}
Let $K=(W_1,W_2; J)$ be a bipartite graph such that $\delta(K)\ge
\tfrac{|W_1|}2$ and $|W_1|\le |W_2|$. Then $K$ contains a matching
covering $W_1$.
\end{lemma}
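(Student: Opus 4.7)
The plan is to verify Hall's condition on $W_1$ and then invoke Hall's Matching Theorem to get the covering matching. So I would show: for every $S\subseteq W_1$, $|N(S)|\ge |S|$.

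I would split into two cases according to the size of $S$. If $|S|\le |W_1|/2$, then picking any single vertex $v\in S$ already gives $|N(S)|\ge |N(v)|\ge \delta(K)\ge |W_1|/2\ge |S|$, so Hall's condition holds trivially. The interesting case is $|S|>|W_1|/2$. Here I would argue by contradiction: if $|N(S)|<|S|$, then since $|S|\le |W_1|\le |W_2|$, the set $W_2\setminus N(S)$ is nonempty. Fix any $u\in W_2\setminus N(S)$; all of $u$'s neighbors lie in $W_1\setminus S$, so $|W_1\setminus S|\ge \deg(u)\ge |W_1|/2$, which forces $|S|\le |W_1|/2$, contradicting the case assumption.

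Since Hall's condition holds, Hall's Matching Theorem yields a matching in $K$ saturating $W_1$. The only place where both hypotheses are used simultaneously is in the second case, where $|W_1|\le |W_2|$ is needed to guarantee $W_2\setminus N(S)\neq\emptyset$, and $\delta(K)\ge |W_1|/2$ is used to bound $\deg(u)$ from below; no further obstacle is expected.
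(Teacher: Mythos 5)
Your proof is correct: the case split and the contradiction in the $|S|>|W_1|/2$ case both go through, and Hall's theorem then applies. The paper does not spell out a proof (it only remarks that the lemma is a simple corollary of Hall's Matching Theorem), so your Hall-condition verification is exactly the intended argument.
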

Let $\ell$ be the number of leaves of $T$. Recall that $\ell<\alpha k$. Fact~\ref{fact_ManyLeavesInUnbalanced} gives that $\disc(F)<\alpha k$. In particular the lower bounds given in Properties~\ref{it:sA} and~\ref{it:BEM} of the lemma, combined with the upper bound in Property~\ref{it:sEM} yield $|A|,|B|\ge \frac{4k}{10}$.

We write $r=|B_\mathrm{d}|$, and $\mathcal{Q}=\{P_1,\ldots,P_{r}\}$.
Root $T$ at an arbitrary vertex $v\in T_\ominus$. An {\em $c$-induced path}
$a_1\ldots a_{c+1}\subset T$ is a path whose internal vertices have degree two
in $T$. Take a maximum family
$\mathcal{F}$ of vertex-disjoint 7-induced paths in $T$. We show that
$|V(\mathcal{F})|\ge k-19\ell$.

Let $D_3=\{u\in V(T)\: :\: \deg_T(u)\ge 3\}$ and $D_i=\{u\in V(T)\: :\: \deg_T(u)= i\}$ for $i=1,2$. By Fact~\ref{fact_FewLeavesManyDeg3}, we have $|D_3|<\ell$ (and $|D_2|\ge k-2\ell$). From
\begin{equation*}
2k=\sum_{u\in V(T)}
\deg(u)=|D_1|+2|D_2|+\sum_{u\in D_3}\deg(u)\ge 2k-3\ell+\sum_{u\in D_3}\deg(u) \; ,
\end{equation*}
we deduce that there are at most $3\ell +1$ maximal (w.~r.~t.\ inclusion) paths formed by vertices of degree $2$ or $1$ not containing the root $v$. On each such maximal path, at most $7$ vertices are not covered by $\mathcal {F}$. Thus the total number of vertices uncovered by $\mathcal {F}$ is at most $7(3\ell+1)+|D_3|+|\{v\}|\leq 26\ell$.
The order $\preceq_v$ naturally extends to an order on the paths of $\mathcal{F}$. For a family
$\mathcal{F}'\subset\mathcal{F}$ we write $T(\downarrow \mathcal{F}')$ to denote
all the vertices of $V(\mathcal{F}')$, and all vertices that are below some
vertex of $V(\mathcal{F}')$, i.e.,
$$T(\downarrow \mathcal{F}')=\bigcup_{u\in V(\mathcal{F}')}V(T(\downarrow u))\;
\mbox{.}$$
There is a family $\mathcal{R}\subset \mathcal{F}$ satisfying
the three properties below.
\begin{itemize}
 \item[{\bf(P1)}] $|\mathcal{R}|\le
|\mathcal{E}|+|\mathcal{M}|$.
 \item[{\bf(P2)}] $|T(\downarrow\mathcal{R})|<34\alpha k$, and
$4(|\mathcal{E}|+|\mathcal{M}|)\le
\min\{|T_\oplus\cap T(\downarrow\mathcal{R})|, |T_\ominus\cap
T(\downarrow\mathcal{R})|\}$.
 \item[{\bf(P3)}] $\mathcal{R}$ is a $\preceq_v$-antichain.
\end{itemize}
We describe a procedure how to obtain such a family $\mathcal{R}$. By an inductive construction, we first find an auxiliary family $\mathcal{R}'$, starting with
$\mathcal{R}'=\emptyset$. While
$|\mathcal{R}'|<|\mathcal{E}|+|\mathcal{M}|$ we
take a $\preceq_v$-minimal path in $\mathcal{F}$ which is not included in
$\mathcal{R}'$ and add it to $\mathcal{R}'$. From the
bound $|V(T)\setminus V(\mathcal{F})|\le 26\ell$, in each step we have that
$|T(\downarrow\mathcal{R}')|< 8|\mathcal{R}'|+26\alpha k$, and obviously
$4|\mathcal{R}'|\le\min\{|T_\oplus\cap
T(\downarrow\mathcal{R}')|,|T_\ominus\cap T(\downarrow\mathcal{R}')|\}$.
Let $\mathcal{R}$ be the $\preceq_v$-maximal elements of $\mathcal{R}'$. Hence $|T(\downarrow \mathcal R)|=|T(\downarrow \mathcal R')|$.
The properties {\bf(P1)}, {\bf(P2)}, and {\bf(P3)} are satisfied.

Set $d=5\alpha k$. Take a family $\mathcal{X}=\{X_1,\ldots,X_d\}$ of
$d$ 5-induced vertex-disjoint $T_\oplus\leftrightarrow T_\ominus\leftrightarrow
T_\oplus\leftrightarrow T_\ominus\leftrightarrow T_\oplus$ paths that avoid $\{v\}\cup T(\downarrow\mathcal{R})$. For each path $R\in \mathcal{R}$ we write $a_R$ to
denote its $\preceq_v$-maximum vertex in $T_\ominus$, and set $b_R=\children(a_R)$, $c_R=\children(b_R)$, and $d_R=\children(c_R)$. We set
$U=A\cap(V(\mathcal{E})\cup
V(\mathcal{M}))$ and $Q=A\cap V(\mathcal{Q})$.

We now describe the embedding $\psi$ of $T$. We do not have to
embed those leaves whose parents are embedded in $A$ until the very end. Indeed,  such a
partial embedding easily extends to an embedding of $T$ using Property~\ref{it:XUN} of the lemma.  We map the root $v$ to an arbitrary vertex in $A\setminus (U\cup Q)$. We
continue embedding $T$ greedily, mapping vertices from $T_\ominus$ to
$A\setminus (U\cup Q)$ and internal vertices of $T_\oplus$ to $B_\mathrm{a}$.
However, there are two exceptions in the greedy procedure:
\begin{itemize}
 \item[{\bf(S1)}] If we are about to map a vertex $b_R$ (for some $R\in\mathcal{R}$),  we skip its embedding, as well as the embedding of $T(\downarrow b_R)$.
 \item[{\bf(S2)}] If we are about to map a vertex $x_2$ which was part of some path $x_1x_2x_3x_4x_5\in\mathcal{X}$, we skip its embedding, as well as the embedding of the vertices $x_3$ and $x_4$. We continue with mapping $x_5$ to $B_\mathrm{a}$.
\end{itemize}
Observe that we are able to finish the greedy part of the embedding since the two ``skipping rules'' guarantee that both in $A$ and in $B$ at least $d>\alpha k$ vertices of $T$ remain unembedded.

In the next step, we build missing connections in the graph $H$ caused by the skipping rules.
We construct an auxiliary
bipartite graph $K_1=(O_\mathrm{a},O_\mathrm{b};E_1)$. We arbitrarily pair up
$2(d-r)$ vertices of $A\setminus (U\cup Q)$ unused by $\psi$ into pairs
$\mu_1=\{a_1^1,a_1^2\},\ldots,\mu_{d-r}=\{a_{d-r}^1,a_{
d-r}^2\}$. The remaining $r$ pairs are formed by
endvertices of the paths in $\mathcal{Q}$. We set $\mu_{i+d-r}=A\cap V(P_i)$. 
Vertices of the color class $O_\mathrm{b}$ are formed by the pairs $\mu_i$
($i\in[d]$). Vertices of the color class $O_\mathrm{a}$ are formed by the paths
in $\mathcal{X}$. A path $x_1x_2x_3x_4x_5\in \mathcal{X}$ is adjacent in $K_1$
to a pair $\mu_i$ if and only if there exists a perfect matching in the graph
$H[\{\psi(x_1),\psi(x_5)\},\mu_i]$. Since $|O_\mathrm{a}|=|O_\mathrm{b}|$ and
$\delta(K_1)\ge |O_\mathrm{a}|-2\alpha k\ge \tfrac{|O_\mathrm{a}|}2$, there
exists, by Lemma~\ref{prop_Konig_v1}, a perfect matching $M_1$ in $K_1$. The matching $M_1$ tells us where to map the vertices $x_2$ and $x_4$ of each path $x_1x_2x_3x_4x_5\in\mathcal{X}$. We extend $\psi$ accordingly on the vertices $\bigcup_{x_1x_2x_3x_4x_5\in\mathcal{X}}\{x_2,x_4\}$. If a path $x_1x_2x_3x_4x_5\in\mathcal{X}$ was matched with $\mu_{i+d-r}$ (for some $i\in[r]$) in $K_1$, we map $x_3$ to the middle vertex of the path~$P_i$. We write $\mathcal{X}'$ for those paths $x_1x_2x_3x_4x_5\in\mathcal{X}$ whose vertex $x_3$ was not yet mapped. It holds  $|\mathcal{X}'|\ge 4\alpha k$.

Let $\xi:\mathcal{R}\rightarrow U$ be an arbitrary
injective mapping. We construct another bipartite graph $K_2=(J_\mathrm{a},J_\mathrm{b}; E_2)$.
Vertices of the color class $J_\mathrm{a}$ are elements of
$\mathcal{R}\cup\mathcal{X}'$ and vertices of the color class $J_\mathrm{b}$ are
vertices of $B_\mathrm{a}$ unused by $\psi$. 
\begin{AuxiliaryCl}\label{cl:JaJb}
We have $|J_\mathrm{a}|\le |J_\mathrm{b}|$.
\end{AuxiliaryCl}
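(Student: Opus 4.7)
The argument is a counting. Write $|J_\mathrm{a}| = |\mathcal{R}| + |\mathcal{X}'|$, where by construction $|\mathcal{X}'| = d - r = 5\alpha k - |B_\mathrm{d}|$ and, by (P1), $|\mathcal{R}| \le |\mathcal{E}|+|\mathcal{M}|$. For $|J_\mathrm{b}|$, I would track carefully which vertices of $B_\mathrm{a}$ have been consumed by the current partial embedding $\psi$. In the greedy part, only the internal vertices of $T_\oplus$ are destined for $B_\mathrm{a}$, and the two skipping rules exempt precisely the vertices in $T_\oplus\cap V(T(\downarrow b_R))$ for each $R\in\mathcal{R}$ (from (S1)) together with the vertex $x_3$ of every path in $\mathcal{X}$ (from (S2)); the $x_3$'s of the $r$ paths paired with $\mathcal{Q}$ are subsequently placed into $B_\mathrm{d}$, so they still do not touch $B_\mathrm{a}$. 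Setting $s := \sum_{R\in\mathcal{R}}|T_\oplus\cap V(T(\downarrow b_R))\cap\mathrm{internal}|$ and letting $\ell_\oplus$ denote the number of leaves of $T$ in $T_\oplus$, the $B_\mathrm{a}$-usage is therefore $(|T_\oplus|-\ell_\oplus) - s - d$, giving $|J_\mathrm{b}| = |B|-|B_\mathrm{d}|-|T_\oplus|+\ell_\oplus+s+d$.

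Subtracting and invoking hypothesis (vi) in the form $|B|\ge |T_\oplus|-1-|\mathcal{E}|-|\mathcal{M}|$, the desired inequality $|J_\mathrm{a}|\le |J_\mathrm{b}|$ reduces after straightforward manipulation to
\[
\ell_\oplus + s \;\ge\; |\mathcal{R}| + |\mathcal{E}| + |\mathcal{M}| + 1.
\]
I would then prove this using (P2) and the fine structure of the 7-induced paths. By (P2), $|T_\oplus\cap T(\downarrow\mathcal{R})|\ge 4(|\mathcal{E}|+|\mathcal{M}|)$. By (P3), the subtrees $T(\downarrow v_{0,R})$ (and therefore the subtrees $V(T(\downarrow b_R))$) are pairwise disjoint across $R\in\mathcal{R}$. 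Every 7-induced path has $4$ vertices in $T_\oplus$, and the three internal ones among them ($v_1,v_3,v_5$ or $v_2,v_4,v_6$) all lie in $V(T(\downarrow b_R))$; consequently every path of $\mathcal{R}'$ whose vertex set is contained in some $V(T(\downarrow b_R))$ contributes at least $3$ to $s$.

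The main obstacle is to control the $T_\oplus$ vertices lying in $T(\downarrow\mathcal{R})\setminus\bigcup_R V(T(\downarrow b_R))$, which would otherwise absorb the slack produced by (P2). These consist of the at-most-two vertices of the path $R$ lying above $b_R$, plus whatever sits in branches rooted at children of $v_{0,R}$ other than the in-path child. Since $\mathcal{F}$ was chosen maximum and $|V(T)\setminus V(\mathcal{F})|\le 26\ell<26\alpha k$, any such branch either contains another path of $\mathcal{R}'$ in its entirety or it is tiny; combined with the global bound $|T(\downarrow\mathcal{R})|<8(|\mathcal{E}|+|\mathcal{M}|)+26\ell$ from the construction of $\mathcal{R}'$, one shows that the number of $\mathcal{R}'$-paths trapped in these branches is small compared with $|\mathcal{E}|+|\mathcal{M}|$, so that at least $3(|\mathcal{R}|+k_\mathrm{trap}^c)\ge 3(|\mathcal{E}|+|\mathcal{M}|)-O(\alpha k)$ paths contribute $\ge 3$ each to $s$. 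Since $|\mathcal{R}|\le |\mathcal{E}|+|\mathcal{M}|\le \alpha k$ and $\alpha<1/10$, this comfortably yields $\ell_\oplus+s\ge |\mathcal{R}|+|\mathcal{E}|+|\mathcal{M}|+1$ (with considerable slack), completing the proof of the claim.
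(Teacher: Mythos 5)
Your first paragraph is sound and is in the same spirit as the paper's own argument: tracking the $B_\mathrm{a}$-usage, using $|\mathcal{X}'|=d-r$ and hypothesis~\ref{it:BEM} of Lemma~\ref{prop_SE_embeddingwithfewleaves}, the claim does reduce to $\ell_\oplus + s \ge |\mathcal{R}|+|\mathcal{E}|+|\mathcal{M}|+1$. The gap is in how you try to prove this reduced inequality. What you actually establish is only that each of the $|\mathcal{R}|$ paths contributes its three internal $T_\oplus$-vertices to $\bigcup_R T(\downarrow b_R)$, i.e.\ $s\ge 3|\mathcal{R}|$; but $\mathcal{R}$ is only the set of $\preceq_v$-maximal elements of $\mathcal{R}'$ and can be far smaller than $|\mathcal{E}|+|\mathcal{M}|=|\mathcal{R}'|$, so $3|\mathcal{R}|$ alone is not enough. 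The subsequent ``trapped paths'' accounting is not a proof: the quantity $k_\mathrm{trap}^c$ is never defined, the branches rooted at the top endpoint $v_{0,R}$ of each $R$ need not lie in $\bigcup_R T(\downarrow b_R)$ and thus need not contribute to $s$ at all, and the announced bound $s\ge 3(|\mathcal{E}|+|\mathcal{M}|)-O(\alpha k)$ is vacuous when $|\mathcal{E}|+|\mathcal{M}|$ is small (the error term does not disappear and dominates). Finally, the closing sentence is a non sequitur: that $|\mathcal{R}|\le|\mathcal{E}|+|\mathcal{M}|\le\alpha k$ and $\alpha<1/10$ gives an upper bound on the right-hand side, which says nothing about whether the left-hand side $\ell_\oplus+s$ is large enough. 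You also never address the base case $|\mathcal{E}|+|\mathcal{M}|=0$, where the required inequality is simply $\ell_\oplus\ge 1$ and requires invoking Fact~\ref{fact_ManyLeavesInUnbalanced}.

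The paper sidesteps these difficulties by not splitting into $\ell_\oplus$ and the awkward quantity $s$ (which is supported only on $\bigcup_R T(\downarrow b_R)$). Instead it counts, among the $T_\oplus$-vertices of $T(\downarrow\mathcal{R})$, all but at most one per path of $\mathcal{R}$ as still unembedded, so property {\bf (P2)} can be used directly: $|T_\oplus\cap T(\downarrow\mathcal{R})|-|\mathcal{R}|\ge 4(|\mathcal{E}|+|\mathcal{M}|)-|\mathcal{R}|\ge 3(|\mathcal{E}|+|\mathcal{M}|)$ by {\bf (P1)}. Combined with~\ref{it:BEM} and $|\mathcal{X}'|\le |\mathcal{X}|$, this yields $|J_\mathrm{b}|\ge |J_\mathrm{a}|+|W|+|\mathcal{R}|-1$, and the last step --- that $|W|+|\mathcal{R}|\ge 1$ --- is handled by observing that $\mathcal{R}=\emptyset$ forces $W\neq\emptyset$ via Fact~\ref{fact_ManyLeavesInUnbalanced}. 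If you want to make your route work, you would have to replace the ad hoc ``trapped paths'' discussion by a genuine lower bound on $\ell_\oplus+s$ in terms of the quantity appearing in {\bf (P2)}; as things stand, the key inequality is asserted rather than proved.
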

\begin{proof}
Let $W$ be the set of leaves of $T_\oplus\setminus V(\mathcal R)$. Remember that the set $W$ is mapped only at the very end of the embedding procedure. Further, for any path $x_1\ldots x_5\in \mathcal X\setminus \mathcal X'$, the vertex $x_3\in T_\oplus$ has been  mapped to $B_\mathrm{d}$, which is disjoint from $B_\mathrm{a}$. Next for each path $x_1\ldots x_5\in \mathcal X'$, the vertex $x_3\in T_\oplus$ has not been embedded, yet. Each path in $\mathcal R$ has at most one vertex in $T_\oplus$ that has already been embedded. Therefore we have
\begin{align*}
|J_\mathrm{b}|&\ge |B|-
\big(|T_\oplus|-|W|-|X\setminus X'|-|X'|-(|T_\oplus\cap V(\downarrow\mathcal{R})|-|\mathcal R|)\big)\\
&\ge |B|-
|T_\oplus|+|W|+|X|+3(|\mathcal E|+|\mathcal M|)\\
&\ge |W| +|X|   +2|\mathcal R|-1\ge |J_\mathrm{a}|+|W|+|R|-1\ge |J_\mathrm{a}|\:,
\end{align*}
where the last inequality follows from the fact that if $\mathcal R=\emptyset$ then $W\neq \emptyset$ by FactFact~\ref{fact_ManyLeavesInUnbalanced}.
\end{proof}
A path $R\in\mathcal{R}$ is
adjacent in~$K_1$ with a vertex $b\in J_\mathrm{b}$ if and only if
$b\psi(a_R)\in E(H)$ and $b\xi(R)\in E(H)$. A path $x_1x_2x_3x_4x_5\in\mathcal{X}'$ is adjacent to a vertex $b\in J_\mathrm{b}$ if and only if $b\psi(x_2)\in E(H)$ and
$b\psi(x_4)\in E(H)$. 
 Indeed, $\delta(K_1)\ge
|J_\mathrm{a}|-2\gamma k> \tfrac{|J_\mathrm{a}|}2$, and $|J_\mathrm{a}|\le
|J_\mathrm{b}|$. By Lemma~\ref{prop_Konig_v1}, there exists a matching $M_2$ in $K_2$
covering $J_\mathrm{a}$. Such a matching tells us where to map
unembedded vertices $x_3$ (in the case of a path $x_1x_2x_3x_4x_5\in\mathcal{X}'$) and
vertices $b_R$ (in the case of a path $R\in\mathcal{R}$). For a path
$R\in\mathcal{R}$ we finish embedding the part of the tree $T(\downarrow
c_R)$, extending the mapping~$\psi$. If $\psi(c_R)\in V(\mathcal{E})$ we just use the
corresponding connecting edge of $\mathcal{E}$ to map $d_R$
to $H_\kappa$ (for some $\kappa\in I$) and continue embedding $T(\downarrow
d_R)$ greedily in $H_\kappa$. If $\psi(c_R)\in V(\mathcal{M})$ we map $d_R$ to the middle vertex of the corresponding
connecting path $\mathcal{M}$ and embed the rest of
$T(\downarrow d_R)$ greedily in $H_\kappa$ (for some $\kappa\in I$). 

\subsection{Omitted proofs from Section~\ref{sec_RLproof}}
\begin{proof}[Proof of Fact~\ref{fact:zap}~\ref{it:za1}]
Let $\tilde X\subseteq X$ be the set of vertices that are not typical w.~r.~t.\ $\bigcup_{i=1}^{\ell}W_i$, i.e., for every $v\in \tilde X$ we have $\deg(v, \bigcup_{i=1}^\ell W_i)<\sum_{i=1}^\ell(d(X,Y_i)-\varepsilon)|W_i|$. Thus $e(\tilde X, \bigcup_{i=1}^\ell W_i)<|\tilde X|\cdot \sum_{i=1}^\ell (d(X,Y_i)-\varepsilon)|W_i|$. Hence, there is an index $i\in[\ell]$ such that $d(\tilde X,W_i)<d(X,Y_i)-\varepsilon$. As $W_i$ is significant and $(X,Y_i)$ is $\varepsilon$-regular, we get that $|\tilde X|\le \varepsilon |X|$.
\end{proof}

\begin{proof}[Proof of Lemma~\ref{lemma_Embedding1}]
Without loss of generality assume that $|P'|\ge \Delta$. Let us fix an arbitrary set $S_P\subset P$ with $|S_P|=\Delta$ and another set $S_Q\subset Q$ with
$|S_Q|=\Delta$. The sets $S_P$ and $S_Q$ are significant.
Choose a
vertex $v\in P'$ which is typical w.~r.~t.\ $S_Q$. There are at least
$|P'|-\varepsilon s\ge 1$ such vertices. Set $\phi(r)=v$.

We inductively extend the embedding $\phi$, so that every vertex of $t$ that is mapped to $S_P$ is typical w.~r.~t.\ $S_Q$, and so
that every vertex that is mapped to $S_Q$ is typical w.~r.~t.\ $S_P$.
We illustrate the inductive step by describing how to embed the
neighborhood of a vertex $u$ that was already mapped to $P$.
The case that $\phi(u)\in Q$ is analogous. Let
$N\subset\neighbor(u)$ be the yet unembedded neighbors of $u$. The vertex
$\phi(u)$ has at least $(d-2\varepsilon)\Delta\ge \varepsilon s+v(t)$
neighbors in~$S_Q$. At least $|N|$ of them are typical w.~r.~t.\ $S_P$ and are not yet used by $\phi$. We map $N$ to these vertices.

For the moreover part, we only need to observe that if $|P'|\ge \Delta$, there is at least one vertex in $P'$ which is typical
w.~r.~t.\ $S_Q$. We map the root~$r$ to this vertex. The second condition of the moreover part is analogous.
\end{proof}

For the proof of Lemma~\ref{lem:Embedding-3}, we need to embed the shrubs of a
given tree in an efficient way. To this end, we try to
fill the clusters of a regular pair in a balanced way. The following
definition of packedness formalizes this.
Let $X,Y,Z$ be three disjoint sets of vertices of a graph~$G$.
We say that $U\subset X\cup Y$ is $(\lambda,\tau)$-\emph{packed} with respect
to the {\em head set $Z$}  and the {\em embedding sets} $X$ and
$Y$,\footnote{the embedding sets will be typically clear, and then we only
specify the head set} if 
\begin{align}
\label{eq:packed1}
\min\{|X\cap U|,|Y\cap
U|\}&\ge\min\{\wdeg_{\mathbf{H}}(Z,X),\wdeg_{\mathbf{H}}(Z,Y)\}-\lambda, \mbox{or}\\ \label{eq:packed2}
\left||X\cap U|-|Y\cap U|\right|&\le \tau
\end{align}

\begin{proof}[Proof of Lemma~\ref{lem:Embedding-3}]
Assume that~$\mathbf {H}$ has~$N$ clusters.
Let $\tilde X\subseteq X'$ be the set of vertices that are typical
w.~r.~t.\ all but at most~$\sqrt{\varepsilon} N$ sets~$C\cap V^X$, $C\in
V(M)$, w.~r.~t.\ all but at most~$\sqrt{\varepsilon}N$ clusters $Z\in  \mathcal Z$, and 
w.~r.~t.\ the cluster~$Y$. Let $\tilde Y\subseteq Y'$ be the set of vertices that are typical w.~r.~t.\ all
but at most~$\sqrt{\varepsilon} N$ sets~$C\cap V^Y$, $C\in V(M)$ and w.~r.~t.\
the cluster~$X$.  Let $\tilde Z\subseteq \bigcup \mathcal Z$ be the set
of vertices (viewed as vertices of individual clusters of $\mathcal Z$) that are typical w.~r.~t.\ all but at most~$\sqrt{\varepsilon} N$ sets~$C\cap V^{\mathcal Z}$, $C\in
V(M)$ and 
w.~r.~t.\ the cluster~$X$. Observe that by Fact~\ref{fact:zap}, $|X'\setminus
\tilde X|\le3\sqrt{\varepsilon} s$, $|Y'\setminus
\tilde Y|\le2\sqrt{\varepsilon} s$, and for every $Z\in \mathcal Z$,
\begin{equation}
\label{eq:cafedu}
|Z\setminus \tilde Z|\le 2\sqrt{\varepsilon} s\;.
\end{equation}
Let $Q_X$ be the set of vertices (viewed as vertices of individual clusters of $V(\mathbf H)$) typical w.~r.~t.\
$\tilde X$. We define analogously $Q_Y$. For each $v\in \tilde X\cup \tilde
Y$, let
\begin{align*}
M_v&=\{CD\in M\::\:
v\textrm{ is typical w.~r.~t.\ both }C\cap V^X\textrm{ and }D\cap V^X\} \quad\mbox{if $v\in \tilde X$}\;,\\
M_v&=\{CD\in M\::\:
v\textrm{ is typical w.~r.~t.\ both }C\cap V^Y\textrm{ and }D\cap V^Y\} \quad\mbox{if $v\in \tilde Y$}\;.
\end{align*}
For each cluster $C\in V(M)$ we have  by  Fact~\ref{fact:zap},
\begin{align}\label{eq:QO}
|C\setminus Q_X|,|C\setminus Q_Y|&\le \varepsilon s\;,\quad\mbox{ and }\\
\label{eq:sizeMv}
|M_v|&\ge
|M|-2\sqrt{\varepsilon} N\;.
\end{align}

The embedding of~$F$ is divided into $w$ steps, where $w=|W_X\cup W_Y|$. We label the vertices of $W_X\cup
W_Y$ as $x_1,\ldots,x_w$, indexing from an arbitrary fixed root~$R\in W_X\cup W_Y$ downwards, i.e., in such
way that $j_1\le j_2$ whenever $x_{j_1}\succeq_R x_{j_2}$. 
We denote by~$\varphi$ the partial embedding of~$F$. For a set $U\subset V(F)$,
$\varphi(U)$ refers to the image of the already embedded part of $U$ at that
moment.\footnote{In particular, one may have $|\varphi(U)|<|U|$.} In step $i\geq
1$, we embed the tree $$F_i=F\Big[\{x_i\}\cup\bigcup_{\ell\in[c_i]}V(t_i^\ell)\Big]\;\mbox{,}$$ where
$t_i^1,\ldots,t_i^{c_i}$ are the components $t\in \mathcal D_X\cup \mathcal D_Y$ such that $\children(x_i)\cap V(t)\not=\emptyset$.  Set $V_i^\ell=\bigcup_{j<i}V(F_j)\cup\bigcup_{j<\ell}V(t_i^j)$, and
$U_i^\ell=\varphi(V_i^\ell)$. We call the embedding $\varphi$ \emph{equable} at step $i$ and substep~$\ell$, if for each $CD\in M$, we have 
$||U_i^\ell\cap V^{\mathcal Z}\cap C|- |U_i^\ell\cap V^{\mathcal Z}\cap D||\le \tau$. During the embedding procedure, we use an auxiliary set $\mathcal Z'\subseteq \mathcal Z$ of ``active'' clusters in $\mathcal Z$.

For~$i=1$, set~$N_i=\tilde X\cup \tilde Y$ and $\mathcal Z'=\mathcal Z$. For~$i>1$, let $p_i=\parent(x_i)$
and set $N_i=\neighbor_H(\varphi(p_i))\cap (\tilde X\cup \tilde Y)$. 
During the embedding process we will keep the following three properties in
every step~$i\in [w]$, and every substep~$j\in [c_i]$.
\begin{itemize}
\item[{\bf (I1)}] 
For each $CD\in M$, the set $U_i^j\cap V^X\cap(C\cup D)$ is $(\tfrac {8\varepsilon s}{d}, \tau)$-packed w.~r.~t.\ the head set~$X$ and
the set $U_i^j\cap V^Y\cap(C\cup D)$ is $(\tfrac {8\varepsilon s}{d}, \tau)$-packed w.~r.~t.\ the head set $Y$.
\item[{\bf (I2)}] $|N_i\cap X|\ge |W_X|$ and $|N_i\cap Y|\ge |W_Y|$.
\item[{\bf (I3)}] $\varphi(W_X)\subseteq \tilde X $, $\varphi(W_Y)\subseteq
\tilde Y $, $\varphi(\mathcal D_Y)\subseteq V^Y$, $\varphi(\mathcal
D_1)\subseteq V^X\setminus V(M_X)$, $\varphi(\mathcal D_2)\subseteq V^X\cap
V(M_X)$, $\varphi(\mathcal D_3\setminus \neighbor_F(W_X))\subseteq V^{\mathcal Z}$, and
$\varphi(\mathcal D_3\cap \neighbor_F(W_X))\subseteq \bigcup \mathcal Z$.
\item [{\bf (I4)}] Either the embedding $\varphi$ is equable and $\mathcal Z'=\mathcal Z$, or for every $CD\in M$ and every $Z\in \mathcal Z'$ we have $$\min\{\wdeg_{\mathbf {H}}(Z, C\cap V^{\mathcal Z}),\wdeg_{\mathbf {H}}(Z, D\cap V^{\mathcal Z})\}\le \min \{ |(C\cap \varphi (\mathcal D_3)|,|(D\cap \varphi (\mathcal D_3)|\}+\frac {8\varepsilon s}{d}\;,$$ and $\wdeg_{\mathbf {H}}(X, \bigcup \mathcal Z')\ge |(V(\mathcal D_3)\cap \neighbor_{F}(W_X))\setminus V_i^j|+|U_i^j\cap \bigcup \mathcal Z'|+\frac {\xi n}2\;.$
\end{itemize}

For $i=1$ and $j=1$, {\bf (I1)}, {\bf (I3)}, and {\bf (I4)} hold trivially. Further, $
\max\{|W_X|,|W_Y|\}\le \tfrac{12 k}\tau\ll \varepsilon s\le \min\{|\tilde
X|,|\tilde Y|\}$ by Definition~\ref{def:ellfine}~\ref{ellfine:max}, yielding {\bf
(I2)}.

We now proceed with a general step. We first give two claims which we then make use of for the embedding itself.
\begin{AuxiliaryCl}\label{rtyhk}$~$
\begin{enumerate}[label={$(\alph{*})$}]
\item\label{BHX1} Suppose that $\mathcal D_Y\neq \emptyset$. Then for every
$v\in \tilde Y$, there is an edge $CD\in M_v$ such that
\begin{equation*}
\deg(v,(C\cup D)\cap V^Y)\ge |\varphi(\mathcal D_Y)\cap (C\cup D)|+2\tau
+\frac {\xi s}2\;.
\end{equation*} 
\item\label{BHX2} Suppose that $\mathcal D_1\neq \emptyset$. Then for every $v\in \tilde X$, there is an edge $CD\in
M_v\setminus M_X$ such that
\begin{equation*}
\deg(v,(C\cup D)\cap V^X\cap Q_X)\ge |\varphi(\mathcal D_1)\cap (C\cup D)|+2\tau
+\frac {\xi s}2\;.
\end{equation*} 
\item \label{aux:XMX}
Suppose that $\mathcal D_2\neq \emptyset$. Then for every $v\in \tilde X$, there is an edge $CD\in M_X\cap M_v$ such that 
\begin{equation}\label{eq:tinD2}
\deg(v,C\cap V^X\cap Q_X)\ge |\varphi(\mathcal D_2)\cap C|
+\frac {\xi s}2\quad\mbox{and}  \quad |D\cap V^X|\ge |\varphi(\mathcal
D_2)\cap D|+\frac {\xi s}2\;.
\end{equation} 
\item \label{lodw}
Suppose that $\mathcal D_3\neq \emptyset$. Then for every $v\in \tilde X$, there is a cluster $Z\in \mathcal Z'$ such that 
\begin{equation}\label{eq:tinD3}
\deg(v, Z\cap\tilde Z)\ge |\varphi(\mathcal D_3)\cap Z|
+\frac {\xi s}4\;.
\end{equation} 
\end{enumerate} 
\end{AuxiliaryCl}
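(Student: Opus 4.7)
All four parts follow the same averaging scheme. For a fixed $v$, I will sum the quantity $\deg(v, \cdot)$ over the set of candidate matching edges $CD$ (respectively clusters $Z$), lower bound this sum via typicality of $v$ together with the cluster-level weighted-degree assumptions (iii)--(vii) of the lemma, and separately upper bound $\sum|\varphi(\mathcal D_*)\cap C|$ via the fact that $\mathcal D_*$ has total order at most $v(\mathcal D_*)$, sharpened in cases where balancedness and the packedness invariant~{\bf(I1)} let me split the load between $C$ and $D$ of an $M$-edge. The existence of the asserted edge or cluster will then follow by pigeonhole. Three facts will be used repeatedly: (1) by the definition of $M_v$ and~\eqref{eq:sizeMv}, $|V(M)\setminus V(M_v)|\le 4\sqrt{\varepsilon}N$, so passing from $M_v$ to $V(M)$ costs only $O(\sqrt{\varepsilon}n)$ in any weighted-degree sum; (2) by the definitions of $\tilde X$, $\tilde Y$, $Q_X$, $Q_Y$, and $\tilde Z$ and by~\eqref{eq:QO}--\eqref{eq:cafedu}, restrictions of the form ``$\cap Q_X$'' or ``$\cap \tilde Z$'' cost only $O(\sqrt{\varepsilon}s)$ per cluster; (3) since $\tau/s\le\varepsilon\ll\xi^2 d$, error terms of order $\sqrt{\varepsilon}n$ and $\tau N$ are absorbed by a single $\xi s/4$ slack.

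Parts~\ref{BHX1},~\ref{BHX2}, and~\ref{lodw} are direct instances of this scheme. In~\ref{BHX1} I sum $\deg(v,(C\cup D)\cap V^Y)$ over $CD\in M_v$, use condition~\ref{emb3-V^Y} to bound this below by $v(\mathcal D_Y)+\xi n-O(\sqrt\varepsilon n)$, and bound $\sum_{CD\in M_v}|\varphi(\mathcal D_Y)\cap(C\cup D)|\le v(\mathcal D_Y)$ using~{\bf(I3)}. The average excess is then at least $\xi n/|M_v|\ge \xi s$, which exceeds the threshold $2\tau+\xi s/2$. Part~\ref{BHX2} is identical but summed over $M_v\setminus M_X$ using condition~\ref{emb3-V^X}, with the $Q_X$-intersection absorbed by~(2) above. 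Part~\ref{lodw} sums $\deg(v,Z\cap\tilde Z)$ over $Z\in\mathcal Z'$, uses condition~\ref{emb3-AZ} through the second clause of invariant~{\bf(I4)} to see that $\sum_Z|\varphi(\mathcal D_3)\cap Z|$ is substantially smaller than $\sum_Z\wdeg_{\mathbf H}(X,Z)$, and concludes by averaging.

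Part~\ref{aux:XMX} is the delicate one because we need two inequalities on the same edge, one on each endpoint. The key structural fact is that each edge $CD\in M_X$ contains at most one vertex of $\neighbor_{\mathbf H}(X)$, so $\wdeg_{\mathbf H}(X,V^X\cap\bigcup V(M_X))$ collects contributions only from those distinguished endpoints. Orient each edge of $M_X$ so that $C\in\neighbor_{\mathbf H}(X)$ and sum $\deg(v,C\cap V^X\cap Q_X)$ over $CD\in M_X\cap M_v$; condition~\ref{emb3-MX} gives the bound $v(\mathcal D_2)-c^2k+\xi n-O(\sqrt\varepsilon n)$. For the second inequality of~\eqref{eq:tinD2}, the $c$-balancedness of $\mathcal D_2$ combined with the packedness invariant~{\bf(I1)} guarantees that for each $CD\in M_X$ the embedded load $|\varphi(\mathcal D_2)\cap C|$ and $|\varphi(\mathcal D_2)\cap D|$ agree up to $\tau$, or both approach their capacity ceilings; in the first alternative the ``far'' cluster $D$ retains at least $|D\cap V^X|-|\varphi(\mathcal D_2)\cap D|\geq\xi s/2$ of free room, while in the second the weighted-degree capacity forces the same slack. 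The two averaging bounds then select the same $CD$ simultaneously, yielding~\eqref{eq:tinD2}.

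\textbf{Anticipated main obstacle.} The interplay of balancedness and packedness in~\ref{aux:XMX} is the subtle point: one must check that the partition of the embedded image of $\mathcal D_2$ between the two clusters of every $M_X$-edge is sufficiently symmetric that the ``second'' condition of~\eqref{eq:tinD2} is not killed on the very edge where the ``first'' condition holds. The invariant~{\bf(I1)}, which was designed precisely so that the embedding cannot create a severe asymmetry within a matching edge, makes this symmetry automatic; without~{\bf(I1)}, the alternative that one cluster is completely filled while the other is empty would be impossible to rule out.
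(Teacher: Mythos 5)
Your overall framework (averaging the typicality estimate over $M_v$ and comparing against the embedded load, then concluding by pigeonhole) is the right one and matches the paper's treatment of parts~\ref{BHX1},~\ref{BHX2}, and~\ref{lodw}. But the mechanism you describe for part~\ref{aux:XMX} is not correct, and the ``anticipated main obstacle'' paragraph misidentifies what actually saves the argument there.

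The invocation of the packedness invariant~{\bf(I1)} for $M_X$-edges is the problem. For any edge $CD\in M_X$, the defining property of $M_X$ (at most one vertex of $\neighbor_{\mathbf H}(X)$ per edge) forces $\min\{\wdeg_{\mathbf{H}}(X,C\cap V^X),\wdeg_{\mathbf{H}}(X,D\cap V^X)\}=0$, so the first alternative~\eqref{eq:packed1} in the definition of packedness holds vacuously. Hence~{\bf(I1)} carries no information at all on $M_X$-edges, and the paper accordingly records it as ``trivially holds by the property of $M_X$'' in embedding step~{\bf(3)}. In particular your claim that~{\bf(I1)} forces the loads $|\varphi(\mathcal D_2)\cap C|$ and $|\varphi(\mathcal D_2)\cap D|$ to ``agree up to $\tau$'' is simply false: a single badly unbalanced shrub of $\mathcal D_2$ placed across an edge $CD$ makes the two sides arbitrarily asymmetric, and nothing prevents that. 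So the case distinction you propose (``agree up to $\tau$, or both approach their capacity ceilings'') does not arise, and the second inequality of~\eqref{eq:tinD2} cannot be obtained this way.

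What actually makes~\ref{aux:XMX} work is a different interplay between the $c$-balancedness, the color-class-respecting nature of the embedding, and the assumption~\ref{emb3-balanced}. Because each embedded shrub $t\in\mathcal D_2$ sends one color class to $C$ and the other to $D$, one has
\[
\sum_{CD\in M_X\cap M_v}\max\{|\varphi(\mathcal D_2)\cap C|,\,|\varphi(\mathcal D_2)\cap D|\}\;\le\;\sum_{t\in\mathcal D_2}|t_\oplus|\;=\;v(\mathcal D_2)-\sum_{t\in\mathcal D_2}|t_\ominus|\;\le\;v(\mathcal D_2)-c^2k\;,
\]
the last step by~\eqref{eq:ladvi}. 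This is exactly the $c^2k$ discount appearing in hypothesis~\ref{emb3-MX}. Comparing with your averaging bound $\sum_{C}\deg(v,C\cap V^X)\ge v(\mathcal D_2)-c^2k+\xi n-O(\sqrt\varepsilon n)$ yields a single edge $CD\in M_X\cap M_v$ with $\deg(v,C\cap V^X)\ge\max\{|\varphi(\mathcal D_2)\cap C|,|\varphi(\mathcal D_2)\cap D|\}+\xi s-O(\sqrt\varepsilon s)$. The first inequality of~\eqref{eq:tinD2} then follows after restricting to $Q_X$ via~\eqref{eq:QO}, and the second follows from~\ref{emb3-balanced} alone: $|D\cap V^X|=|C\cap V^X|\ge\deg(v,C\cap V^X)\ge|\varphi(\mathcal D_2)\cap D|+\xi s/2$. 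Note that this gives both inequalities on the \emph{same} edge automatically, with no need for the ``two averaging bounds select the same $CD$'' reconciliation you worried about. Invariant~{\bf(I1)} plays no role in part~\ref{aux:XMX}; its real job is in the $\mathcal D_Y$- and $\mathcal D_1$-steps, where the matching edges can see $X$ (resp.\ $Y$) on both sides.
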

\begin{proof} 
\ref{BHX1}~Using the typicality of $v$, we get
\[\sum_{CD\in M_v}\deg(v, (C\cup D)\cap V^Y)\geBy{\eqref{eq:sizeMv}}
\wdeg_{\mathbf{H}}(Y,V^Y)-2\sqrt{\varepsilon}Ns-\varepsilon n\geBy{\ref{emb3-V^Y}}v(\mathcal
D_Y)+\frac {3\xi n}{4}\;,\]
which implies the statement.

The proof of~\ref{BHX2} is analogous,
using~\eqref{eq:QO} and~\ref{emb3-V^X}.

\ref{aux:XMX} By~\eqref{eq:sizeMv} and by the typicality
of~$v$, we have
\begin{align*}
\sum_{C\in V(M_X\cap M_v)\cap \neighbor_{\mathbf H}(X)}\deg(v, C\cap
V^X)
&\ge \sum_{CD\in M_X}\deg(v, (C\cup D)\cap
V^X)-2\sqrt{\varepsilon}Ns\\ 
&\ge \wdeg_{\mathbf{H}}(X, V^X\cap \bigcup V(M_X))-\varepsilon n-2\sqrt{\varepsilon}Ns\\
&\geBy{\ref{emb3-MX}}v(\mathcal D_2)-c^2k+\xi
n-3\sqrt{\varepsilon}Ns\;.
\end{align*}
As $\mathcal D_2$ is $c$-balanced, we get
$
v(\mathcal D_2)
\ge  c^2k+\sum_{CD\in M_X\cap M_v}\max\{|\varphi
(\mathcal D_2)\cap C|, |\varphi
(\mathcal D_2)\cap D|\}
$.
So, there is an edge $CD\in M_X\cap M_v$ such that 
\[|D\cap V^X|\geBy{\ref{emb3-balanced}} \deg(v,C\cap V^X)\ge \max\{|\varphi(\mathcal
D_2)\cap C|, |\varphi(\mathcal D_2)\cap
D|\}+\xi s-3\sqrt{\varepsilon}s\;.\]
Together with~\eqref{eq:QO}, we get~\eqref{eq:tinD2}.

\ref{lodw} 
The vertex $v$ is typical w.~r.~t.\ all but at most $\sqrt{\varepsilon} N$ clusters $Z\in  \mathcal Z'$.

First assume that~$\varphi$ is equable and $\mathcal Z'=\mathcal Z$. We have
\begin{align*}
\deg(v,\tilde Z\cap \bigcup \mathcal Z')
\;&\geBy{$\mathcal Z'=\mathcal Z$}\; \deg(v,\bigcup \mathcal
Z)-\left|\bigcup \mathcal Z\setminus \tilde Z\right|
\;\geBy{\eqref{eq:cafedu}}\; \wdeg_{\mathbf{H}}(X,\bigcup \mathcal
Z)-\varepsilon n
-(1+2)\sqrt{\varepsilon}N s\\
&\geBy{\ref{emb3-AZ}}|V(\mathcal D_3)\cap \neighbor_F(W_X)|+\xi n -
4\sqrt{\varepsilon}n\;.
\end{align*}
As by~{\bf (I3)} only $V(\mathcal
D_3)\cap \neighbor_F(W_X)$ is mapped to $\bigcup \mathcal Z=\bigcup \mathcal Z'$,  there exists a
cluster $Z\in \mathcal Z'$ satisfying~\eqref{eq:tinD3}.

If~$\varphi$ is not equable,  we get
\begin{align*}
\deg(v,\tilde Z\cap \bigcup \mathcal Z')&\ge \deg(v,\bigcup \mathcal
Z')-\left|\bigcup \mathcal Z\setminus \tilde Z\right|\ge \wdeg_{\mathbf{H}}(X,\bigcup \mathcal
Z')-\varepsilon n
-(1+2)\sqrt{\varepsilon}N s\\
&\geBy{{\bf (I4)}}|V(\mathcal D_3)\cap \neighbor_F(W_X)\setminus V_i^j|+|U_i^j\cap \bigcup \mathcal Z'|+\xi n/2 -
4\sqrt{\varepsilon}n\;.
\end{align*}
As by~{\bf (I3)} only $V(\mathcal
D_3)\cap \neighbor_F(W_X)$ is mapped to $\bigcup \mathcal Z'$,  there exists a
cluster $Z\in \mathcal Z'$ satisfying~\eqref{eq:tinD3}.
\end{proof}

\begin{AuxiliaryCl}
\label{BHX3} Suppose that $\mathcal D_3\neq \emptyset$. Then for every vertex $v\in \tilde Z$, there is an edge  $CD\in M_{v}$ such that 
\begin{equation}\label{eq:degZ-CD}
\deg(v,(C\cup D)\cap V^{\mathcal Z})\ge |\varphi(\mathcal D_3)\cap (C\cup D)|+2\tau +
2\varepsilon s +\frac {\xi s}2\;.
\end{equation}
\end{AuxiliaryCl}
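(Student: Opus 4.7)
The plan is to follow the same template used earlier for parts~\ref{BHX1} and~\ref{BHX2} of Claim~\ref{rtyhk}, with condition~\ref{emb3-Z} playing the role that~\ref{emb3-V^Y} and~\ref{emb3-V^X} played there.

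First I would extend the shorthand $M_v$, originally defined only for $v\in\tilde X\cup\tilde Y$, to vertices $v\in\tilde Z$ in the obvious way: $M_v=\{CD\in M\::\: v\text{ is typical w.~r.~t.\ both $C\cap V^{\mathcal Z}$ and $D\cap V^{\mathcal Z}$}\}$. Since $v$ is typical with respect to all but at most $\sqrt{\varepsilon}N$ sets of the form $C\cap V^{\mathcal Z}$ and each bad cluster belongs to at most one edge of $M$, we get $|M_v|\ge |M|-2\sqrt{\varepsilon}N$, the analogue of~\eqref{eq:sizeMv}.

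Next, letting $Z_v\in\mathcal Z$ denote the cluster containing $v$, I would lower-bound the sum $\Sigma:=\sum_{CD\in M_v}\deg(v,(C\cup D)\cap V^{\mathcal Z})$. By condition~\ref{emb3-sign} every nonempty $C\cap V^{\mathcal Z}$ is significant, so summing the typicality estimates of $v$ across all such clusters and applying~\ref{emb3-Z} gives
$$\Sigma\ge\wdeg_{\mathbf H}(Z_v,V^{\mathcal Z})-O(\sqrt{\varepsilon}n)\ge v(\mathcal D_3)+\xi n-O(\sqrt{\varepsilon}n).$$
On the other hand, because $M$ is disjoint from $\mathcal Z$, the set $V^{\mathcal Z}$ is disjoint from $\bigcup\mathcal Z$, so invariant~{\bf (I3)} tells us that only images of $\mathcal D_3\setminus\neighbor_F(W_X)$ can land in $V^{\mathcal Z}$; hence $\sum_{CD\in M_v}|\varphi(\mathcal D_3)\cap(C\cup D)|\le v(\mathcal D_3)$.

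Subtracting the two bounds and averaging over the at most $N/2$ edges in $M_v$ yields an edge $CD\in M_v$ with $\deg(v,(C\cup D)\cap V^{\mathcal Z})-|\varphi(\mathcal D_3)\cap(C\cup D)|\ge 2\xi s-O(\sqrt{\varepsilon}s)$, which the hypothesis $\tau/s\le\varepsilon\le\xi^2 d/400$ makes strictly larger than the required $2\tau+2\varepsilon s+\xi s/2$. I expect no real obstacle here: the argument is a mechanical adaptation of part~\ref{BHX1}, with the typicality losses comfortably absorbed by the gap $\xi\gg\sqrt{\varepsilon}$.
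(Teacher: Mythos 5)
Your proposal is correct and follows essentially the same route as the paper: extend $M_v$ and its size bound~\eqref{eq:sizeMv} to $v\in\tilde Z$, sum $\deg(v,(C\cup D)\cap V^{\mathcal Z})$ over $CD\in M_v$ using typicality and condition~\ref{emb3-Z} to obtain (after the small losses) $v(\mathcal D_3)+\Theta(\xi n)$, then observe via~{\bf(I3)} that $\sum_{CD\in M_v}|\varphi(\mathcal D_3)\cap(C\cup D)|\le v(\mathcal D_3)$ and average over the at most $N/2$ edges. The paper leaves the subtraction-and-averaging step implicit (as it does in parts~\ref{BHX1} and~\ref{BHX2} of Claim~\ref{rtyhk}), whereas you make it explicit, which is a faithful filling-in rather than a different argument.
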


\begin{proof}
Suppose that $v$ lies in a cluster $Z$. Using the typicality of $v$, we get
\[\sum_{CD\in M_{v}}\deg(v, (C\cup D)\cap V^{\mathcal Z})\geBy{\eqref{eq:sizeMv}}
\wdeg_{\mathbf{H}}(Z,V^{\mathcal Z})-4\sqrt{\varepsilon}Ns-\varepsilon n\geBy{\ref{emb3-Z}}v(\mathcal
D_3)+\frac {3\xi n}{4}\;.\]
\end{proof}

Assume that we are in step $i\ge 1$ and that we want to embed the forest $F_i$.
By~{\bf(I2)}, we can map~$x_i$ to an unused vertex in~$N_i$ (in $N_i\cap X$ if $x_i\in W_X$, and in $N_i\cap Y$ if $x_i\in W_Y$). Observe that
$\varphi(x_i)$ has at least $(d-\varepsilon)s-d s/2-3\sqrt{\varepsilon}s\ge
\varepsilon s\ge |W_X\cup W_Y|$ neighbors in $\tilde X$ or in $\tilde Y$
(depending whether $\varphi(x_i)\in \tilde Y$ or $\varphi(x_i)\in \tilde X$).
This ensures that {\bf(I2)} still holds. Assume that we are in substep $j\in [c_i]$, i.e., we have already embedded the components $t_i^1,\dots, t_i^{j-1}$ and that we want to embed the component~$t_i^j$.
\begin{itemize}
  \item [\bf{(1)}] If $t_i^j\in \mathcal D_Y$, pick an edge $CD\in
  M_{\varphi(x_i)}$ as in Claim~\ref{rtyhk}~\ref{BHX1}. 
  We use Lemma~\ref{lemma_Embedding1} to embed $t_i^j$ (where the root
  of~$t_i^j$ is the neighbor of~$x_i$) with the following setting. \begin{align*}P'&=\neighbor_H(\varphi(x_i))\cap C\cap V^Y\setminus
  U_i^j\qquad P= C\cap V^Y\setminus U_i^j\subseteq C \;,\\
Q'&=\neighbor_H(\varphi(x_i))\cap D\cap
  V^Y\setminus U_i^j\qquad Q=D\cap V^Y\setminus U_i^j\subseteq
  D\;,\end{align*} and $\Delta=\tfrac {4\varepsilon s}{d}$. We have that \[\max\{|P'|,|Q'|\}\ge \frac
  12 \deg(\varphi(x_i), (C\cup D)\cap V^Y\setminus U_i^j)\ge \frac 12
  (2\tau+\tfrac{\xi s}2)\ge \tfrac {4\varepsilon s}{d}\;,\]
  which verifies one of the assumption of Lemma~\ref{lemma_Embedding1}. We use Lemma~\ref{lemma_Embedding1} differently in cases
  \begin{align}\label{eq:c1E1}
  \min\{|\varphi(\mathcal D_Y)\cap C|,|\varphi(\mathcal D_Y)\cap
  D|\}&< \min\{\wdeg_{\mathbf{H}}(X, C\cap V^Y), \wdeg_{\mathbf{H}}(X,D\cap V^Y)\}-\tfrac{8\varepsilon s}{d}\quad{\mbox{and}}\\
\label{eq:c2E1}
  \min\{|\varphi(\mathcal D_Y)\cap C|,|\varphi(\mathcal D_Y)\cap
  D|\}&\ge \min\{\wdeg_{\mathbf{H}}(X, C\cap V^Y), \wdeg_{\mathbf{H}}(X,D\cap V^Y)\}-\tfrac{8\varepsilon s}{d}
    \end{align}
  Suppose first that we do not  have~\eqref{eq:c1E1}. Thus in particular, the packedness of $U_i^j\cap V^X\cap(C\cup D)$ in {\bf (I1)} has the form of~\eqref{eq:packed2}. Then 
  \begin{align*}
  \min\{|P'|,|Q'|\}&=\min\{\deg(\varphi(x_i), C\cap V^Y\setminus U_i^j),\deg(\varphi(x_i), D\cap
  V^Y\setminus U_i^j)\}\\   
  &\geBy{{\bf (I3)}} \min\{\deg(\varphi(x_i), C\cap
  V^Y),\deg(\varphi(x_i), D\cap V^Y)\}-\max\{|\varphi(\mathcal D_Y)\cap C|,
  |\varphi(\mathcal D_Y)\cap D|\}\\ 
  &\geBy{{\bf (I1)}} \min\{\wdeg_{\mathbf{H}}(X, C\cap V^Y),\wdeg_{\mathbf{H}}(X, D\cap V^Y)\}-\varepsilon s\\
  &~~~~~-\min\{|\varphi(\mathcal D_Y)\cap C|, |\varphi(\mathcal D_Y)\cap D|\}-\tau\\ &\geBy{\eqref{eq:c1E1}} \tfrac {8\varepsilon s}{d}-\varepsilon s -\tau\ge \tfrac {4\varepsilon s}{d}\;,
  \end{align*}  
  which allows us to use the ``moreover'' part of Lemma~\ref{lemma_Embedding1}. We can then choose in this case to
  which set $P'$ or $Q'$ we map the root of~$t_i^j$.  We thus can ensure that
  $\big||\varphi(\mathcal D_Y)\cap C|-|\varphi(\mathcal D_Y)\cap D|\big|\le \tau$ still holds after embedding~$t_i^j$, yielding~{\bf{(I1)}}. 
  
  Suppose now that~\eqref{eq:c1E1} holds.
  Then 
\begin{align*}
\min\{|P|,|Q|\}&=\min\{|C\cap V^Y\setminus U_i^j|,|D\cap V^Y\setminus U_i^j|\}\\
&\ge 
\max\{\deg(\varphi(x_i), C\cap V^Y), \deg(\varphi(x_i),D\cap
V^Y)\}-\max \{|\varphi(\mathcal
D_Y)\cap C|,|\varphi(\mathcal D_Y)\cap D|\}\\ 
\ge &\deg(\varphi(x_i), (C\cup D)\cap
V^Y)-|\varphi(\mathcal D_Y)\cap (C\cup D)|\\&-\min\{\deg(\varphi(x_i), C\cap
V^Y), \deg(\varphi(x_i),D\cap V^Y)\}+\min\{|\varphi(\mathcal D_Y)\cap
C|,|\varphi(\mathcal D_Y)\cap D|\}\\
\geBy{\eqref{eq:c1E1}} & 2\tau +\tfrac {\xi s}{2}-\tfrac {8\varepsilon s}{d}-\varepsilon s\ge
\tfrac {4\varepsilon s}{d}\;,
\end{align*}
which indeed allows us to embed $t^j_i$ using Lemma~\ref{lemma_Embedding1} in this case.
After the embedding of $t_i^j$ in this case, {\bf(I1)} holds trivially. 

In both
cases, {\bf (I2)} holds, as~$\mathcal D_Y$ contains only end-shrubs. The
tree~$t_i^j$ was embedded in $(C\cup D)\cap V^Y$, ensuring~{\bf (I3)}. {\bf (I4)} is immaterial in this step as nothing was done regarding $V^{\mathcal Z}$ or $\mathcal D_3$.
  
\item [\bf{(2)}] If $t_i^j\in \mathcal D_1$, pick an edge $CD\in
M_{\varphi(x_i)}\setminus M_X$ as in Claim~\ref{rtyhk}~\ref{BHX2}. The
embedding is done analogously to the case {\bf (1)}, setting
\begin{align*}
P'&=
\neighbor_H(\varphi(x_i))\cap C\cap V^X\cap Q_X\setminus
U_i^j
\qquad P= C\cap V^X\cap Q_X\setminus U_i^j\subseteq C \;,\\
Q'&=
\neighbor_H(\varphi(x_i))\cap D\cap V^X\cap Q_X\setminus
U_i^j
\qquad Q= D\cap V^X\cap Q_X\setminus U_i^j\subseteq D \;.
\end{align*}
As~$\varphi(V(t_i^j))\subseteq Q_X$, every vertex in $V(t_i^j)\cap
\neighbor_F(W_X)$ is mapped to a vertex that has at least
$(d-\varepsilon)|\tilde X|\ge |W_X|$ neighbours in $\tilde X$, ensuring {\bf
(I2)}. Conditions {\bf (I1)} and {\bf (I3)} are maintained as in case {\bf (1)}. Again, {\bf (I4)} is maintained automatically.

\item[\bf{(3)}]  If
$t_i^j\in \mathcal D_2$, we pick an edge $CD\in M_{\varphi(x_i)}\cap M_X$ as in
Claim~\ref{aux:XMX}. We use Lemma~\ref{lemma_Embedding1} with the following
setting.
\begin{align*}
P'&=\neighbor_H(\varphi(x_i))\cap C\cap V^X\cap Q_X\setminus U_i^j\subseteq 
C\cap V^X\cap Q_X\setminus U_i^j\subseteq C\;,\\
Q'&=\emptyset\subseteq D\cap V^X\setminus U_i^j\subseteq D\;,
\end{align*}
and $\Delta=\tfrac {4\varepsilon s}{d}$. The
requirements on $\max\{|P'|,|Q'|\}$, and $\min\{|P|,|Q|\}$ are fulfilled
by~\eqref{eq:tinD2}. We get an embedding of $t_i^j$ in $(C\cup D)\cap V^X$
(ensuring {\bf {(I3)}}) such that every vertex at even distance to the root of
$t_i^j$ is mapped to $Q_X$. Therefore its image sends at least $(d-\varepsilon)|\tilde X|\ge |W_X|$
edges to $\tilde X$ (ensuring {\bf(I2)}). The condition {\bf (I1)} trivially
holds by the property of $M_X$. 

\item [\bf{(4)}] Suppose that $t_i^j\in \mathcal D_3$.

First we consider the case, when there is a cluster $Z\in \mathcal Z$ such that 
\begin{itemize}
\item[(*)] $\deg(\varphi(x_i), Z\cap\tilde Z)\ge |U_i^j\cap Z|
+\frac {\xi s}4\;,$ and 
\item[(**)] there is an edge $CD\in M$ such that $\wdeg(Z,C\cap V^{\mathcal Z})\ge |U_i^j\cap C\cap V^{\mathcal Z}|+\tau +
\varepsilon s+\frac {3\varepsilon s+\tau}{d-2\varepsilon}$, and $\wdeg(Z,D\cap V^{\mathcal Z})\ge |U_i^j\cap D\cap V^{\mathcal Z}|+\tau +
\varepsilon s+\frac {3\varepsilon s+\tau}{d-2\varepsilon}$.
\end{itemize}

Then we embed $t_i^j$ in $Z\cup C\cup D$ as follows. 
We map the root $r$ of $t_i^j$ to an unused vertex  $v\in Z\cap \tilde Z$ that is typical w.r.t. $C\cap V^{\mathcal Z}$ and typical w.r.t. $D\cap V^{\mathcal Z}$. 
By Fact~\ref{fact:zap} there are at least $\frac{\xi s}{4}-2\varepsilon s>0$ such vertices. By (**), the vertex $v$ satisfies
\begin{equation}\label{eq:balance neighbourhoods}
\begin{split}
\deg(v,(C\cap V^{\mathcal Z})\setminus U_i^j)&\ge \tau+\frac {3\varepsilon s+\tau}{d-2\varepsilon}\;, \mbox{ and}\\
\deg(v,(D\cap V^{\mathcal Z})\setminus U_i^j)&\ge \tau+\frac {3\varepsilon s+\tau}{d-2\varepsilon}\;.
\end{split}
\end{equation}
Let $K\subset C\cup D$ be the set of vertices  that are  typical (where typicality refers to $C$ or $D$, respectively) w.r.t.~$(Z\cap \tilde Z)\setminus U_i^j$. Note that the set $(Z\cap \tilde Z)\setminus U_i^j$ is significant by~(*). By Fact~\ref{fact:zap},
\begin{equation}\label{eq:tilde CD-small}
|C\setminus K|,|D\setminus K|\le \varepsilon s\;.
\end{equation}

Let $t_{even}$ be the set of vertices in $V(t_i^j)\setminus \{r\}$ of even distance from $r$, and let $t_{odd}$ be the ones of odd distance. 
If $|t_{odd}|<|t_{even}|$ and $|(C\cap V^{\mathcal Z})\setminus U_i^j|\le |(D\cap V^{\mathcal Z})\setminus U_i^j|$, or $|t_{odd}|\ge |t_{even}|$ and $|(C\cap V^{\mathcal Z})\setminus U_i^j|> |(D\cap V^{\mathcal Z})\setminus U_i^j|$, set $X_{\LPDJ}=D$, and $Y_{\LPDJ}=C$. Otherwise set $X_{\LPDJ}=C$, and $Y_{\LPDJ}=D$. 

Consider the set $\mathcal T^{r}$ of components of $t_i^j-\neighbor_F(W_X)$ that are incident to $r$. By Definition~\ref{def:ellfine}(ix), $V(t_i^j)\cap \neighbor_F(W_X)$ has one or two elements.
If $r$ is the only element in $V(t_i^j)\cap \neighbor_F(W_X)$ then $\mathcal T^{r}$ contains all the components of $t_i^j\setminus\{r\}$. 
We embed the elements $t\in \mathcal T^{r}$ one after the other using Lemma~\ref{lemma_Embedding1}. At each application of Lemma~\ref{lemma_Embedding1} we use 
$P=(X_{\LPDJ}\cap K\cap V^{\mathcal{Z}})\setminus \varphi(\mathcal D_3)$, $P'=P\cap \neighbor(v)$, $Q=(Y_{\LPDJ}\cap K\cap V^{\mathcal{Z}})\setminus\varphi(\mathcal D_3)$, and $Q'=Q\cap \neighbor(v)$. 
By~\eqref{eq:balance neighbourhoods} and~\eqref{eq:tilde CD-small} we have  $\min\{|P'|,|Q'|\}\ge \frac {3\varepsilon s+\tau}{d-2\varepsilon}-\varepsilon s\ge \frac {\varepsilon s+\tau}{d-2\varepsilon s}$. 
By the ``moreover'' part of Lemma~\ref{lemma_Embedding1} we can ensure that the root of $t$ (i.e.\ the unique vertex in $V(t)\cap \neighbor_F(r)$) is mapped to the set $P'$.

If $r$ is the only element in $V(t_i^j)\cap \neighbor_F(W_X)$, then we are done with embedding $t_i^j$. Otherwise, let $r'$ be the second vertex in $V(t_i^j)\cap \neighbor_F(W_X)$. 
The predecessor of $r'$ is mapped on a vertex $u\in K$. Since $u$ is typical w.r.t.\ the set $(Z\cap \tilde Z)\setminus U_i^j$, we have
\begin{equation}\label{eq:degv'}
\deg(u, (Z\cap \tilde Z)\setminus U_i^j)\ge (d-\varepsilon)|(Z\cap \tilde Z)\setminus U_i^j|\geBy{(*)}  (d-\varepsilon)\frac{\xi s}4> 3\varepsilon s\;.
\end{equation}
We can thus map the vertex $r'$ to an unused vertex $v'\in (Z\cap \tilde Z)\cap \neighbor (u)$ that is typical w.r.t.~$C\cap V^{\mathcal Z}$ and typical w.r.t.~$D\cap V^{\mathcal Z}$.

Consider the set $\mathcal T^{r'}$ of components of $t_i^j-\{r'\}$ that are incident to $r'$ and does not contain~$r$. See Figure~\ref{fig:TrTr}. 
\begin{figure}[t]
\centering 
\includegraphics[scale=0.7]{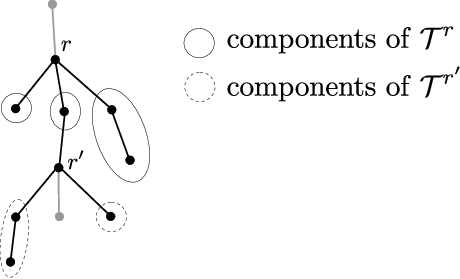}
\caption{The components $\mathcal T^r$ and $\mathcal T^{r'}$. Vertices of $W_X$ are shown in gray.}
\label{fig:TrTr}
\end{figure}
We embed the elements $t\in \mathcal T^{r'}$ one after the other using Lemma~\ref{lemma_Embedding1}.  At each application of Lemma~\ref{lemma_Embedding1} we use 
$P=(X_{\LPDJ}\cap K\cap V^{\mathcal{Z}})\setminus \varphi(\mathcal D_3)$, $P'=P\cap \neighbor(v)$, $Q=(Y_{\LPDJ}\cap K\cap V^{\mathcal{Z}})\setminus\varphi(\mathcal D_3)$, and $Q'=Q\cap \neighbor(v)$. 
By~(**), the vertex $v'$ satisfies
\begin{align*}
\deg(v',(C\cap V^{\mathcal Z})\setminus\varphi(\mathcal D_3))&\ge \frac {3\varepsilon s+\tau}{d-2\varepsilon}\;, \mbox{ and}\\
\deg(v',(D\cap V^{\mathcal Z})\setminus \mathcal D_3))&\ge \frac {3\varepsilon s+\tau}{d-2\varepsilon}\;.
\end{align*}
By~\eqref{eq:tilde CD-small}, we have that $\min\{|P'|,|Q'|\}\ge \frac {3\varepsilon s+\tau}{d-2\varepsilon}-\varepsilon s\ge \frac {\varepsilon s+\tau}{d-2\varepsilon s}$. We can thus use the ``moreover'' part of Lemma~\ref{lemma_Embedding1} to ensure that the root of $t$ (i.e.\ the unique vertex in $V(t)\cap \neighbor_F(r')$) is mapped to the set $P'$.

As we embedded $V(t_i^j)\cap \neighbor(W_X)$ in $Z$ and the rest of $t_i^j$ in $(C\cup D)\cap V^{\mathcal Z}$, properties {\bf (I1)}, {\bf (I2)}, and {\bf (I3)} trivially hold. As for {\bf (I4)}, we did not alter the set $\mathcal Z'$ and the set~$\varphi(\mathcal D_3)$ may have only increased. Also observe that by {\bf (I3)} we have that $|V(t_i^j)\cap \neighbor_F(W_X)|\ge |\varphi(t_i^j)\cap \bigcup \mathcal Z'| $. 
Therefore, it is enough to show that if $\varphi$ was equable at the substep~$j$, it is still equable at substep~$j+1$ (i.e.\ after we embedded $t_i^j$). This was guaranteed by the choice of $X_{\LPDJ}$ and $Y_{\LPDJ}$, so that to minimise the difference  between $|\varphi(\mathcal D_3)\cap (C\cap V^{\mathcal Z})|$ and $|\varphi(\mathcal D_3)\cap (D\cap V^{\mathcal Z})|$ together with the fact that $v(t_i^j)\le \tau$.

Now consider the case when there is no cluster $Z\in \mathcal Z$ that satisfies (*) and (**). If~$\varphi$ is  equable and $\mathcal Z'=\mathcal Z$ then we redefine $\mathcal Z'$ to be the set of clusters in $\mathcal Z$ with respect to which the vertex~$\varphi(x_i)$ is typical and for which (*) holds. 
We want to check {\bf (I4)} for this new set $\mathcal Z'$.

As $\varphi(x_i)\in \tilde X$ by {\bf (I2)}, we have that $\varphi(x_i)$ is typical to all but at most $\sqrt{\varepsilon}N$ clusters of $\mathcal Z$. Therefore,
\begin{align*}\wdeg_{\mathbf{H}}(X, \bigcup \mathcal Z')&\ge \wdeg_{\mathbf{H}}(X, \bigcup \mathcal Z)-\sqrt{\varepsilon} n- \left(|U_i^j\cap \bigcup (\mathcal Z\setminus \mathcal Z')|+\frac {\xi n}{4}\right)\\
&\ge |V(\mathcal D_3)\cap \neighbor_F(W_X)|- |U_i^j\cap \bigcup (\mathcal Z\setminus \mathcal Z')|+\xi n-\sqrt{\varepsilon}n-\frac{\xi n}{4}\\
&\geBy{{\bf(I3)}}  |(V(\mathcal D_3)\cap \neighbor_F(W_X))\setminus V_i^j|+|U_i^j\cap \bigcup \mathcal Z'|+\frac {\xi n}{2}\;.
\end{align*}
By the definition of $\mathcal Z'$, (**) does not hold for any cluster $Z\in \mathcal Z'$. Then, as $\varphi$ is equable, we have  
\begin{align*}
\min&\{\wdeg(Z, C\cap V^{\mathcal Z}), \wdeg(Z, D\cap V^{\mathcal Z})\}\\
&\le \max\{|U_i^j\cap C\cap V^{\mathcal Z}|,|U_i^j\cap D\cap V^{\mathcal Z}|\}+\tau+\varepsilon s+\frac {3\varepsilon s+\tau}{d-2\varepsilon}\\
&\le \min\{|U_i^j\cap C\cap V^{\mathcal Z}|,|U_i^j\cap D\cap V^{\mathcal Z}|\}+2\tau +\varepsilon s+\frac {3\varepsilon s+\tau}{d-2\varepsilon}\\
&\leBy{{\bf (I3)}} \min\{|\varphi(\mathcal D_3)\cap C|,|\varphi(\mathcal D_3)\cap D|\}+\frac {8\varepsilon s}{d}\;,
\end{align*} 
showing that the newly created set $\mathcal Z'$ satisfies  {\bf (I4)}.

So we may assume that the second condition of {\bf (I4)} is satisfied.
Let  $Z\in \mathcal Z'$ be a cluster as in Claim~\ref{rtyhk}~\ref{lodw} and map the root of~$t_i^j$ to a vertex $v\in Z\cap \tilde Z$. Then pick an edge $CD\in M_v$ as in Claim~\ref{BHX3}. Let $K\subset C\cup D$ be the set of vertices  that are  typical (where typicality refers to $C$ or $D$, respectively) w.r.t.~$(Z\cap \tilde Z)\setminus U_i^j$.

Without loss of generality, assume that $|\neighbor(v,K\cap D\cap V^{\mathcal Z}\setminus U_i^j)|\le |\neighbor(v,K\cap C\cap V^{\mathcal Z}\setminus U_i^j)|$. Let $X_{\LPDJ}=C$ and $Y_{\LPDJ}=D$.
Consider the set $\mathcal T^{r}$ of components of $t_i^j-\neighbor_F(W_X)$ that are incident to $r$. 
We embed the elements $t\in \mathcal T^{r}$ one after the other using Lemma~\ref{lemma_Embedding1} with the following
setting,
\begin{align*}
P&=(X_{\LPDJ}\cap K\cap V^{\mathcal{Z}})\setminus \varphi(\mathcal D_3)\;,
&P'&=P\cap \neighbor(v)\;,\\
Q&=(Y_{\LPDJ}\cap K\cap V^{\mathcal{Z}})\setminus \varphi(\mathcal D_3)\;,
&Q'&=Q\cap \neighbor(v)\;.
\end{align*}
By~\eqref{eq:degZ-CD} we have $|(P'\cup Q')|\ge\frac {\xi s}{4}-2\varepsilon s$. 
 As by assumption we have $|P'|\ge |Q'|$, we get $|P'|\ge \frac {\xi s}{8}-\varepsilon s\ge \frac {\varepsilon s+\tau}{d-2\varepsilon}. $

From~\eqref{eq:degZ-CD} we derive
\begin{align*}
|Q|&\ge |\tilde D\cap V^{\mathcal Z}|-|(D\cap V^{\mathcal Z})\cap U_i^j| = |\tilde D\cap V^{\mathcal Z}|-|((C\cup D)\cap V^{\mathcal Z})\cap U_i^j|+|(C\cap V^{\mathcal Z})\cap U_i^j|\\
&\ge |\tilde D\cap V^{\mathcal Z}|-\left( \deg(v,(C\cup D)\cap V^{\mathcal Z})-2\tau -
2\varepsilon s -\frac {\xi s}2\right)+|(C\cap V^{\mathcal Z})\cap U_i^j|\\
& = |\tilde D\cap V^{\mathcal Z}|-\max\left\{\deg(v, C\cap V^{\mathcal Z}),\deg(v, D\cap V^{\mathcal Z})\right\}+2\tau +
2\varepsilon s +\frac {\xi s}2\\
&-\min\left\{\deg(v, C\cap V^{\mathcal Z}),\deg(v, D\cap V^{\mathcal Z})\right\}+
\min\left\{|(C\cap V^{\mathcal Z})\cap U_i^j|,|(D\cap V^{\mathcal Z})\cap U_i^j|\right\}\\
&\ge 2\tau +
\varepsilon s +\frac {\xi s}2 -\left(\min\left\{\wdeg(Z, C\cap V^{\mathcal Z}),\wdeg(Z, D\cap V^{\mathcal Z})\right\}+\varepsilon s\right)\\&+
\min\left\{|(C\cap V^{\mathcal Z})\cap U_i^j|,|(D\cap V^{\mathcal Z})\cap U_i^j|\right\}\\
&\geBy{{\bf(I4)}}  2\tau +
\varepsilon s +\frac {\xi s}2-\varepsilon s\ge \frac {\varepsilon s+\tau}{d-2\varepsilon}\;.
\end{align*}

If $r$ is the only element in $V(t_i^j)\cap \neighbor_F(W_X)$, then we are done with embedding $t_i^j$. Otherwise, let $r'$ be the second vertex in $V(t_i^j)\cap \neighbor_F(W_X)$. The predecessor of $r'$ is mapped on a vertex $u\in K$ that is typical w.r.t.\ the set $(Z\cap \tilde Z)\setminus U_i^j$ and hence satisfies~\eqref{eq:degv'}. We can thus mapped the vertex $r'$ to an unused vertex $v'\in (Z\cap \tilde Z)\cap \neighbor (u)$ that is typical w.r.t.~$C\cap V^{\mathcal Z}$ and typical w.r.t.~$D\cap V^{\mathcal Z}$. 
 Let $X_{\LPDJ}=C$ and $Y_{\LPDJ}=D$.
Consider the set $\mathcal T^{r'}$ of components of $t_i^j-\{r'\}$ that are incident to $r'$ and does not contain~$r$. 
We embed the elements $t\in \mathcal T^{r'}$ one after the other using Lemma~\ref{lemma_Embedding1} similarly as we did for the elements of $\mathcal T^{r}$.  At each application of Lemma~\ref{lemma_Embedding1} we use 
$P=(X_{\LPDJ}\cap K\cap V^{\mathcal{Z}})\setminus \varphi(\mathcal D_3)$, $P'=P\cap \neighbor(v')$, $Q=(Y_{\LPDJ}\cap K\cap V^{\mathcal{Z}})\setminus \varphi(\mathcal D_3)$, and $Q'=Q\cap \neighbor(v')$. 

Observe that the embedding $\varphi$ satisfies {\bf(I1)}--{\bf(I4)}.
\end{itemize}

\end{proof}

\end{document}